\documentclass[10pt,english]{amsart}

\usepackage{amsfonts}
\usepackage{mathrsfs}
\usepackage{amsmath}
\usepackage{amsthm}
\usepackage{amssymb}
\usepackage{latexsym}
\usepackage[all]{xy}
\usepackage{verbatim}
\usepackage{graphicx}
\usepackage{amscd,amsbsy}
\usepackage{nccmath}
\usepackage{stmaryrd}
\SetSymbolFont{stmry}{bold}{U}{stmry}{m}{n} 
\usepackage{etoolbox}
\usepackage{enumitem}
\usepackage{dsfont}
\usepackage{bbm}
\usepackage[
    pdfborder={0 0 0},
	colorlinks,
	plainpages,
	citecolor=red!50!black,
    filecolor=Darkgreen,
    linkcolor=blue!40!black,
    urlcolor=cyan!50!black!90]{hyperref}

\usepackage{babel}
\usepackage{csquotes}
\MakeOuterQuote{"}


\setlength{\parskip}{1ex}
\setlength{\oddsidemargin}{0in}
\setlength{\evensidemargin}{0in}
\setlength{\textwidth}{6.5in}
\setlength{\topmargin}{-0.5in}
\setlength{\textheight}{9in}

\newtheorem{theorem}{Theorem}[section]
\newtheorem{proposition}[theorem]{Proposition}
\newtheorem{corollary}[theorem]{Corollary}
\newtheorem{lemma}[theorem]{Lemma}
\newtheorem{definition}[theorem]{Definition}
\newtheorem{remark}[theorem]{Remark}

\newcommand{\nc}{\newcommand}


\nc{\tl}{\tilde}
\nc{\wt}{\widetilde}
\nc{\wh}{\widehat}
\nc{\mc}{\mathcal}
\nc{\mf}{\mathfrak}
\nc{\ms}{\mathsf}
\nc{\mr}{\mathrm}

\newcommand{\al}{\alpha}

\newcommand{\eps}{\epsilon}

\newcommand{\ka}{\kappa}
\newcommand{\la}{\lambda}

\newcommand{\vphi}{\varphi}

\newcommand{\gr}{\mathrm{gr}\,}

\newcommand{\mbf}{\mathbf{f}}
\newcommand{\mbF}{\mathbf{F}}

\newcommand{\mbG}{\mathbf{G}}
\newcommand{\mbH}{\mathbf{H}}

\newcommand{\mbu}{\mathbf{u}}

\newcommand{\mcE}{\mathcal{E}}
\newcommand{\mcF}{\mathcal{F}}

\newcommand{\mcI}{\mathcal{I}}
\newcommand{\mcJ}{\mathcal{J}}
\newcommand{\mcK}{\mathcal{K}}

\newcommand{\mcR}{\mathcal{R}}

\newcommand{\mcT}{\mathcal{T}}

\newcommand{\mcY}{\mathcal{Y}}

\newcommand{\mcZ}{\mathcal{Z}}

\newcommand{\mfa}{\mathfrak{a}}

\newcommand{\mfg}{\mathfrak{g}}
\newcommand{\mfgl}{\mathfrak{g}\mathfrak{l}}

\newcommand{\mfh}{\mathfrak{h}}

\newcommand{\mfS}{\mathfrak{S}}
\newcommand{\mfsl}{\mathfrak{s}\mathfrak{l}}
\newcommand{\mfso}{\mathfrak{s}\mathfrak{o}}
\newcommand{\mfsp}{\mathfrak{s}\mathfrak{p}}

\newcommand{\mfz}{\mathfrak{z}}

\newcommand{\msF}{\mathsf{F}}

\newcommand{\End}{\mathrm{End}}

\newcommand{\Ker}{\mathrm{Ker}}

\newcommand{\mdD}{\mathds{D}}
\newcommand{\mdF}{\mathds{F}}
\newcommand{\mdK}{\mathds{K}}
\newcommand{\mdL}{\mathds{L}}
\newcommand{\mdT}{\mathds{T}}
\newcommand{\mdX}{\mathds{X}}

\newcommand{\iso}{\stackrel{\sim}{\longrightarrow}}

\newcommand{\into}{\hookrightarrow}
\newcommand{\onto}{\twoheadrightarrow}

\newcommand{\C}{\mathbb{C}}
\newcommand{\Z}{\mathbb{Z}}

\newcommand{\ot}{\otimes}
\newcommand{\ol}{\overline}

\nc{\qu}{\quad}
\nc{\qq}{\qquad}

\usepackage[scr=boondoxo]{mathalfa}
\nc{\key}{{\mathscr{k}}}
\nc{\ley}{{\mathscr{l}}}
\nc{\emm}{{\mathscr{m}}}
\nc{\mt}{{\mathscr{t}}}
\nc{\mT}{{\mathscr{T}}}
\nc{\mX}{{\mathscr{X}}}
\nc{\my}{{\mathscr{y}}}
\nc{\mY}{{\mathscr{Y}}}
\nc{\mz}{{\mathscr{z}}}
\newcommand {\Omit}[1]{}

\DeclareMathOperator{\ad}{ad}

\numberwithin{equation}{section}


\renewcommand{\,}{\kern 0.1em} 


\usepackage{color}
\usepackage[usenames,dvipsnames]{xcolor}

\setcounter{tocdepth}{1} 

\raggedbottom

\title{The $R$-matrix presentation for the Yangian of a simple Lie algebra}
\author{Curtis Wendlandt}
\address{Department of Mathematical and Statistical Sciences,
University of Alberta, CAB 632, Edmonton, AB T6G 2G1, Canada.}
\email{cwendlan@ualberta.ca}
\subjclass[2010]{Primary 17B37;
Secondary 81R10}

\begin{document} 
\begin{abstract}
Starting from a finite-dimensional representation of the Yangian $Y(\mfg)$ for a simple Lie algebra $\mfg$ in Drinfeld's original presentation, we construct a Hopf algebra $X_\mcI(\mfg)$, called the extended Yangian, whose defining relations are encoded in a ternary matrix relation built from a specific $R$-matrix $R(u)$. We prove that there is a surjective Hopf algebra morphism $X_\mcI(\mfg)\onto Y(\mfg)$ whose kernel is generated as an ideal by the coefficients of a central matrix $\mcZ(u)$. When the underlying representation is irreducible, we show that this matrix becomes a grouplike central series, thereby making available a proof of a well-known theorem stated by Drinfeld in the 1980's. We then study in detail the algebraic structure of the extended Yangian, and prove several generalizations of results which are known to hold for  Yangians associated to classical Lie algebras in their $R$-matrix presentations.
\end{abstract}
\maketitle
{
\setlength{\parskip}{0ex}
\tableofcontents
}

\section{Introduction}
To any simple Lie algebra $\mfg$ one can associate a Hopf algebra $Y(\mfg)$, called the Yangian of $\mfg$, which is a filtered deformation of the enveloping algebra for the Lie algebra 
$\mfg[z]$ of polynomial maps $\C\to \mfg$. This quantum group originally appeared in disguise in the work of mathematical physicists studying quantum integrable systems and the quantum Yang-Baxter equation (see, for example, \cite{KS1,KS2}). The definition of $Y(\mfg)$ was later formalized in the pioneering paper \cite{Dr1}, where several foundational results were established. Since the 1980's, the study of Yangians has grown into a beautiful theory with applications to several areas, including, for instance, the theory of classical Lie algebras \cite{Mobook, MoGZ, NaSCap, NaCap, NaTaCap}, the study of finite $W$-algebras and their representations \cite{Brown, BrownRep, BK1, BKmem, BRWalg, RtwistWalg, RSWalg}, the theory of classical $W$-algebras and affine vertex algebras \cite{MoMuWalg, MoMuWalg2,MoFFC}, as well as geometric representation theory \cite{MO, FinkRyb, FKPRW, KWWY, KTWWY, Nak, SV1, SV2, Va, YaGu1, YaGu2, YaGu3}.

Yangians admit at least three important presentations: Drinfeld's original (or "$J$") presentation, the $R$-matrix (or $RTT$) realization, and the Drinfeld "new" (or current) presentation \cite{Dr1,Dr2,FRT}. Many applications of $Y(\mfg)$ are specific to $\mfg=\mfsl_N$ and employ the $R$-matrix realization of $Y(\mfsl_N)$, which has a rich history (see the monograph \cite{Mobook}). In this setting, the evaluation morphism $Y(\mfsl_N)\onto U(\mfsl_N)$, which only exists for $\mfg=\mfsl_N$, is particularly simple to describe and this phenomena gives rise to many interesting results. The $R$-matrix presentation of $Y(\mfg)$ has also been studied for orthogonal and symplectic Lie algebras \cite{AACFR,AMR}, and this has led to a more explicit description of the relationship between Yangian characters, classical $W$-algebras, and the centers of vertex algebras at the critical level \cite{MoMuWalg}. It has also served as the catalyst for the study of twisted Yangians of type B-C-D and their representations \cite{GR, smrank, RepsI, RepsII, IMO}. 

The equivalence between the $J$ and $R$-matrix presentations of the Yangian was succinctly explained in \cite[Theorem 6]{Dr1}. The idea is as follows: starting from a finite-dimensional irreducible representation $V$ of the Yangian $Y(\mfg)$ in the $J$-presentation, one can build a Hopf algebra called the extended Yangian (which we denote $X(\mfg)$) defined by a ternary matrix relation called the $RTT$-relation. Using the existence of the universal $R$-matrix for $Y(\mfg)$ (see Theorem \ref{T:DrThm3}), one then constructs a surjective Hopf algebra morphism $\wt \Phi:X(\mfg)\onto Y(\mfg)$. According to Drinfeld, the kernel of this morphism is generated by the coefficients of a grouplike central series $c(u)\in X(\mfg)[\![u^{-1}]\!]$. That is, 
the coefficients of $c(u)$ are central elements and $\Delta(c(u))=c(u)\ot c(u)$, where $\Delta$ denotes the coproduct of $X(\mfg)$. The quotient $X(\mfg)/\Ker \wt \Phi$, which we denote $Y_R(\mfg)$, is the so-called $R$-matrix realization of the Yangian. 

However, this construction has only been explicitly written down and studied in the special cases alluded to above. In these instances, $\mfg=\mfsl_N,\mfso_N$ or $\mfsp_N$ and the underlying representation $V$ of $Y(\mfg)$ is the vector representation $\C^N$. Moreover, the proof of \cite[Theorem 6]{Dr1}, which in principle should explain how to construct the series $c(u)$, has never appeared in the literature in full generality. We note, however, that for the special case where $\mfg=\mfso_N$ or $\mfsp_N$ and  $V=\C^N$ a proof was given in \cite{GRW}. 

This brings us to the original motivation and first main goal of this paper: to make available a detailed proof of \cite[Theorem 6]{Dr1}. In fact, we take a slightly more general approach. After recalling the definition of $Y(\mfg)$ in its $J$-presentation in Section \ref{Sec:YJ} and obtaining the polynomial current algebra version of \cite[Theorem 6]{Dr1} in Section \ref{Sec:r-matrix}, we construct in Section \ref{Sec:RTT} the $RTT$-Yangian $Y_R(\mfg)$ and its extension $X_\mcI(\mfg)$ for any non-trivial finite-dimensional $Y(\mfg)$-module $V$. Here $\mcI$ is an indexing set which keeps track of the dimension of the endomorphism space $\End_{Y(\mfg)}V$, and is omitted as a subscript of $X_\mcI(\mfg)$ when $V$ is irreducible. We will prove in Section \ref{Sec:YR->YJ} that, even when $V$ is not irreducible, the Yangian $Y_R(\mfg)$ is isomorphic to $Y(\mfg)$: see Theorem \ref{T:YR->YJ}. As an immediate corollary to the proof of this result, one obtains Theorem \ref{T:PBW}, which gives a Poincar\'{e}-Birkhoff-Witt theorem for $Y_R(\mfg)$. The actual statement and proof of Theorem 6 in \cite{Dr1} is collected later in Section \ref{Sec:Drin-Vec}, which is solely devoted to the case when $V$ is irreducible: see Theorem \ref{T:Drin}. Our argument also gives a concrete description of the series $c(u)$ (or at least one choice for $c(u)$) in terms of the generating matrix $T(u)$ for $X(\mfg)$ and its image under the square of the antipode: see Corollary \ref{C:z,y}, where $c(u)$ is denoted $z(u)$. As a disclaimer, we note that Part \eqref{Dr:2} of Theorem \ref{T:Drin} does depend on the assumption that $V$ is irreducible. In the general setting, the formal series $c(u)$ is replaced by a matrix $C(u)$ which satisfies similar properties: see Remark \ref{R:Drin}. 

The proof of Theorem \ref{T:YR->YJ} makes use of the so-called $r$-matrix presentation of the current algebra $\mfg[z]$. This presentation is very similar in flavour to the $R$-matrix realization of $Y(\mfg)$, except that the role played by the universal $R$-matrix of $Y(\mfg)$ is instead played by the classical $r$-matrix $\frac{\Omega}{u-v}$ associated to the standard Lie bialgebra structure of $\mfg[z]$. Since no general treatment of this presentation seems to exist in the literature, we have devoted Section \ref{Sec:r-matrix} to its construction and to obtaining an analogous presentation for a certain extension of $\mfg[z]$ which is closely related to $X_\mcI(\mfg)$. As was suggested in the previous paragraph, the equivalence of the standard and $r$-matrix presentations of $\mfg[z]$, which is given in Propositions \ref{P:r-matrix} and \ref{P:g_I[z]}, can be viewed as the classical version of \cite[Theorem 6]{Dr1}. 

Let us now describe the second main goal of this paper. When $\mfg$ is a classical Lie algebra and $V$ is its vector representation, the extended Yangian $X(\mfg)$ is often studied in place of its quotient $Y_R(\mfg)$. Whereas the center of $Y_R(\mfg)$ is trivial, $X(\mfg)$ has a large center which is isomorphic to a polynomial algebra in countably many variables, and which can conveniently be described using certain explicit formal series. When $\mfg=\mfsl_N$, the study of these series and their twisted Yangian analogues has led to applications in studying the centers of $U(\mfgl_N)$, $U(\mfso_N)$ and $U(\mfsp_N)$ (see \cite[Chapter 7]{Mobook}). It is also known that one can describe $Y_R(\mfg)$ (for $\mfg=\mfsl_N,\mfso_N$ and $\mfsp_N$) not only as a quotient of $X(\mfg)$, but also as the subalgebra of $X(\mfg)$ fixed by a certain family of automorphisms (see Subsection \ref{ssec:class}). These considerations naturally lead to the question of whether or not the structure of $X_\mcI(\mfg)$ for general $\mfg$ and $V$ can be described in more detail, and in particular if some of the results which characterize $X_\mcI(\mfg)$ in the aforementioned special cases can be proven in general.

The second goal of this paper, which is considered in Section \ref{Sec:XR}, is to provide an affirmative answer to this question with as much detail as possible.  Our first result in this direction is Theorem \ref{T:XR->CxYR}, which proves that $X_\mcI(\mfg)$ is always isomorphic to the tensor product of a polynomial algebra in countably many variables with the Yangian $Y_R(\mfg)$. Not only does this prove that $Y_R(\mfg)$ can be identified with a subalgebra of $X_\mcI(\mfg)$, but it shows that the center of $X_\mcI(\mfg)$ is a polynomial algebra. In Proposition \ref{P:Z(u)}, explicit algebraically independent generators of the center are identified. Our next main result is a Poincar\'{e}-Birkhoff-Witt type theorem for $X_\mcI(\mfg)$: see Theorem \ref{T:X-PBW}. This result proves that $X_\mcI(\mfg)$ can be viewed as a filtered deformation 
of the enveloping algebra for the current algebra $(\mfg\oplus \mfz_\mcI)[z]$, where $\mfz_\mcI$ is a commutative Lie algebra of dimension $\dim \End_{Y(\mfg)}V$. Additionally, it demonstrates that the enveloping algebra of $\mfg$ is always contained in $X_\mcI(\mfg)$ as a subalgebra. In Subsection \ref{ssec:Y->X}, we prove that the embedding $Y_R(\mfg)\into X_\mcI(\mfg)$ furnished by Theorem \ref{T:XR->CxYR} is a Hopf algebra morphism and study the behaviour of the center of $X_\mcI(\mfg)$ with respect to its Hopf structure: see Proposition \ref{P:Hopf}. The last result relevant to the second main goal of our paper 
is Theorem \ref{T:fixed-pt}, which proves that $Y_R(\mfg)$ can be realized as the subalgebra of $X_\mcI(\mfg)$ consisting of all elements stable under a specific family of automorphisms. In Subsection \ref{ssec:class} of Section \ref{Sec:Drin-Vec}, it is explained in more detail how the results of Section \ref{Sec:XR} generalize results which are known to hold when $\mfg$ is a classical Lie algebra and $V$ is its vector representation \cite{AACFR,AMR,Mobook}. 

We now give a few remarks, the first of which concerns the current presentation $Y_D(\mfg)$ of the Yangian.
In \cite[Theorem 1]{Dr2}, Drinfeld established that the $J$ and  current realizations of the Yangian were isomorphic, and also gave an an explicit formula for an isomorphism  $Y(\mfg)\to Y_D(\mfg)$. A proof of this result was not published at the time, but one was recently made available in \cite[Theorem 2.6]{GRW}, where $Y_D(\mfg)$ was denoted $Y^{\rm cr}(\mfg)$. By composing this map with the morphism of Theorem \ref{T:YR->YJ}, one obtains an isomorphism $Y_R(\mfg)\to Y_D(\mfg)$ for each finite-dimensional non-trivial $Y(\mfg)$-module $V$. We remark that, when $\mfg$ is assumed to be a classical Lie algebra and $V$ its vector representation, such an isomorphism has also been established using the Gauss decomposition of the generating matrix for $X(\mfg)$. For $\mfg=\mfsl_N$, this was accomplished in 
\cite{BKpara}, while for $\mfg=\mfso_N$ and $\mfg=\mfsp_N$ this was achieved in the recent paper \cite{JLM}.  

As a last remark, we note that due to deep parallels between the theories of Yangians and quantum loop algebras \cite{GM,GTL1,GTL2,GTL3}, it is reasonable to expect that the results of this paper could be proven, to some extent, for the quantum loop algebra associated to an arbitrary simple Lie algebra.

\noindent {\it Acknowledgements.} The author gratefully acknowledges the financial support of the Natural Sciences and Engineering Research Council of Canada provided via the Alexander Graham Bell Canada Graduate Scholarship (CGS D). He would also like to thank Nicolas Guay and the anonymous reviewers for several helpful comments. 

\section{Preliminaries}\label{Sec:Pre}

%
%
\subsection{Simple Lie algebras and their polynomial current algebras}

Throughout this paper we assume that $\mfg$ is a finite-dimensional complex simple Lie algebra with symmetric non-degenerate invariant bilinear form $(\cdot,\cdot)$. Following the notation of \cite{Dr1}, we fix an orthonormal basis $\{X_\lambda\}_{\lambda\in \Lambda}$ of 
$\mfg$ with respect to this form, where $\Lambda$ is an indexing set of size $\dim \mfg$. Let $\{\al_{\lambda\nu}^\gamma\}_{\lambda,\nu,\gamma\in \Lambda}$ be the structure constants with respect to this basis:

\begin{equation*}
 [X_\lambda,X_\nu]=\sum_{\gamma\in \Lambda}\al_{\lambda\nu}^\gamma X_\gamma.
\end{equation*}
In particular, $\al_{\lambda\nu}^\gamma=-\al_{\nu\lambda}^\gamma$ and $\al_{\lambda\nu}^\gamma=-\al_{\lambda\gamma}^\nu$ for all $\lambda,\nu,\gamma\in \Lambda$, the second of these equalities being a consequence of the invariance of the bilinear form $(\cdot,\cdot)$. 

Let $\Omega$ and $\omega$ denote the Casimir elements
\begin{equation*}
 \Omega=\sum_{\lambda\in \Lambda} X_\lambda \otimes X_\lambda \in \mfg\ot\mfg \quad \text{ and }\quad \omega=\sum_{\lambda\in \Lambda} X_\lambda^2\in U(\mfg),
\end{equation*}
and let $c_\mfg$ denote the eigenvalue of $\omega$ in the adjoint representation. Here $U(\mfg)$ denotes the enveloping algebra of $\mfg$. More generally, the notation $U(\mfa)$ will be used to denote the enveloping algebra of an arbitrary complex Lie algebra $\mfa$, and $\Delta$ will denote the standard coproduct on $U(\mfa)$.

The polynomial current algebra of a complex Lie algebra $\mfa$ is the Lie algebra which is equal to $\mfa[z]=\mfa\otimes \C[z]$ as a vector space, with Lie bracket given by 
\begin{equation*}
 [X\otimes f(z),Y\otimes g(z)]=[X,Y]_\mfg\otimes f(z)g(z) \quad \text{ for all }\; X,Y\in \mfa \; \text{ and }\; f(z),g(z)\in \C[z].
\end{equation*}
Equivalently, $\mfa[z]$ is the space of polynomial maps $\C\to \mfg$ with Lie bracket given pointwise. 
If $\mfa=\mfg$ is a complex simple Lie algebra, then the enveloping algebra $U(\mfg[z])$ is isomorphic to the unital associative algebra generated by elements $\{X_\lambda z^r\,: \, \lambda \in \Lambda, \, r\geq 0\}$ subject to the defining relations 
\begin{equation}
 [X_\lambda z^r, X_\mu z^s]=\sum_{\gamma\in \Lambda}\al_{\lambda \mu}^\gamma X_\gamma z^{r+s} \; \text{ for all }\; \lambda,\mu\in \Lambda \; \text{ and }\; r,s\geq 0. \label{g[z]}
\end{equation}
The Lie algebra $\mfa[z]$ is graded: we have $\mfa[z]=\bigoplus_{k\geq 0} \mfa z^k$, with $\mfa z^k=\mfa\ot \C z^k$. If $\mfa=\mfg$ is simple, then 
$\mfg[z]$ is generated as a Lie algebra by $\mfg$ and $\mfg z$.

In addition to having the structure of a Lie algebra, $\mfg[z]$ admits the structure of a coboundary Lie bialgebra determined by the  classical 
$r$-matrix 
\begin{equation*}
 r_\mfg=-\!\!\sum_{\lambda\in \Lambda,k\geq 0} X_\lambda v^k \ot X_\lambda u^{-k-1}\in \mfg[v]\wh \ot \mfg(\!(u^{-1})\!).
\end{equation*}
That is, its Lie bialgebra cocommutator $\delta:\mfg[z]\to \mfg[z]\ot \mfg[z]\cong (\mfg\ot \mfg)[v,u]$ is given by
\begin{equation*}
 \delta(f(z))(u,v)=[f(v)\ot 1+1\ot f(u),r_\mfg] \quad \forall \quad f(z)\in \mfg[z]. 
\end{equation*}
 That the right-hand side of the above expression indeed belongs to $(\mfg\ot \mfg)[v,u]$ follows from the observation that $r_\mfg$ may be identified with the element 
\begin{equation*}
 -\frac{\Omega}{u-v}=-\sum_{k\geq 0}\Omega  v^k u^{-k-1}\in (\mfg\ot \mfg)\ot (\C[v])[\![u^{-1}]\!], 
\end{equation*}
 together with the fact that $[\Delta(X),\Omega]=0$ for all $X\in\mfg$. The statement that 
$r_\mfg$ is an $r$-matrix is meant to indicate that it is a solution of the classical Yang-Baxter equation with spectral parameter: see \cite[Subsection 6.3.2]{ES}, as well as 
Subsection 6.2 of \textit{loc. cit.} for a more complete description of the Lie bialgebra structure on $\mfg[z]$.

A deep understanding of the bialgebra $(\mfg[z],\delta)$ will not be needed here, although 
the $r$-matrix $\frac{\Omega}{u-v}$ will play a significant role. We, however, adapt the viewpoint that this element be treated as a rational function in $u-v$ which can 
be expanded as a formal series in $(\mfg\ot\mfg)\ot \C[\![v^{\pm 1},u^{\pm 1}]\!]$ in various ways: see Remark \ref{R:expand}. 

%
%
\subsection{Matrix, formal series, and miscellaneous notation} 
In what follows, all vector spaces and algebras are assumed to be over the complex numbers $\C$, and we will maintain this assumption for the remainder of this paper.

Suppose that $W$ is an arbitrary vector space and that $V$ is a finite-dimensional vector space of dimension $N$ with a fixed basis $\{e_1,\ldots,e_N\}$, and let $\{E_{ij}\}_{1\leq i,j\leq N}$ denote the elementary matrices of $\End V$ with respect to this basis. We will often be working with spaces of the form 
$(\End V)^{\otimes m}\otimes W$, with $m\geq 1$. Given $A=\sum_{i,j=1}^N E_{ij}\otimes a_{ij}\in \End V\otimes W$ and $1\leq k\leq m$, we set
\begin{equation*}
 A_k=\sum_{i,j=1}^N 1^{\ot(k-1)}\ot E_{ij}\ot 1^{\ot(m-k)}\ot a_{ij}\in (\End V)^{\otimes m}\otimes W. 
\end{equation*}
If $W$ is a formal power series ring or if more generally $A=A(u)$ depends on a formal parameter $u$, we will indicate this by 
writing $A_a(u)$ in place of $A_a$ (and rather than $A(u)_a$). 

Similarly, if $\mathscr{A}$ is a unital algebra and $B=\sum_{i=1}^r a_i\ot b_i \in \mathscr{A}\ot \mathscr{A}$  with $1\leq k<l \leq m$ and $m\geq 2$, then we will denote by $B_{kl}$ the element 
\begin{equation*}
 B_{kl}=\sum_{i=1}^r 1^{\ot (k-1)}\ot  a_i \ot  1^{\ot (l-k-1)}\ot b_i \ot 1^{\ot(m-l)}\in \mathscr{A}^{\otimes m}.
\end{equation*}
We instead write $B_{kl}(u)$ if $B=B(u)$ depends on a formal parameter $u$.

In Sections \ref{Sec:RTT} - \ref{Sec:XR} we will consider embeddings of elements $A(u)\in \End V\ot \mathscr{A}[\![u^{-1}]\!]$ into $\End V\ot (\mathscr{A} \ot \mathscr{A})[\![u^{-1}]\!]$. With this in mind, given $A(u)=\sum_{i,j=1}^N E_{ij} \ot a_{ij}(u)\in \End V\ot \mathscr{A} [\![u^{-1}]\!]$ and $1\leq k \leq 2$, we define 
\begin{equation*}
 A_{[k]}(u)=\sum_{i,j=1}^N E_{ij}\ot 1^{\ot(k-1)}\ot a_{ij}(u)\ot 1^{\ot(2-k)}\in \End V\otimes (\mathscr{A} \ot \mathscr{A})[\![u^{-1}]\!].
\end{equation*}
Now suppose that $W_1$ and $W_2$ are arbitrary vector spaces, and let $\phi: W_1\to W_2$ be a linear map. Then, given 
$a(u)=\sum_{r\geq 0} a_r u^{-r}\in W_1[\![u^{-1}]\!]$ and $b(u)=\sum_{r\geq 0} b_r u^{-r}\in W_2[\![u^{-1}]\!]$, we will write $\phi(a(u))=b(u)$  to indicate that 
$\phi(a_r)=b_r$ for all $r\geq 0$. Conversely, we will use expressions of the form $\phi(a(u))=b(u)$ (understood in the same way) to define linear maps, algebra homomorphisms and anti-homomorphisms. 
Similarly, expressions of the form $\phi(A(u))=(\mathrm{id}\ot \phi)A(u)=B(u)$ with $A(u)\in \End V \ot W_1[\![u^{-1}]\!]$ and $B(u)\in \End V \ot W_2[\![u^{-1}]\!]$ will be used to define and interpret transformations $\phi:W_1\to W_2$. 

For any two vector spaces $W_1$ and $W_2$, let 
$\sigma_{W_1,W_2}: W_1\ot W_2 \to W_2\ot W_1$ be the permutation operator defined by $\sigma_{W_1,W_2}(w_1\ot w_2)=w_2\ot w_1$ for all $w_1\in W_1$ and $w_2\in W_2$. In practice we will drop the subscripts and simply write $\sigma=\sigma_{W_1,W_2}$: the underlying vector spaces will always be clear from context. Given $R\in W_1\ot W_2$, we will write $R_{21}$ for the element $\sigma(R)\in W_2\ot W_1$.

Finally, for any unital associative algebra $\mathscr{A}$ we  denote by $\mathrm{Lie}(\mathscr{A})$ the Lie algebra which is equal to $\mathscr{A}$ as a vector space and has Lie bracket equal to the commutator bracket: $[a_1,a_2]=a_1a_2-a_2a_1$ for all $a_1,a_1\in \mathscr{A}$. 

\section{The Yangian of a simple Lie algebra}\label{Sec:YJ}
In this section we recall the definition for the Yangian of $\mfg$ in its $J$-presentation, as well as some of its properties which will play a role in Sections \ref{Sec:YR->YJ} and \ref{Sec:XR}. Aside from Proposition \ref{P:grJ} and a few brief remarks, all of the contents of this section appeared in Drinfeld's seminal paper \cite{Dr1}. 
\begin{definition}[\cite{Dr1}]\label{D:YJ}
 The Yangian $Y(\mfg)$ is the unital associative $\C$-algebra generated by the set of elements $\{X,J(X)\,:\,X\in \mfg\}$ subject to the defining relations
 \begin{align}
  &XY-YX=[X,Y]_\mfg,  \quad J([X,Y])=[J(X),Y], \label{YJ:1}\\
  &J(cX+dY)=cJ(X)+dJ(Y),\label{YJ:2}\\
  &[J(X),[J(Y),Z]]-[X,[J(Y),J(Z)]]=\sum_{\lambda,\mu,\nu\in \Lambda}([X,X_\la],[[Y,X_\mu],[Z,X_\nu]])\{X_\lambda,X_\mu,X_\nu\},\label{YJ:3}\\ 
  &[[J(X),J(Y)],[Z,J(W)]]+[[J(Z),J(W)],[X,J(Y)]]\nonumber\\
  &\qquad =\sum_{\lambda,\mu,\nu\in \Lambda}\left(([X,X_\lambda],[[Y,X_\mu],[[Z,W],X_\nu]])+([Z,X_\lambda],[[W,X_\mu],[[X,Y],X_\nu]])\right)\{X_\lambda,X_\mu,J(X_\nu)\},\label{YJ:4}
 \end{align}
 for all $X,Y,Z,W\in \mfg$ and $c,d\in \C$, where $\{x_1,x_2,x_3\}=\frac{1}{24}\sum_{\pi \in \mathfrak{S}_3}x_{\pi(1)}x_{\pi(2)}x_{\pi(3)}$ for all $x_1,x_2,x_3\in Y(\mfg)$.
\end{definition}
The algebra $Y(\mfg)$ is equipped with an ascending filtration $\mbF^J$ defined by $\deg X=0$ and $\deg J(X)=1$ for all $X\in \mfg$. For each $k\geq 0$, let $\mbF_k^J$ denote the subspace of $Y(\mfg)$ spanned by elements of degree less than or equal to $k$ and denote by $\bar{X}$ and $\ol{J(X)}$ 
the images of $X$ and $J(X)$, respectively, in $\mbF^J_0$ and $\mbF^J_1/\mbF^J_0$, respectively. A proof of the following well-known result, dating back to \cite{Dr1}, was made available recently in \cite{GRW}. 
\begin{proposition}[Proposition 2.2 of \cite{GRW}]\label{P:grJ}
 The associated graded algebra $\gr\,Y(\mfg)$ is isomorphic to $U(\mfg[z])$. An isomorphism $\varphi_{J}:U(\mfg[z])\to \gr\,Y(\mfg)$ is provided by the assignment 
 \begin{equation*}
  X_\lambda z\mapsto \ol{J(X_\lambda)},\quad X_\lambda \mapsto \ol{X}_\lambda \quad \forall \; \lambda \in \Lambda. 
 \end{equation*}
\end{proposition}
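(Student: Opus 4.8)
The plan is to realize $\varphi_J$ as a surjection and then to force it to be an isomorphism by a graded dimension count; the real content lies in constructing $\varphi_J$ and in checking that the current-algebra relations \eqref{g[z]} survive in $\gr Y(\mfg)$. To build the map, I would first record that since $\mfg$ is simple its adjoint Casimir acts by a nonzero scalar, $\sum_{\lambda\in\Lambda}\ad(X_\lambda)^2=c_\mfg\,\id_\mfg$ with $c_\mfg\neq 0$, and use this to define linear maps $J_r:\mfg\to Y(\mfg)$ by $J_0=\id$, $J_1=J$, and the recursion
\begin{equation*}
 J_{r+1}(X)=c_\mfg^{-1}\sum_{\lambda\in\Lambda}[J(X_\lambda),J_r([X_\lambda,X])].
\end{equation*}
A degree count gives $J_r(X)\in\mbF^J_r$, so I may propose $\varphi_J(X_\lambda z^r)=\overline{J_r(X_\lambda)}\in\mbF^J_r/\mbF^J_{r-1}$. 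Extending linearly over the decomposition $\mfg[z]=\bigoplus_r\mfg z^r$ gives a linear map $\mfg[z]\to\mathrm{Lie}(\gr Y(\mfg))$, and once it is shown to respect brackets the universal property of the enveloping algebra produces the algebra homomorphism $\varphi_J$. Surjectivity is then immediate, because $\bar{\mfg}$ and $\overline{J(\mfg)}$ generate $\gr Y(\mfg)$ and both lie in the image.

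The heart of the argument is the identity $[\overline{J_r(X)},\overline{J_s(Y)}]=\overline{J_{r+s}([X,Y])}$ in $\gr Y(\mfg)$, which is precisely \eqref{g[z]} transported through $\varphi_J$; I would prove it by induction on $r+s$. The cases $r+s\le 1$ follow at once from \eqref{YJ:1} and \eqref{YJ:2}. The first genuinely new case is $r=s=1$, where the degree-two element $[\overline{J(X)},\overline{J(Y)}]$ is not controlled by \eqref{YJ:1}, and this is exactly where \eqref{YJ:3} and \eqref{YJ:4} intervene: passing \eqref{YJ:3} to the top filtration degree annihilates its right-hand side, which is a product of three degree-zero elements and hence of strictly smaller degree than the degree-two brackets on the left, leaving $[\overline{J(X)},\overline{J([Y,Z])}]=[\bar X,[\overline{J(Y)},\overline{J(Z)}]]$. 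Writing an arbitrary element of $\mfg$ as a sum of brackets (possible since $\mfg=[\mfg,\mfg]$) and comparing with the recursion that defines $J_2$ then pins down $[\overline{J(X)},\overline{J(Y)}]=\overline{J_2([X,Y])}$. The inductive step propagates this via the definition of $J_{r+1}$ together with the Jacobi identity in $\gr Y(\mfg)$.

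For injectivity, note that surjectivity of $\varphi_J$ already yields $\dim(\gr Y(\mfg))_k\le\dim U(\mfg[z])_k$ for every $k$, so the remaining task is the reverse inequality. This lower bound cannot be extracted from the defining relations, since relations only ever produce spanning sets; it requires an independent supply of elements of $Y(\mfg)$. I would obtain it from a family of finite-dimensional representations rich enough to separate the ordered monomials in the $J_r(X_\lambda)$, or equivalently from the flatness of $Y(\mfg)$ as a deformation of $U(\mfg[z])$, or by transporting the Poincar\'e--Birkhoff--Witt theorem from the current presentation $Y_D(\mfg)$ through Drinfeld's isomorphism. Any of these gives $\dim(\gr Y(\mfg))_k=\dim U(\mfg[z])_k$ for all $k$, forcing $\varphi_J$ to be injective and hence an isomorphism.

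I expect the main obstacle to be the second step: relations \eqref{YJ:1}--\eqref{YJ:2} govern only those brackets in which one factor has degree zero, so the new degree-two term $[\overline{J(X)},\overline{J(Y)}]$ must be identified using \eqref{YJ:3}, and in the exceptional case $\mfg=\mfsl_2$, where \eqref{YJ:3} degenerates, using \eqref{YJ:4} instead. Carrying out the $\mfg$-module bookkeeping so that the Casimir recursion matches the current bracket in every degree is the principal calculation, while the injectivity lower bound is the single point where input beyond the presentation is unavoidable.
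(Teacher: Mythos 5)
The paper itself contains no proof of Proposition \ref{P:grJ}: it imports the statement from \cite{GRW}, so your proposal can only be measured against that cited argument, whose architecture you do reproduce correctly (a surjection $U(\mfg[z])\onto \gr Y(\mfg)$ extracted from the relations, plus injectivity coming from input beyond the presentation). Your insistence that injectivity cannot follow from the defining relations alone is exactly right. The genuine gap is in the step you yourself call the heart of the argument: the induction establishing $[\ol{J_r(X)},\ol{J_s(Y)}]=\ol{J_{r+s}([X,Y])}$. The inductive step does not "propagate via the definition of $J_{r+1}$ together with the Jacobi identity." Writing $\ol{J_r(X)}=c_\mfg^{-1}\sum_\lambda[\ol{J(X_\lambda)},\ol{J_{r-1}([X_\lambda,X])}]$ with $r+s=n+1$ and $r,s\geq 1$, the Jacobi identity gives
\begin{equation*}
[\ol{J_r(X)},\ol{J_s(Y)}]=c_\mfg^{-1}\sum_{\lambda\in\Lambda}\Bigl(\bigl[\ol{J(X_\lambda)},[\ol{J_{r-1}([X_\lambda,X])},\ol{J_s(Y)}]\bigr]-\bigl[\ol{J_{r-1}([X_\lambda,X])},[\ol{J(X_\lambda)},\ol{J_s(Y)}]\bigr]\Bigr),
\end{equation*}
and after the inner brackets are evaluated by the inductive hypothesis, the outer brackets are of types $(1,n)$ and $(r-1,s+1)$ --- still of total degree $n+1$, to which the hypothesis does not apply. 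The induction therefore never closes; what one actually obtains is a coupled linear system among the unknown degree-$(n+1)$ brackets, and solving it needs ingredients absent from your sketch: the invariance $\sum_\lambda [X,X_\lambda]\ot X_\lambda=-\sum_\lambda X_\lambda\ot[X,X_\lambda]$ to settle the $(0,n+1)$ case, Casimir identities such as $\sum_\lambda[[X_\lambda,X],[X_\lambda,Y]]=-\tfrac{1}{2}c_\mfg[X,Y]$ to pass from $(r-1,s+1)$ to $(r,s)$, and, already at $r=s=1$, the fact that the bilinear map $(X,Y)\mapsto[\ol{J(X)},\ol{J(Y)}]$ annihilates $\Ker(\Lambda^2\mfg\to\mfg)$ --- which rests on $(\Lambda^2\mfg)^\mfg=0$ and is precisely why your recipe "write an element as a sum of brackets" is well defined at all. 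In substance this step is the nontrivial theorem that $\mfg[z]$ is presented by the degree $\leq 1$ generators with the degenerate relations \eqref{YJ:1}--\eqref{YJ:4} (right-hand sides set to zero, and with \eqref{YJ:4} genuinely needed when $\mfg\cong\mfsl_2$); it is a result in its own right, not a routine induction.

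There is a second, smaller problem in the injectivity step: two of your three proposed sources of the lower bound are circular in the present context. "Flatness of $Y(\mfg)$ as a deformation of $U(\mfg[z])$" is essentially a restatement of the proposition being proven; and transporting the Poincar\'e--Birkhoff--Witt theorem from $Y_D(\mfg)$ through Drinfeld's isomorphism \cite[Theorem 1]{Dr2} is not available, because the only published proof of that isomorphism, \cite[Theorem 2.6]{GRW} (the one this paper relies on, and whose analogue here, Theorem \ref{T:YR->YJ}, is proved the same way), itself takes Proposition \ref{P:grJ} as input to control the associated graded. So the external input must be chosen non-circularly (for instance a Levendorskii-type construction of a PBW basis, or Drinfeld's quantization/$\hbar$-adic argument). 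Finally, note that the "graded dimension count" cannot be taken literally, since the $z$-graded components of $U(\mfg[z])$ are infinite dimensional; the correct formulation is that the images in $\gr Y(\mfg)$ of a PBW basis of $U(\mfg[z])$ must be shown to be linearly independent.
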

We pause momentarily to comment on the relations \eqref{YJ:3} and \eqref{YJ:4}. It was pointed out in \cite{Dr1} that
\begin{enumerate}[label=(\emph{\alph*})]
 \item \label{red:1} when $\mfg\cong \mfsl_2$ the relation \eqref{YJ:3} follows from \eqref{YJ:1} together with \eqref{YJ:2}, and 
 \item \label{red:2} when $\mfg\ncong \mfsl_2$ the relation \eqref{YJ:4} follows from the relations \eqref{YJ:1}--\eqref{YJ:3}.
\end{enumerate}
One way of seeing this is to appeal to the proof of \cite[Theorem 2.6]{GRW}. A careful reading of that proof together with \cite[3(ii)]{GNW} shows that if $\mfg\ncong \mfsl_2$ then the relation \eqref{YJ:4} can be omitted and the relation \eqref{YJ:3} can even be replaced with the relation 
\begin{equation*}
 [J(h),J(h^\prime)]=\frac{1}{4}\sum_{\al,\beta\in \Delta_+}\al(h)\beta(h')[x_\al^-x_\al^+,x_\beta^-x_\beta^+]\quad \forall \; h,h^\prime\in \mfh,
\end{equation*}
where $\mfh$ denotes the Cartan subalgebra of $\mfg$, $\Delta_+$ denotes the set of positive roots of $\mfg$, and for each $\alpha\in \Delta_+$ $x_\al^{\pm}\in \mfg_{\pm \al}$ are such that $(x_\al^+,x_\al^-)=1$. If instead $\mfg\cong \mfsl_2$, then the proof of \cite[Theorem 2.6]{GRW} found in Appendix A of \textit{loc. cit.} shows that the relation \eqref{YJ:3} can be omitted and \eqref{YJ:4} can be replaced with 
\begin{equation*}
 \left[[J(e),J(f)],J(h)\right]=(fJ(e)-J(f)e)h,
\end{equation*}
where $\{e,f,h\}$ is the standard $\mfsl_2$-triple and $(\cdot,\cdot)$ has been normalized to equal the trace form.

By \cite[Theorem 2]{Dr1}, $Y(\mfg)$ is a Hopf algebra with comultiplication $\Delta$, counit $\epsilon$, and antipode $S$ given by 
\begin{equation}\label{YJ:Hopf}
\begin{gathered}
 \Delta(X)=X\otimes 1 +1\otimes X,\; \Delta(J(X))=J(X)\otimes 1+1\otimes J(X)+\tfrac{1}{2}[X\ot 1,\Omega] , \\
  \epsilon(X)=\epsilon(J(X))=0,\\
 S(X)=-X,\quad S(J(x))=-J(X)+\tfrac{1}{4}c_\mfg X,
\end{gathered}
\end{equation}
where $X$ is an arbitrary element of $\mfg$. A proof that $\Delta$ is an algebra homomorphism may be found in \cite{GNW}.

The enveloping algebra $U(\mfg[z])$ has a one parameter family of Hopf algebra automorphisms $\ol \tau_c$, indexed by $c\in \C$, which are determined by 
$\ol \tau_c: Xz^r \to X(z+c)^r$ for all $r\geq 0$ and $X\in \mfg$. The Yangian 
$Y(\mfg)$ also possesses such a family of Hopf algebra automorphisms which can be viewed as quantizations of these shift automorphisms. Explicitly, for each $c\in \C$, there is a Hopf algebra automorphism $\tau_c$ of $Y(\mfg)$ given by the assignment 
\begin{equation}
 X\mapsto X, \quad J(X)\mapsto J(X)+cX \quad \text{ for all }\; X\in \mfg.  \label{J:shift}
\end{equation}
By replacing $c\in \C$ with a formal variable $u$, we obtain an automorphism $\tau_u$ of the polynomial algebra $Y(\mfg)[u]$ or even of the formal power series algebra $Y(\mfg)(\!(u^{-1})\!)$. 
Given complex numbers $c,d\in \C$ and formal variables $u,v$, we will write $\tau_{c,d}=\tau_c\otimes \tau_d$ and $\tau_{u,v}=\tau_u\otimes \tau_v$. We will also denote by $\Delta^{\mathrm{op}}$ the opposite coproduct of $Y(\mfg)$; that is, $\Delta^{\mathrm{op}}=\sigma \circ \Delta$ where $\sigma=\sigma_{Y(\mfg),Y(\mfg)}$. The next corollary follows immediately from the definition of the antipode $S$ given in \eqref{YJ:Hopf}.
\begin{corollary}\label{C:S2}
The square of the antipode $S$ is given by  $S^2=\tau_{-\frac{1}{2}c_\mfg}$.
\end{corollary}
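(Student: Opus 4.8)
The plan is to verify the identity $S^2 = \tau_{-\frac{1}{2}c_\mfg}$ directly on the algebra generators $X$ and $J(X)$ for arbitrary $X \in \mfg$, since both sides are algebra homomorphisms and the set $\{X, J(X) : X \in \mfg\}$ generates $Y(\mfg)$. Thus it suffices to check that the two maps agree on these generators. First I would compute $S^2$ on the degree-zero generators: from the antipode formula \eqref{YJ:Hopf} we have $S(X) = -X$, so $S^2(X) = S(-X) = -S(X) = X$. On the other side, the shift automorphism $\tau_c$ from \eqref{J:shift} fixes every $X \in \mfg$, so in particular $\tau_{-\frac{1}{2}c_\mfg}(X) = X$. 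Hence the two maps agree on all $X \in \mfg$.

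Next I would compute $S^2$ on the generators $J(X)$. Using $S(J(X)) = -J(X) + \tfrac{1}{4}c_\mfg X$, I apply $S$ once more:
\begin{equation*}
 S^2(J(X)) = S\!\left(-J(X) + \tfrac{1}{4}c_\mfg X\right) = -S(J(X)) + \tfrac{1}{4}c_\mfg\, S(X).
\end{equation*}
Substituting $S(J(X)) = -J(X) + \tfrac{1}{4}c_\mfg X$ and $S(X) = -X$ gives
\begin{equation*}
 S^2(J(X)) = J(X) - \tfrac{1}{4}c_\mfg X - \tfrac{1}{4}c_\mfg X = J(X) - \tfrac{1}{2}c_\mfg X.
\end{equation*}
On the other hand, the defining formula \eqref{J:shift} for $\tau_c$ with $c = -\tfrac{1}{2}c_\mfg$ yields $\tau_{-\frac{1}{2}c_\mfg}(J(X)) = J(X) - \tfrac{1}{2}c_\mfg X$, which matches.

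Having verified agreement on both families of generators, I would conclude by noting that $S^2$ and $\tau_{-\frac{1}{2}c_\mfg}$ are both algebra endomorphisms of $Y(\mfg)$ that coincide on a generating set, and therefore coincide everywhere. The one point requiring a word of justification is that $S^2$ is indeed an algebra homomorphism: since $S$ is the antipode of a Hopf algebra, it is an algebra anti-homomorphism, and the composition of two anti-homomorphisms is a homomorphism, so $S^2$ is genuinely an algebra endomorphism (as is $\tau_{-\frac{1}{2}c_\mfg}$, being an automorphism). I anticipate no real obstacle here: the entire argument is a short generator-level computation, and the only subtlety is the bookkeeping of signs and the factor of $c_\mfg$ when composing the antipode with itself, together with the linearity observation needed to extend from the generators to all of $\mfg$.
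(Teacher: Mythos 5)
Your proof is correct and matches the paper's intent exactly: the paper offers no separate argument, stating only that the corollary ``follows immediately from the definition of the antipode $S$ given in \eqref{YJ:Hopf}'', which is precisely your generator-level computation $S^2(X)=X$ and $S^2(J(X))=J(X)-\tfrac{1}{2}c_\mfg X$ compared against \eqref{J:shift}. Your added remark that $S^2$ is an algebra homomorphism (as the square of an anti-homomorphism) is the right justification for extending the check from generators to all of $Y(\mfg)$.
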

We are now prepared to introduce the universal $R$-matrix of $Y(\mfg)$. 
\begin{theorem}[Theorem 3 of \cite{Dr1}]\label{T:DrThm3}
There is a unique formal series $\mcR(u)=1+\sum_{k=1}^\infty \mcR_k u^{-k}\in (Y(\mfg)\otimes Y(\mfg))[\![u^{-1}]\!]$ satisfying  
\begin{gather}
 (\mathrm{id}\otimes \Delta)\mcR(u)=\mcR_{12}(u)\mcR_{13}(u), \label{co-R}\\
\tau_{0,u}\Delta^{\mathrm{op}}(Y)=\mcR(u)^{-1}(\tau_{0,u}\Delta(Y))\mcR(u) \; \text{ for all }\; Y\in Y(\mfg). \label{quasi} 
\end{gather}
The series $\mcR(u)$ is called the universal $R$-matrix of $Y(\mfg)$ and it also satisfies the quantum Yang-Baxter equation
\begin{equation}
 \mcR_{12}(u-v)\mcR_{13}(u)\mcR_{23}(v)=\mcR_{23}(v)\mcR_{13}(u)\mcR_{12}(u-v), \label{QYBE}
\end{equation}
as well as the relations 
\begin{gather}
 \mcR_{12}(u)\mcR_{21}(-u)=1,\quad \tau_{c,d}\mcR(u)=\mcR(u+d-c), \label{R-inv,shift}\\
 \mcR(u)=1+\Omega u^{-1}+\sum_{\lambda\in \Lambda}(J(X_\lambda)\otimes X_\lambda -X_\lambda\otimes J(X_\lambda))u^{-2}+\tfrac{1}{2}\Omega^2 u^{-2} + O(u^{-3}). \label{R-exp}
\end{gather}
\end{theorem}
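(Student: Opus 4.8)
The plan is to build $\mcR(u)$ recursively from the intertwining relation \eqref{quasi} and to use the coproduct relation \eqref{co-R} to remove the remaining ambiguity at each step. Since $\Delta$, $\Delta^{\mathrm{op}}$ and conjugation by $\mcR(u)$ are all algebra maps, it suffices to impose \eqref{quasi} on the generators $X$ and $J(X)$ of $Y(\mfg)$. Because $\tau_{0,u}$ fixes $\mfg$ and $\Omega$ is symmetric, one has $\tau_{0,u}\Delta(X)=\tau_{0,u}\Delta^{\mathrm{op}}(X)=\Delta(X)$, so \eqref{quasi} for $Y=X$ reduces to $[\mcR(u),\Delta(X)]=0$; hence every coefficient $\mcR_k$ must be $\mfg$-invariant. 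For $Y=J(X)$ one computes $\tau_{0,u}\Delta(J(X))=\Delta(J(X))+u(1\ot X)$ and likewise for $\Delta^{\mathrm{op}}$, so \eqref{quasi} becomes $\mcR(u)\,\Delta^{\mathrm{op}}(J(X))-\Delta(J(X))\,\mcR(u)=u\,[1\ot X,\mcR(u)]$. Extracting the coefficient of $u^{-k}$ gives the recursion $[1\ot X,\mcR_{k+1}]=\mcR_k\,\Delta^{\mathrm{op}}(J(X))-\Delta(J(X))\,\mcR_k$, with $\mcR_0=1$. At the bottom level this reads $[1\ot X,\mcR_1]=\Delta^{\mathrm{op}}(J(X))-\Delta(J(X))=-\tfrac12[X\ot 1-1\ot X,\Omega]=[1\ot X,\Omega]$, whose natural solution is $\mcR_1=\Omega$.

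Uniqueness then follows by induction: the recursion determines $\mcR_{k+1}$ from $\mcR_k$ only up to an element in the joint kernel of all the operators $\ad(1\ot X)$ lying among the $\mfg$-invariants, and I would show that the order-$u^{-(k+1)}$ part of \eqref{co-R} — which expresses $(\id\ot\Delta)\mcR_{k+1}$ through $\mcR_{k+1}$ placed in slots $12$ and $13$ together with lower coefficients — supplies exactly the extra equation needed to fix this residual freedom. With the normalization $\mcR(u)=1+O(u^{-1})$ this pins $\mcR(u)$ down completely. Running the recursion one step further while simultaneously enforcing \eqref{co-R} at order $u^{-2}$ produces $\mcR_2=\sum_{\lambda\in\Lambda}(J(X_\lambda)\ot X_\lambda-X_\lambda\ot J(X_\lambda))+\tfrac12\Omega^2$, which is the content of the expansion \eqref{R-exp}.

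The genuinely hard part is existence, i.e. showing that the recursion is consistent: that at each stage the right-hand side $\mcR_k\,\Delta^{\mathrm{op}}(J(X))-\Delta(J(X))\,\mcR_k$ actually lies in the image of $\ad(1\ot X)$, compatibly for all $X\in\mfg$ and compatibly with \eqref{co-R}. This is precisely where the Yangian relations \eqref{YJ:3}--\eqref{YJ:4} must enter. The cleanest conceptual route exploits that $Y(\mfg)$ is a filtered quantization of the Lie bialgebra $(\mfg[z],\delta)$ whose classical $r$-matrix is $\Omega/(u-v)$, the quasi-classical limit of $\mcR(u)$ (consistent with $\mcR_1=\Omega$), and lifts the classical Yang--Baxter equation order by order. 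Concretely, I would verify that the obstruction to solving the $(k+1)$-st equation is a cocycle that is automatically a coboundary, using $\mfg$-invariance of the $\mcR_k$ and the classical $r$-matrix identity; the vanishing of the relevant deformation cohomology is what relations \eqref{YJ:3}--\eqref{YJ:4} encode. I expect this obstruction-vanishing step to be the main obstacle, both technically and conceptually.

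The remaining properties follow formally from \eqref{co-R} and \eqref{quasi}. For \eqref{QYBE} I would run the standard quasitriangular argument in a triple tensor product: apply $(\id\ot\Delta)$ and $(\Delta\ot\id)$ to $\mcR$, using \eqref{co-R}, its companion $(\Delta\ot\id)\mcR(u)=\mcR_{13}(u)\mcR_{23}(u)$ (deduced from \eqref{co-R}, the inversion relation, and uniqueness), and the intertwining relation \eqref{quasi} to move $\Delta^{\mathrm{op}}$ past $\mcR$; comparing the two resulting expressions for the braiding yields \eqref{QYBE} with the spectral shifts $u-v,u,v$ dictated by the $\tau$-twists. The inversion identity $\mcR_{12}(u)\mcR_{21}(-u)=1$ follows by checking that $\mcR_{21}(-u)^{-1}$ satisfies the same defining system as $\mcR(u)$ and invoking uniqueness; the shift identity $\tau_{c,d}\mcR(u)=\mcR(u+d-c)$ follows because each $\tau_c$ is a Hopf-algebra automorphism, so applying $\tau_{c,d}$ to \eqref{co-R}--\eqref{quasi} reproduces the same system with $u$ replaced by $u+d-c$, and uniqueness concludes. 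Corollary \ref{C:S2} and the Hopf structure \eqref{YJ:Hopf} are what make these equivariance checks routine.
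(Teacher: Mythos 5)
The first thing to say is that the paper contains no proof of this statement to compare against: Theorem \ref{T:DrThm3} is quoted verbatim as Theorem 3 of \cite{Dr1} and used throughout as a black box (the text immediately after it only records the standard consequences \eqref{S(R),eps(R)}). So the relevant question is whether your blind attempt would constitute an actual proof of Drinfeld's theorem, and it would not. Your first paragraph is algebraically sound: the reduction of \eqref{quasi} on $\mfg$ to $[\mcR(u),\Delta(X)]=0$, the identity $\tau_{0,u}\Delta(J(X))=\Delta(J(X))+u(1\ot X)$, the resulting recursion $[1\ot X,\mcR_{k+1}]=\mcR_k\,\Delta^{\mathrm{op}}(J(X))-\Delta(J(X))\,\mcR_k$, and the computation forcing $\mcR_1=\Omega$ are all correct. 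But the entire content of the theorem lies in the two steps you leave as assertions. For existence, you must show that at every order the right-hand side of the recursion lies in the image of the operators $\mathrm{ad}(1\ot X)$, coherently in $X$, and that a solution can be chosen compatibly with \eqref{co-R}; your proposal to exhibit the obstruction as "a cocycle that is automatically a coboundary" names no cochain complex, verifies no cocycle condition, and invokes no vanishing theorem --- the parenthetical that relations \eqref{YJ:3}--\eqref{YJ:4} "encode" the required cohomology vanishing is precisely the theorem, not an argument for it. For uniqueness, the residual freedom at each order is an element commuting with $\Delta(X)$ and $1\ot X$ for all $X\in\mfg$, hence an element of $C\ot C$ where $C$ is the (nontrivial) centralizer of $\mfg$ in $Y(\mfg)$; the claim that the order-$u^{-(k+1)}$ part of \eqref{co-R} kills exactly this freedom is again asserted rather than proven, and it is entangled with the existence question since one must know the constrained recursion remains solvable.

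It is also worth knowing that the route you sketch is not the one Drinfeld used and is not known to work as stated: his argument (itself only sketched in \cite{Dr1}) constructs $\mcR(u)$ from the quantum double of $Y(\mfg)$ and its pseudotriangular structure, not by an order-by-order obstruction analysis, and to this day no complete proof of Theorem 3 of \cite{Dr1} has appeared in the literature. The derivations in your final paragraph (QYBE via the quasitriangular argument, $\mcR_{12}(u)\mcR_{21}(-u)=1$ and the shift property \eqref{R-inv,shift} via uniqueness, and the expansion \eqref{R-exp} from the recursion) are indeed the formal part and would follow once existence and uniqueness are in hand --- but those two steps are the theorem, and they are missing.
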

Note that \eqref{quasi} should be viewed as a relation in $(Y(\mfg)\otimes Y(\mfg))(\!(u^{-1})\!)$ and the quantum Yang-Baxter equation \eqref{QYBE} can be interpreted as an equality in the space $(Y(\mfg)\otimes Y(\mfg)\otimes Y(\mfg))[\![v^{\pm 1},u^{\pm 1}]\!]$. 

In addition to those properties of $\mcR(u)$ listed in the above theorem, standard arguments show that 
\begin{equation}\label{S(R),eps(R)}
(\mathrm{id}\otimes S)\mcR(u)=\mcR(u)^{-1}\quad \text{ and }\quad  (\mathrm{id}\otimes \epsilon)(\mcR(u))=1.
\end{equation}
We end this section by recalling a result which concerns the uniqueness and rationality of $\mcR(u)$ when evaluated on any two finite-dimensional irreducible representations. 
Let $\rho_V$ and $\rho_W$ be finite-dimensional irreducible representations of $Y(\mfg)$ on the spaces $V$ and $W$, respectively, and set $\mcR_{V,W}(u)=(\rho_V\ot\rho_W)\mcR(-u)$. 
\begin{theorem}[Theorem 4 of \cite{Dr1} and Theorem 3.10 of \cite{GRW}]\label{T:R-rat}
 Up to multiplication by elements of $\C[\![u^{-1}]\!]$, $\mcR_{V,W}(u)$ is the unique solution $R(u)\in \End(V\ot W)[\![u^{-1}]\!]$ of the equation 
\begin{equation}
(\rho_V\ot \rho_W)(\tau_{u,v}\Delta(J(X)))R(u-v)=R(u-v)(\rho_V\ot \rho_W)(\tau_{u,v}\Delta^{\mathrm{op}}(J(X))) \quad \text{ for all }\; X\in \mfg. \label{inter}
\end{equation}
Additionally, there exists a formal series $f(u)\in 1+u^{-1}\C[\![u^{-1}]\!]$ such that $f(u)\mcR_{V,W}(u) \in \End(V\ot W)\ot \C(u)$.
\end{theorem}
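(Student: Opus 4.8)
The plan is to verify first that $\mcR_{V,W}(u)$ solves \eqref{inter}, then to reduce the uniqueness assertion to the irreducibility of a generic tensor product, and finally to extract rationality from uniqueness by elementary linear algebra over $\C(u)$. \emph{Existence and reduction.} Rewriting the quasi-triangularity identity \eqref{quasi} with spectral variable $w$ as $\mcR(w)\,\tau_{0,w}\Delta^{\mathrm{op}}(J(X)) = \tau_{0,w}\Delta(J(X))\,\mcR(w)$ and applying the Hopf algebra automorphism $\tau_{u,0}=\tau_u\ot\id$, the shift rule \eqref{R-inv,shift} gives $\tau_{u,0}\mcR(w)=\mcR(w-u)$ and $\tau_{u,0}\tau_{0,w}=\tau_{u,w}$; setting $w=v$ and applying $\rho_V\ot\rho_W$ turns this into \eqref{inter} with $R(u-v)=\mcR_{V,W}(u-v)$, since $(\rho_V\ot\rho_W)\mcR(v-u)=\mcR_{V,W}(u-v)$. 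Hence $\mcR_{V,W}(u)$, and every $\C[\![u^{-1}]\!]$-multiple of it, solves \eqref{inter}. For the converse, let $R(u)$ be any solution and write $u=s+v$, so that $R$ depends only on $s=u-v$. In $(\rho_V\ot\rho_W)\tau_{u,v}\Delta(J(X))$ the coefficient of $v$ is the diagonal action $P_X:=\rho_V(X)\ot1+1\ot\rho_W(X)$, and the same holds for $\Delta^{\mathrm{op}}$ because the $\mfg$-part of the coproduct is cocommutative; since $R(s)$ is independent of $v$, comparing coefficients of $v$ in \eqref{inter} yields $P_XR(s)=R(s)P_X$. Thus every solution commutes with the diagonal $\mfg$-action, and the residual ($v$-independent) content of \eqref{inter} is the intertwining relation for the two module structures $\pi_1:=(\rho_V\ot\rho_W)\tau_{s,0}\Delta$ and $\pi_2:=(\rho_V\ot\rho_W)\tau_{s,0}\Delta^{\mathrm{op}}$. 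Setting $T(s):=R(s)\mcR_{V,W}(s)^{-1}$, which is well defined because $\mcR_{V,W}(s)=1+O(s^{-1})$ is invertible over $\C[\![s^{-1}]\!]$, one finds (since $R$ and $\mcR_{V,W}$ satisfy the same equation) that $T(s)$ commutes with $\pi_1(J(X))$ and with $P_X=\pi_1(X)$ for all $X$; as $\mfg$ and $\{J(X)\}$ generate $Y(\mfg)$ and a centralizer is a subalgebra, $T(s)$ commutes with the entire image $\pi_1(Y(\mfg))$.

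\emph{The main obstacle.} The uniqueness assertion is now equivalent to the claim that the commutant of $\pi_1(Y(\mfg))$ in $\End(V\ot W)[\![s^{-1}]\!]$ is exactly $\C[\![s^{-1}]\!]\cdot\id$; equivalently, that $\mathbf V:=(V\ot W)\ot_\C\mbbK$, with $\mbbK:=\C(\!(s^{-1})\!)$ and the $\pi_1$-action, is an irreducible $Y(\mfg)\ot_\C\mbbK$-module with $\End_{Y(\mfg)\ot\mbbK}(\mathbf V)=\mbbK$. Granting this, $T(s)=g(s)\,\id$ with $g(s)\in\mbbK$, and since $T(s)\in\End(V\ot W)[\![s^{-1}]\!]$ we get $g\in\C[\![s^{-1}]\!]$ and $R=g\,\mcR_{V,W}$, as required. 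I expect this irreducibility to be the main obstacle. The approach is a leading-term analysis with respect to the $s$-degree (the $s^{-1}$-adic valuation on $\mbbK$): here $\mathbf V\cong V(s)\ot W$ with $V(s)$ the $\tau_s$-shift of $V$, the operator $s^{-1}\pi_1(J(X))$ has leading coefficient $\rho_V(X)\ot1$, and $\pi_1(J(X))-s\,P_X$ has leading coefficient $-(1\ot\rho_W(X))$, so that the leading-coefficient space of any nonzero submodule is stable under both $\rho_V(\mfg)\ot1$ and $1\ot\rho_W(\mfg)$. The delicate part — and the crux of the theorem — is to promote this to stability under the full images $\rho_V(Y(\mfg))\ot1=\End(V)\ot1$ and $1\ot\rho_W(Y(\mfg))=1\ot\End(W)$, forcing the submodule to be all of $\mathbf V$; this requires disentangling the $V$- and $W$-contributions concealed in the lower-order-in-$s$ terms of $\pi_1(J(X))$, and it is precisely here that the $Y(\mfg)$-irreducibility of $V$ and $W$ enters, through the Jacobson density theorem guaranteeing $\rho_V(Y(\mfg))=\End V$ and $\rho_W(Y(\mfg))=\End W$.

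\emph{Rationality.} Finally I would deduce rationality from uniqueness. Fixing the basis $\{X_\lambda\}_{\lambda\in\Lambda}$ of $\mfg$, equation \eqref{inter}, in the single variable $s=u-v$, becomes a homogeneous linear system $\mbA(s)\vec R=0$ for the entries of $R(s)$, whose coefficient matrix $\mbA(s)$ has entries that are affine in $s$. By the uniqueness just proved, this system has a one-dimensional solution space over $\mbbK$; since $\mbA(s)$ has entries in $\C[s]$, its rank is unchanged under the extension $\C(s)\subseteq\mbbK$, so $\Ker\mbA(s)$ is already one-dimensional over $\C(s)$ and is spanned by some $R^{\mathrm{rat}}(s)\in\End(V\ot W)\ot\C(s)$. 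As $\mcR_{V,W}(s)$ is a solution, $\mcR_{V,W}(s)=g(s)R^{\mathrm{rat}}(s)$ for a single scalar $g(s)\in\mbbK$. Comparing Laurent expansions at $s=\infty$, the leading term of $\mcR_{V,W}(s)$ is the scalar $\id$, which forces the leading term of $R^{\mathrm{rat}}(s)$ to be scalar as well; after rescaling $R^{\mathrm{rat}}$ by an element of $\C(s)$ we may therefore assume $R^{\mathrm{rat}}(s)=1+O(s^{-1})$, whence $g(s)\in1+s^{-1}\C[\![s^{-1}]\!]$. Then $f:=g^{-1}$ satisfies $f\in1+s^{-1}\C[\![s^{-1}]\!]$ and $f(s)\mcR_{V,W}(s)=R^{\mathrm{rat}}(s)\in\End(V\ot W)\ot\C(s)$; renaming the spectral variable $s$ as $u$ gives the asserted series $f(u)$.
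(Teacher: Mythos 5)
A preliminary remark: the paper does not prove Theorem \ref{T:R-rat} at all — it is recalled verbatim from the literature (Theorem 4 of \cite{Dr1} and Theorem 3.10 of \cite{GRW}), so there is no internal proof to compare against; your proposal must therefore stand on its own. Much of it does: the existence step (applying $\tau_{u,0}$ to \eqref{quasi} and using the shift property \eqref{R-inv,shift}) is correct; so is the reduction, since with $u=s+v$ the coefficient of $v$ in both $(\rho_V\ot\rho_W)\tau_{u,v}\Delta(J(X))$ and $(\rho_V\ot\rho_W)\tau_{u,v}\Delta^{\mathrm{op}}(J(X))$ is indeed $\rho_V(X)\ot 1+1\ot\rho_W(X)$, so any solution commutes with the diagonal $\mfg$-action and the residual equation is the $Y(\mfg)$-intertwining property for $\pi_1,\pi_2$; and the final rationality argument (rank of the polynomial linear system is unchanged under $\C(s)\subseteq\mbbK$, then normalize leading terms) is clean and would go through \emph{given} uniqueness.

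The genuine gap is the step you yourself flag as "the main obstacle": the claim that the commutant of $\pi_1(Y(\mfg))$ in $\End(V\ot W)\ot\mbbK$ is $\mbbK\cdot\mathrm{id}$, equivalently that $(V\ot W)\ot\mbbK$ with the shifted action is irreducible with scalar endomorphisms. This is not a technical lemma one may defer — it is precisely the mathematical content of the cited theorems, and your proposal reduces the statement to it without proving it. Moreover, the sketch you offer does not obviously close: the leading-term analysis only shows that the leading-coefficient space $L(M)$ of a nonzero submodule $M$ is stable under $\rho_V(\mfg)\ot 1$ and $1\ot\rho_W(\mfg)$, i.e.\ that $L(M)$ is a $\mfg\oplus\mfg$-submodule of $V\ot W$. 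Since $V$ and $W$ are assumed irreducible only over $Y(\mfg)$ (not over $\mfg$), this is far from forcing $L(M)=V\ot W$. To upgrade to stability under $\rho_V(Y(\mfg))\ot 1=\End(V)\ot 1$ and $1\ot\rho_W(Y(\mfg))=1\ot\End(W)$ one must extract, from operators like $\pi_1(J(X)J(Y))$, the factor-wise contributions hidden among cross terms such as $\rho_V(J(X))\ot\rho_W(\,\cdot\,)$ that occur at the \emph{same} order in $s$ as the terms you want; no mechanism for this cancellation is given, and producing one is exactly where the known proofs (via highest-weight/Drinfeld-polynomial theory or a careful filtration argument) do their real work. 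As it stands, the proposal is a correct reduction of the theorem to its hardest part, not a proof of it.
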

The negative sign which appears in the definition of $\mcR_{V,W}(u)$ does not play an important role in this result and has been included so that, up to multiplication by a formal series, $\mcR_{\C^N,\C^N}(u)$ coincides with the $R$-matrix $R(u)$ given by \eqref{R(u)-sl} if $\mfg=\mfsl_N$ and \eqref{R(u)-sosp} if $\mfg=\mfso_N$ or $\mfsp_N$: see \cite[Proposition 3.13]{GRW}.

%
\section{The \texorpdfstring{$r$}{}-matrix presentation of the current algebra \texorpdfstring{$\mfg[z]$}{}}\label{Sec:r-matrix}
An important ingredient needed to prove the isomorphism between the Drinfeld Yangian $Y(\mfg)$ and the $RTT$-Yangian $Y_R(\mfg)$ (see Section \ref{Sec:RTT}) is a presentation of the polynomial current algebra $\mfg[z]$ which is determined by the image of the Casimir element $\Omega$, or more precisely the classical $r$-matrix of $\mfg[z]$, under a fixed representation of the Lie algebra $\mfg$. In this section we obtain such a realization of $\mfg[z]$ (see Corollary \ref{C:g[z]} and Proposition \ref{P:r-matrix}), and also for the current algebra $(\mfg\oplus \mfz_\mcI)[z]$ of a certain trivial central extension $\mfg\oplus \mfz_\mcI$ of $\mfg$ (see Proposition \ref{P:g_I[z]}). The polynomial current algebra $(\mfg\oplus \mfz_\mcI)[z]$ will play an analogous role to $\mfg[z]$ in the study of the extended Yangian $X_\mcI(\mfg)$.

%
%
\subsection{Setup}\label{ssec:setup}

Let $V$ be a finite-dimensional $\mfg$-module with associated homomorphism $\rho:\mfg\to \mfgl(V)$, set $N=\dim V$, and assume that $V$ is not isomorphic to a direct sum of $N$ copies of the trivial representation.  The following setup will be used throughout this paper, with the exception that from Subsection \ref{ssec:g_I} onwards $V$ will be assumed to be a finite-dimensional $Y(\mfg)$-module. 

As in the preliminary section, we fix a basis $\{e_1,\ldots,e_N\}$ of $V$ and let $\{E_{ij}\}_{1\leq i\,j\leq N}$ denote the usual elementary matrices with respect to this basis. 
Let $\Omega_\rho$ denote the image of $\Omega$ under $\rho\ot \rho$: 
\begin{equation*}
 \Omega_\rho=(\rho\otimes \rho)(\Omega).
\end{equation*}
Since $\mfg$ is simple and $\Ker(\rho)\subsetneq\mfg$, the homomorphism $\rho$ is injective, and hence $\{X_\lambda^\bullet=\rho(X_\lambda)\}_{\lambda\in \Lambda}$ is a linearly independent set in $\mfgl(V)$ which spans a Lie subalgebra $\rho(\mfg)$ isomorphic to $\mfg$. The Lie algebra $\mfgl(V)$ acts on itself via the adjoint action, and we may restrict this action to $\mfg\cong \rho(\mfg)$ to obtain a finite-dimensional representation of $\mfg$. We denote the resulting $\mfg$-module by $\ad_\mfg(\mfgl(V))$, and we let $\varrho$ denote the corresponding Lie algebra homomorphism: 
\begin{equation*}
 \varrho:\mfg\to \End(\mfgl(V)). 
\end{equation*}
We use the same notation when $\ad_\mfg(\mfgl(V))$ is viewed as a $U(\mfg)$-module. 

The space $\mathrm{span}\{X_\lambda^\bullet\}_{\lambda\in \Lambda}$ forms a submodule of $\ad_\mfg(\mfgl(V))$ isomorphic to the adjoint representation of $\mfg$. Accordingly, we will write 
\begin{equation*}
 \ad(\mfg)=\mathrm{span}\{X_\lambda^\bullet\}_{\lambda\in \Lambda}
\end{equation*}
when the space on the right-hand side is viewed as a $\mfg$-submodule of $\ad_\mfg(\mfgl(V))$. 

We will extend the basis $\{X_\lambda^\bullet\}_{\lambda\in \Lambda}$ of $\ad(\mfg)$ to a basis $\{X_\lambda^\bullet\}_{\lambda\in\Lambda^\bullet}$ of $\End V$ 
which respects the decomposition of $\ad_\mfg(\mfgl(V))$ into irreducible submodules. Consider the subspace of intertwiners $\mcE_\mfg$ defined by
\begin{equation*}
\mathcal{E}_\mfg=\End_\mfg V. 
\end{equation*}
This is a submodule of $\ad_\mfg(\mfgl(V))$ isomorphic to a direct sum of copies of the trivial representation $\C_\mfg$ of $\mfg$. 
As $\mcE_\mfg$ intersects with $\ad(\mfg)$ trivially, the direct sum $\ad(\mfg)\oplus \mcE_\mfg$ is also a submodule of $\ad_\mfg(\mfgl(V))$. By complete reducibility,
there is a submodule $W^\prime$ of $\ad_\mfg(\mfgl(V))$ complimentary to $\ad(\mfg)\oplus \mcE_\mfg$. Let 
\begin{equation}
W^\prime=W_1\oplus \cdots \oplus W_m \label{Wprime}
\end{equation}
be its decomposition into a direct sum of irreducible $\mfg$-submodules of $\ad_\mfg(\mfgl(V))$, and set $W=\mcE_\mfg\oplus W^\prime$. In summary, we have the $\mfg$-module decomposition
\begin{equation*}
\ad_\mfg(\mfgl(V))=\ad(\mfg)\oplus W= \ad(\mfg)\oplus\mcE_\mfg\oplus W^\prime=\ad(\mfg)\oplus  \mcE_\mfg\oplus W_1\oplus \cdots \oplus W_m. 
\end{equation*}
Note that, by definition, every trivial subrepresentation of $\ad_\mfg(\mfgl(V))$ consists of endomorphisms which commute with $\rho(\mfg)$, and hence is contained in $\mcE_\mfg$. 
In particular, this implies that $W_i\ncong \C_\mfg$ for any $1\leq i\leq m$. Let
$\mcJ$ and $\Lambda_i$, for each $1\leq i\leq m$, be indexing sets such that $\{X_\lambda^\bullet\}_{\lambda\in \mcJ}$ is a basis for $\mcE_\mfg$, and
$\{X_\lambda^\bullet\}_{\lambda\in \Lambda_i}$ is a basis for $W_i$ for each fixed $1\leq i\leq m$. We then set 
\begin{equation*}
 \Lambda^c=\mcJ\sqcup \Lambda_1\sqcup\cdots\sqcup \Lambda_m \quad \text{ and }\quad \Lambda^\bullet=\Lambda\cup \Lambda^c. 
\end{equation*}
Finally, we define a family of complex scalars $\{c_{ij}^\lambda, a_{ij}^\lambda\,:\,\lambda\in \Lambda^\bullet,\, 1\leq i,j\leq N\}$ by 
\begin{equation}
 X_\lambda^\bullet=\sum_{i,j=1}^N c_{ij}^\lambda E_{ij} \quad \text{ and }\quad E_{ij}=\sum_{\lambda \in \Lambda^\bullet} a_{ij}^\lambda X_\lambda^\bullet. \label{ChangeofBasis}
\end{equation}
%
%
\subsection{The Lie algebras \texorpdfstring{$\mfg_\mcJ$}{}, \texorpdfstring{$\mfg_\rho$}{} and their polynomial current algebras}
We now turn to giving a presentation for the enveloping algebra of $\mfg$ which is governed by $\Omega_\rho$. This naturally leads to the desired presentation of the polynomial current algebra $\mfg[z]$: see Corollary \ref{C:g[z]} and Proposition \ref{P:r-matrix}.

\subsubsection{$U_\rho(\mfg)$ and the extended enveloping algebra $U_\mcJ(\mfg)$} We begin by defining an algebra $U_\mcJ(\mfg)$ which can be viewed as an extension of $U(\mfg)$. It will be proven in Proposition \ref{P:K} that this algebra is isomorphic to the enveloping algebra of the Lie algebra $\mfg\oplus \mfz_\mcJ$, where $\mfz_\mcJ$ is a commutative Lie algebra 
of dimension $\dim \End_\mfg V$.

\begin{definition} The extended enveloping algebra $U_\mcJ(\mfg)$ is defined to be the  unital associative $\C$-algebra generated by elements $\{F_{ij}^\mcJ\}_{1\leq i,j\leq N}$ subject to the defining relation 
\begin{equation}
  [F_1^\mcJ,F_2^\mcJ]=[\Omega_\rho,  F_2^\mcJ]  \quad \text{ in } \quad (\End V)^{\ot 2} \ot U_\mcJ(\mfg), \label{wtF-br}
\end{equation}
where $F^\mcJ=\sum_{i,j=1}^N E_{ij}\otimes F_{ij}^\mcJ\in \End V \otimes U_\mcJ(\mfg)$ and $\Omega_\rho$ has been identified with $\Omega_\rho\ot 1$. 
\end{definition}

For each $\lambda \in \Lambda^\bullet$, set $X_\lambda^\mcJ=\sum_{i,j=1}^N a_{ij}^\lambda F_{ij}^\mcJ$ (see \eqref{ChangeofBasis}) so that $ F^\mcJ=\sum_{\lambda \in \Lambda^\bullet} X_\lambda^\bullet \otimes  X_\lambda^\mcJ$, and let $K=\sum_{i,j=1}^N E_{ij}\ot k_{ij}$ be the element of $\End V \ot U_\mcJ(\mfg)$ defined by 
\begin{equation*}
 K=\sum_{i,j=1}^N E_{ij}\otimes k_{ij}=\sum_{\lambda\in \Lambda^c}X_\lambda^\bullet \otimes X_\lambda^\mcJ. 
\end{equation*}
Given an arbitrary vector space $\mathbf{U}$ and $A=\sum_{i,j=1}^N E_{ij}\ot \mbu_{ij} \in \End V\ot \mathbf{U}$, define 
\begin{equation*}
 \omega(A)=\sum_{i,j=1}^N \omega(E_{ij})\ot \mbu_{ij}\in \End V\ot \mathbf{U},\quad \text{ where }\quad \omega(E_{ij})=\varrho(\omega)(E_{ij}),
\end{equation*}
and let $\nabla:\End V\ot \End V\to \End V$ denote the multiplication (or composition) map. 
\begin{lemma}\label{L:K-rels} $K$ satisfies the following properties: 
\begin{enumerate}
 \item\label{K-rel:1} The coefficients $k_{ij}$ of $K$ are central, 
 \item\label{K-rel:2} $[\Omega_\rho,K_2]=0=[\Omega_\rho,K_1]$ and $\omega(K)=0$,
 \item\label{K-rel:3} $X_\lambda^\mcJ=0$ for all $\lambda \in \Lambda^c\setminus \mcJ$. In particular, $K=\sum_{\lambda\in \mcJ}X_\lambda^\bullet\ot X_\lambda^\mcJ$.
\end{enumerate}
\end{lemma}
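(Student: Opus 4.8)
The plan is to extract scalar relations from the matrix identity \eqref{wtF-br} by expanding everything in the basis $\{X_\lambda^\bullet\}_{\lambda\in\Lambda^\bullet}$ of $\End V$, and then to exploit the fact that the Casimir operator $\varrho(\omega)$ annihilates the trivial summand $\mcE_\mfg$ of $\ad_\mfg(\mfgl(V))$ while acting invertibly on each nontrivial summand $W_i$. Writing $F^\mcJ=\sum_{\lambda\in\Lambda^\bullet}X_\lambda^\bullet\ot X_\lambda^\mcJ$ and $\Omega_\rho=\sum_{\nu\in\Lambda}X_\nu^\bullet\ot X_\nu^\bullet$, I would first expand both sides of \eqref{wtF-br} and compare coefficients of the basis vectors $X_\lambda^\bullet\ot X_\mu^\bullet$ of $(\End V)^{\ot 2}$. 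Setting $[X_\lambda^\bullet,X_\kappa^\bullet]=\sum_\mu\beta_{\lambda\kappa}^\mu X_\mu^\bullet$ for $\lambda\in\Lambda$, and noting that $[X_\nu^\bullet,X_\mu^\bullet]=\varrho(X_\nu)(X_\mu^\bullet)$ stays in the same $\mfg$-submodule as $X_\mu^\bullet$ whenever $\nu\in\Lambda$, while the left-hand slot of $[\Omega_\rho,F_2^\mcJ]$ is always indexed by $\Lambda$, this comparison yields two families of relations: the bracket relation $[X_\lambda^\mcJ,X_\mu^\mcJ]=\sum_{\kappa}\beta_{\lambda\kappa}^\mu X_\kappa^\mcJ$ for $\lambda\in\Lambda$, together with $[X_\lambda^\mcJ,X_\mu^\mcJ]=0$ for all $\lambda\in\Lambda^c$ and $\mu\in\Lambda^\bullet$.

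The second family immediately gives Part \eqref{K-rel:1}: every $X_\lambda^\mcJ$ with $\lambda\in\Lambda^c$ is central, hence so is each $k_{ij}=\sum_{\lambda\in\Lambda^c}c_{ij}^\lambda X_\lambda^\mcJ$. For Part \eqref{K-rel:2}, I would substitute a central generator $X_\mu^\mcJ$ ($\mu\in\Lambda^c$) into the bracket relation, whose left-hand side then vanishes, to obtain $\sum_\kappa\beta_{\lambda\kappa}^\mu X_\kappa^\mcJ=0$ for all $\lambda\in\Lambda$ and $\mu\in\Lambda^c$. Expanding $[\Omega_\rho,K_2]$ and $[\Omega_\rho,K_1]$ in the basis and reorganizing the sums shows that each is a linear combination of precisely these vanishing expressions, so $[\Omega_\rho,K_2]=0=[\Omega_\rho,K_1]$. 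To deduce $\omega(K)=0$ I would then use the standard adjoint–Casimir identity, which in the present notation reads $\varrho(\omega)(Y)=\nabla([\Omega_\rho,Y_2])-\nabla([\Omega_\rho,Y_1])$ for all $Y\in\End V$; applying it entrywise to $K$ gives $\omega(K)=\nabla([\Omega_\rho,K_2])-\nabla([\Omega_\rho,K_1])$, which vanishes by what was just shown.

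Finally, Part \eqref{K-rel:3} follows from $\omega(K)=0$ by decomposing $K$ according to $\ad_\mfg(\mfgl(V))=\ad(\mfg)\oplus\mcE_\mfg\oplus W_1\oplus\cdots\oplus W_m$. Since $\varrho(\omega)$ acts as $0$ on the trivial summand $\mcE_\mfg$ and as a scalar $c_{W_i}$ on each irreducible $W_i$, applying $\omega(\cdot)$ term by term yields $0=\omega(K)=\sum_{i=1}^m c_{W_i}\big(\sum_{\lambda\in\Lambda_i}X_\lambda^\bullet\ot X_\lambda^\mcJ\big)$. Because the summands $W_i$ are linearly independent in $\End V$ and each $c_{W_i}\neq 0$ (as $W_i\ncong\C_\mfg$ is a nontrivial irreducible, its Casimir eigenvalue is nonzero), the linear independence of $\{X_\lambda^\bullet\}_{\lambda\in\Lambda_i}$ forces $X_\lambda^\mcJ=0$ for every $\lambda\in\Lambda_i$, i.e. for every $\lambda\in\Lambda^c\setminus\mcJ$.

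The bookkeeping of the index comparison in the first step is the most error-prone part, but the genuine mathematical input — and what I expect to be the crux — is the non-vanishing of the Casimir eigenvalue $c_{W_i}$ on each nontrivial irreducible summand. This is exactly what allows the relation $\omega(K)=0$ to annihilate the $W_i$-components of $K$ while leaving its $\mcE_\mfg$-component untouched, and it is where the hypothesis $W_i\ncong\C_\mfg$ — guaranteed by the construction, since every trivial subrepresentation of $\ad_\mfg(\mfgl(V))$ already lies in $\mcE_\mfg$ — is indispensable.
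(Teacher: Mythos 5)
Your proof is correct and takes essentially the same route as the paper's: centrality of the $\Lambda^c$-components follows from the fact that the right-hand side of \eqref{wtF-br} has first tensor slot in $\ad(\mfg)$ while $K_1$ contributes only to the complementary summand $W$; the relations $[\Omega_\rho,K_2]=0=[\Omega_\rho,K_1]$ then yield $\omega(K)=(\nabla\ot 1)\bigl[\Omega_\rho,K_2-K_1\bigr]=0$; and Part \eqref{K-rel:3} follows from the nonvanishing of the Casimir eigenvalue on each nontrivial irreducible summand $W_i$, exactly as in the paper. The only difference is presentational — you carry out the submodule arguments via explicit structure-constant bookkeeping, while the paper phrases them as containments of both sides of the relevant identities in complementary $\mfg$-submodules of $\ad_\mfg(\mfgl(V))$.
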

\begin{proof} 
Consider first \eqref{K-rel:1}. 
After setting $\msF=F^\mcJ-K\in \mathrm{ad}(\mfg)\ot U_\mcJ(\mfg)$, \eqref{wtF-br} gives
\begin{equation}\label{UJ-exp}
 [K_1,F_2^\mcJ]=[\Omega_\rho,F_2^\mcJ]-[\msF_1,F_2^\mcJ]\in \mathrm{ad}(\mfg)\otimes \End V\otimes U_\mcJ(\mfg).
\end{equation}
Since $[K_1,F_2^\mcJ]\in W\otimes \End V\otimes U_\mcJ(\mfg)$, both sides of this equality must vanish, which proves \eqref{K-rel:1}.

\noindent \textit{Proof of \eqref{K-rel:2}}. By Part \eqref{K-rel:1} and \eqref{UJ-exp}, we have 
\begin{equation}\label{UJ-exp2}
 [\Omega_\rho,K_2]=[\msF_2,\Omega_\rho]+[\msF_1,\msF_2]\in \mathrm{ad}(\mfg)\otimes\mathrm{ad}(\mfg) \otimes U_\mcJ(\mfg).
\end{equation}
As $W$ is a submodule of $\mathrm{ad}_\mfg(\mfgl(V))$, $[\Omega_\rho,K_2]\in \mathrm{ad}(\mfg)\otimes W\otimes U_\mcJ(\mfg)$. Therefore $[\Omega_\rho,K_2]=0$, and applying the permutation operator $\sigma\ot 1$ to both sides of this equality gives $[\Omega_\rho,K_1]=0$. These two relations also imply that
\begin{equation*}
0=(\nabla\ot 1)( [\Omega_\rho,K_2-K_1] )=\sum_{\lambda\in\Lambda,\mu\in \Lambda^c} \left[X_\lambda^\bullet,[X_\lambda^\bullet,X_\mu^\bullet]\right]\ot X_\mu^\mcJ=\omega(K).
\end{equation*} 
\noindent \textit{Proof of \eqref{K-rel:3}}. 
On each irreducible component $W_i$ of $W^\prime$ (see \eqref{Wprime}), $\omega$ operates as multiplication by a scalar $c_i$. Hence, from the equality $\omega(K)=0$ and the fact that $\omega(X_\mu^\bullet)=0$ for all $\mu\in \mcJ$, we obtain 
\begin{equation}\label{Cas-act}
 0=\sum_{i=1}^m c_i \left(\sum_{\mu \in \Lambda_i}X_\mu^\bullet \ot  X_\mu^\mcJ\right)
 =\sum_{\mu \in \Lambda^c \setminus \mathcal{J}} X_\mu^\bullet \ot c_\mu X_\mu^\mcJ,
\end{equation}
where in the second equality we have defined $c_\mu$, for each $\mu\in \Lambda^c\setminus \mcJ$, to be equal to $c_i$ for the unique $i\in \{1,\ldots,m\}$ such that $\mu \in \Lambda_i$. It is well known result from the classical theory of simple Lie algebras over $\C$ that the Casimir element operates as a nonzero scalar in every non-trivial finite-dimensional irreducible module. Therefore, $c_i\neq 0$ for all $1\leq i\leq m$ and  \eqref{Cas-act} implies that $ X_\mu^\mcJ=0 $ for all $\mu\in \Lambda^c\setminus \mcJ$. \qedhere
\end{proof}
The next lemma gives two equivalent definitions of $K$ and proves that there is a morphism $U(\mfg)\to U_\mcJ(\mfg)$.  
\begin{lemma}\label{L:F^J,K}
The matrices $F^\mcJ$ and $K$ satisfy the identities 
\begin{gather}
 [\Omega_\rho,F_2^\mcJ]=[F_1^\mcJ,F_2^\mcJ]=[F_1^\mcJ,\Omega_\rho], \label{sigma-sym}\\
 F^\mcJ-2c_\mfg^{-1}(\nabla\ot 1)[F_1^\mcJ,F_2^\mcJ]=K=F^\mcJ-c_\mfg^{-1} \omega(F^\mcJ). \label{wtF-sym}
\end{gather}
Moreover, the assignment $X_\lambda\mapsto -X_\lambda^\mcJ$ for all $\lambda\in \Lambda$ extends to a homomorphism $\iota_\mcJ:U(\mfg)\to U_\mcJ(\mfg)$. 
\end{lemma}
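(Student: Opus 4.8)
The plan is to derive all three assertions directly from the defining relation \eqref{wtF-br}, supplemented by Lemma \ref{L:K-rels}. I would begin with \eqref{sigma-sym}. Its first equality is precisely \eqref{wtF-br}, so only $[F_1^\mcJ,F_2^\mcJ]=[F_1^\mcJ,\Omega_\rho]$ requires an argument. To obtain it, I would apply the algebra automorphism $\sigma\ot\id$ of $(\End V)^{\ot 2}\ot U_\mcJ(\mfg)$, where $\sigma$ permutes the two $\End V$ factors, to the relation \eqref{wtF-br}. Since $\sigma(F_1^\mcJ)=F_2^\mcJ$, $\sigma(F_2^\mcJ)=F_1^\mcJ$, and $\Omega_\rho$ is symmetric (so $\sigma(\Omega_\rho)=\Omega_\rho$), the left side becomes $[F_2^\mcJ,F_1^\mcJ]=-[F_1^\mcJ,F_2^\mcJ]$ and the right side becomes $[\Omega_\rho,F_1^\mcJ]=-[F_1^\mcJ,\Omega_\rho]$; cancelling signs gives the claim.

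For \eqref{wtF-sym} I would treat the right-hand equality first. Writing $F^\mcJ=\msF+K$ with $\msF=\sum_{\lambda\in\Lambda}X_\lambda^\bullet\ot X_\lambda^\mcJ\in\ad(\mfg)\ot U_\mcJ(\mfg)$, linearity of $\omega(\cdot)$ gives $\omega(F^\mcJ)=\omega(\msF)+\omega(K)$. The second summand vanishes by Lemma \ref{L:K-rels}\eqref{K-rel:2}, while $\varrho(\omega)$ acts on the adjoint submodule $\ad(\mfg)$ as the scalar $c_\mfg$, so $\omega(\msF)=c_\mfg\msF$. Hence $c_\mfg^{-1}\omega(F^\mcJ)=\msF=F^\mcJ-K$, which is the right-hand equality. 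For the left-hand equality it then suffices to establish $2(\nabla\ot 1)[F_1^\mcJ,F_2^\mcJ]=\omega(F^\mcJ)$. Here I would exploit the two forms of $[F_1^\mcJ,F_2^\mcJ]$ supplied by \eqref{sigma-sym}: applying $\nabla\ot 1$ to $[\Omega_\rho,F_2^\mcJ]$ yields $\sum_{\mu,\lambda}X_\mu^\bullet[X_\mu^\bullet,X_\lambda^\bullet]\ot X_\lambda^\mcJ$, while applying it to $[F_1^\mcJ,\Omega_\rho]$ yields $\sum_{\mu,\lambda}[X_\lambda^\bullet,X_\mu^\bullet]X_\mu^\bullet\ot X_\lambda^\mcJ$. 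Adding these two equal quantities collapses each summand into the double commutator $[X_\mu^\bullet,[X_\mu^\bullet,X_\lambda^\bullet]]$, whose sum over $\mu\in\Lambda$ is $\varrho(\omega)(X_\lambda^\bullet)$; recognizing the total as $\omega(F^\mcJ)$ finishes this step, and the left-hand equality follows by combining it with the right-hand one.

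Finally, to produce $\iota_\mcJ$ I would check that the elements $-X_\lambda^\mcJ$ satisfy the bracket relations of $\mfg$ and invoke the universal property of $U(\mfg)$. The key reduction is that, since the coefficients of $K$ are central (Lemma \ref{L:K-rels}\eqref{K-rel:1}) and $[\Omega_\rho,K_2]=0$ (Lemma \ref{L:K-rels}\eqref{K-rel:2}), the relation \eqref{wtF-br} restricts to $[\msF_1,\msF_2]=[\Omega_\rho,\msF_2]$, an identity entirely inside $\ad(\mfg)^{\ot 2}\ot U_\mcJ(\mfg)$. Expanding both sides in the basis $\{X_\lambda^\bullet\}_{\lambda\in\Lambda}$ of $\ad(\mfg)$ and comparing the coefficient of $X_\lambda^\bullet\ot X_\mu^\bullet$ gives $[X_\lambda^\mcJ,X_\mu^\mcJ]=\sum_{\gamma\in\Lambda}\al_{\lambda\gamma}^\mu X_\gamma^\mcJ$; the antisymmetry $\al_{\lambda\gamma}^\mu=-\al_{\lambda\mu}^\gamma$ rewrites this as $[X_\lambda^\mcJ,X_\mu^\mcJ]=-\sum_{\gamma\in\Lambda}\al_{\lambda\mu}^\gamma X_\gamma^\mcJ$, precisely the relation needed for $X_\lambda\mapsto-X_\lambda^\mcJ$ to extend to a homomorphism $U(\mfg)\to U_\mcJ(\mfg)$. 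I expect the main obstacle to be the symmetrization identity $2(\nabla\ot 1)[F_1^\mcJ,F_2^\mcJ]=\omega(F^\mcJ)$: one must track the order of composition in $\End V$ carefully and confirm that averaging the two forms of the commutator genuinely reconstitutes the quadratic Casimir action rather than some other symmetric combination.
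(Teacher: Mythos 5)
Your proposal is correct and follows essentially the same route as the paper's proof: the flip $\sigma\ot\id$ applied to \eqref{wtF-br} gives \eqref{sigma-sym}, the identity $\omega(K)=0$ together with the Casimir eigenvalue $c_\mfg$ on $\ad(\mfg)$ gives the right-hand equality of \eqref{wtF-sym}, averaging the two expressions $[\Omega_\rho,F_2^\mcJ]$ and $[F_1^\mcJ,\Omega_\rho]$ under $\nabla\ot 1$ produces the double-commutator form $\tfrac12\omega(F^\mcJ)$ for the left-hand equality, and the reduction of \eqref{wtF-br} to $[\msF_1,\msF_2]=[\Omega_\rho,\msF_2]$ followed by expansion in the basis $\{X_\lambda^\bullet\ot X_\mu^\bullet\}_{\lambda,\mu\in\Lambda}$ yields the structure-constant relation needed for $\iota_\mcJ$. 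The only cosmetic difference is that the paper derives $[\msF_1,\msF_2]=[\Omega_\rho,\msF_2]$ from its intermediate equation \eqref{UJ-exp2}, whereas you rederive it directly from centrality of the coefficients of $K$ and $[\Omega_\rho,K_2]=0$, which is the same content.
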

\begin{proof}
 Applying the permutation operator $\sigma\ot 1$ to $[F_1^\mcJ,F_2^\mcJ]=[\Omega_\rho, F_2^\mcJ]$ gives $-[F_1^\mcJ,F_2^\mcJ]=[\Omega_\rho, F_1^\mcJ]$. 
 This implies \eqref{sigma-sym}. 
 
 By Part \eqref{K-rel:2} of Lemma \ref{L:K-rels}, $F^\mcJ-c_\mfg^{-1}\omega(F^\mcJ)=K$. Since $(\nabla\ot 1)[F_1^\mcJ,F_2^\mcJ]=(\nabla\ot 1)[\Omega_\rho, F_2^\mcJ]$, \eqref{sigma-sym} yields
 \begin{equation*}
  (\nabla\ot 1)[F_1^\mcJ,F_2^\mcJ]=\tfrac{1}{2}(\nabla \ot 1)([\Omega_\rho, F_2^\mcJ]-[\Omega_\rho, F_1^\mcJ])=\tfrac{1}{2}\omega(F^\mcJ),
 \end{equation*}
which proves \eqref{wtF-sym}.  

 As for the second part of the lemma, we obtain from Part \eqref{K-rel:2} of Lemma \ref{L:K-rels} and \eqref{UJ-exp2} that $[\msF_1,\msF_2]=[\Omega_\rho,\msF_2]$, where $\msF=F^\mcJ-K$. Expanding in terms of the basis $\{X_\lambda^\bullet\ot X_\mu^\bullet\}_{\lambda,\mu\in\Lambda}$ of $\mathrm{ad}(\mfg)\otimes \mathrm{ad}(\mfg)$ gives 
\begin{equation*}
[X_\lambda^\mcJ,X_\mu^\mcJ]=\sum_{\gamma\in \Lambda}\al_{\lambda\gamma}^\mu X_\gamma^\mcJ=-\sum_{\gamma\in \Lambda}\al_{\lambda\mu}^\gamma X_\gamma^\mcJ\quad \forall \; \lambda,\mu\in \Lambda.
\end{equation*}
 Thus, the assignment $X_\lambda\mapsto -X_\lambda^\mcJ$, for all $ \lambda\in \Lambda$, extends to a homomorphism $\iota_\mcJ:U(\mfg)\to U_\mcJ(\mfg)$. \qedhere

\qedhere
\end{proof}
We now simultaneously define the algebra $U_\rho(\mfg)$ as a quotient of $U_\mcJ(\mfg)$ and prove that it is isomorphic to the enveloping algebra $U(\mfg)$.
\begin{proposition}\label{P:new-g}
Let $U_\rho(\mfg)$ be the quotient of $U_\mcJ(\mfg)$ by the two-sided ideal generated by the coefficients of the central matrix $K$. Equivalently, $U_\rho(\mfg)$ is the unital associative $\C$-algebra generated by elements 
 $\{F_{ij}\}_{1\leq i,j\leq N}$ subject to the defining relations 
 \begin{gather}
  [F_1,F_2]=[\Omega_\rho,F_2], \label{F-br}\\
  F=c_\mfg^{-1}\omega(F),  \label{F-sym}
 \end{gather}
 where $F=\sum_{i,j=1}^N E_{ij}\otimes F_{ij}\in \End V\otimes U_\rho(\mfg)$.

 Then $U_\rho(\mfg)$ is isomorphic to the enveloping algebra $U(\mfg)$. An isomorphism $\phi_\rho$ is given by
\begin{equation}
 \phi_\rho:U_\rho(\mfg)\to U(\mfg), \quad F\mapsto -(\rho\otimes 1)\Omega. \label{g-map}
\end{equation}
\end{proposition}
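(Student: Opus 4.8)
The plan is to produce a two-sided inverse to $\phi_\rho$ using the morphism $\iota_\mcJ$ of Lemma \ref{L:F^J,K}, after first reconciling the two descriptions of $U_\rho(\mfg)$. For the latter I would invoke the identity $K = F^\mcJ - c_\mfg^{-1}\omega(F^\mcJ)$ from \eqref{wtF-sym}: it shows that the two-sided ideal of $U_\mcJ(\mfg)$ generated by the coefficients of $K$ agrees with the ideal generated by the coefficients of $F^\mcJ - c_\mfg^{-1}\omega(F^\mcJ)$. Since $U_\mcJ(\mfg)$ is presented by the $F_{ij}^\mcJ$ subject only to \eqref{wtF-br}, passing to this quotient is the same as adjoining the relation \eqref{F-sym} to \eqref{F-br}; this is precisely the second presentation, so the two definitions of $U_\rho(\mfg)$ coincide.

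Next I would verify that $\phi_\rho$ is well defined. Writing $\wt F = -(\rho\ot 1)\Omega = -\sum_{\lambda\in\Lambda}X_\lambda^\bullet\ot X_\lambda$, it suffices to check that $\wt F$ satisfies \eqref{F-br} and \eqref{F-sym} in $\End V\ot U(\mfg)$. For \eqref{F-br}, since the two $\End V$ slots commute one gets $[\wt F_1,\wt F_2]=\sum_{\lambda,\mu}X_\lambda^\bullet\ot X_\mu^\bullet\ot[X_\lambda,X_\mu]$ and $[\Omega_\rho,\wt F_2]=-\sum_{\nu,\mu}X_\nu^\bullet\ot[X_\nu^\bullet,X_\mu^\bullet]\ot X_\mu$; expanding both through $[X_\lambda,X_\mu]=\sum_\gamma \al_{\lambda\mu}^\gamma X_\gamma$ (and the fact that $\rho$ preserves structure constants) and applying the antisymmetry $\al_{\nu\mu}^\gamma=-\al_{\nu\gamma}^\mu$ shows the two expressions agree. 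For \eqref{F-sym}, since each $X_\lambda^\bullet$ with $\lambda\in\Lambda$ lies in the adjoint submodule $\ad(\mfg)$, on which $\omega$ acts as the scalar $c_\mfg$, one has $\omega(\wt F)=c_\mfg\wt F$, so $\wt F = c_\mfg^{-1}\omega(\wt F)$. Hence $\phi_\rho$ is a well-defined algebra homomorphism. (Equivalently, the assignment $F^\mcJ\mapsto \wt F$ respects \eqref{wtF-br} and sends every coefficient of $K = F^\mcJ - c_\mfg^{-1}\omega(F^\mcJ)$ to $0$, so it descends to the quotient.)

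For the inverse, set $X_\lambda^\rho=\sum_{i,j}a_{ij}^\lambda F_{ij}$ for $\lambda\in\Lambda^\bullet$ (see \eqref{ChangeofBasis}), so that $F=\sum_{\lambda\in\Lambda^\bullet}X_\lambda^\bullet\ot X_\lambda^\rho$. Composing $\iota_\mcJ$ with the quotient map $U_\mcJ(\mfg)\onto U_\rho(\mfg)$ gives a homomorphism $\iota_\rho:U(\mfg)\to U_\rho(\mfg)$ with $X_\lambda\mapsto -X_\lambda^\rho$ for $\lambda\in\Lambda$. Comparing $\phi_\rho(F)=\wt F=-\sum_{\lambda\in\Lambda}X_\lambda^\bullet\ot X_\lambda$ with $F=\sum_{\lambda\in\Lambda^\bullet}X_\lambda^\bullet\ot X_\lambda^\rho$ and using that $\{X_\lambda^\bullet\}_{\lambda\in\Lambda^\bullet}$ is a basis of $\End V$, I read off $\phi_\rho(X_\lambda^\rho)=-X_\lambda$ for $\lambda\in\Lambda$ and $\phi_\rho(X_\lambda^\rho)=0$ for $\lambda\in\Lambda^c$. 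This gives $\phi_\rho\circ\iota_\rho=\id_{U(\mfg)}$ at once.

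Finally I would check $\iota_\rho\circ\phi_\rho=\id_{U_\rho(\mfg)}$ by verifying it fixes each generator $X_\lambda^\rho$. For $\lambda\in\Lambda$ this is immediate from the previous step, while for $\lambda\in\Lambda^c$ it holds as soon as one knows $X_\lambda^\rho=0$ in $U_\rho(\mfg)$: for $\lambda\in\mcJ$ this follows because $K=0$ in the quotient and $\{X_\lambda^\bullet\}_{\lambda\in\mcJ}$ is linearly independent, and for $\lambda\in\Lambda^c\setminus\mcJ$ it is already guaranteed by Lemma \ref{L:K-rels}\eqref{K-rel:3}, where $X_\lambda^\mcJ=0$ in $U_\mcJ(\mfg)$. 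I expect this vanishing --- the assertion that no spurious generators outside the adjoint part $\ad(\mfg)$ survive --- to be the crux of the argument, since it is exactly where the representation-theoretic input behind Lemma \ref{L:K-rels} (the Casimir acting by nonzero scalars on each nontrivial component $W_i$) is used. With this in hand, $\iota_\rho\circ\phi_\rho$ fixes all generators and is therefore the identity, so $\phi_\rho$ is an isomorphism with inverse $\iota_\rho$.
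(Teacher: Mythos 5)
Your proposal is correct and follows essentially the same route as the paper: show $-(\rho\otimes 1)\Omega$ satisfies \eqref{F-br} and \eqref{F-sym}, then invert $\phi_\rho$ via the composite of $\iota_\mcJ$ with the quotient map, using Lemma \ref{L:K-rels}\eqref{K-rel:3} and $q(K)=0$ to kill the generators indexed by $\Lambda^c$. The only cosmetic difference is that you verify \eqref{F-br} by a direct structure-constant computation, whereas the paper applies $\rho\otimes\rho\otimes 1$ to the identity $[\Omega_{13},\Omega_{23}]=-[\Omega_{12},\Omega_{23}]$ coming from $[\Omega,\Delta(X)]=0$ --- the same invariance fact in different clothing.
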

\begin{proof}
Set $\mcF=\sum_{i,j=1}^NE_{ij}\otimes \mcF_{ij}=-(\rho\otimes 1)\Omega$. By \eqref{ChangeofBasis}, the element $\mcF_{ij}=\phi_\rho(F_{ij})$ is equal to $-\sum_{\lambda\in \Lambda}c_{ij}^\lambda X_\lambda$. 

\noindent \textit{Step 1}: $\phi_\rho$ is a homomorphism of algebras. 

Recall that $[\Omega,\Delta(X)]=0$ for all $X\in \mfg$. This implies that, in $\mfg\otimes \mfg\otimes \mfg$, we have the identity $ [\Omega_{13},\Omega_{23}]=-[\Omega_{12},\Omega_{23}]$. 
Applying the homomorphism $\rho\otimes \rho\otimes 1$ to both sides of this identity, we obtain the relation 
\begin{equation*}
 [\mcF_1,\mcF_2]=[\Omega_\rho,\mcF_2] \quad \text{ in }\quad  (\End V)^{\ot 2}\otimes \mfg.
\end{equation*}
Hence, the assignment \eqref{g-map} preserves the relation \eqref{F-br}. 

Since we also have $\mcF=-\sum_{\lambda\in\Lambda} X_\lambda^\bullet \ot X_\lambda\in \ad(\mfg)\ot \mfg$, and $\omega$ acts on $\ad(\mfg)$ as multiplication by the scalar $c_\mfg$, the relation $\mcF=c_\mfg^{-1}\omega(\mcF)$ is satisfied, and thus $\phi_\rho$ is a homomorphism.  

\noindent\textit{Step 2:} $\phi_\rho$ is an isomorphism. 

For each $\lambda\in \Lambda^\bullet$, define $X_\lambda^\rho$ to be the image of $X_\lambda^\mcJ$ under the natural quotient map $q:U_\mcJ(\mfg)\onto U_\rho(\mfg)$. 
Since $q(K)=0$, $X_\lambda^\rho=0$ for all $\lambda\in \Lambda^c$. Let $\psi=q\circ \iota_\mcJ: U(\mfg)\to U_\rho(\mfg)$, where $\iota_\mcJ:U(\mfg)\to U_\mcJ(\mfg)$ is the morphism from Lemma \ref{L:F^J,K}. Then $\phi_\rho\circ \psi=\mathrm{id}_{U(\mathfrak{g})}$, and to see that $\psi\circ \phi_\rho=\mathrm{id}_{U_\rho(\mathfrak{g})}$ it suffices to note that $\{X_\lambda^\rho\}_{\lambda\in \Lambda}$ generates $U_\rho(\mfg)$, which is immediate since it is the image of the generating set $\{X_\lambda^\mcJ\}_{\lambda \in \Lambda^\bullet}$ of $U_\mcJ(\mfg)$. This proves that $\phi_\rho$ is an isomorphism with inverse $\psi$. \qedhere
\end{proof}
\begin{remark}\label{R:F-sym}
After expanding $\Omega_\rho=\sum_{i,j,k,l=1}^N \mathsf{c}_{ij}^{kl} E_{ij}\otimes E_{kl}$, we may rewrite the relations \eqref{F-br} and \eqref{F-sym} of $U_\rho(\mfg)$ more explicitly in terms of the generators $F_{ij}$. They are
 \begin{gather}
  [F_{ij},F_{kl}]=\sum_{a=1}^N\left(\mathsf{c}_{ij}^{ka}F_{al}-\mathsf{c}_{ij}^{al}F_{ka}\right) \quad \text{ and }\quad  F_{ij}=2c_\mathfrak{g}^{-1}\sum_{a=1}^N[F_{ia},F_{aj}] \quad \forall \; 1\leq i,j,k,l\leq N, \label{F-sym'}
 \end{gather}
 where to obtain the second relation we have employed that, by \eqref{wtF-sym}, $c_\mfg^{-1}\omega(F)=2c_\mfg^{-1}(\nabla\ot 1)[F_1,F_2]$.   
 \end{remark}
Let $\mfg_\rho$ be the Lie subalgebra of $\mathrm{Lie}(U_\rho(\mfg))$  generated by $\{X_\lambda^\rho\}_{\lambda\in \Lambda}$, or equivalently by $\{F_{ij}\}_{1\leq i,j\leq N}$. 
Then Proposition \ref{P:new-g} implies that \eqref{F-br} and \eqref{F-sym} are defining relations for $\mfg_\rho$ and that $\phi_\rho|_{\mfg_\rho}$ is an isomorphism of Lie algebras $\mfg_\rho\iso \mfg$. Consequently $U(\mfg_\rho)\cong U_\rho(\mfg)$, and we will henceforth exploit this fact and denote $U_\rho(\mfg)$ instead by $U(\mfg_\rho)$. 

We now return to the study of the algebra $U_\mcJ(\mfg)$. 
Define $\mathfrak{z}_\mcJ$ to be the commutative Lie algebra with basis $\{\mcK_\lambda^\mcJ\}_{\lambda \in \mathcal{J}}$, and identify the enveloping algebra $U(\mfz_\mcJ)$ with $\C[\mcK_\lambda^\mcJ\,:\,\lambda \in \mathcal{J}]$. We will denote the matrix $\sum_{\lambda\in \mathcal{J}}X_\lambda^\bullet\ot \mcK_\lambda^\mcJ\in \End V \ot \mathfrak{z}_\mcJ$ by $\mathscr{K}^\mcJ$. 
 \begin{proposition}\label{P:K}
The assignment  $F^\mcJ\mapsto F+\mathscr{K}^\mcJ$ extends to an isomorphism of algebras 
  \begin{equation}
  \phi_{\mcJ}:U_\mcJ(\mfg)\iso  \C[\mcK_\lambda^\mcJ\,:\,\lambda \in \mathcal{J}]\ot U(\mfg_\rho).\label{phi_ex}
  \end{equation}
  \end{proposition}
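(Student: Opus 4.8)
The plan is to exhibit $\phi_\mcJ$ as an isomorphism by constructing an explicit two-sided inverse $\psi_\mcJ$, using the central matrix $K$ to separate the commutative part from the $\mfg_\rho$-part of $U_\mcJ(\mfg)$. First I would check that $\phi_\mcJ$ is a well-defined algebra homomorphism, i.e. that the matrix $G:=F+\mathscr{K}^\mcJ\in \End V\ot(\C[\mcK_\lambda^\mcJ:\lambda\in\mcJ]\ot U(\mfg_\rho))$ satisfies the defining relation \eqref{wtF-br}. Since the coefficients of $\mathscr{K}^\mcJ$ lie in $\mfz_\mcJ$ they are central and commute with those of $F$, so the cross terms and the $\mathscr{K}^\mcJ$--$\mathscr{K}^\mcJ$ term vanish and $[G_1,G_2]=[F_1,F_2]=[\Omega_\rho,F_2]$ by \eqref{F-br}. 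On the other side $[\Omega_\rho,G_2]=[\Omega_\rho,F_2]+[\Omega_\rho,\mathscr{K}^\mcJ_2]$, and the key point is that $[\Omega_\rho,\mathscr{K}^\mcJ_2]=0$: the matrices $X_\lambda^\bullet$ with $\lambda\in\mcJ$ lie in $\mcE_\mfg=\End_\mfg V$, hence commute with every $X_\mu^\bullet$, $\mu\in\Lambda$, so that $[\Omega_\rho,(X_\lambda^\bullet)_2]=\sum_{\mu\in\Lambda}X_\mu^\bullet\ot[X_\mu^\bullet,X_\lambda^\bullet]=0$. Thus $[\Omega_\rho,G_2]=[\Omega_\rho,F_2]=[G_1,G_2]$, and $\phi_\mcJ$ is well defined.

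Next I would build the inverse as a product of two homomorphisms into $U_\mcJ(\mfg)$ whose images commute. By Lemma \ref{L:F^J,K} the matrix $\msF=F^\mcJ-K$ satisfies $[\msF_1,\msF_2]=[\Omega_\rho,\msF_2]$, and since $\msF\in\ad(\mfg)\ot U_\mcJ(\mfg)$ while $\omega$ acts as the scalar $c_\mfg$ on $\ad(\mfg)$, we also have $\msF=c_\mfg^{-1}\omega(\msF)$; these are exactly the relations \eqref{F-br}--\eqref{F-sym} of Proposition \ref{P:new-g}, so $F\mapsto\msF$ defines a homomorphism $\alpha:U(\mfg_\rho)\to U_\mcJ(\mfg)$. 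On the other hand, by Lemma \ref{L:K-rels} the elements $X_\lambda^\mcJ$, $\lambda\in\mcJ$, are central, so $\mcK_\lambda^\mcJ\mapsto X_\lambda^\mcJ$ defines a homomorphism $\beta:\C[\mcK_\lambda^\mcJ:\lambda\in\mcJ]\to U_\mcJ(\mfg)$ with central image. As this image commutes with $\alpha(U(\mfg_\rho))$, the assignment $p\ot a\mapsto\beta(p)\alpha(a)$ defines an algebra homomorphism $\psi_\mcJ$ from the tensor product into $U_\mcJ(\mfg)$.

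Finally I would verify that the two maps are mutually inverse on generators. For $\psi_\mcJ\circ\phi_\mcJ$, applying $\psi_\mcJ$ to $F+\mathscr{K}^\mcJ$ returns $\msF+K=F^\mcJ$, because $(\id\ot\alpha)F=\msF$ and $(\id\ot\beta)\mathscr{K}^\mcJ=\sum_{\lambda\in\mcJ}X_\lambda^\bullet\ot X_\lambda^\mcJ=K$ by Lemma \ref{L:K-rels}\eqref{K-rel:3}. For $\phi_\mcJ\circ\psi_\mcJ$, the cleanest route is the operator identity $K=F^\mcJ-c_\mfg^{-1}\omega(F^\mcJ)$ from \eqref{wtF-sym}: since $\phi_\mcJ$ acts only on the coefficient tensor factor it commutes with the matrix operator $\omega$, whence $\phi_\mcJ(K)=(1-c_\mfg^{-1}\omega)(F+\mathscr{K}^\mcJ)$. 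Here $(1-c_\mfg^{-1}\omega)F=0$ by \eqref{F-sym}, and $(1-c_\mfg^{-1}\omega)\mathscr{K}^\mcJ=\mathscr{K}^\mcJ$ since $\omega$ annihilates $\mcE_\mfg$; hence $\phi_\mcJ(K)=\mathscr{K}^\mcJ$ and so $\phi_\mcJ(\msF)=\phi_\mcJ(F^\mcJ)-\phi_\mcJ(K)=F$. Therefore $\phi_\mcJ\circ\psi_\mcJ$ fixes both the $\mcK_\lambda^\mcJ$ and the coefficients of $F$, which generate the target algebra, and the proof is complete.

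The only genuinely delicate point I anticipate is the bookkeeping separating the intertwiner block $\mcE_\mfg$, on which $\omega$ vanishes and which carries the central variables $\mcK_\lambda^\mcJ$, from the adjoint block $\ad(\mfg)$, on which $\omega$ is the nonzero scalar $c_\mfg$ and which carries $\mfg_\rho$. Once this $\omega$-eigenspace decomposition is exploited correctly, in tandem with the centrality statements of Lemma \ref{L:K-rels}, each verification reduces to a short matrix computation.
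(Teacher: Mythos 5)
Your proof is correct and follows essentially the same route as the paper's: a two-sided inverse built from the decomposition $F^\mcJ=\msF+K$, with the polynomial factor mapped to the central matrix $K$ and $U(\mfg_\rho)$ mapped to $\msF$. The only differences are cosmetic — you verify the relations \eqref{F-br}--\eqref{F-sym} for $\msF$ directly where the paper instead composes the previously established maps $\iota_\mcJ\circ\phi_\rho$, and you make explicit the computation $\phi_\mcJ(K)=\mathscr{K}^\mcJ$ via the $\omega$-eigenspace argument, a step the paper leaves implicit.
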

\begin{proof}
Since $\mathscr{K}^\mcJ\in \mcE_\mfg\ot \mfz_\mcJ$, we have $[\Omega_\rho,\mathscr{K}_2^\mcJ]=0$.
As the coefficients of $\mathscr{K}^\mcJ$ are also central and $F$ satisfies \eqref{F-br}, $F+\mathscr{K}^\mcJ$ satisfies the defining relation \eqref{wtF-br} of $U_\mcJ(\mfg)$. Thus the assignment 
$F^\mcJ\mapsto F+\mathscr{K}^\mcJ$ extends to a homomorphism $\phi_\mcJ:U_\mcJ(\mfg)\to \C[\mcK_\lambda^\mcJ\,:\,\lambda \in \mathcal{J}]\ot U(\mfg_\rho)$. 

Since the coefficients of $K$ are central, we deduce that there is an algebra homomorphism $\psi_{\mfz_\mcJ}:\C[\mcK_\lambda^\mcJ\,:\,\lambda \in \mathcal{J}]\to U_\mcJ(\mfg)$ given by $\mathscr{K}^\mcJ\mapsto K$. Let $\iota=\iota_\mcJ\circ \phi_\rho: U(\mfg_\rho)\to U_\mcJ(\mfg)$.  Since $[\iota(X),\psi_{\mfz_\mcJ}(Y)]=0$ for all $X\in U(\mfg_\rho)$ and $Y\in \C[\mcK_\lambda^\mcJ\,:\,\lambda \in \mathcal{J}]$, there is a unique homomorphism 
\begin{equation*}
\psi_\mcJ=\psi_{\mfz_\mcJ}\ot \iota:\C[\mcK_\lambda^\mcJ\,:\,\lambda \in \mathcal{J}]\ot U(\mfg_\rho)\to U_\mcJ(\mfg)
\end{equation*}
satisfying $\psi_\mcJ(\mathscr{K}^\mcJ)=K$ and $\psi_\mcJ(F)=\msF$, where we recall that $\msF=F^\mcJ-K$. As $\phi_\mcJ$ is completely determined by $\phi_{\mcJ}(K)=\mathscr{K}^\mcJ$ and $\phi_{\mcJ}(\msF)=F$, it follows immediately that $\psi_\mcJ=\phi_{\mcJ}^{-1}$. \qedhere
\end{proof}
Define $\mfg_\mcJ$ to be the Lie subalgebra of $\mathrm{Lie}(U_\mcJ(\mfg))$ generated by the set of elements $\{F_{ij}^\mcJ\}_{1\leq i,j\leq N}$. Then the restriction $\phi_{\mcJ}|_{\mfg_\mcJ}$ (see \eqref{phi_ex}) and its composition with $\mathrm{id}\ot \phi_\rho$ (see \eqref{g-map}) produce isomorphisms 
\begin{equation}
 \mfg_\mcJ \iso \mfg_\rho\oplus \mfz_\mcJ\iso \mfg\oplus \mfz_\mcJ, \label{gJ=g+zJ}
\end{equation}
and we have $U(\mfg_\mcJ)\cong U_\mcJ(\mfg)$. Accordingly, we will henceforth denote $U_\mcJ(\mfg)$ by $U(\mfg_\mcJ)$.
%
\subsubsection{The polynomial current algebras $\mfg_\rho[z]$ and $\mfg_\mcJ[z]$}
As a consequence of Proposition \ref{P:new-g} and the comments following Remark \ref{R:F-sym}, the current algebras $\mfg_\rho[z]$ and $\mfg[z]$ are isomorphic. Similarly, Proposition \ref{P:K} and the isomorphism \eqref{gJ=g+zJ} imply that $\mfg_\mcJ[z]\cong(\mfg\oplus \mfz_\mcJ)[z]$. The former identification leads to the so called $r$-matrix realization of $\mfg[z]$, as we will illustrate in this subsection. 
\begin{corollary}\label{C:g[z]}
An isomorphism $\phi_\rho^z:\mfg_\rho[z]\to \mfg[z]$ is provided by the assignment
\begin{equation*}
 \phi_\rho^z:F^{(r)}\mapsto -(\rho\otimes 1)(\Omega z^r) \quad \forall\; r\geq 0,
\end{equation*}
where, for each $r\geq 0$, $F^{(r)}= \sum_{i,j=1}^N E_{ij}\otimes F_{ij} z^r\in \End V \ot \mfg_\rho[z]$ and $\Omega z^r= \sum_{\lambda \in \Lambda} X_\lambda \otimes X_\lambda z^r\in \mfg\otimes \mfg[z]$.

In particular, $U(\mfg[z])$ is isomorphic to the unital associative $\C$-algebra generated by the family of elements 
$\{F_{ij}^{(r)}=F_{ij}z^r\,:\, 1\leq i,j\leq N,\, r\in \Z_{\geq 0}\}$ subject to the defining relations 
\begin{gather}
 [F_1^{(r)},F_2^{(s)}]=[\Omega_\rho,F_2^{(r+s)}] \quad \forall\; r,s\geq 0, \label{g[z]-R} \\
    F^{(r)}=c_\mfg^{-1}\omega(F^{(r)})  \quad \forall\; r\geq 0. \label{g[z]-sym}
\end{gather}
\end{corollary}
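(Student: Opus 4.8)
The plan is to get the Lie algebra isomorphism $\phi_\rho^z$ essentially for free from Proposition \ref{P:new-g} and the functoriality of the current-algebra construction, and then to establish the presentation of $U(\mfg[z])$ by transporting the structural arguments of Lemma \ref{L:K-rels}, Lemma \ref{L:F^J,K} and Proposition \ref{P:new-g} into the $z$-graded setting. First I would recall from the discussion following Proposition \ref{P:new-g} that $\phi_\rho|_{\mfg_\rho}\colon\mfg_\rho\iso\mfg$ is a Lie algebra isomorphism with $\phi_\rho(F_{ij})=\mcF_{ij}$, where $\mcF=-(\rho\ot 1)\Omega$. Since $\mfa\mapsto\mfa[z]$ is functorial, applying it yields an isomorphism $\phi_\rho^z=\phi_\rho|_{\mfg_\rho}\ot\id_{\C[z]}\colon\mfg_\rho[z]\iso\mfg[z]$ determined by $Xz^r\mapsto\phi_\rho(X)z^r$; evaluating on the generators $F_{ij}z^r$ and repackaging into matrix form gives exactly $\phi_\rho^z(F^{(r)})=-(\rho\ot 1)(\Omega z^r)$, the first assertion. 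In particular $U(\mfg_\rho[z])\cong U(\mfg[z])$.

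For the presentation, let $\mcA$ denote the abstract algebra on generators $\{F_{ij}^{(r)}\}$ subject to \eqref{g[z]-R} and \eqref{g[z]-sym}. I would first check that the elements $F_{ij}z^r\in U(\mfg_\rho[z])$ satisfy these relations: the bracket relation \eqref{F-br} of $\mfg_\rho$ produces \eqref{g[z]-R} after inserting the current bracket $[F_{ij}z^r,F_{kl}z^s]=[F_{ij},F_{kl}]z^{r+s}$, while the linear relation \eqref{F-sym} produces \eqref{g[z]-sym} after multiplying by $z^r$. Since $\{\mcF_{ij}\}$ spans $\mfg$ (as $\rho$ is injective and $\Omega$ non-degenerate), the set $\{F_{ij}z^r\}$ spans $\mfg_\rho[z]$ and hence generates $U(\mfg_\rho[z])$, giving a surjection $\pi\colon\mcA\onto U(\mfg_\rho[z])$. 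To invert it I would construct $\Psi\colon U(\mfg[z])\to\mcA$ as the enveloping-algebra lift of the Lie map $X_\lambda z^r\mapsto -X_\lambda^{\rho,(r)}$, where $X_\lambda^{\rho,(r)}=\sum_{i,j}a_{ij}^\lambda F_{ij}^{(r)}$.

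The crux, and the main obstacle, is verifying that this Lie map is well-defined, which amounts to re-proving Lemma \ref{L:K-rels} and Lemma \ref{L:F^J,K} with an extra degree parameter tracked through every commutator. Writing $K^{(r)}=\sum_{\lambda\in\Lambda^c}X_\lambda^\bullet\ot X_\lambda^{\rho,(r)}$ and $\msF^{(r)}=F^{(r)}-K^{(r)}$, the relation \eqref{g[z]-R} forces, by the same $\ad(\mfg)\oplus W$ tensor-leg decomposition arguments compared degree by degree, the centrality of the coefficients of $K^{(r)}$, then $[\Omega_\rho,K_2^{(r)}]=0$ and $\omega(K^{(r)})=0$, and finally, using that the Casimir acts by nonzero scalars on the nontrivial components $W_i$, the vanishing $X_\mu^{\rho,(r)}=0$ for all $\mu\in\Lambda^c\setminus\mcJ$. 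The relation \eqref{g[z]-sym} then kills the remaining $\mcJ$-components (as $\omega$ annihilates $\mcE_\mfg$), leaving $\{X_\lambda^{\rho,(r)}\}_{\lambda\in\Lambda}$ satisfying $[X_\lambda^{\rho,(r)},X_\mu^{\rho,(s)}]=-\sum_{\gamma}\al_{\lambda\mu}^\gamma X_\gamma^{\rho,(r+s)}$, which are precisely the defining relations \eqref{g[z]} of $U(\mfg[z])$. This makes $\Psi$ well-defined.

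Finally I would check on generators, using $F_{ij}^{(r)}=\sum_{\lambda\in\Lambda}c_{ij}^\lambda X_\lambda^{\rho,(r)}$ (the $\Lambda^c$-terms vanishing in $\mcA$), that $\Psi\circ U(\phi_\rho^z)\circ\pi=\id_\mcA$. Thus $\pi$ admits a left inverse and is therefore an isomorphism, whence $\mcA\cong U(\mfg_\rho[z])\cong U(\mfg[z])$, which is the asserted presentation. The only substantive work is the bookkeeping of $z$-degrees in the module-decomposition arguments; every other step is formal and mirrors the proofs already established in the preceding results.
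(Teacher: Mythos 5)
Your proof is correct and follows essentially the same route as the paper: the paper's own proof is a one-sentence citation of Proposition \ref{P:new-g}, the remarks following Remark \ref{R:F-sym} (which identify \eqref{F-br} and \eqref{F-sym} as defining relations of the Lie algebra $\mfg_\rho\cong\mfg$), and the presentation \eqref{g[z]}, leaving the transfer to the current algebra implicit. What you have written is exactly that argument carried out in full, with the degreed versions of Lemmas \ref{L:K-rels} and \ref{L:F^J,K} supplying the well-definedness of the inverse map $X_\lambda z^r\mapsto -X_\lambda^{\rho,(r)}$ -- a point the paper glosses over but which, as you correctly identify, is where the actual content lies.
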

\begin{proof}
 The corollary follows from Proposition \ref{P:new-g}, the three sentences following Remark \ref{R:F-sym}, and the definition of the current algebra $\mfg[z]$ (see \eqref{g[z]}).
\end{proof}
\begin{remark} The relations \eqref{g[z]-R} and \eqref{g[z]-sym} are, of course, just the defining relations of $U(\mfg_\rho[z])$. Omitting the relation \eqref{g[z]-sym}
gives the definition of $U(\mfg_\mcJ[z])$. 
\end{remark}
Introduce the generating matrix 
\begin{equation*}\label{F(u)}
 F(u)=\sum_{i,j=1}^N E_{ij}\ot F_{ij}(u)\in \End V \ot (\mfg_\rho[z])[\![u^{-1}]\!],\quad \text{ where }\quad F_{ij}(u)=\sum_{r\geq 0}F_{ij}^{(r)}u^{-r-1}\in (\mfg_\rho[z])[\![u^{-1}]\!].
\end{equation*}
Using this notation, we can express the defining relations of $\mfg[z]$ (or more precisely those of $\mfg_\rho[z]$) using the classical $r$-matrix 
$\frac{\Omega}{u-v}$ associated to its standard Lie bialgebra structure.
\begin{proposition}\label{P:r-matrix}
The defining relations \eqref{g[z]-R} and \eqref{g[z]-sym} are equivalent to the relations
\begin{align}
[F_1(u),F_2(v)]&=\left[\frac{\Omega_\rho}{u-v}, F_1(u)+F_2(v)\right], \label{g[z]-R:u}\\
F(u)&=c_\mfg^{-1}\omega(F(u)). \label{g[z]-Cas}
\end{align}
The relation \eqref{g[z]-R:u} independently serves as the defining relation of $U(\mfg_\mcJ[z])$. 
\end{proposition}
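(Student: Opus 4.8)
The plan is to read the statement as two independent equivalences bundled together — the symmetry relations \eqref{g[z]-sym}$\,\Leftrightarrow\,$\eqref{g[z]-Cas} and the bracket relations \eqref{g[z]-R}$\,\Leftrightarrow\,$\eqref{g[z]-R:u} — and then to observe that the bracket equivalence alone yields the final sentence. The first equivalence is purely formal and I would dispose of it in a sentence: since the operator $\omega$ acts only on the $\End V$ tensor factor, it commutes with extraction of the coefficient of $u^{-r-1}$, so that $\omega(F(u))=\sum_{r\geq 0}\omega(F^{(r)})u^{-r-1}$; comparing the coefficient of $u^{-r-1}$ on the two sides of \eqref{g[z]-Cas} recovers \eqref{g[z]-sym} for each $r\geq 0$, and conversely.

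The substance is the bracket equivalence, which I would establish by a direct generating-series computation together with a coefficient comparison. Expanding the left-hand side of \eqref{g[z]-R:u} gives $[F_1(u),F_2(v)]=\sum_{r,s\geq 0}[F_1^{(r)},F_2^{(s)}]\,u^{-r-1}v^{-s-1}$, which already lies in the completed space involving only negative powers of $u$ and $v$. The target is to show that the right-hand side of \eqref{g[z]-R:u}, once correctly interpreted, equals $\sum_{r,s\geq 0}[\Omega_\rho,F_2^{(r+s)}]\,u^{-r-1}v^{-s-1}$; comparing coefficients of $u^{-r-1}v^{-s-1}$ then returns exactly \eqref{g[z]-R}, the comparison being an equivalence in both directions. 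Two ingredients drive the computation. First, the $\mfg$-invariance of the Casimir, $[\Delta(X),\Omega]=0$, gives $[\Omega_\rho,(X_\lambda^\bullet)_1+(X_\lambda^\bullet)_2]=0$ for every $\lambda$, which (using $F^{(t)}=\sum_{\lambda}X_\lambda^\bullet\ot X_\lambda^\rho z^t$) propagates to the matrix identity $[\Omega_\rho,F_1(u)]=-[\Omega_\rho,F_2(u)]$. Second, for each $t\geq 0$ one has the partial-fractions identity
\begin{equation*}
 \frac{1}{u-v}\left(\frac{1}{v^{t+1}}-\frac{1}{u^{t+1}}\right)=\sum_{\substack{r,s\geq 0\\ r+s=t}}u^{-r-1}v^{-s-1},
\end{equation*}
which is a genuine Laurent polynomial. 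Combining the two, the right-hand side of \eqref{g[z]-R:u} collapses to $\tfrac{1}{u-v}[\Omega_\rho,F_2(v)-F_2(u)]=\sum_{t\geq 0}[\Omega_\rho,F_2^{(t)}]\tfrac{1}{u-v}(v^{-t-1}-u^{-t-1})$, and reindexing by $t=r+s$ produces precisely the desired double series.

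The one genuine subtlety — and the step I expect to require the most care — is the interpretation of $\frac{\Omega_\rho}{u-v}$ in \eqref{g[z]-R:u}. Taken in isolation, neither $\frac{1}{u-v}[\Omega_\rho,F_1(u)]$ nor $\frac{1}{u-v}[\Omega_\rho,F_2(v)]$ is a well-defined element of the ambient power-series space without first choosing an expansion of $(u-v)^{-1}$. The point I would emphasize is that their \emph{sum} is unambiguous: after using $[\Omega_\rho,F_1(u)]=-[\Omega_\rho,F_2(u)]$ the combination becomes $\tfrac{1}{u-v}[\Omega_\rho,F_2(v)-F_2(u)]$, and for each fixed degree $t$ the scalar $\tfrac{1}{u-v}(v^{-t-1}-u^{-t-1})$ is a finite Laurent polynomial, so the whole expression is a bona fide formal series with no expansion choice involved. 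I would state this convention explicitly (referring back to Remark \ref{R:expand}) before running the computation.

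Finally, the last sentence is immediate once the bracket equivalence is in hand. By the remark following Corollary \ref{C:g[z]}, $U(\mfg_\mcJ[z])$ is by definition generated by the $F_{ij}^{(r)}$ subject to \eqref{g[z]-R} alone. The computation above never invokes the symmetry relation \eqref{g[z]-sym}, and the only structural input it uses about $\Omega_\rho$ — its $\mfg$-invariance, supplemented in the $\mfg_\mcJ$ case by the fact that the central generators commute with $\Omega_\rho$ (Lemma \ref{L:K-rels}\eqref{K-rel:2}) — holds equally in $\mfg_\mcJ[z]$. Hence \eqref{g[z]-R}$\,\Leftrightarrow\,$\eqref{g[z]-R:u} verbatim in that setting, so \eqref{g[z]-R:u} on its own is a complete set of defining relations for $U(\mfg_\mcJ[z])$.
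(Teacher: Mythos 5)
Your handling of \eqref{g[z]-sym}$\,\Leftrightarrow\,$\eqref{g[z]-Cas}, your partial-fractions identity, and your derivation of the antisymmetry $[\Omega_\rho,F_1(u)]=-[\Omega_\rho,F_2(u)]$ from invariance of the Casimir element are all correct: the latter is valid in $U(\mfg_\rho[z])$ (where $F^{(t)}\in\ad(\mfg)\ot U(\mfg_\rho[z])$ by Proposition \ref{P:new-g}) and in $U(\mfg_\mcJ[z])$ (using the degree-wise version of Lemma \ref{L:K-rels}). Together with the collapse of the right-hand side, this gives a complete proof of \emph{one} implication: the algebras presented by \eqref{g[z]-R} and \eqref{g[z]-sym} (resp.\ by \eqref{g[z]-R} alone) satisfy \eqref{g[z]-R:u}. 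Here your route genuinely differs from the paper's, which obtains the antisymmetry relations as a purely formal consequence of \eqref{g[z]-R} by applying $\sigma\ot 1$ (exactly as \eqref{sigma-sym} is deduced from \eqref{wtF-br}), rather than from the structure of the concrete algebra; both arguments work for this direction.

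The gap is in the converse implication, and hence also in the final sentence of the proposition, and it sits precisely at the point you flag as the subtlety. When \eqref{g[z]-R:u} is the \emph{defining} relation, you must work in an abstract algebra in which the antisymmetry identity is not yet available, so your ``correct interpretation'' of the right-hand side is circular: the assertion that the sum $\tfrac{1}{u-v}\left([\Omega_\rho,F_1(u)]+[\Omega_\rho,F_2(v)]\right)$ is unambiguous is not a fact about the free algebra --- the two expansions of $(u-v)^{-1}$ produce readings of the right-hand side that differ by a series whose coefficients are exactly the elements $[\Omega_\rho,F_1^{(t)}+F_2^{(t)}]$, $t\geq 0$, so ``unambiguous'' is \emph{equivalent} to the relation you are using to justify it. A relation whose formulation presupposes another relation cannot serve as a presentation. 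The paper resolves this by fixing the expansion $(u-v)^{-1}=\sum_{p\geq 0}v^pu^{-p-1}$ once and for all, viewing \eqref{g[z]-R:u} in $(\End V)^{\ot 2}\ot U(\mfg_\rho[z])[\![v^{\pm 1},u^{-1}]\!]$, and comparing \emph{all} coefficients: those of $u^{-r-1}v^{-s-1}$ return exactly \eqref{g[z]-R} (no collapse is needed here, since only the $[\Omega_\rho,F_2^{(b)}]$ terms contribute to negative powers of $v$), while those of nonnegative powers of $v$ return the antisymmetry family, which must then be shown redundant given \eqref{g[z]-R}. The repair to your argument is small --- fix an expansion, record that the extra coefficient identities are precisely the antisymmetry relations, and invoke your own Casimir argument (or the one-line flip argument) for their redundancy --- but as written, your ``equivalence in both directions'' and the claim that \eqref{g[z]-R:u} alone presents $U(\mfg_\mcJ[z])$ assume the collapsed form of the relation in a setting where it has not been justified.
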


\begin{proof}
It is clear that the relation \eqref{g[z]-Cas} is equivalent to \eqref{g[z]-sym}. To prove the equivalence of \eqref{g[z]-R:u} with \eqref{g[z]-R}, we will expand 
\begin{equation}
(u-v)^{-1}=\sum_{p\geq 0} v^p u^{-p-1} \in (\C[v])[\![u^{-1}]\!],  \label{exp:u-v}
\end{equation}
view \eqref{g[z]-R:u} as an equality in the space  $(\End V)^{\otimes 2}\ot U(\mfg_\rho[z])[\![v^{\pm 1},u^{-1}]\!]$, and compare the coefficient of $v^s u^{-r}$ on each side for $s\in \Z$ and $r\in \Z_{\geq 0}$. Note that \eqref{exp:u-v} is not the unique expansion of $(u-v)^{-1}$ in $\C[\![v^{\pm 1},u^{\pm 1}]\!]$, and thus there are other equivalent ways of viewing \eqref{g[z]-R:u}: see Remark \ref{R:expand}.

 Expanding  \eqref{g[z]-R:u} using \eqref{exp:u-v}, we obtain
 \begin{equation}\label{g[z]-R:exp}
  \sum_{r,s\geq 0} [F_1^{(r)},F_2^{(s)}]v^{-s-1}u^{-r-1}=\sum_{p,a,b\geq 0}\left([\Omega_\rho, F_1^{(a)}]v^p u^{-p-a-2}+[\Omega_\rho, F_2^{(b)}]v^{p-b-1}u^{-p-1}\right)
 \end{equation}
 Comparing the coefficient of $u^{-r-1}v^{-s-1}$ in both sides, for $r,s\in \Z_{\geq 0}$, we obtain \eqref{g[z]-sym}:
 \begin{equation*}
  [F_1^{(r)},F_2^{(s)}]=[\Omega_\rho,F_2^{(r+s)}] \quad \forall \; r,s\geq 0. 
 \end{equation*}
 We must also compare the coefficient of $v^s u^{-r}$ (for $r,s\in \Z_{\geq 0}$) in both sides of \eqref{g[z]-R:exp} to guarantee that this relation does not imply any additional relations which are not satisfied in $U(\mfg_\rho[z])$. If $0\leq r< 2$ or $s>r-2$, the coefficient of $u^{-r}v^s$ on both sides of \eqref{g[z]-R:u} is zero. Otherwise, we obtain 
 \begin{equation*}
  0=[\Omega_\rho, F_1^{(r-s-2)}]+[\Omega_\rho, F_2^{(r-s-2)}],
 \end{equation*}
which is also a consequence of \eqref{g[z]-R}: this can be deduced from \eqref{g[z]-R} in the same way that the relation \eqref{sigma-sym} of Lemma \ref{L:F^J,K}
was deduced from \eqref{wtF-br}. \qedhere
\end{proof}
\begin{remark}\label{R:expand}
In the proof of Proposition \ref{P:r-matrix} we have expanded the rational expression $(u-v)^{-1}$ as an element of $(\C[v])[\![u^{-1}]\!]$ and then interpreted 
\eqref{g[z]-R:u} as an equality in $(\End V)^{\otimes 2}\ot U(\mfg_\rho[z])[\![v^{\pm 1},u^{-1}]\!]$. As mentioned in the proof of the proposition, this is not the only way we could have proceeded. Working in a more general framework, \eqref{g[z]-R:u} should be viewed as an equality in $(\End V)^{\otimes 2}\ot U(\mfg_\rho[z])[\![v^{\pm 1},u^{\pm 1}]\!]$. In particular, 
$(u-v)^{-1}$ can be expanded as the formal series $-\sum_{p\geq 0}u^p v^{-p-1}$ in $(\C[u])[\![v^{-1}]\!]$, leading to an equivalent set of defining relations. 

An alternative expansion involves multiplying both sides of \eqref{g[z]-R:u} by the polynomial $u-v$ and then expanding both sides as elements of $(\End V)^{\otimes 2}\ot U(\mfg_\rho[z])[\![u^{-1},v^{-1}]\!]$: see for instance Subsection 1.1 of \cite{Mobook}.  
\end{remark}
%
%
%
\subsection{The extended Lie algebra \texorpdfstring{$\mfg_\mcI$}{} and its polynomial current algebra}\label{ssec:g_I}
In this subsection we consider an algebra $U_\mcI(\mfg)$ which is constructed from a fixed finite-dimensional $Y(\mfg)$-module. Like $U(\mfg_\mcJ)=U_\mcJ(\mfg)$, it is an extension of 
the enveloping algebra $U(\mfg_\rho)$, but the role played by $\End_\mfg V$ is instead played by $\End_{Y(\mfg)} V$. Consequently, $U_\mcI(\mfg)$ encodes certain information about the underlying $Y(\mfg)$-module structure which $U(\mfg_\mcJ)$ does not. 

Henceforth, we assume that $V$ is a finite-dimensional $Y(\mfg)$-module with corresponding homomorphism $\rho: Y(\mfg) \to \End V$. We also assume that $V$ contains a non-trivial  irreducible submodule. This hypothesis guarantees that $V$ has at least one non-trivial irreducible component when viewed as a $\mfg$-module (via restriction), and hence that we are in the situation of Subsection \ref{ssec:setup}. In particular, all the definitions and results of the previous subsection apply. 

Going forward, we will need to specialize our basis $\{X_\lambda^\bullet\}_{\lambda \in \mcJ}$ of $\mcE_\mfg=\End_\mfg(V)$. Let $\mcE\subset \mcE_\mfg$ 
denote the subspace of $Y(\mfg)$-module endomorphisms, and let $\mcE_c$ be a subspace of $\mcE_\mfg$ complimentary to $\mcE$:
\begin{equation*}
 \mcE=\End_{Y(\mfg)}V\subset \mcE_\mfg, \quad \mcE_\mfg=\mcE\oplus \mcE_c. 
\end{equation*}
We may then partition $\mcJ=\mcI\sqcup \mcI_c$ and choose $\{X_\lambda^\bullet\}_{\lambda \in \mcJ}$ in such a way that $\{X_\lambda^\bullet\}_{\lambda\in \mcI}$ is a basis of $\mcE$ and $\{X_\lambda^\bullet\}_{\lambda\in \mcI_c}$ is a basis of $\mcE_c$. 

%
\subsubsection{The extended enveloping algebra $U_\mcI(\mfg)$}
Following our convention of labeling $X^\bullet=\rho (X)$ for each $X\in \mfg$, we will write $J(X^\bullet)$ for the image of $J(X)$ in $\End V $ under $\rho $. In addition, we define a module homomorphism $J:\ad(\mfg)\to \ad_\mfg(\mfgl(V))$ by $X^\bullet \mapsto J(X^\bullet)$ for all $X\in \mfg$.
\begin{definition}
 Define $U_\mcI(\mfg)$ to be the quotient of $U(\mfg_\mcJ)$ by the two-sided ideal generated by the relation $[K_2,(1\ot J)(\Omega_\rho)]=[K_1,(J\ot 1)(\Omega_\rho)]$. That is, 
$U_\mcI(\mfg)$ is the associative unital $\C$-algebra generated by elements $\{F_{ij}^\mcI\}_{ 1\leq i,j\leq N}$ subject to the defining relations 
\begin{gather}
  [F_1^\mcI,F_2^\mcI]=[\Omega_\rho, F_2^\mcI], \label{FI-br}\\
  [K_2^\mcI,(1\ot J)(\Omega_\rho)]=[K_1^\mcI,(J\ot 1)(\Omega_\rho)] \label{KJ=JK}
\end{gather}
where $F^\mcI=\sum_{i,j=1}^N E_{ij}\otimes F_{ij}^\mcI\in \End V \otimes  U_\mcI(\mfg)$ and $K^\mcI=F^\mcI-c_\mfg^{-1}\omega(F^\mcI)$.  
\end{definition}

We now work towards establishing an analogue of Proposition \ref{P:K}. For each $\lambda \in \Lambda^\bullet$, let $X_\lambda^\mcI$ denote the image 
of $X_\lambda^\mcJ$ in $U_\mcI(\mfg)$. Explicitly, $X_\lambda^\mcI=\sum_{i,j=1}^N a_{ij}^\lambda F_{ij}^\mcI$ (see \eqref{ChangeofBasis}) and we have $F^\mcI=\sum_{\lambda \in \Lambda^\bullet} X_\lambda^\bullet \otimes X_\lambda^\mcI$. In fact, by Part \eqref{K-rel:3} of  Lemma \ref{L:K-rels}, we have
\begin{equation*}
 K^\mcI=\sum_{\lambda\in \mcJ} X_\lambda^\bullet \ot X_\lambda^\mcI \quad \text{ and } \quad F^\mcI=\sum_{\lambda\in \Lambda\cup \mcJ} X_\lambda^\bullet \ot X_\lambda^\mcI.
\end{equation*}
As a first step, we construct for each $x\in \mcE_\mfg$ a $\mfg$-module $W(x)$ which is either zero or isomorphic to $\ad(\mfg)$, but which cannot have a nonzero intersection with $\ad(\mfg)$. Fix $x\in \mcE_\mfg$ and let 
\begin{equation*}
 W(x)=\mathrm{span}\{[x,J(X_\lambda^\bullet)]\}_{\lambda \in \Lambda}.
\end{equation*}
Note that $W(x)$ is a submodule of the $\mfg$-module $\ad_\mfg(\mfgl(V))$, and that there is a module homomorphism
\begin{equation*}
 \varphi_x:\ad(\mfg) \to W(x), \; X_\lambda^\bullet \mapsto [x,J(X_\lambda^\bullet)] \quad \forall\; \lambda\in \Lambda.
\end{equation*}
This homomorphism is surjective and, by Schur's lemma, it is either an isomorphism or the zero morphism. We also have  $\mcE=\{x\in \mcE_\mfg\,:\,\varphi_x=0\}=\{x\in \mcE_\mfg\,:\,W(x)=0\}$. 
\begin{lemma}\label{L:Wx=/=ad}
There does not exist $x\in \mcE_\mfg$ such that $W(x)\cap \ad(\mfg)\neq \{0\}$. 
\end{lemma}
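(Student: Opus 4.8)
The plan is to argue by contradiction, collapsing the statement to a single scalar identity via Schur's lemma and then defeating that identity with a trace computation. Suppose there is some $x\in\mcE_\mfg$ with $W(x)\cap\ad(\mfg)\neq\{0\}$. First I would note that this forces $W(x)\neq\{0\}$, so by the discussion preceding the lemma $\varphi_x$ is an isomorphism and $W(x)$ is an irreducible $\mfg$-module isomorphic to $\ad(\mfg)$. Since the adjoint module $\ad(\mfg)$ is itself irreducible ($\mfg$ being simple), the nonzero submodule $W(x)\cap\ad(\mfg)$ must coincide with both $W(x)$ and $\ad(\mfg)$, whence $W(x)=\ad(\mfg)$. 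Consequently $\varphi_x$ is a nonzero $\mfg$-module endomorphism of the irreducible module $\ad(\mfg)$, so Schur's lemma yields a scalar $c\in\C^\times$ with $\varphi_x=c\cdot\id$; that is,
\begin{equation*}
 [x,J(X_\lambda^\bullet)]=c\,X_\lambda^\bullet \qquad \text{for all }\lambda\in\Lambda.
\end{equation*}
The goal is then to show that no such $x$ and $c\neq 0$ can exist.

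Set $\Theta=\sum_{\lambda\in\Lambda}X_\lambda^\bullet\otimes J(X_\lambda^\bullet)\in\End V\otimes\End V$. Applying $\id\otimes\ad(x)$ to $\Theta$ and invoking the displayed identity turns it into $\sum_{\lambda\in\Lambda}X_\lambda^\bullet\otimes c\,X_\lambda^\bullet=c\,\Omega_\rho$. I would next apply the multiplication map $\nabla$ to both sides. On the right this produces $c\sum_{\lambda\in\Lambda}(X_\lambda^\bullet)^2=c\,\rho(\omega)$, namely $c$ times the Casimir operator on $V$; on the left, using that $x\in\mcE_\mfg=\End_\mfg V$ commutes with every $X_\lambda^\bullet$, the sum $\sum_{\lambda\in\Lambda}X_\lambda^\bullet[x,J(X_\lambda^\bullet)]$ collapses to the commutator $[x,\nabla(\Theta)]$, where $\nabla(\Theta)=\sum_{\lambda\in\Lambda}X_\lambda^\bullet J(X_\lambda^\bullet)$. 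This yields the identity $[x,\nabla(\Theta)]=c\,\rho(\omega)$ in $\End V$, and taking the trace $\operatorname{tr}_V$ annihilates the left-hand side, leaving $c\,\operatorname{tr}_V(\rho(\omega))=0$.

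It remains to rule this out, and the main point — indeed the only step requiring real care — is that $\operatorname{tr}_V(\rho(\omega))\neq 0$. Here I would use that $(X,Y)\mapsto\operatorname{tr}_V(\rho(X)\rho(Y))$ is an invariant symmetric bilinear form on $\mfg$; since $\rho|_\mfg$ is faithful (as $\Ker\rho\subsetneq\mfg$ and $\mfg$ is simple), it is nondegenerate by Cartan's criterion, hence equal to $\kappa_V(\cdot,\cdot)$ for some $\kappa_V\in\C^\times$. Evaluating on the orthonormal basis then gives $\operatorname{tr}_V(\rho(\omega))=\sum_{\lambda\in\Lambda}\operatorname{tr}_V(\rho(X_\lambda)^2)=\kappa_V\sum_{\lambda\in\Lambda}(X_\lambda,X_\lambda)=\kappa_V\dim\mfg\neq 0$. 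Therefore $c=0$, contradicting $c\in\C^\times$, so no $x\in\mcE_\mfg$ with $W(x)\cap\ad(\mfg)\neq\{0\}$ exists. The bracket manipulations in the middle paragraph are routine once one records $[x,X_\lambda^\bullet]=0$ together with the fact that $J$ is a $\mfg$-module map, so the genuine content is concentrated in the nonvanishing of the Casimir trace — which is precisely where the nontriviality of $V$ (via the faithfulness of $\rho|_\mfg$) is used.
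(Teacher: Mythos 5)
Your proof is correct, and after the common Schur-lemma reduction --- $W(x)=\ad(\mfg)$, hence $[x,J(X_\lambda^\bullet)]=c\,X_\lambda^\bullet$ for all $\lambda\in\Lambda$ with $c\in\C^\times$ --- it genuinely departs from the paper's argument. The paper proceeds structurally: it normalizes $c=1$, introduces the derivation $\ad_{\rho,x}\colon\rho(Y(\mfg))\to\rho(Y(\mfg))$, $A\mapsto[x,A]$, notes that it lowers the filtration $\mbH_k=\rho(\mbF_k^J)$ by one step so that finite-dimensionality of $\rho(Y(\mfg))\subset\End V$ forces $\ad_{\rho,x}^{\ell+1}=0$ for some $\ell$, and then shows by induction that $\ad_{\rho,x}^{k}$ sends the nested bracket $[J(X_{\al_1}^\bullet),[\cdots,J(X_{\al_k}^\bullet)\cdots]]$ to $k!\,[X_{\al_1}^\bullet,[\cdots,X_{\al_k}^\bullet\cdots]]$; since $\mfg$ is perfect, the image of every power of $\ad_{\rho,x}$ contains $\rho(\mfg)$, contradicting nilpotency. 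You instead contract the Schur identity against $X_\lambda^\bullet$: using $[x,X_\lambda^\bullet]=0$ you obtain $[x,\sum_{\lambda}X_\lambda^\bullet J(X_\lambda^\bullet)]=c\,\rho(\omega)$ in $\End V$, and since commutators are traceless while $\mathrm{tr}_V(\rho(\omega))=\kappa_V\dim\mfg\neq0$, you conclude $c=0$, a contradiction. Both arguments exploit $\dim V<\infty$, but differently: yours needs the trace together with the standard chain of facts that the trace form of the faithful representation $\rho|_\mfg$ is nonzero (Cartan's solvability criterion), hence nondegenerate (simplicity), hence a nonzero multiple of $(\cdot,\cdot)$ (Schur), whereas the paper's needs only perfectness of $\mfg$ and the $J$-filtration, with no input about invariant forms. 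Your route is noticeably shorter and more elementary in its mechanics; the paper's is longer but purely filtration-theoretic and makes explicit how the Yangian relation $[X,J(Y)]=J([X,Y])$ propagates through $\rho(Y(\mfg))$, which is in the spirit of the surrounding sections.
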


\begin{proof}
Suppose that $x\in \mcE_\mfg$ satisfies $W(x)\cap\ad(\mfg)\neq \{0\}$. Then $W(x)$ is irreducible and, since the same is true for $\ad(\mfg)$, we have $W(x)=\ad(\mfg)$. In particular, $\varphi_x$ must be an isomorphism, and by Schur's lemma, every module homomorphism $W(x)\to \ad(\mfg)$ is a scalar multiple of $\varphi_x^{-1}:[x,J(X_\lambda^\bullet)]\mapsto X_\lambda^\bullet$. As the identity map provides such a homomorphism, there exists $c\in \C^\times$ such that $[x,J(X_\lambda^\bullet)]=cX_\lambda^\bullet$ for all 
$\lambda\in\Lambda$. After re-normalizing $x$ if necessary, we can assume that $c=1$. Consider the linear map 
\begin{equation*}
 \ad_x:\End V\to \End V,\quad X\mapsto [x,X] \quad \forall\; X\in \End V. 
\end{equation*}
Since $\ad_x(J(X_\lambda^\bullet))=X_\lambda^\bullet$ and $\ad_x(X_\lambda^\bullet)=0$ for all $\lambda\in \Lambda$, we deduce from the fact that $\ad_x$ is a derivation that it restricts to a linear map 
\begin{equation*}
\ad_{\rho,x}:\rho(Y(\mfg))\to \rho(Y(\mfg)). 
\end{equation*}
Given a monomial $X$ in the variables $\{J(X_\lambda^\bullet),X_\gamma^\bullet\}_{\lambda,\gamma\in \Lambda}$, we denote by $\ell(X)$ the degree of this monomial with respect to 
the assignment $\deg X_\gamma^\bullet=0$ and $\deg J(X_\lambda^\bullet)=1$. 
For each $k\geq 0$, let $\mbH_k$ denote the subspace of $\rho(Y(\mfg))$ which is spanned by monomials $X$ such that $\ell(X)\leq k$, i.e. $\mbH_k=\rho (\mbF_k^J)$, where $\mbF^J=\{\mbF_k^J\}_{k\geq 0}$ is the filtration defined below Definition \ref{D:YJ}. 
We then have $\ad_{\rho,x}(\mbH_0)=0$ and $\ad_{\rho,x}(\mbH_k)\subset \mbH_{k-1}$ for all $k\geq 1$. This follows from the facts that $\ad_{\rho,x}(J(X_\lambda^\bullet))=X_\lambda^\bullet$ for all $\lambda\in \Lambda$, 
$\ad_{\rho,x}(X_\gamma^\bullet)=0$ for all $\gamma \in \Lambda$, and that $\ad_{\rho,x}$ is a derivation. We will break the remainder of our proof into two steps: 

\noindent \textit{Step 1:} There exists $k\geq 1$ such that $\ad_{\rho,x}^k=0$. 

Note that $\mbH_{k-1}\subset \Ker(\ad_{\rho,x}^k)$ for all $k\geq 1$. Indeed, since $\ad_{\rho,x}(\mbH_{k-1})\subset \mbH_{k-2}$ for all $k\geq 1$ (here $\mbH_a=\{0\}$ for all $a<0$), 
we obtain inductively that $\ad_{\rho,x}^k(\mbH_{k-1})\subset \mbH_{-1}=\{0\}$. Since $\rho(Y(\mfg))\subset\End V$ is finite-dimensional, it has a finite basis $\{B_1,\ldots,B_{\dim \rho(Y(\mfg))}\}$ consisting of monomials $B_i$ in  the variables $\{J(X_\lambda^\bullet),X_\gamma^\bullet\}_{\lambda,\gamma\in \Lambda}$. Let $\ell$ denote the finite integer $\max\{\ell(B_i)\,:\,1\leq i\leq \dim \rho(Y(\mfg))\}$. Then each $B_i$ belongs to $\mbH_\ell$ and hence so does all of $\rho(Y(\mfg))$. Since $\ad_{\rho,x}^{\ell+1}(\mbH_{\ell})=0$, $\ad_{\rho,x}^{\ell+1}$ is identically zero.

\noindent \textit{Step 2:} The image of $\ad_{\rho,x}^k$ contains $\rho(\mfg)\cong \mfg$ for every $k\geq 1$.

For each $k\geq 1$ and $k$-tuple $\alpha_1,\ldots,\alpha_{k}\in \Lambda$, set 
\begin{gather*}
 A_{\al_1,\ldots,\al_k}=[J(X_{\al_1}^\bullet),[J(X_{\al_2}^\bullet),\cdots,[J(X_{\al_{k-1}}^\bullet),J(X_{\al_k}^\bullet)]\cdots]],\\
  Y_{\al_1,\ldots,\al_k}=[X_{\al_1}^\bullet,[X_{\al_2}^\bullet,\cdots,[X_{\al_{k-1}}^\bullet,X_{\al_{k}}^\bullet]\cdots]].
\end{gather*}
If $k=1$, then it is understood that $A_{\al}=J(X_\al^\bullet)$ and $Y_{\al}=X_{\al}^\bullet$. 

\noindent\textit{Claim: } $\ad_{\rho,x}^{k}(A_{\al_1,\ldots,\al_k})= k ! Y_{\al_1,\ldots,\al_k}$ for all $k\geq 1$. 

We will prove the claim by induction on $k$. If $k=1$ then it is just the statement that $\ad_{\rho,x}(J(X_\al^\bullet))=X_\al^\bullet$. Suppose inductively that 
the claim holds whenever $k=l$, and consider $\ad_{\rho,x}^{l+1}(A_{\al_1,\ldots,\al_{l+1}})$. We have 
\begin{equation*}
 \ad_{\rho,x}^{l+1}(A_{\al_1,\ldots,\al_{l+1}})=\sum_{j=0}^{l+1}\binom{l+1}{j}\left[\ad_{\rho,x}^j(J(X_{\al_1}^\bullet)),\ad_{\rho,x}^{l+1-j}(A_{\al_2,\ldots,\al_{l+1}})\right].
\end{equation*}
Since $\ad_{\rho,x}^2(J(X_{\al_1}^\bullet))=0$ and $\ad_{\rho,x}^{l+1}(A_{\al_2,\ldots,\al_{l+1}})=0$ (since $A_{\al_2,\ldots,\al_{l+1}}\in \mbH_{l}$), the only term of the sum on the right-hand side which does not necessarily vanish corresponds to $j=1$. As $\ad_{\rho,x}(J(X_{\al_1}^\bullet))=X_{\al_1}^\bullet$ and, by induction, $\ad_{\rho,x}^{l}(A_{\al_2,\ldots,\al_{l+1}})={l!}Y_{\al_2,\ldots,\al_{l+1}}$, we have
\begin{equation*}
 \ad_{\rho,x}^{l+1}(A_{\al_1,\ldots,\al_{l+1}})=(l+1)l!\left[X_{\al_1}^\bullet,Y_{\al_2,\ldots,\al_{l+1}}\right]=(l+1)!Y_{\al_1,\ldots,\al_{l+1}}.
\end{equation*}
This completes the proof of the claim. 

To complete the proof of Step 2, it remains to note that, since $\rho(\mfg)$ is a simple Lie algebra, it is perfect and thus spanned by the collection of elements $\{Y_{\al_1,\ldots,\al_k}\}_{\al_i\in \Lambda}$ for any fixed $k\geq 1$. 

We can now finish the proof of the lemma. By Step 1, there exists $k \geq 1$ such that $\ad_{\rho,x}^k=0$. By Step 2, $\rho(\mfg) \subset \ad_{\rho,x}^k(\rho(Y(\mfg)))=\{0\}$, which is a contradiction. Therefore there cannot exist $x\in \mcE_\mfg$ such that $W(x)\cap \ad(\mfg) \neq \{0\}$. 
\end{proof}

This leads us to the following analogue of Part \eqref{K-rel:3} of Lemma \ref{L:K-rels}.  
\begin{lemma}\label{L:KI}
 We have $X_\mu^\mcI=0$ for all $\mu \in \mcI_c$. In particular, $K^\mcI=\sum_{\lambda\in \mcI} X_\lambda^\bullet \ot X_\lambda^\mcI$.
\end{lemma}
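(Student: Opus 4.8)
The plan is to rewrite the defining relation \eqref{KJ=JK} as an explicit identity in $(\End V)^{\ot 2}\ot U_\mcI(\mfg)$ which is linear in the elements $\{X_\mu^\mcI\}_{\mu\in\mcJ}$, and then to show that the matrix coefficients appearing there form a linearly independent family, so that each $X_\mu^\mcI$ with $\mu\in\mcI_c$ can be isolated by applying a suitable linear functional.

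First I would expand both sides of \eqref{KJ=JK} using $\Omega_\rho=\sum_{\lambda\in\Lambda}X_\lambda^\bullet\ot X_\lambda^\bullet$ together with the expression $K^\mcI=\sum_{\mu\in\mcJ}X_\mu^\bullet\ot X_\mu^\mcI$ furnished by Part \eqref{K-rel:3} of Lemma \ref{L:K-rels}. A direct computation of the two commutators (the coefficients $X_\mu^\mcI$ being central) turns \eqref{KJ=JK} into
$$\sum_{\mu\in\mcJ}\Theta(X_\mu^\bullet)\ot X_\mu^\mcI=0,\qquad \Theta(x):=\sum_{\lambda\in\Lambda}\big(X_\lambda^\bullet\ot[x,J(X_\lambda^\bullet)]-[x,J(X_\lambda^\bullet)]\ot X_\lambda^\bullet\big)\in(\End V)^{\ot2}.$$
Since $\{X_\mu^\bullet\}_{\mu\in\mcI}$ is a basis of $\mcE=\End_{Y(\mfg)}V$, the definition of $\varphi_x$ gives $[X_\mu^\bullet,J(X_\lambda^\bullet)]=\varphi_{X_\mu^\bullet}(X_\lambda^\bullet)=0$ for every $\mu\in\mcI$, so these terms drop out and the relation becomes $\sum_{\mu\in\mcI_c}\Theta(X_\mu^\bullet)\ot X_\mu^\mcI=0$.

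The crux is to prove that $\{\Theta(X_\mu^\bullet)\}_{\mu\in\mcI_c}$ is linearly independent in $(\End V)^{\ot2}$. Since $x\mapsto\Theta(x)$ is linear and $\{X_\mu^\bullet\}_{\mu\in\mcI_c}$ is a basis of $\mcE_c$, it suffices to show $\Theta(x)\neq0$ for every nonzero $x\in\mcE_c$. For such $x$ we have $x\notin\mcE$ (as $\mcE\cap\mcE_c=\{0\}$), so $\varphi_x\neq0$, and hence by Schur's lemma $\varphi_x$ is an isomorphism of $\ad(\mfg)$ onto $W(x)=\Ima(\varphi_x)$; Lemma \ref{L:Wx=/=ad} then gives $W(x)\cap\ad(\mfg)=\{0\}$. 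Now the first summand $\sum_{\lambda}X_\lambda^\bullet\ot\varphi_x(X_\lambda^\bullet)$ lies in $\ad(\mfg)\ot W(x)$ and is nonzero, because $\{\varphi_x(X_\lambda^\bullet)\}_{\lambda\in\Lambda}$ is a basis of $W(x)$; the second summand lies in $W(x)\ot\ad(\mfg)$. As $\ad(\mfg)\cap W(x)=\{0\}$, the sum $\ad(\mfg)\oplus W(x)$ is direct and the two subspaces $\ad(\mfg)\ot W(x)$ and $W(x)\ot\ad(\mfg)$ are distinct direct summands of $(\ad(\mfg)\oplus W(x))^{\ot2}$, hence meet only in $0$. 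The two summands of $\Theta(x)$ therefore cannot cancel, so $\Theta(x)\neq0$. This step is the main obstacle, and it is precisely here that the trivial-intersection statement of Lemma \ref{L:Wx=/=ad} is indispensable.

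Finally I would conclude as follows. Given $\nu\in\mcI_c$, choose a linear functional $f\in((\End V)^{\ot2})^\ast$ with $f(\Theta(X_\mu^\bullet))=\delta_{\nu\mu}$ for all $\mu\in\mcI_c$, which exists by the linear independence just established. Applying $f\ot\mathrm{id}$ to the identity $\sum_{\mu\in\mcI_c}\Theta(X_\mu^\bullet)\ot X_\mu^\mcI=0$, valid in $(\End V)^{\ot2}\ot U_\mcI(\mfg)$, yields $X_\nu^\mcI=0$. (Alternatively, one may invoke Proposition \ref{P:K}: the elements $\{X_\mu^\mcJ\}_{\mu\in\mcJ}$ are algebraically independent central generators of $U(\mfg_\mcJ)$, so linear independence of the $\Theta(X_\mu^\bullet)$ forces the defining ideal of $U_\mcI(\mfg)$ to contain $X_\mu^\mcJ$ for each $\mu\in\mcI_c$.) The asserted form $K^\mcI=\sum_{\lambda\in\mcI}X_\lambda^\bullet\ot X_\lambda^\mcI$ then follows at once from $K^\mcI=\sum_{\mu\in\mcJ}X_\mu^\bullet\ot X_\mu^\mcI$ and the vanishing of $X_\mu^\mcI$ for $\mu\in\mcI_c$.
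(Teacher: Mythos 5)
Your proof is correct and follows essentially the same route as the paper: both expand \eqref{KJ=JK}, discard the $\mu\in\mcI$ terms using $[X_\mu^\bullet,J(X_\lambda^\bullet)]=0$ for $\mu\in\mcI$, and reduce the claim to a linear-independence statement whose proof rests on the Schur-lemma dichotomy for $\varphi_x$ together with Lemma \ref{L:Wx=/=ad}, concluding by applying a suitable functional. The only difference is organizational: the paper fixes $\lambda\in\Lambda$, proves linear independence of $\{[X_\mu^\bullet,J(X_\lambda^\bullet)]\}_{\mu\in\mcI_c}$ and of this set together with $\{X_\gamma^\bullet\}_{\gamma\in\Lambda}$ inside $\End V$, and then applies the dual-basis functional $f_\lambda\ot\mathrm{id}\ot\mathrm{id}$, whereas you bundle both tensor legs into the single linear map $\Theta:\mcE_c\to(\End V)^{\ot 2}$ and prove its injectivity via the direct-sum decomposition of $(\ad(\mfg)\oplus W(x))^{\ot 2}$.
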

\begin{proof} 
  Since $[X_\mu^\bullet,J(X_\lambda^\bullet)]=0$ for all $\mu\in \mcI$ and $\lambda \in \Lambda$, \eqref{KJ=JK} is equivalent to 
\begin{equation}\label{J-comm}
\sum_{\lambda\in \Lambda,\mu\in \mcI_c} X_\lambda^\bullet \ot [X_\mu^\bullet,J(X_\lambda^\bullet)]\ot X_\mu^\mcI=
\sum_{\lambda\in \Lambda,\mu\in \mcI_c} [X_\mu^\bullet,J(X_\lambda^\bullet)] \ot X_\lambda^\bullet \ot X_\mu^\mcI.
\end{equation}

Let's first show that for any fixed $\lambda\in \Lambda$, $\{[X_\mu^\bullet,J(X_\lambda^\bullet)]\}_{\mu \in \mcI_c}$ is a linearly independent set. Suppose that 
\begin{equation*}
 \sum_{\mu\in \mcI_c}a_\mu [X_\mu^\bullet,J(X_\lambda^\bullet)]=0 \;\text{ for some }\; \{a_\mu\}_{\mu\in \mcI_c}\subset \C. 
\end{equation*}
Then $x=\sum_{\mu\in \mcI_c}a_\mu X_\mu^\bullet$ must belong to $\mcE$, because $\varphi_x$ cannot be an isomorphism as its kernel contains $X_\lambda^\bullet$. Since $x$ also belongs to $\mcE_c$, we must have $x=0$. The assertion then follows from the linear independence of the set $\{X_\mu^\bullet\}_{\mu\in \mcI_c}$. 

Next, we deduce that, for any fixed $\lambda\in \Lambda$, the set $\{X_\gamma^\bullet, [X_\mu^\bullet,J(X_\lambda^\bullet)]\}_{\gamma\in \Lambda,\mu \in \mcI_c}$ must also be linearly independent. Indeed, if $0\neq \sum_{\mu\in \mcI_c}a_\mu [X_\mu^\bullet,J(X_\lambda^\bullet)]\in \ad(\mfg)$, then $x=\sum_{\mu\in \mcI_c}a_\mu X_\mu^\bullet$ is such that $W(x)\cap \ad(\mfg) \neq \{0\}$. By  Lemma \ref{L:Wx=/=ad}, no such $x$ can exist, and hence we have shown that $\mathrm{span}_{\mu \in \mcI_c}\{[X_\mu^\bullet,J(X_\lambda^\bullet)]\}$ intersects trivially with $\ad(\mfg)$, from which the linear independence of  $\{X_\gamma^\bullet, [X_\mu^\bullet,J(X_\lambda^\bullet)]\}_{\gamma\in \Lambda,\mu \in \mcI_c}$ follows automatically from the previous assertion and the linear independence of $\{X_\gamma^\bullet\}_{\gamma\in \Lambda}$. 

Let $\{f_\mu\}_{\mu\in \Lambda^\bullet}\subset (\End V)^*$ denote the dual basis to $\{X_\lambda^\bullet\}_{\lambda\in \Lambda^\bullet} \subset \End V$. By the linear independence of $\{X_\gamma^\bullet, [X_\mu^\bullet,J(X_\lambda^\bullet)]\}_{\gamma\in \Lambda,\mu \in \mcI_c}$, applying $f_\lambda\ot \mathrm{id}\ot \mathrm{id}$ to both sides of \eqref{J-comm} for a fixed $\lambda\in \Lambda$ yields 
\begin{equation*}
\sum_{\mu \in \mcI_c} [X_\mu^\bullet,J(X_\lambda^\bullet)]\ot X_\mu^\mcI=0.
\end{equation*}
The linear independence of $\{[X_\mu^\bullet,J(X_\lambda^\bullet)]\}_{\mu \in \mcI_c}$ then implies $X_\mu^\mcI=0$ for all $\mu\in \mcI_c$. \qedhere
\end{proof}
We define $\mathfrak{z}_\mcI$ similarly to $\mfz_\mcJ$: it is the commutative Lie algebra with basis $\{\mcK_\lambda^\mcI\}_{\lambda \in \mcI}$. We identify its enveloping algebra with the polynomial ring $\C[\mcK_\lambda^\mcI\,:\,\lambda \in \mcI]$, and set $\mathscr{K}^\mcI=\sum_{\lambda\in \mcI}X_\lambda^\bullet\ot \mcK_\lambda^\mcI\in \End V \ot \mathfrak{z}_\mcI$. 
We are now prepared to state the analogue of Proposition \ref{P:K}. 
\begin{proposition}\label{P:gI-iso}
 The assignment  $F^\mcI \mapsto F+\mathscr{K}^\mcI$ extends to an isomorphism of algebras 
  \begin{equation}
  \phi_{\mcI}:U_\mcI(\mfg)\iso  \C[\mcK_\lambda^\mcI\,:\,\lambda \in \mcI]\ot U(\mfg_\rho).\label{phi_I}
  \end{equation}
\end{proposition}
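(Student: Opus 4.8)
The plan is to mirror the proof of Proposition \ref{P:K} essentially verbatim, treating $U_\mcI(\mfg)$ as the analogue of $U(\mfg_\mcJ)=U_\mcJ(\mfg)$ with the indexing set $\mcJ$ replaced by $\mcI$ and the polynomial algebra $\C[\mcK_\lambda^\mcJ\,:\,\lambda\in\mcJ]$ replaced by $\C[\mcK_\lambda^\mcI\,:\,\lambda\in\mcI]$. First I would check that $\phi_\mcI$ is a well-defined homomorphism, and then I would exhibit an explicit two-sided inverse $\psi_\mcI$ assembled as a tensor product of a map out of the polynomial algebra and a map out of $U(\mfg_\rho)$.

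To see that $\phi_\mcI$ is well defined I must verify that $F+\mathscr{K}^\mcI$ satisfies the defining relations \eqref{FI-br} and \eqref{KJ=JK}. Relation \eqref{FI-br} follows exactly as in Proposition \ref{P:K}: since $\mathscr{K}^\mcI\in\mcE_\mfg\ot\mfz_\mcI$ its coefficients are central and $[\Omega_\rho,\mathscr{K}_2^\mcI]=0$, so all brackets involving $\mathscr{K}^\mcI$ drop out and \eqref{FI-br} reduces to the relation \eqref{F-br} satisfied by $F$ in $U(\mfg_\rho)$. The genuinely new point is \eqref{KJ=JK}. Here I would first note that, because $F$ satisfies \eqref{F-sym} and $\omega$ annihilates the trivial module $\mcE_\mfg$, the matrix $K^\mcI=F^\mcI-c_\mfg^{-1}\omega(F^\mcI)$ is sent by $\phi_\mcI$ to $(F+\mathscr{K}^\mcI)-c_\mfg^{-1}\omega(F+\mathscr{K}^\mcI)=\mathscr{K}^\mcI$. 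Then both sides of \eqref{KJ=JK} become commutators of $\mathscr{K}^\mcI$ with $(1\ot J)(\Omega_\rho)$ and $(J\ot 1)(\Omega_\rho)$; since $\mathscr{K}^\mcI=\sum_{\lambda\in\mcI}X_\lambda^\bullet\ot\mcK_\lambda^\mcI$ and each $X_\lambda^\bullet$ with $\lambda\in\mcI$ lies in $\mcE=\End_{Y(\mfg)}V$, it commutes with every $J(X_\gamma^\bullet)$, so both sides vanish identically and \eqref{KJ=JK} is trivially preserved.

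For the inverse I would invoke Lemma \ref{L:KI}, which gives $K^\mcI=\sum_{\lambda\in\mcI}X_\lambda^\bullet\ot X_\lambda^\mcI$ with central coefficients $X_\lambda^\mcI$ (centrality being inherited from $U(\mfg_\mcJ)$ via Lemma \ref{L:K-rels}). This allows me to define a homomorphism $\psi_{\mfz_\mcI}\colon\C[\mcK_\lambda^\mcI\,:\,\lambda\in\mcI]\to U_\mcI(\mfg)$ by $\mathscr{K}^\mcI\mapsto K^\mcI$, together with a homomorphism $\iota\colon U(\mfg_\rho)\to U_\mcI(\mfg)$ obtained by composing the quotient map $U(\mfg_\mcJ)\onto U_\mcI(\mfg)$ with $\iota_\mcJ\circ\phi_\rho$, which sends $F\mapsto F^\mcI-K^\mcI$. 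As their images commute, these assemble into $\psi_\mcI=\psi_{\mfz_\mcI}\ot\iota$. Comparing on generators, $\phi_\mcI$ sends $K^\mcI\mapsto\mathscr{K}^\mcI$ and $F^\mcI-K^\mcI\mapsto F$, while $\psi_\mcI$ reverses these assignments; since the coefficients of $F^\mcI$ generate $U_\mcI(\mfg)$ and those of $F$ and $\mathscr{K}^\mcI$ generate the target, I conclude $\psi_\mcI=\phi_\mcI^{-1}$, establishing \eqref{phi_I}. The only conceptual input beyond routine bookkeeping is Lemma \ref{L:KI} (itself resting on Lemma \ref{L:Wx=/=ad}); once that vanishing is in hand the proposition is a formal consequence, and I expect the sole delicate step to be the verification that $\phi_\mcI$ respects \eqref{KJ=JK}, which reduces to the commutation $[X_\lambda^\bullet,J(X_\gamma^\bullet)]=0$ for $\lambda\in\mcI$.
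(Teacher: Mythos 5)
Your proof is correct, and it reaches the result by a route that is organized differently from the paper's, though it consumes exactly the same inputs. The paper does not re-verify relations or construct an inverse at the level of $U_\mcI(\mfg)$ at all: it descends the already-established isomorphism $\phi_\mcJ$ of Proposition \ref{P:K} through the quotient, composing $\phi_\mcJ$ with the projection $\pi\ot\mathrm{id}$ that kills the variables $\mcK_\lambda^\mcJ$ with $\lambda\in\mcI_c$, and then identifies $\Ker((\pi\ot\mathrm{id})\circ\phi_\mcJ)$ with the defining ideal $\mathscr{I}$ of $U_\mcI(\mfg)$ — one inclusion coming from Lemma \ref{L:KI}, the reverse inclusion from the observation that the relation \eqref{KJ=JK} holds trivially in $\C[\mcK_\lambda^\mcI\,:\,\lambda\in\mcI]\ot U(\mfg_\rho)$. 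You instead re-run the proof of Proposition \ref{P:K} one level down: check directly that the assignment preserves \eqref{FI-br} and \eqref{KJ=JK} (your computation $\phi_\mcI(K^\mcI)=\mathscr{K}^\mcI$, using \eqref{F-sym} and $\omega(\mathscr{K}^\mcI)=0$, is the right way to do this), and exhibit the explicit two-sided inverse $\psi_{\mfz_\mcI}\ot\iota$, whose well-definedness rests on Lemma \ref{L:KI} and the centrality inherited from Lemma \ref{L:K-rels}. The mathematical content is identical in both arguments — Lemma \ref{L:KI} plus the fact that $\mcE$ centralizes $\rho(Y(\mfg))$, so that all commutators of $\mathscr{K}^\mcI$ with $(1\ot J)(\Omega_\rho)$ and $(J\ot 1)(\Omega_\rho)$ vanish. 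What the paper's descent buys is brevity: well-definedness and bijectivity are inherited from $\phi_\mcJ$ rather than re-checked. What your version buys is self-containedness and an explicit formula for $\phi_\mcI^{-1}$, at the cost of repeating the generator-by-generator verifications.
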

\begin{proof}
 Let $\pi:\C[\mcK_\lambda^\mcJ\,:\,\lambda \in \mcJ]\onto \C[\mcK_\lambda^\mcI\,:\,\lambda \in \mcI]$ be the surjection given by 
 \begin{equation*}
  \pi(\mcK_\lambda^\mcJ)=\begin{cases}
                     \mcK_\lambda^\mcI \; &\text{ if }\; \lambda \in \mcI,\\
                     0 \; &\text{ if }\lambda \in \mcI_c.
                    \end{cases}
 \end{equation*}
 Consider the tensor product $\pi\ot \mathrm{id}: \C[\mcK_\lambda^\mcJ\,:\,\lambda \in \mcJ]\ot U(\mfg_\rho)\onto  \C[\mcK_\lambda^\mcI\,:\,\lambda \in \mcI]\ot U(\mfg_\rho)$. Its kernel 
 is precisely the ideal generated by $\{\mcK_\lambda^\mcJ\}_{\lambda \in \mcI_c}$, which is the image of the ideal generated by $\{X_\mu^\mcJ \}_{ \mu \in \mcI_c}$ under the isomorphism $\phi_\mcJ$ of Proposition \ref{P:K}. By Lemma \ref{L:KI} and the definition of $U_\mcI(\mfg)$, this ideal is contained in the two-sided ideal $\mathscr{I}$ of
 $U_\mcJ(\mfg)$ generated by the relation $[K_2,(1\ot J)(\Omega_\rho)]=[K_1,(J\ot 1)(\Omega_\rho)]$, hence $\Ker((\pi \ot \mathrm{id})\circ \phi_\mcJ)\subset \mathscr{I}$. Since 
 $[\mathscr{K}_2^\mcI,(1\ot J)(\Omega_\rho)]=[\mathscr{K}_1^\mcI,(J\ot 1)(\Omega_\rho)]$ trivially holds in $\C[\mcK_\lambda^\mcI\,:\,\lambda \in \mcI]\ot U(\mfg_\rho)$, we indeed have 
 the equality $\Ker((\pi \ot \mathrm{id})\circ \phi_\mcJ)=\mathscr{I}$. Thus $(\pi \ot \mathrm{id})\circ \phi_\mcJ$ induces an isomorphism 
 $\phi_{\mcI}:U_\mcI(\mfg)\iso  \C[\mcK_\lambda^\mcI\,:\,\lambda \in \mcI]\ot U(\mfg_\rho)$ which is given by $F^\mcI \mapsto F+\mathscr{K}^\mcI$.\qedhere
\end{proof}
We conclude our discussion of $U_\mcI(\mfg)$ by emphasizing that Proposition \ref{P:gI-iso} can be naturally interpreted at the level of Lie algebras. Letting $\mfg_\mcI$ denote the Lie subalgebra of $\mathrm{Lie}(U_\mcI(\mfg))$ generated by $\{F_{ij}^\mcI\}_{1\leq i,j\leq N}$, we find that $\phi_\mcI|_{\mfg_\mcI}$ and its composition with $\mathrm{id}\ot \phi_\rho$ induce isomorphisms
\begin{equation}\label{gI-g+zI}
 \mfg_\mcI \iso \mfg_\rho\oplus \mfz_\mcI\iso \mfg\oplus \mfz_\mcI,
\end{equation}
and moreover that $U(\mfg_\mcI)\cong U_\mcI(\mfg)$. With this in mind, $U_\mcI(\mfg)$ will be denoted $U(\mfg_\mcI)$ from this point on. 
%
%
\subsubsection{The extended polynomial current algebra \texorpdfstring{$\mfg_\mcI[z]$}{}} \label{ssec:PCA-ext}
By \eqref{FI-br}, \eqref{KJ=JK} and \eqref{gI-g+zI}, the enveloping algebra $U(\mfg_\mcI[z])$ is isomorphic to the unital associative $\C$-algebra generated by elements $\{  \mdF_{ij}^{(r)}=F_{ij}^\mcI z^r\,:\, 1\leq i,j\leq N,\, r\in \Z_{\geq 0}\}$ subject to the defining relations 
\begin{gather}
   [\mdF_1^{(r)}, \mdF_2^{(s)}]=[\Omega_\rho,\mdF_2^{(r+s)}] \quad \forall\; r,s\geq 0, \label{ex:g[z]-R}\\
   [\mdK_2^{(r)},(1\ot J)(\Omega_\rho)]=[\mdK_1^{(r)},(J\ot 1)(\Omega_\rho)] \quad \forall\; r\geq 0,\label{g[z]:KJ=JK}
\end{gather}
 where $\mdF^{(a)}=\sum_{i,j=1}^N E_{ij}\otimes \mdF_{ij}^{(a)} \in \End V \otimes U(\mfg_\mcI[z])$ and $\mdK^{(a)}=\mdF^{(a)}-c_\mfg^{-1} \omega(\mdF^{(a)})$ for all $a\geq 0$. 

Following \eqref{F(u)}, let us define 
\begin{equation*}
 \mdF(u)=\sum_{i,j=1}^N E_{ij} \otimes \mdF_{ij}(u)\in \End V\ot (\mfg_\mcI[z])[\![u^{-1}]\!],\quad \text{ where }\quad \mdF_{ij}(u)=\sum_{r\geq 0} \mdF_{ij}^{(r)} u^{-r-1}\in (\mfg_\mcI[z])[\![u^{-1}]\!].
\end{equation*}
Recall that, for each $\lambda \in \Lambda^\bullet$, $X_\lambda^\mcI=\sum_{i,j} a_{ij}^\lambda  F_{ij}^\mcI\in \mfg_\mcI$, where the family of scalars $\{a_{ij}^\lambda\}$ is defined in \eqref{ChangeofBasis}. To every $\lambda \in \Lambda^\bullet$ we associate the series $\mdX_\lambda (u)=\sum_{r\geq 0} \mdX_\lambda^{(r)} u^{-r-1}\in (\mfg_\mcI[z])[\![u^{-1}]\!]$, where $\mdX_\lambda^{(r)}=X_\lambda^\mcI z^r$. 

Finally, we set  $\mcK_\lambda^{(r)}=\mcK_\lambda^\mcI z^{r-1}$, so that $U(\mfz_\mcI[z])\cong \C[\mcK_\lambda^{(r)}:\lambda \in \mcI,\, r\geq 1]$, and define 
\begin{equation*}
\mathscr{K}(u)=\sum_{\lambda \in \mcI}X_\lambda^\bullet \ot \mcK_\lambda(u), \quad \text{ where }\quad \mcK_\lambda(u)=\sum_{r\geq 1} \mcK_\lambda^{(r)} u^{-r}.
\end{equation*}
We can now state the polynomial current algebra version of Proposition \ref{P:gI-iso}:
\begin{proposition}\label{P:wtphi-gen}
The assignment $\mdF(u)\mapsto  F(u)+\mathscr{K}(u)$ extends to an isomorphism of algebras 
\begin{equation}
 \phi_\mcI^z: U(\mfg_\mcI[z])\iso \C[\mathcal{K}_\lambda^{(r)}:\lambda \in \mcI,\, r\geq 1]\ot U(\mfg_\rho[z]). \label{wtphi_z}
\end{equation}
\end{proposition}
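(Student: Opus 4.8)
The plan is to deduce this proposition from the Lie algebra isomorphism \eqref{gI-g+zI} by the same functorial mechanism that was used to pass from Proposition \ref{P:new-g} to Corollary \ref{C:g[z]}, rather than by rerunning the quotient argument of Proposition \ref{P:gI-iso}. The substantive work, namely the identification $\mfg_\mcI\cong \mfg_\rho\oplus \mfz_\mcI$, has already been carried out in Lemma \ref{L:KI} and Proposition \ref{P:gI-iso}; what remains is to transport it through the polynomial current functor and to match the relevant generating series.

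First I would record the Lie algebra isomorphism underlying \eqref{gI-g+zI}: by Proposition \ref{P:gI-iso}, $\phi_\mcI$ restricts to an isomorphism $\bar\phi_\mcI:\mfg_\mcI\iso \mfg_\rho\oplus \mfz_\mcI$ determined at the matrix level by $\bar\phi_\mcI(F^\mcI)=F+\mathscr{K}^\mcI$, or equivalently by $\bar\phi_\mcI(X_\lambda^\mcI)=X_\lambda^\rho$ for $\lambda\in \Lambda$ and $\bar\phi_\mcI(X_\mu^\mcI)=\mcK_\mu^\mcI$ for $\mu\in \mcI$, the remaining generators $X_\mu^\mcI$ with $\mu\in \mcI_c$ being zero by Lemma \ref{L:KI}. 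Applying the polynomial current functor $(-)[z]$, which is functorial and commutes with finite direct sums, produces a Lie algebra isomorphism $\mfg_\mcI[z]\iso (\mfg_\rho\oplus \mfz_\mcI)[z]=\mfg_\rho[z]\oplus \mfz_\mcI[z]$. Passing to enveloping algebras, using $U(\mfa\oplus \mfb)\cong U(\mfa)\ot U(\mfb)$ and the identification $U(\mfz_\mcI[z])\cong \C[\mcK_\lambda^{(r)}:\lambda\in \mcI,\,r\geq 1]$, then yields an algebra isomorphism $U(\mfg_\mcI[z])\iso \C[\mcK_\lambda^{(r)}:\lambda\in \mcI,\,r\geq 1]\ot U(\mfg_\rho[z])$. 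Since the presentation of $U(\mfg_\mcI[z])$ recorded just before the statement is precisely $U$ of the abstract Lie algebra $\mfg_\mcI[z]$, this isomorphism has the correct domain.

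It then remains to verify that this map sends $\mdF(u)$ to $F(u)+\mathscr{K}(u)$, which is a direct computation on homogeneous components. Writing $\mdF^{(r)}=F^\mcI z^r$ and using the expansion $F^\mcI=\sum_{\lambda\in \Lambda}X_\lambda^\bullet\ot X_\lambda^\mcI+\sum_{\mu\in \mcI}X_\mu^\bullet\ot X_\mu^\mcI$ valid after Lemma \ref{L:KI}, the isomorphism carries $\mdF^{(r)}$ to $F^{(r)}+\sum_{\mu\in \mcI}X_\mu^\bullet\ot \mcK_\mu^{(r+1)}$, where I have used the index shift $\mcK_\mu^\mcI z^r=\mcK_\mu^{(r+1)}$. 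Multiplying by $u^{-r-1}$, summing over $r\geq 0$, and reindexing the central part via $\sum_{r\geq 0}\mcK_\mu^{(r+1)}u^{-r-1}=\mcK_\mu(u)$ gives exactly $\mdF(u)\mapsto F(u)+\mathscr{K}(u)$, as required.

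The step I expect to be the main obstacle is precisely this last matching: one must justify that the abstractly defined functorial isomorphism assumes the explicit matrix form claimed on the chosen generators, and this rests entirely on the vanishing $X_\mu^\mcI=0$ for $\mu\in \mcI_c$ furnished by Lemma \ref{L:KI}, without which spurious $\mcI_c$-components would survive in the image of $\mdF(u)$. If one instead prefers a self-contained argument bypassing \eqref{gI-g+zI}, one can mirror the proof of Proposition \ref{P:gI-iso} line by line: realize $U(\mfg_\mcI[z])$ as the quotient of $U(\mfg_\mcJ[z])$ by the ideal generated by the coefficients of \eqref{g[z]:KJ=JK}, compose the current-algebra analogue of $\phi_\mcJ$ with the projection killing the central current variables indexed by $\mcI_c$, and check that the kernel of the composite agrees with this ideal. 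The inclusion of the kernel in the defining ideal uses the current-level analogue of Lemma \ref{L:KI}, whose proof is identical because \eqref{g[z]:KJ=JK} is, for each fixed $r$, formally the relation \eqref{KJ=JK}; the reverse inclusion holds because $\mathscr{K}(u)$ has coefficients in $\mcE=\End_{Y(\mfg)}V$ and hence commutes with $(1\ot J)(\Omega_\rho)$ and $(J\ot 1)(\Omega_\rho)$.
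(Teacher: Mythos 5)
Your proposal is correct and follows essentially the same route as the paper: the paper's proof likewise takes the Lie algebra isomorphism $\mfg_\mcI\iso \mfg_\rho\oplus \mfz_\mcI$ of Proposition \ref{P:gI-iso}, extends it through the current functor to $\mfg_\mcI[z]\iso \mfg_\rho[z]\oplus \mfz_\mcI[z]$, and passes to enveloping algebras. Your explicit matching of $\mdF(u)$ with $F(u)+\mathscr{K}(u)$ via Lemma \ref{L:KI} is a correct spelling-out of what the paper leaves implicit.
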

\begin{proof}
 The isomorphism $\mfg_\mcI\iso \mfg_\rho\oplus \mfz_\mcI$ furnished by Proposition \ref{P:gI-iso} (see \eqref{gI-g+zI}) extends to an isomorphism $ \mfg_\mcI[z]\iso (\mfg_\rho\oplus \mfz_\mcI)[z]\cong \mfg_\rho[z]\oplus \mfz_\mcI[z]$, which induces the desired isomorphism $\phi_\mcI^z$ between the corresponding enveloping algebras. 
\end{proof}
Setting $\mdK(u)=\sum_{r\geq 0}\mdK^{(r)}u^{-r-1}$,  we have $\phi_\mcI^z(\mdK(u))=\mathscr{K}(u)$ and $\mdK(u)=\mdF(u)-c_\mfg^{-1}\omega(\mdF(u))$. By Lemma \ref{L:KI}, $\mdK(u)$ can be equivalently defined by $\mdK(u)=\sum_{\lambda \in \mcI}X_\lambda^\bullet \ot \mdX_\lambda(u)$.

We will end this section by rewriting the defining relations of $U(\mfg_\mcI[z])$ using the classical $r$-matrix formalism, which is achieved with the use of Proposition \ref{P:r-matrix}. 
\begin{proposition}\label{P:g_I[z]}
 The defining relations \eqref{ex:g[z]-R} and \eqref{g[z]:KJ=JK} are equivalent to the relations
\begin{align}
&[\mdF_1(u),\mdF_2(v)]=\left[\frac{\Omega_\rho}{u-v}, \mdF_1(u)+\mdF_2(v)\right], \label{g_I[z]:R}\\
&[\mdK_2(u),(1\ot J)(\Omega_\rho)]=[\mdK_1(u),(J\ot 1)(\Omega_\rho)], \label{g_I[z]:Cas}
\end{align}
where  $\mdK(u)=\mdF(u)-c_\mfg^{-1}\omega(\mdF(u))$.
\end{proposition}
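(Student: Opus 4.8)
The plan is to prove the two equivalences separately, since the bracket relation and the $K$-relation decouple. First I would observe that \eqref{g_I[z]:R} is formally identical to the relation \eqref{g[z]-R:u} of Proposition \ref{P:r-matrix}, with the matrix $\mdF(u)$ playing the role of $F(u)$, and likewise \eqref{ex:g[z]-R} is the relation \eqref{g[z]-R} with $\mdF^{(r)}$ in place of $F^{(r)}$. The final assertion of Proposition \ref{P:r-matrix} establishes that \eqref{g[z]-R:u} \emph{alone} serves as the defining relation of $U(\mfg_\mcJ[z])$; that is, that \eqref{g[z]-R:u} is equivalent to \eqref{g[z]-R} \emph{without} invoking the symmetry relation \eqref{g[z]-sym}. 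Reading that argument verbatim with $F$ replaced by $\mdF$ therefore yields the equivalence of \eqref{ex:g[z]-R} and \eqref{g_I[z]:R}: one expands $(u-v)^{-1}=\sum_{p\geq 0}v^pu^{-p-1}$, compares the coefficient of $u^{-r-1}v^{-s-1}$ to recover \eqref{ex:g[z]-R}, and checks that the coefficients of $v^su^{-r}$ with $r,s\geq 0$ only produce relations already implied by \eqref{ex:g[z]-R}.

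The second step is to show that, granting the bracket relations, \eqref{g[z]:KJ=JK} and \eqref{g_I[z]:Cas} are equivalent, and this is pure coefficient extraction. The matrices $(1\ot J)(\Omega_\rho)$ and $(J\ot 1)(\Omega_\rho)$ are constant in $u$, and by definition $\mdK(u)=\sum_{r\geq 0}\mdK^{(r)}u^{-r-1}$, where each $\mdK^{(r)}=\mdF^{(r)}-c_\mfg^{-1}\omega(\mdF^{(r)})$ is the coefficient of $u^{-r-1}$ in $\mdK(u)=\mdF(u)-c_\mfg^{-1}\omega(\mdF(u))$, using that $\omega$ acts componentwise and hence commutes with extracting coefficients. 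Consequently the two sides of \eqref{g_I[z]:Cas} are formal series in $u^{-1}$ whose coefficients of $u^{-r-1}$ are precisely the two sides of \eqref{g[z]:KJ=JK}. Extracting these coefficients for each $r\geq 0$ gives \eqref{g[z]:KJ=JK}, and conversely multiplying by $u^{-r-1}$ and summing recovers \eqref{g_I[z]:Cas}; thus the two relations are equivalent.

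Combining the two steps yields the equivalence of the pair \eqref{ex:g[z]-R}, \eqref{g[z]:KJ=JK} with the pair \eqref{g_I[z]:R}, \eqref{g_I[z]:Cas}. I do not expect a genuine obstacle here, as the content is essentially bookkeeping; the only point requiring care is to confirm that the $r$-matrix bracket relation \eqref{g_I[z]:R} does not covertly impose the symmetry relation, so that it is interchangeable with \eqref{ex:g[z]-R} inside the quotient defining $U(\mfg_\mcI[z])$. This is exactly what the last sentence of Proposition \ref{P:r-matrix} guarantees, so the present proposition amounts to that result together with the trivial coefficient comparison for the $K$-relation.
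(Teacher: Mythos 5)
Your proof is correct and takes exactly the route the paper intends: the paper states this proposition without a written proof, remarking only that it "is achieved with the use of Proposition \ref{P:r-matrix}", and your two steps are precisely that omitted argument made explicit. Namely, the equivalence of \eqref{ex:g[z]-R} with \eqref{g_I[z]:R} is the coefficient-comparison argument of Proposition \ref{P:r-matrix} (which, as you correctly note, never invokes the symmetry relation, so it applies verbatim with $\mdF$ in place of $F$ and the extra coefficients only reproduce consequences of \eqref{ex:g[z]-R}), while the equivalence of \eqref{g[z]:KJ=JK} with \eqref{g_I[z]:Cas} is immediate since $\mdK(u)=\sum_{r\geq 0}\mdK^{(r)}u^{-r-1}$ and both brackets are taken with constant matrices.
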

%
%
%
\section{The \texorpdfstring{$R$}{R}-matrix presentation of the Yangian \texorpdfstring{$Y(\mfg)$}{}}\label{Sec:RTT}
We have now reached the second and main part of this paper, where we will focus on establishing the Yangian version of the results of Section \ref{Sec:r-matrix} and studying them in more detail. In this section specifically, we define the extended Yangian $X_\mcI(\mfg)$, the $RTT$-Yangian $Y_R(\mfg)$,  and we then study some of their basic properties.

We continue to assume that $V$ is a fixed finite-dimensional $Y(\mfg)$-module with corresponding homomorphism $\rho$, and that $V$ has a non-trivial (not necessarily proper) irreducible submodule. We let $R(u)$ denote the image of the universal $R$-matrix $\mcR(-u)$ (see Theorem \ref{T:DrThm3}) under $\rho\ot \rho$:
\begin{equation*}
 R(u)=(\rho\ot \rho)\mcR(-u)\in \End(V\otimes V)[\![u^{-1}]\!]. 
\end{equation*}
We adapt all of the notation from Section \ref{Sec:r-matrix}. In particular, we fix a basis  $\{e_1,\ldots,e_N\}$ of $V$ and we let $\{E_{ij}\}_{1\leq i,j\leq N}$ denote the usual elementary matrices with respect to this basis.

%
%
\subsection{The extended Yangian \texorpdfstring{$X_\mcI(\mfg)$}{}}
In this subsection we define and study a Hopf algebra $X_\mcI(\mfg)$ larger than $Y(\mfg)$ which we will eventually prove (in Section \ref{Sec:XR}) is a filtered deformation of $U(\mfg_\mcI[z])$.

\subsubsection{Definition of the extended Yangian}

We begin with the definition of $X_\mcI(\mfg)$ as an algebra.
\begin{definition}\label{D:Y}
 The extended Yangian $X_\mcI(\mfg)$ is the unital associative $\C$-algebra generated by elements $\{t_{ij}^{(r)}\,:\,1\leq i,j\leq N,\,r\geq 1 \}$ subject to the defining $RTT$-relation
 \begin{equation}
 R(u-v)T_1(u)T_2(v)=T_2(v)T_1(u)R(u-v) \quad \text{ in } \quad (\End V)^{\ot 2} \ot X_\mcI(\mfg)[\![v^{\pm 1},u^{\pm 1}]\!]\label{RTT-V}, 
 \end{equation}
where $T(u)=\sum_{i,j=1}^N E_{ij}\otimes t_{ij}(u)$ with $t_{ij}(u)=\delta_{ij}+\sum_{r\geq 1}t_{ij}^{(r)}u^{-r}$ for all $1\leq i,j\leq N$, and $R(u-v)$ has been identified with 
$R(u-v)\ot 1$. 
\end{definition}
\begin{remark}\label{R:rational}
An equivalent definition is obtained by replacing $R(u)$ by $f(u)R(u)$ for any fixed $f(u)\in 1+u^{-1}\C[\![u^{-1}]\!]$. In particular, if $V$ is irreducible then, by Theorem \ref{T:R-rat}, $R(u)$ can be replaced with a rational $R$-matrix.   
 
 Since no explicit description of the coefficients $\mcR_k$ of $\mcR(u)$ is known, $R(u)$ cannot be computed directly by evaluating $\mcR(-u)$. In practice, $R(u)$ is obtained by instead solving the equation \eqref{inter}. By Theorem \ref{T:R-rat}, this determines $R(u)$ up to multiplication by elements of $\C[\![u^{-1}]\!]$, provided $V$ is irreducible. See for example \cite[Proposition 3.13]{GRW}. 
 \end{remark}
Note that $X_\mcI(\mfg)$ comes equipped with a natural action on the underlying $Y(\mfg)$-module $V$. Namely, there is an algebra homomorphism
\begin{equation*}
 X_\mcI(\mfg)\to \End V, \quad T(u)\mapsto R(u).
\end{equation*}

A standard argument (see \cite[Theorem 1.5.1]{Mobook} and \cite{FRT}) shows that $X_\mcI(\mfg)$ is a Hopf algebra with coproduct $\Delta_\mcI$, antipode $S_\mcI$, and counit $\epsilon_\mcI$ given by 
\begin{equation*}
 \Delta_\mcI(T(u))=T_{[1]}(u)T_{[2]}(u), \quad S_\mcI(T(u))=T(u)^{-1},\quad \epsilon_\mcI(T(u))=I, 
\end{equation*}
respectively. Expressing $\Delta_\mcI$ in terms of the generating series $t_{ij}(u)$ and the generators $t_{ij}^{(r)}$, we have
\begin{equation*}
 \Delta_\mcI(t_{ij}(u))=\sum_{a=1}^N t_{ia}(u)\ot t_{aj}(u)\quad \text{ and }\quad \Delta_\mcI(t_{ij}^{(r)})=\sum_{a=1}^N \sum_{b=0}^r t_{ia}^{(b)}\ot t_{aj}^{(r-b)},
\end{equation*}
where $t_{kl}^{(0)}=\delta_{kl}$ for all $1\leq k,l\leq N$.

\subsubsection{Automorphisms of $X_\mcI(\mfg)$}
The extended Yangian $X_\mcI(\mfg)$ has at least two important families of automorphisms. The first family we will discuss turns out to be closely tied to the Yangian $Y_R(\mfg)$, as we will make precise in Subsection \ref{ssec:Y->X}. 

Recall that $\mcE=\End_{Y(\mfg)}V\subset \End V$, and consider the tensor product $\mcE\ot u^{-1}\C[\![u^{-1}]\!]$. This space can be identified with $\prod_{\lambda\in \mcI}(u^{-1}\C[\![u^{-1}]\!])_\lambda$, i.e. 
the collection of all tuples $(f_\lambda(u))_{\lambda\in \mcI}\subset u^{-1}\C[\![u^{-1}]\!]$, the identification being given by
\begin{equation}\label{mbF-tuple}
 (f_\lambda(u))_{\lambda\in \mcI}\in \prod_{\lambda\in \mcI}(u^{-1}\C[\![u^{-1}]\!])_\lambda\mapsto \mbf^\circ(u)=\sum_{\lambda\in \mcI} X_\lambda^\bullet \ot f_\lambda(u)\in \mcE\ot u^{-1}\C[\![u^{-1}]\!].
\end{equation}
Here $(u^{-1}\C[\![u^{-1}]\!])_\lambda$ just denotes a copy of $u^{-1}\C[\![u^{-1}]\!]$ associated to $\lambda$. The following lemma shows that the extended Yangian $X_\mcI(\mfg)$ admits a family of automorphisms indexed by $\prod_{\lambda\in \mcI}(u^{-1}\C[\![u^{-1}]\!])_\lambda$.
\begin{lemma}\label{L:auto}
 Let $(f_\lambda(u))_{\lambda\in \mcI}\in \prod_{\lambda\in \mcI}(u^{-1}\C[\![u^{-1}]\!])_\lambda$ and set $\mbf(u)=I+\mbf^\circ(u)$. Then the assignment 
 \begin{equation}
  m_{\mbf}:T(u)\mapsto \mbf(u)T(u)   \label{aut:m_F}
 \end{equation}
 extends to an automorphism $m_{\mbf}$ of $X_\mcI(\mfg)$.
\end{lemma}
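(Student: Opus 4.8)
The plan is to show that $m_\mbf$ respects the defining $RTT$-relation \eqref{RTT-V}, so that by the presentation of $X_\mcI(\mfg)$ it extends to an algebra endomorphism, and then to exhibit an explicit two-sided inverse of the same type. Since $\mbf(u)$ has scalar entries lying in $\C[\![u^{-1}]\!]$, the matrix $\mbf(u)T(u)$ has entries that are $\C[\![u^{-1}]\!]$-linear combinations of the generating series $t_{ij}(u)$, hence well-defined elements of $X_\mcI(\mfg)[\![u^{-1}]\!]$ whose leading term is $\delta_{ij}+O(u^{-1})$; reading off coefficients of $u^{-r}$ thus defines $m_\mbf$ on the generators $t_{ij}^{(r)}$, and the only real content is relation-preservation.

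The first thing I would record is the key commutation property. For each $\lambda\in\mcI$ the endomorphism $X_\lambda^\bullet$ lies in $\mcE=\End_{Y(\mfg)}V$, so it commutes with $\rho(Y(\mfg))$. By Theorem \ref{T:DrThm3} we have $\mcR(u)\in(Y(\mfg)\ot Y(\mfg))[\![u^{-1}]\!]$, so every coefficient of $R(u-v)=(\rho\ot\rho)\mcR(v-u)$ is a $\C$-linear combination of matrices of the form $\rho(a)\ot\rho(b)$ with $a,b\in Y(\mfg)$. Consequently both $X_\lambda^\bullet\ot 1$ and $1\ot X_\lambda^\bullet$ commute with every coefficient of $R(u-v)$, and therefore $\mbf_1(u)$ and $\mbf_2(v)$ each commute with $R(u-v)$ as identities of formal series whose matrix coefficients commute. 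I would emphasize that this is precisely where the choice $\mcE=\End_{Y(\mfg)}V$ is essential, as opposed to the larger $\mcE_\mfg=\End_\mfg V$: commutation with the full $R(u)$ requires commutation with $\rho(Y(\mfg))$, not merely with $\rho(\mfg)$.

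Next, writing $\wt T(u)=\mbf(u)T(u)$, I would verify directly that $R(u-v)\wt T_1(u)\wt T_2(v)=\wt T_2(v)\wt T_1(u)R(u-v)$. This uses only three kinds of commutation: $\mbf_1(u)$ and $\mbf_2(v)$ commute with $R(u-v)$ (above); $\mbf_1(u)$ and $\mbf_2(v)$ commute with one another, acting on different tensor legs with scalar coefficients; and $\mbf_1(u)$ commutes with $T_2(v)$ (respectively $\mbf_2(v)$ with $T_1(u)$), for the same reason. Starting from $R(u-v)\mbf_1(u)T_1(u)\mbf_2(v)T_2(v)$, one slides $\mbf_1(u)$ and $\mbf_2(v)$ to the far left past $R(u-v)$ and past the intervening $T$-factor, applies the $RTT$-relation \eqref{RTT-V} for $T(u)$, and then slides $\mbf_1(u),\mbf_2(v)$ back through the $T$-factors to reconstruct $\wt T_2(v)\wt T_1(u)R(u-v)$; no additional relations arise. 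Hence $m_\mbf$ preserves \eqref{RTT-V} and extends to an algebra endomorphism of $X_\mcI(\mfg)$.

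Finally, for bijectivity I would observe that $\mcE$ is a unital subalgebra of $\End V$ containing $I$, so $\mbf(u)=I+\mbf^\circ(u)\in\mcE\ot\C[\![u^{-1}]\!]$ has invertible constant term $I$ while $\mbf^\circ(u)$ has no constant term. Its matrix inverse $\mbg(u)=\sum_{k\geq 0}(-\mbf^\circ(u))^k$ therefore again has the form $I+\mbg^\circ(u)$ with $\mbg^\circ(u)\in\mcE\ot u^{-1}\C[\![u^{-1}]\!]$, so $m_{\mbg}$ is defined and is an endomorphism by the preceding steps. Since the (scalar) entries of $\mbf(u)$ and $\mbg(u)$ are fixed by these maps, one computes $m_\mbg\circ m_\mbf(T(u))=\mbf(u)\mbg(u)T(u)=T(u)$ and likewise $m_\mbf\circ m_\mbg=\id$, so $m_\mbf$ is an automorphism with inverse $m_\mbg$. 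The one genuinely delicate point, and the only step requiring care, is the commutation of $\mcE$ with $R(u-v)$ established in the second paragraph; every other step is formal bookkeeping.
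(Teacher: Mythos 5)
Your proof is correct and follows essentially the same route as the paper's: the key point in both is that $\mcE=\End_{Y(\mfg)}V$ centralizes $\rho(Y(\mfg))$, hence $\mbf_a(u)$ commutes with $R(u-v)$ and with the opposite $T$-factor, which gives preservation of \eqref{RTT-V}, and invertibility comes from the invertibility of $\mbf(u)$ in $\mcE\ot\C[\![u^{-1}]\!]$. Your write-up simply spells out the sliding computation and the inverse automorphism $m_{\mbf^{-1}}$ explicitly, which the paper leaves as "follows easily."
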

\begin{proof}
 Using that $\mbf(u)\in \mcE\ot \C[\![u^{-1}]\!]$ and $R(u)\in (\rho(Y(\mfg))\ot\rho(Y(\mfg)))[\![u^{-1}]\!]$, we can conclude that $\mbf(u)$ satisfies the defining $RTT$-relation of $X_\mcI(\mfg)$. Indeed, by definition $\mcE$ is the centralizer of $\rho(Y(\mfg))$ in $\End V$, which implies $ R(u-v)\mbf_a(u)=\mbf_a(u)R(u-v)$ for $a\in \{1,2\}$. Moreover, $[\mbf_1(u),\mbf_2(v)]=0$, from which the assertion follows easily. 
 
 Applying this observation in conjunction with $[\mbf_1(u),T_2(v)]=0=[\mbf_2(v),T_1(u)]$, we deduce that $m_{\mbf}$ extends to an algebra endomorphism of $X_\mcI(\mfg)$. The invertibility of $m_{\mbf}$ follows from the invertibility of $\mbf(u)$ as an element $\mcE[\![u^{-1}]\!]$. \qedhere
\end{proof}
The second family of automorphisms is indexed by the complex numbers. For each $c\in \C$, the assignment 
\begin{equation}\label{tau_c:R}
T(u)\mapsto T(u-c)
\end{equation}
extends to an automorphism of $X_\mcI(\mfg)$. These automorphisms are closely related to the automorphisms $\tau_c$ of $Y(\mfg)$ defined in \eqref{J:shift}.

\subsubsection{The associated graded algebra $\gr X_\mcI(\mfg)$} 
By \eqref{R-exp}, the $R$-matrix $R(u)$ admits an expansion 
\begin{equation}\label{R(u)-eval}
R(u)=I+\sum_{k\geq 1}R^{(k)}u^{-k}=I-\Omega_\rho u^{-1}+\left((J\ot 1-1\ot J)(\Omega_\rho)+\tfrac{1}{2}\Omega_\rho^2\right)u^{-2}+\sum_{k\geq 3} R^{(k)} u^{-r}  
\end{equation}
with $R^{(k)}=(-1)^k(\rho\ot \rho)(\mcR_k)$ for each $k\geq 1$. Setting $T^\circ(u)=T(u)-I$, the defining relation \eqref{RTT-V} can be rewritten as
\begin{align}\label{eq1}
\begin{split}
  [T_1^\circ(u),T_2^\circ(v)]&=\frac{1}{u-v}\left([\Omega_\rho,T_1^\circ(u)]+[\Omega_\rho,T_2^\circ(v)]+\Omega_\rho T_1^\circ(u)T_2^\circ(v)-T_2^\circ(v)T_1^\circ(u)\Omega_\rho\right)\\
                          &+\sum_{k\geq 2}\frac{1}{(u-v)^k}\left([T_2^\circ(v),R^{(k)}]+[T_1^\circ(u), R^{(k)}]+T_2^\circ(v)T_1^\circ(u)R^{(k)}-R^{(k)}T_1^\circ(u)T_2^\circ(v) \right),
\end{split}
\end{align}
where $\Omega_\rho$ and $R^{(k)}$ have been identified with $\Omega_\rho\ot 1$ and $R^{(k)}\ot 1$, respectively. 

The degree assignment $\deg t_{ij}^{(r)}=r-1$ for all $1\leq i,j\leq N$ and $r\geq 1$ equips $X_\mcI(\mfg)$ with the structure of a filtered algebra.
 Let $\mathbf{F}_k(X_\mcI(\mfg))$ (or $\mbF_k^\mcI$ for brevity) denote the subspace spanned by elements of degree less than or equal to $k$, and set $\bar{t}_{ij}^{(r)}$ to be the image of $t_{ij}^{(r)}$ in $\mbF_{r-1}^\mcI/\mbF_{r-2}^\mcI\subset \gr X_\mcI(\mfg)$. 
\begin{proposition}\label{P:X-cur}
The assignment 
 \begin{equation}
  \varphi_\mcI: \mdF_{ij}^{(r-1)}\mapsto \bar{t}_{ij}^{(r)} \quad \forall \; 1\leq i,j\leq N,\; r\geq 1 \label{varphi_I}
 \end{equation}
extends to a surjective morphism of algebras $\varphi_\mcI:U(\mfg_\mcI[z])\onto \gr X_\mcI(\mfg)$. 
\end{proposition}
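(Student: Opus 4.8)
The plan is to verify that the images $\bar t_{ij}^{(r)}$ satisfy the defining relations of $U(\mfg_\mcI[z])$ and then observe that surjectivity comes for free. Surjectivity is immediate: since the $t_{ij}^{(r)}$ generate $X_\mcI(\mfg)$ and $\mbF^\mcI$ is the filtration induced by the degrees of these generators, their symbols $\bar t_{ij}^{(r)}$ generate $\gr X_\mcI(\mfg)$, and these are precisely the images of the generators $\mdF_{ij}^{(r-1)}$. It therefore remains to check that the $\bar t_{ij}^{(r)}$ satisfy the two families of relations \eqref{ex:g[z]-R} and \eqref{g[z]:KJ=JK} (equivalently those of Proposition \ref{P:g_I[z]}). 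Write $T^{(r)}=\sum_{i,j}E_{ij}\ot t_{ij}^{(r)}$ and $\bar T^{(r)}=\sum_{i,j}E_{ij}\ot\bar t_{ij}^{(r)}$.

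For the bracket relation I would expand each $(u-v)^{-k}$ in \eqref{eq1} as a power series in $u^{-1}$ and compare the coefficient of $u^{-(r+1)}v^{-(s+1)}$. The left-hand side contributes $[T_1^{(r+1)},T_2^{(s+1)}]$, and the unique term on the right of full degree $r+s$ is $[\Omega_\rho,T_2^{(r+s+1)}]$; every remaining contribution — the quadratic $k=1$ terms together with all terms having $k\geq2$, which carry the matrices $R^{(k)}$ — has degree at most $r+s-1$, because each additional factor $T^{(j)}$ and each additional power of $(u-v)^{-1}$ lowers the degree by one. Passing to top degree $r+s$ in $\gr X_\mcI(\mfg)$ yields $[\bar T_1^{(r+1)},\bar T_2^{(s+1)}]=[\Omega_\rho,\bar T_2^{(r+s+1)}]$, which is the image of \eqref{ex:g[z]-R}. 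This step is routine bookkeeping.

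Having verified \eqref{ex:g[z]-R}, I already obtain a surjection $U(\mfg_\mcJ[z])\onto\gr X_\mcI(\mfg)$, so every consequence of the relations of $U(\mfg_\mcJ[z])$ holds among the symbols. In particular, the graded analogue of Part \eqref{K-rel:3} of Lemma \ref{L:K-rels} gives $\bar K^{(r+1)}=\sum_{\mu\in\mcJ}X_\mu^\bullet\ot\bar t_\mu^{(r+1)}$, where I set $\bar K^{(r+1)}=\bar T^{(r+1)}-c_\mfg^{-1}\omega(\bar T^{(r+1)})$ (the candidate image of $\mdK^{(r)}$) and $\bar t_\mu^{(r+1)}$ denotes the $X_\mu^\bullet$-component of $\bar T^{(r+1)}$. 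Feeding this into the linear-independence arguments from the proof of Lemma \ref{L:KI} (which rest on Lemma \ref{L:Wx=/=ad}), one sees that the relation \eqref{g[z]:KJ=JK} for $\bar K^{(r+1)}$ is equivalent to the vanishing $\bar t_\mu^{(r+1)}=0$ for all $\mu\in\mcI_c$ — that is, to the assertion that $t_\mu^{(r+1)}\in\mbF_{r-1}^\mcI$.

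The crux is thus to establish this degree drop, and here the genuinely Yangian input enters through the second-order term $R^{(2)}=(J\ot1-1\ot J)(\Omega_\rho)+\tfrac12\Omega_\rho^2$ of \eqref{R(u)-eval}: the classes indexed by $\mcI_c$ are precisely those $\mfg$-endomorphisms failing to commute with the $J(X_\lambda^\bullet)$. I would isolate from \eqref{eq1} the sub-leading contribution carrying $R^{(2)}$, namely terms of the form $[T_2^{(\cdot)},R^{(2)}]$, and then apply a projection built from the decomposition $\End V=\ad(\mfg)\oplus W$ and the dual basis of Lemma \ref{L:KI}: projecting away $\ad(\mfg)$ in the appropriate tensor slot annihilates the leading term $[\Omega_\rho,\bar T_2^{(\cdot)}]$, which lies in $\ad(\mfg)\ot\ad(\mfg)$, while the $\mcI_c$-part of the $R^{(2)}$-term survives because $[X_\mu^\bullet,J(X_\lambda^\bullet)]$ meets $\ad(\mfg)$ trivially for $\mu\in\mcI_c$ by Lemma \ref{L:Wx=/=ad}. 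Matching degrees then forces $\bar t_\mu^{(r+1)}=0$. The main obstacle will be the bookkeeping needed to separate this $R^{(2)}$-contribution from the quadratic terms of \eqref{eq1}, all of which are equally sub-leading; I expect to carry this out by induction on $r$, using \eqref{ex:g[z]-R} to rewrite the quadratic terms and the invariance $[\Omega_\rho,\Delta(X_\lambda^\bullet)]=0$ to keep them under control.
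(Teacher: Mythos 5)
Your surjectivity argument and your verification of the bracket relation \eqref{ex:g[z]-R} are correct and essentially identical to Step 1 of the paper's proof, and your reformulation of the remaining task is legitimate: once the symbols satisfy the relations of $U(\mfg_\mcJ[z])$, factoring through $U(\mfg_\mcI[z])$ is equivalent, by the linear-independence arguments behind Lemmas \ref{L:Wx=/=ad} and \ref{L:KI}, to the degree drop $\bar t_\mu^{(r+1)}=0$ for all $\mu\in \mcI_c$ (this is exactly how the proof of Proposition \ref{P:gI-iso} identifies the two ideals). The genuine gap is that you never prove this degree drop: you name the difficulty yourself -- separating the $R^{(2)}$-contribution of \eqref{eq1} from the quadratic terms, ``all of which are equally sub-leading'' -- and then defer it to an unspecified induction. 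That deferred step is precisely where the one nontrivial idea of the proof is required.

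Moreover, the mechanism you sketch cannot succeed as described. First, $[\Omega_\rho,T_2^{(k+1)}]$ lies in $\ad(\mfg)\ot \End V\ot X_\mcI(\mfg)$, not in $\ad(\mfg)\ot\ad(\mfg)\ot X_\mcI(\mfg)$ as you assert, so one must project \emph{both} tensor slots onto $W$ to kill both leading terms $[\Omega_\rho,T_1^{(k+1)}]$ and $[\Omega_\rho,T_2^{(k+1)}]$; this exact killing matters, because their sub-leading classes in $\mbF^\mcI_{k-1}/\mbF^\mcI_{k-2}$ are not expressible through the symbols and would otherwise contaminate any congruence taken modulo $\mbF^\mcI_{k-2}$. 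But the double projection annihilates neither the quadratic sum $\sum_{a}\bigl(\Omega_\rho T_1^{(k+1-a)}T_2^{(a)}-T_2^{(a)}T_1^{(k+1-a)}\Omega_\rho\bigr)$ nor the $\tfrac{1}{2}\Omega_\rho^2$ part of $R^{(2)}$: their matrix components are products such as $X_\lambda^\bullet E_{ij}$, which are not confined to $\ad(\mfg)$ in either slot, and they sit in exactly the same filtration degree $k-1$ as the $J$-part you want to isolate. Worse, any single coefficient extraction -- e.g.\ the $v^0u^{-k-2}$ coefficient, which is \eqref{u-exp} -- carries the $R^{(2)}$-commutators with the asymmetric weights $[T_1^{(k)},R^{(2)}]+(k+1)[T_2^{(k)},R^{(2)}]$, inseparably entangled with the quadratic sum. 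The device that resolves all of this in the paper, and which is absent from your proposal, is a symmetrization through two distinct formal expansions: extract the $v^0u^{-k-2}$ coefficient after expanding \eqref{eq1} in $(\C[v])[\![u^{-1}]\!]$, extract the $u^0v^{-k-2}$ coefficient after expanding the same identity in $(\C[u])[\![v^{-1}]\!]$, and add the two congruences; the skew-symmetry of \eqref{eq1} under exchanging the tensor slots and $u\leftrightarrow v$ makes the leading terms and the entire quadratic sums cancel, leaving the clean relation $(k+2)\bigl([T_1^{(k)},R^{(2)}]+[T_2^{(k)},R^{(2)}]\bigr)\equiv 0 \mod \mbF^\mcI_{k-2}$, with no projections needed. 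From there the $\Omega_\rho^2$ part of $R^{(2)}$ is discarded via Lemma \ref{L:F^J,K} (giving \eqref{TJ=JT}), and the splitting $\mdT^{(k)}=\mdL^{(k)}+\mdD^{(k)}$, together with the fact that $J$ is a $\mfg$-module map, yields \eqref{I-rel:gr}; your linear-independence endgame is then the paper's Lemma \ref{L:KI}. Without this cancellation, or a genuine substitute for it, your argument stops exactly at the crux.
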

\begin{proof}
Let $\mathds{T}(u)=\sum_{k\geq 1} \mathds{T}^{(k)}u^{-k}$, where $\mathds{T}^{(k)}=\sum_{i,j=1}^N E_{ij}\ot \bar{t}_{ij}^{(k)}$.  
 
\noindent \textit{Step 1}: The relation $[\mdT_1(u),\mdT_2(v)]=\left[\frac{\Omega_\rho}{u-v},\mdT_1(u)+\mdT_2(v)\right]$ is satisfied. 

For each $k>0$, we expand $(u-v)^{-k}$ as an element of $(\C[v])[\![u^{-1}]\!]$: 
\begin{equation}\label{exp:1}
 (u-v)^{-k}=\sum_{s\geq 0} \binom{k+s-1}{s}v^{s} u^{-s-k}. 
\end{equation}
Note the following simple fact: if $\mathds{A}(u,v)=\sum_{a,b\geq 1}\mathds{A}_{a,b}u^{-a}v^{-b}$ with $\mathds{A}_{a,b}\in (\End V)^{\ot 2}\ot \mbF_{a+b-c}^\mcI$, then
\begin{equation}\label{obs}
\frac{1}{(u-v)^k}\mathds{A}(u,v)=\sum_{a\in \Z_{\geq k+1},b\in \Z} \mathds{B}_{a,b} u^{-a}v^{-b} \quad \text{ with }\; \mathds{B}_{a,b}\in (\End V)^{\ot 2}\ot \mbF_{a+b-c-k}^\mcI\quad \forall \; a,b\geq 0,
\end{equation}
 where $\mbF_{-l}^\mcI=\{0\}$ for all $l\in \mathbb{N}$. Here $c$ is assumed to be a fixed positive integer depending on $\mathds{A}(u,v)$. 

For each $l\geq 0$, set 
\begin{equation*}
\mbF_{l}(u,v)=(\End V)^{\ot 2}\ot \prod_{a\in \Z_{\geq 0},b\in \Z} \mbF^\mcI_{a+b-l} u^{-a}v^{-b}\subset (\End V)^{\ot 2}\ot X_\mcI(\mfg)[\![v^{\pm 1},u^{-1}]\!],
\end{equation*}
and note that $\mbF_l(u,v)/\mbF_{l+1}(u,v)$ can be naturally identified with
\begin{equation*}
(\End V)^{\ot 2}\ot \prod_{a\in \Z_{\geq 0},b\in \Z} (\gr_{a+b-l}X_\mcI(\mfg) ) u^{-a}v^{-b}\subset (\End V)^{\otimes 2}\ot (\gr X_\mcI(\mfg))[\![v^{\pm 1},u^{-1}]\!],
\end{equation*}
where $\gr_k X_\mcI(\mfg)$ denotes the $k$-th graded component of $\gr X_\mcI(\mfg)$, which is understood to equal zero if $k<0$. 

We will simultaneously show both sides of \eqref{eq1} belong to $\mbF_2(u,v)$ and compute their images in the quotient $\mbF_2(u,v)/\mbF_3(u,v)$. By the above observation this yields an identity in $(\End V)^{\otimes 2}\ot (\gr X_\mcI(\mfg))[\![v^{\pm 1},u^{-1}]\!]$.

If $\mathds{A}(u,v)=\Omega_\rho T_1^\circ(u)T_2^\circ(v)$ or $\mathds{A}(u,v)=T_2^\circ(v)T_1^\circ(u)\Omega_\rho$, then the integer $c$ (see \eqref{obs}) is equal to $2$ and hence 
$(u-v)^{-1}\mathds{A}(u,v)\equiv 0 \mod \mbF_3(u,v)$. 

If instead $\mathds{A}(u,v)$ is equal to one of the terms that appears within the parentheses on the second line of the right-hand side of \eqref{eq1} (i.e. a term involving $R^{(k)}$ with $k\geq 2$), then 
$c=1$ or $2$ but $k\geq 2$. Therefore the observation \eqref{obs} yields that $(u-v)^{-k}\mathds{A}(u,v)\equiv 0 \mod \mbF_3(u,v)$.

Since both $[T_1^\circ(u),T_2^\circ(v)]$ and $[\frac{\Omega_\rho}{u-v},T_1^\circ(u)+T_2^\circ(v)]$ belong to $\mbF_{2}(u,v)$ with images 
$[\mdT_1(u),\mdT_2(v)]$ and $[\frac{\Omega_\rho}{u-v},\mdT_1(u)+\mdT_2(v)]$ in $\mbF_{2}(u,v)/\mbF_3(u,v)$, respectively, we obtain the relation 
\begin{equation*}
 [\mdT_1(u),\mdT_2(v)]=\left[\frac{\Omega_\rho}{u-v},\mdT_1(u)+\mdT_2(v)\right]. 
\end{equation*}
Note that Step 1 implies that there is a surjective algebra homomorphism $U(\mfg_\mcJ[z])\onto \gr X_\mcI(\mfg)$. To verify that it factors through 
$U(\mfg_\mcI[z])$ we must show that the assignment $\varphi_\mcI$ preserves the relation \eqref{g_I[z]:Cas}. In order to state this more precisely we define, for each $\lambda \in \Lambda^\bullet$ and $k\geq 1$, $\bar{t}_\lambda^{(k)}=\sum_{i,j=1}^N a_{ij}^\lambda \bar{t}_{ij}^{(k)}$ (see \eqref{ChangeofBasis}). Then the statement that $\varphi_\mcI$ preserves 
\eqref{g_I[z]:Cas} is equivalent to the statement that, for each $k\geq 1$, $\mdD^{(k)}=\sum_{\lambda\in \mcJ} X_\lambda^\bullet \ot \bar{t}_\lambda^{(k)}$ satisfies
\begin{equation}
[\mdD^{(k)}_2,(1\ot J)(\Omega_\rho)]=[\mdD^{(k)}_1,(J\ot 1)(\Omega_\rho)]. \label{I-rel:gr} 
\end{equation}

\noindent \textit{Step 2: } the relation \eqref{I-rel:gr} is satisfied for every $k\geq 1$. 

We will divide this step of the proof into a few smaller steps. 

\noindent \textit{Step 2.1: } The relation 
\begin{equation}
 [\mdT_2^{(k)},(J\ot 1-1\ot J)(\Omega_\rho)]=-[\mdT_1^{(k)},(J\ot 1-1\ot J)(\Omega_\rho)] \label{TJ=JT}
\end{equation}
holds in $\gr X_\mcI(\mfg)$ for all $k\geq 1$.

We will prove \eqref{TJ=JT} by expanding \eqref{eq1} in two different ways. First,  we compute for each $k\geq 1$ the $v^0u^{-k-2}$ coefficient of both sides of \eqref{eq1} modulo $\mbF^\mcI_{k-2}$, using the expansion \eqref{exp:1}. Using \eqref{obs}, it is not difficult to deduce that no term on the right-hand side of \eqref{eq1} involving $R^{(k)}$ with $k\geq 3$ makes a contribution, and the same is true for the terms $T_2^\circ(v)T_1^\circ(u)R^{(2)}$ and $R^{(2)}T_1^\circ(u)T_2^\circ(v)$. As the coefficient of $v^0 u^{-k-2}$ 
in $[T_1^\circ(u),T_2^\circ(v)]$ is zero, we are left with the equivalence 
\begin{align}
\begin{split}\label{u-exp}
 0\equiv &[\Omega_\rho,T_1^{(k+1)}]+[\Omega_\rho,T_2^{(k+1)}]\\
                        &+\sum_{a=1}^{k}(\Omega_\rho T_1^{(k+1-a)}T_2^{(a)}- T_2^{(a)}T_1^{(k+1-a)}\Omega_\rho)+[T_1^{(k)},R^{(2)}]+(k+1)[T_2^{(k)},R^{(2)}]\mod \mbF^\mcI_{k-2}.
\end{split}
\end{align}
Next, we compute the $u^0v^{-k-2}$ coefficient of both sides of \eqref{eq1} modulo $\mbF^\mcI_{k-2}$ after expanding $(u-v)^{-r}$ as an element of $(\C[u])[\![v^{-1}]\!]$
and viewing \eqref{eq1} as an equality in $(\End V)^{\ot 2} \ot X_\mcI(\mfg)[\![u^{\pm 1}, v^{-1}]\!]$. Using the symmetry and skew-symmetry between $u$ and $v$ in the relation \eqref{eq1}, we deduce from \eqref{u-exp} the equivalence
\begin{align}
\begin{split}\label{v-exp}
 0\equiv &-[\Omega_\rho,T_1^{(k+1)}]-[\Omega_\rho,T_2^{(k+1)}]\\
                        &-\sum_{b=1}^{k}(\Omega_\rho T_1^{(b)}T_2^{(k+1-b)}- T_2^{(k+1-b)}T_1^{(b)}\Omega_\rho)+(k+1)[T_1^{(k)},R^{(2)}]+[T_2^{(k)},R^{(2)}]\mod \mbF^\mcI_{k-2}.
\end{split}                        
\end{align}
Adding \eqref{u-exp} and \eqref{v-exp} and dividing by $k+2$, we obtain 
\begin{equation*}
 [T_2^{(k)},R^{(2)}]\equiv -[T_1^{(k)},R^{(2)}] \mod \mbF^\mcI_{k-2}\implies [\mdT_2^{(k)},R^{(2)}]= -[\mdT_1^{(k)},R^{(2)}] \; \text{ in }\; \gr X_\mcI(\mfg). 
\end{equation*}
Recall from \eqref{R(u)-eval} that $R^{(2)}=(J\ot 1-1\ot J)(\Omega_{\rho})+\tfrac{1}{2}\Omega_{\rho}^2$. Substituting this into the above equality gives
\begin{equation}\label{eq4}
 [\mdT_2^{(k)},(J\ot 1-1\ot J)(\Omega_\rho)]+\tfrac{1}{2}[\mdT_2^{(k)},\Omega_\rho^2]= -[\mdT_1^{(k)},(J\ot 1-1\ot J)(\Omega_\rho)]-\tfrac{1}{2}[\mdT_1^{(k)},\Omega_\rho^2]. 
\end{equation}
Since $\mdT^{(k)}$ is a homomorphic image of $F^\mcJ z^{k-1}\in U(\mfg_\mcJ[z])$, Lemma \ref{L:F^J,K} yields $[\mdT_2^{(k)},\Omega_\rho]=-[\mdT_1^{(k)},\Omega_\rho]$, from which the identity
\begin{equation*}
 \tfrac{1}{2}[\mdT_2^{(k)},\Omega_\rho^2]=-\tfrac{1}{2}[\mdT_1^{(k)},\Omega_\rho^2]
\end{equation*}
follows directly. Therefore the relation \eqref{eq4} implies the relation \eqref{TJ=JT}. 

\noindent \textit{Step 2.2: } We have 
\begin{equation}\label{DJ-JD}
 [\mdD_2^{(k)},(J\ot 1-1\ot J)(\Omega_\rho)]=-[\mdD_1^{(k)},(J\ot 1-1\ot J)(\Omega_\rho)].
\end{equation}
for each $k\geq 1$. 

For each $k\geq 1$, set $\mathds{L}^{(k)}=\mdT^{(k)}-\mdD^{(k)}$, so that $\mdL^{(k)}=\sum_{\lambda\in\Lambda}X_\lambda^\bullet \ot \bar{t}_\lambda^{(k)}$. 
Using that $J:\ad(\mfg)\to \ad_\mfg(\mfgl(V))$ is a morphism of $\mfg$-modules, it is straightforward to derive from the relation $[\Omega_\rho,\mdL_2^{(k)}]=-[\Omega_\rho,\mdL_1^{(k)}]$ that 
\begin{equation}\label{LJ-JL}
 [\mdL_2^{(k)},(J\ot 1-1\ot J)(\Omega_\rho)]=-[\mdL_1^{(k)},(J\ot 1-1\ot J)(\Omega_\rho)].
\end{equation}
Subtracting \eqref{LJ-JL} from \eqref{TJ=JT} yields \eqref{DJ-JD}.

Since $\mdD^{(k)}$ is a homomorphic image of $\sum_{i,j=1}^N E_{ij}\ot K_{ij}z^{k-1}\in \End V\ot U(\mfg_\mcJ[z])$, Lemma \ref{L:K-rels} implies that $ [\Omega_\rho,\mdD^{(k)}_2]=0=[\Omega_\rho,\mdD_1^{(k)}]$.
We thus also have 
\begin{equation*}
 [\mdD_2^{(k)},(J\ot 1)(\Omega_\rho)]=0=[\mdD_1^{(k)},(1\ot J)(\Omega_\rho)].
\end{equation*}
Subtracting this identity from \eqref{DJ-JD}  leaves us with the equality 
\eqref{I-rel:gr}.

By Step 1, Step 2 and Proposition \ref{P:g_I[z]}, the assignment \eqref{varphi_I} extends to an epimorphism $\varphi_\mcI: U(\mfg_\mcI[z])\onto \gr X_\mcI(\mfg)$. \qedhere
\end{proof}

We will prove that $\varphi_\mcI$ is in fact an isomorphism, but this will be delayed until Subsection \ref{ssec:X-str}.  
%
%
\subsection{The \texorpdfstring{$RTT$}--Yangian \texorpdfstring{$Y_R(\mfg)$}{}}
Our present goal is to give an exposition of $Y_R(\mfg)$ analogous to that given for $X_\mcI(\mfg)$ in the previous subsection. 

\subsubsection{Definition of the \texorpdfstring{$RTT$}--Yangian }
Let us begin with the definition of the Yangian $Y_R(\mfg)$: 
\begin{definition}
The $RTT$-Yangian $Y_R(\mfg)$ is the quotient of $X_\mcI(\mfg)$ by the two-sided ideal generated by the elements $z_{ij}^{(r)}$, for $1\leq i,j\leq N$ and $r\geq 1$, defined by 
\begin{equation}
 \mcZ(u)=\sum_{i,j=1}^N E_{ij}\ot z_{ij}(u)=S_\mcI^2(T(u))T(u+\tfrac{1}{2}c_\mathfrak{g})^{-1}, \quad  \label{Y-sym}
\end{equation}
where $z_{ij}(u)=\delta_{ij}+\sum_{r\geq 1} z_{ij}^{(r)}u^{-r}$ for each pair of indices $1\leq i,j\leq N$. 
\end{definition}
The ideal of $X_\mcI(\mfg)$ generated by $\{z_{ij}^{(r)}\,:\, 1\leq i,j\leq N,\, r\geq 1\}$ will be conveniently denoted by  $(\mcZ(u)-I)$. 
Note that it is not obvious that this ideal is a Hopf ideal, and hence that $Y_R(\mfg)$ inherits the structure of a Hopf algebra from $X_\mcI(\mfg)$. This will, however, be a consequence of Lemma \ref{L:X->YJ} and Theorem \ref{T:YR->YJ}, which will be proven in the next section. 

We will denote the images of $t_{ij}^{(r)}, t_{ij}(u)$ and $T(u)$ in $Y_R(\mfg)$ by $\tau_{ij}^{(r)}, \tau_{ij}(u)$ and $\mathcal{T}(u)$, respectively. 

For each $c\in \C$ the automorphism \eqref{tau_c:R} factors through the Yangian $Y_R(\mfg)$ yielding an automorphism given by the assignment $\mcT(u)\mapsto \mcT(u-c)$. We will prove in Subsection \ref{ssec:Y->X} that each automorphism $m_{\mbf}$ of $X_\mcI(\mfg)$ (see Lemma \ref{L:auto}) also induces an automorphism of $Y_R(\mfg)$, but that these turn out to all be equal to the identity map. This fact will be used to give an equivalent characterization of $Y_R(\mfg)$.

\subsubsection{The associated graded algebra $\gr Y_R(\mfg)$} 
The $RTT$-Yangian $Y_R(\mfg)$ inherits an algebra filtration from $X_\mcI(\mfg)$ via the quotient filtration; this is equivalent to assigning $\deg \tau_{ij}^{(r)}=r-1$. Let $\bar\tau_{ij}^{(r)}$ denote the image of $\tau_{ij}^{(r)}$ in $\mbF_{r-1}(Y_R(\mfg))/\mbF_{r-2}(Y_R(\mfg))=\gr_{r-1}Y_R(\mfg)$. 
\begin{proposition} \label{P:Y-cur}
  The assignment 
 \begin{equation*}
 \varphi: F_{ij}^{(r-1)}\mapsto \bar{\tau}_{ij}^{(r)} \quad \forall\; 1\leq i,j\leq N,\; r\geq 1
 \end{equation*}
extends to a surjective algebra morphism $\varphi:U(\mfg_\rho[z])\onto\gr Y_R(\mfg)$.
\end{proposition}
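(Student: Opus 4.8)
The plan is to derive the statement from Proposition \ref{P:X-cur} by passing through the defining quotient and identifying the single extra relation that survives in the associated graded algebra. The canonical surjection $\pi:X_\mcI(\mfg)\onto Y_R(\mfg)$ respects the degree assignment $\deg t_{ij}^{(r)}=\deg \tau_{ij}^{(r)}=r-1$, so it is a morphism of filtered algebras and induces an epimorphism $\gr\pi:\gr X_\mcI(\mfg)\onto \gr Y_R(\mfg)$ with $\gr\pi(\bar t_{ij}^{(r)})=\bar\tau_{ij}^{(r)}$. Composing with the epimorphism $\varphi_\mcI:U(\mfg_\mcI[z])\onto \gr X_\mcI(\mfg)$ of Proposition \ref{P:X-cur} yields a surjective algebra morphism $U(\mfg_\mcI[z])\onto \gr Y_R(\mfg)$ sending $\mdF_{ij}^{(r-1)}\mapsto \bar\tau_{ij}^{(r)}$. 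Since $U(\mfg_\rho[z])$ is the quotient of $U(\mfg_\mcI[z])$ by the ideal generated by the coefficients of $\mdK(u)=\mdF(u)-c_\mfg^{-1}\omega(\mdF(u))$ (see Proposition \ref{P:wtphi-gen} and the remark following Corollary \ref{C:g[z]}), it suffices to show that $\varphi_\mcI(\mdK^{(r)})$ maps to zero in $\gr Y_R(\mfg)$ for all $r\geq 0$; recalling the notation of the proof of Proposition \ref{P:X-cur}, this is exactly the assertion that the image of each $\mdD^{(k)}=\sum_{\lambda\in\mcJ}X_\lambda^\bullet\ot\bar t_\lambda^{(k)}$ in $\gr Y_R(\mfg)$ vanishes for every $k\geq 1$.

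To establish this I would extract the leading symbols of the ideal generators $z_{ij}^{(r)}$. Writing $\mcZ(u)=S_\mcI^2(T(u))\,T(u+\tfrac{1}{2}c_\mfg)^{-1}$, one first checks that $\gr S_\mcI^2=\id$ on $\gr X_\mcI(\mfg)$: the antipode $S_\mcI$ is filtered with $\gr S_\mcI(\mdT^{(r)})=-\mdT^{(r)}$ (immediate from $S_\mcI(T(u))=T(u)^{-1}$), and $\gr X_\mcI(\mfg)$ is cocommutative since $\Delta_\mcI(t_{ij}^{(r)})=t_{ij}^{(r)}\ot 1+1\ot t_{ij}^{(r)}+(\text{lower degree})$. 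As the shift $T(u+\tfrac12 c_\mfg)$ also agrees with $T(u)$ to leading order, the degree-$(r-1)$ symbol of $z_{ij}^{(r)}$ vanishes, forcing $z_{ij}^{(r)}\in\mbF_{r-2}^\mcI$. The crucial point is then to identify the \emph{next} symbol: I claim the degree-$(r-2)$ symbol of $z_{ij}^{(r)}$ is a nonzero scalar multiple of $\mdD^{(r-1)}_{ij}$, modulo the two-sided ideal of $\gr X_\mcI(\mfg)$ generated by $\mdD^{(1)},\dots,\mdD^{(r-2)}$.

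For $r=2$ this is a short direct calculation illustrating the mechanism: using $S_\mcI(T(u))=T(u)^{-1}$ one obtains the exact identity $z_{ij}^{(2)}=\sum_{a}[t_{aj}^{(1)},t_{ia}^{(1)}]+\tfrac{1}{2}c_\mfg\,t_{ij}^{(1)}$, whose degree-$0$ symbol collapses, through the relation \eqref{wtF-sym} satisfied by $\mdT^{(1)}$, to $\tfrac{1}{2}c_\mfg\,\mdD^{(1)}_{ij}$. In general I would proceed by induction on $r$, computing the degree-$(r-2)$ symbol of $z_{ij}^{(r)}$ from the subleading expansions of $S_\mcI^2(T(u))$ (again via $S_\mcI(T(u))=T(u)^{-1}$) and of $T(u+\tfrac12 c_\mfg)^{-1}$: the surviving contribution is the shift term together with the quadratic bracket terms, which \eqref{wtF-sym} again reduces to a nonzero multiple of $\mdD^{(r-1)}$ plus products involving the $\mdD^{(k)}$ with $k<r-1$. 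Because each $z_{ij}^{(r)}$ is zero in $Y_R(\mfg)$, its leading symbol lies in $\ker(\gr\pi)$; the lower-order terms vanish there by the inductive hypothesis, so $\gr\pi(\mdD^{(r-1)}_{ij})=0$ for all $i,j$ and all $r\geq 2$, i.e. the image of every $\mdD^{(k)}$ vanishes. This produces the desired factorization $U(\mfg_\rho[z])\onto\gr Y_R(\mfg)$ sending $F_{ij}^{(r-1)}\mapsto\bar\tau_{ij}^{(r)}$. The main obstacle is precisely the bookkeeping in this last symbol computation --- controlling $S_\mcI^2$ to subleading order for general $r$ and verifying that every contribution other than the shift term either cancels or is absorbed, via \eqref{wtF-sym}, into the ideal generated by the lower $\mdD^{(k)}$.
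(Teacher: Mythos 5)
Your proposal is correct --- both the overall logic and your two concrete claims check out --- but it reorganizes the argument relative to the paper, so a comparison is worthwhile. The paper makes the same initial reduction (the bracket relations descend from Proposition \ref{P:X-cur} through the filtered quotient map) and then verifies the single surviving relation \emph{downstairs} in $Y_R(\mfg)$: there $\mcZ(u)=I$ gives $S_\mcI^2(\mcT(u))=\mcT(u+\tfrac{1}{2}c_\mfg)$ exactly, and comparing the expansion of $S_\mcI^2(t_{ij}^{(r+1)})$ modulo $\mbF^\mcI_{r-2}$ with that of the shifted series yields \eqref{eq:U->Y}, which is precisely the statement that the images of your $\mdD^{(k)}$ vanish in $\gr Y_R(\mfg)$. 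You work \emph{upstairs} instead, identifying the leading symbols of the kernel generators $z_{ij}^{(r)}$ inside $\gr X_\mcI(\mfg)$ and invoking the (valid) principle that symbols of elements of $\Ker(X_\mcI(\mfg)\onto Y_R(\mfg))$ lie in the kernel of the induced graded map. Both routes rest on the identity \eqref{Y-sym} and on the same subleading expansion of $S_\mcI^2$ via the antipode recursion and the graded commutation relations; yours additionally requires expanding $T(u+\tfrac{1}{2}c_\mfg)^{-1}$ and tracking the cross terms of the product $S_\mcI^2(T(u))\,T(u+\tfrac{1}{2}c_\mfg)^{-1}$, which the downstairs computation never sees. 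In fact those cross terms cancel exactly against the quadratic terms arising from the inversion, and one finds $\bar{z}_{ij}^{(r+1)}=\tfrac{r}{2}c_\mfg\,\mdD_{ij}^{(r)}$ on the nose, with no correction from the lower $\mdD^{(k)}$; so your hedge and the induction it forces are unnecessary, and your $r=2$ identity $z_{ij}^{(2)}=\sum_{a=1}^N[t_{aj}^{(1)},t_{ia}^{(1)}]+\tfrac{1}{2}c_\mfg\,t_{ij}^{(1)}$ is the correct base instance of this exact formula. A genuine payoff of your packaging is that it establishes Parts \eqref{Z-Y:2} and \eqref{Z-Y:3} of Lemma \ref{L:Z-Y} non-circularly at this early stage, whereas the paper obtains them only later via $\mcY(u)$ and Theorem \ref{T:XR->CxYR}, results that themselves depend on Proposition \ref{P:Y-cur}. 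The one soft spot is that your general-$r$ symbol computation is sketched rather than executed; since it amounts to the expansion of $S_\mcI^2(t_{ij}^{(r+1)})$ that the paper's proof carries out in any case, this is bookkeeping rather than a gap.
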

\begin{proof}  We will take a slightly more explicit route than taken in the proof of Proposition \ref{P:X-cur} and work directly with the generators $\bar{\tau}_{ij}^{(r)}$ of 
$\gr Y_R(\mfg)$. 
 By \eqref{F-sym'}, Corollary \ref{C:g[z]} and Proposition \ref{P:X-cur} it suffices to show that 
\begin{equation}
  \tfrac{1}{2}\bar{\tau}_{ij}^{(r)}= c_\mfg^{-1}\sum_{a=1}^N[\bar{\tau}_{ia}^{(r)},\bar{\tau}_{aj}^{(1)}] \quad \forall \; r\geq 1. \label{eq:U->Y}
\end{equation}
In $Y_R(\mfg)$ we have, by \eqref{Y-sym}, the relation $\mcT(u+\tfrac{1}{2}c_\mfg)=S_\mcI^2(\mcT(u))$ where $S_\mcI^2(\mcT(u))$ is understood to equal the image of $S^2_\mcI(T(u))$ in $Y_R(\mfg)$ under the natural quotient map. Since 
\begin{equation*}
\left(u+\tfrac{1}{2}c_\mfg\right)^{-k}=\sum_{s\geq 0}\binom{k+s-1}{s}\left(-\tfrac{1}{2}c_\mfg\right)^s u^{-s-k}\quad \forall \; k\geq 1,
\end{equation*}
the $u^{-r-1}$ coefficient of $\tau_{ij}(u+\tfrac{1}{2}c_\mfg)$ is equal to 
\begin{equation}
\tau_{ij}^{(r+1)}-\tfrac{r}{2}c_\mfg \tau_{ij}^{(r)} \mod \mbF_{r-2}(Y_R(\mfg)). \label{eq:s-check}
\end{equation}
Let $\wh T^{(r)}=\sum_{i,j=1}^N E_{ij}\otimes \wh t_{ij}^{(r)}$ denote the $u^{-r}$ coefficient of $T(u)^{-1}$. In particular, $\wh T^{(r)}$ can be determined inductively from the relation $\wh T^{(r)}=-\sum_{b=1}^r T^{(b)}\wh T^{(r-b)}=-\sum_{b=1}^r \sum_{i,j=1}^N E_{ij}\otimes \left(\sum_{a=1}^Nt_{ia}^{(b)} \wh t_{aj}^{(r-b)}\right)$. By definition of the antipode $S_\mcI$, we thus have 
\begin{equation*}
  S_\mcI^2(t_{ij}^{(r+1)})=-S_\mcI(\sum_{b=1}^{r+1}\sum_{a=1}^Nt_{ia}^{(b)} \wh t_{aj}^{(r+1-b)})=-\sum_{b=1}^{r+1}\sum_{a=1}^N S_\mcI(\wh t_{aj}^{(r+1-b)}) \wh t_{ia}^{(b)}.
\end{equation*} 
Expanding the right-hand side and using that $S_\mcI$ is a filtration preserving map with $S_\mcI(t_{kl}^{(s)})=\wh t_{kl}^{(s)}\equiv -t_{kl}^{(s)} \mod \mbF^\mcI_{s-2}$ for each $s\geq 1$, we obtain
\begin{align*}
 S_\mcI^2(t_{ij}^{(r+1)})&=\sum_{b=1}^{r+1}\sum_{a=1}^N S_\mcI(\wh t_{aj}^{(r+1-b)})(t_{ia}^{(b)}+\sum_{d=1}^{b-1}\sum_{c=1}^N t_{ic}^{(d)}\wh t_{ca}^{(b-d)})\\
                    &\equiv t_{ij}^{(r+1)}-\sum_{b=1}^{r}\sum_{a=1}^N \wh t_{aj}^{(r+1-b)}t_{ia}^{(b)}+\sum_{d=1}^{r}\sum_{c=1}^N t_{ic}^{(d)}\wh t_{cj}^{(r+1-d)} \mod \mbF^\mcI_{r-2}\\
                    &\equiv t_{ij}^{(r+1)}+\sum_{b=1}^{r}\sum_{a=1}^N [t_{aj}^{(r+1-b)},t_{ia}^{(b)}] \mod \mbF^\mcI_{r-2}.
\end{align*}
Combining this with the relation $[\mdT_1^{(r)},\mdT_2^{(s)}]=[\Omega_\rho,\mdT_2^{(r+s)}]$ of $\gr X_\mcI(\mfg)$ (which holds by Proposition \ref{P:X-cur}),
we arrive at the relation 
\begin{equation*}
 S_\mcI^2(t_{ij}^{(r+1)})\equiv t_{ij}^{(r+1)}+r\sum_{a=1}^N [t_{aj}^{(1)},t_{ia}^{(r)}] \mod \mbF^\mcI_{r-2}.
\end{equation*}
As the same relation must hold in $Y_R(\mfg)/\mbF_{r-2}(Y_R(\mfg))$ with each generator $t_{kl}^{(s)}$ replaced by $\tau_{kl}^{(s)}$, equating the resulting expression with \eqref{eq:s-check} and subtracting $\tau_{ij}^{(r+1)}$ from both sides gives \eqref{eq:U->Y}. \qedhere
\end{proof}
We conclude this section by noting a simple, but rather useful, corollary of Proposition \ref{P:Y-cur}. 
\begin{corollary}\label{C:mingen}
 The algebra $Y_R(\mfg)$ is generated by the elements $\tau_{ij}^{(r)}$ with $1\leq i,j\leq N$ and $1\leq r\leq 2$. 
\end{corollary}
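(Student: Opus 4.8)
The plan is to derive the corollary directly from Proposition \ref{P:Y-cur} together with the elementary principle that a filtered algebra is generated by any choice of lifts of a generating set of its associated graded algebra. The first input is that the current algebra $\mfg_\rho[z]$ is generated, as a Lie algebra, by the two lowest graded pieces. Indeed, by Corollary \ref{C:g[z]} the assignment $\phi_\rho^z$ is an isomorphism $\mfg_\rho[z]\iso\mfg[z]$ carrying $\mathrm{span}\{F_{ij}^{(r)}\}_{i,j}$ onto $\mfg z^r$ for each $r\geq 0$, and $\mfg[z]$ is generated as a Lie algebra by $\mfg$ and $\mfg z$ (as recalled in Section \ref{Sec:Pre}). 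Consequently $\mfg_\rho[z]$ is generated as a Lie algebra by $\mathrm{span}\{F_{ij}^{(0)}\}=\mfg_\rho$ and $\mathrm{span}\{F_{ij}^{(1)}\}=\mfg_\rho z$, and therefore $U(\mfg_\rho[z])$ is generated as an algebra by $\{F_{ij}^{(r-1)}:1\leq i,j\leq N,\;1\leq r\leq 2\}$.

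Applying the epimorphism $\varphi$ of Proposition \ref{P:Y-cur}, it follows that the associated graded algebra $\gr Y_R(\mfg)$ is generated by the homogeneous elements $\bar\tau_{ij}^{(1)}\in\gr_0 Y_R(\mfg)$ and $\bar\tau_{ij}^{(2)}\in\gr_1 Y_R(\mfg)$. It remains to lift this statement to $Y_R(\mfg)$. Let $B$ denote the subalgebra of $Y_R(\mfg)$ generated by $\{\tau_{ij}^{(r)}:1\leq i,j\leq N,\;1\leq r\leq 2\}$. I would prove by induction on $k$ that $\mbF_k(Y_R(\mfg))\subseteq B$, the base case being trivial since $\mbF_{-1}(Y_R(\mfg))=\{0\}$. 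Given $x\in\mbF_k(Y_R(\mfg))$, its image in $\gr_k Y_R(\mfg)$ is a sum of products of the generators $\bar\tau_{ij}^{(1)},\bar\tau_{ij}^{(2)}$ of total degree $k$; replacing each such product by the corresponding product of $\tau_{ij}^{(1)},\tau_{ij}^{(2)}$ produces an element $y\in B\cap\mbF_k(Y_R(\mfg))$ with $x-y\in\mbF_{k-1}(Y_R(\mfg))$, whence $x\in B$ by the inductive hypothesis.

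This argument presents no genuine obstacle; the only point requiring a little care is the filtered-lifting induction, where one must check that each monomial in the homogeneous generators of $\gr Y_R(\mfg)$ lifts to a monomial of the \emph{same} filtration degree in $B$. This holds precisely because the degree assignment $\deg\tau_{ij}^{(r)}=r-1$ matches the grading in which $\bar\tau_{ij}^{(r)}$ lives in $\gr_{r-1}Y_R(\mfg)$, so that a product of generators of total graded degree $k$ lifts to a product lying in $\mbF_k(Y_R(\mfg))$ and reducing to the given homogeneous term modulo $\mbF_{k-1}(Y_R(\mfg))$.
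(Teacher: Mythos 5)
Your proposal is correct and follows essentially the same route as the paper: both deduce from Proposition \ref{P:Y-cur} (together with the fact that $\mfg[z]$, hence $\mfg_\rho[z]$, is generated in degrees $0$ and $1$) that $\gr Y_R(\mfg)$ is generated by $\{\bar\tau_{ij}^{(1)},\bar\tau_{ij}^{(2)}\}$, and then lift through the filtration by induction. The only cosmetic difference is that the paper inducts on the index $r$ of each generator $\tau_{ij}^{(r)}$, while you induct on the filtration degree $k$ showing $\mbF_k(Y_R(\mfg))\subseteq B$; these are interchangeable formulations of the same standard argument.
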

\begin{proof} 
 Since $U(\mfg_\rho[z])$ is generated by $\{F_{ij}^{(0)},F_{ij}^{(1)}\}_{1\leq i,j\leq N}$ and $\vphi:U(\mfg_\rho[z])\to \gr Y_R(\mfg)$ is surjective, 
 the associated graded algebra $\gr Y_R(\mfg)$ is generated by $\{\bar \tau_{ij}^{(1)},\bar \tau_{ij}^{(2)}\}_{1\leq i,j\leq N}$. If $r>2$, then 
 we may write $\bar \tau_{ij}^{(r)}$ as a homogeneous polynomial $Q$ in the variables $\{\bar\tau_{kl}^{(1)},\bar\tau_{kl}^{(2)}\}_{1\leq i,j\leq N}$ of degree $r-1$. Let $P$ be 
 the polynomial in $\{\tau_{kl}^{(1)},\tau_{kl}^{(2)}\}_{1\leq i,j\leq N}$  obtained from $Q$ by replacing $\bar\tau_{kl}^{(s)}$ with $\tau_{kl}^{(s)}$ for $s=1,2$ and $1\leq k,l\leq N$. Then $P\in \mbF_{r-1}(Y_R(\mfg))$ and $\tau_{ij}^{(r)}-P\in \mbF_{r-2}(Y_R(\mfg))$. The result thus follows by a straightforward induction on $r\geq 1$. 
\end{proof}
%
\section{Equivalence of the two definitions of the Yangian}\label{Sec:YR->YJ}
In this section we prove that, irrespective of the choice of $V$, we always have $Y_R(\mfg)\cong Y(\mfg)$.  In the process we prove that the surjection $\varphi:U(\mfg_\rho[z])\onto \gr Y_R(\mfg)$ from Proposition \ref{P:Y-cur} is an isomorphism, yielding a Poincar\'{e}-Birkhoff-Witt theorem for $Y_R(\mfg)$: see Theorem \ref{T:PBW}. This in turn implies that the center of $Y_R(\mfg)$ is trivial, as will be explained in Corollary \ref{C:ZY=1}.

The first step in proving the equivalence of the two Yangians is the construction of a surjective 
Hopf algebra homomorphism $X_\mcI(\mfg)\onto Y(\mfg)$, and this is the content of the next lemma.
\begin{lemma}\label{L:X->YJ}
 The assignment 
 \begin{equation}
  \wt \Phi: T(u) \to (\rho\otimes 1)(\mcR(-u))
 \end{equation}
 extends to a surjective homomorphism of Hopf algebras $\wt \Phi:X_\mcI(\mfg)\onto Y(\mfg)$.
\end{lemma}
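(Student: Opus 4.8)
The plan is to realize $\wt\Phi$ first as an algebra homomorphism, then check compatibility with the Hopf structure, and finally prove surjectivity. Write $\wt T(u)=(\rho\ot 1)\mcR(-u)\in \End V\ot Y(\mfg)[\![u^{-1}]\!]$; since $\mcR(-u)=1+O(u^{-1})$ we have $\wt T(u)=I+\sum_{r\geq 1}\wt T^{(r)}u^{-r}$, so it is enough to show that $\wt T(u)$ obeys the defining $RTT$-relation \eqref{RTT-V}. First I would deduce this directly from the quantum Yang-Baxter equation \eqref{QYBE}: replacing $u\mapsto -u$ and $v\mapsto -v$ there yields
\begin{equation*}
\mcR_{12}(v-u)\mcR_{13}(-u)\mcR_{23}(-v)=\mcR_{23}(-v)\mcR_{13}(-u)\mcR_{12}(v-u)
\end{equation*}
in $(Y(\mfg)^{\ot 3})[\![v^{\pm 1},u^{\pm 1}]\!]$. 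Applying $\rho\ot\rho\ot\id$ and using $(\rho\ot\rho)\mcR(v-u)=R(u-v)$ together with the identities $(\rho\ot\rho\ot\id)\mcR_{13}(-u)=\wt T_1(u)$ and $(\rho\ot\rho\ot\id)\mcR_{23}(-v)=\wt T_2(v)$ turns this into $R(u-v)\wt T_1(u)\wt T_2(v)=\wt T_2(v)\wt T_1(u)R(u-v)$, which is exactly \eqref{RTT-V}. Hence $\wt\Phi$ extends to a homomorphism of algebras.

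Next I would confirm that $\wt\Phi$ is a morphism of Hopf algebras by checking agreement on the generators $T(u)$ (both sides being algebra maps). For the coproduct, \eqref{co-R} gives $(\id\ot\Delta)\wt T(u)=(\rho\ot\id\ot\id)\big[\mcR_{12}(-u)\mcR_{13}(-u)\big]=\wt T_{[1]}(u)\wt T_{[2]}(u)$, which is precisely $(\wt\Phi\ot\wt\Phi)\Delta_\mcI(T(u))$ because $\Delta_\mcI(T(u))=T_{[1]}(u)T_{[2]}(u)$. The two relations in \eqref{S(R),eps(R)} then handle the counit and antipode: the second gives $(\id\ot\epsilon)\wt T(u)=(\rho\ot 1)\big[(\id\ot\epsilon)\mcR(-u)\big]=I=\epsilon_\mcI(T(u))$, while the first gives $(\id\ot S)\wt T(u)=(\rho\ot 1)\mcR(-u)^{-1}=\wt T(u)^{-1}=\wt\Phi(S_\mcI(T(u)))$. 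Thus $\wt\Phi$ intertwines all the Hopf structure maps.

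To finish, I would prove surjectivity by exhibiting $\mfg$ and $J(\mfg)$, which together generate $Y(\mfg)$, inside the image. Expanding $\wt T(u)$ with \eqref{R-exp} gives $\wt T^{(1)}=-(\rho\ot 1)\Omega=\mcF$ and
\begin{equation*}
\wt T^{(2)}=\sum_{\lambda\in\Lambda}\big(J(X_\lambda^\bullet)\ot X_\lambda-X_\lambda^\bullet\ot J(X_\lambda)\big)+\tfrac{1}{2}(\rho\ot 1)\Omega^2.
\end{equation*}
Since $\phi_\rho(F)=\mcF$ (see \eqref{g-map}), one has $X_\mu=\sum_{i,j}a_{ij}^\mu\mcF_{ij}=\sum_{i,j}a_{ij}^\mu\wt\Phi(t_{ij}^{(1)})$, so $\mfg\subset\im\wt\Phi$. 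Taking matrix coefficients of $\wt T^{(2)}$ modulo the subalgebra generated by $\mfg$—which already lies in $\im\wt\Phi$—kills the first and third terms and leaves $\wt T_{ij}^{(2)}\equiv-\sum_{\lambda\in\Lambda}c_{ij}^\lambda J(X_\lambda)$. Forming $\sum_{i,j}a_{ij}^\mu\wt T_{ij}^{(2)}$ and invoking the duality $\sum_{i,j}a_{ij}^\mu c_{ij}^\lambda=\delta_{\mu\lambda}$—which follows from \eqref{ChangeofBasis} and the linear independence of $\{X_\lambda^\bullet\}_{\lambda\in\Lambda^\bullet}$—then gives $J(X_\mu)\in\im\wt\Phi$ for all $\mu\in\Lambda$. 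As $\mfg\cup J(\mfg)$ generates $Y(\mfg)$, $\wt\Phi$ is surjective.

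All of the computations are formal consequences of the listed properties \eqref{co-R}--\eqref{R-exp} and \eqref{S(R),eps(R)} of the universal $R$-matrix, so I do not anticipate a serious obstacle; the hardest parts are bookkeeping. The main point requiring care is the sign in $R(u)=(\rho\ot\rho)\mcR(-u)$, which must be tracked when converting \eqref{QYBE} into \eqref{RTT-V} and is precisely what produces the argument reversal $v-u$ versus $u-v$. The other delicate step is the surjectivity argument, where one must genuinely work modulo the subalgebra generated by $\mfg$ to isolate $J(\mfg)$ from the quadratic term of $\wt T^{(2)}$ and correctly exploit the duality between the two change-of-basis families in \eqref{ChangeofBasis}.
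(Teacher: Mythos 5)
Your proof is correct and takes essentially the same route as the paper's: the QYBE \eqref{QYBE} pushed through $\rho\ot\rho\ot 1$ gives the $RTT$-relation, the properties \eqref{co-R} and \eqref{S(R),eps(R)} give compatibility with the Hopf structure maps, and the expansion \eqref{R-exp} yields surjectivity by producing $\mcF_{ij}$ and $J(\mcF_{ij})$ modulo $U(\mfg)=\mbF_0^J$ in the image. (One immaterial sign slip: from \eqref{ChangeofBasis} and $\mcF_{ij}=-\sum_{\lambda}c_{ij}^\lambda X_\lambda$ one gets $\sum_{i,j}a_{ij}^\mu\mcF_{ij}=-X_\mu$, not $+X_\mu$, which of course still places $\mfg$ in the image.)
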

\begin{proof} 
The lemma follows from the same kind of arguments as used to prove \cite[Theorem 3.16]{GRW}.
 By \eqref{QYBE}, $\mcR(u)$ satisfies 
 \begin{equation*}
 \mcR_{12}(v-u)\mcR_{13}(-u)\mcR_{23}(-v)=\mcR_{23}(-v)\mcR_{13}(-u)\mcR_{12}(v-u).
 \end{equation*}
 Applying the homomorphism
 $\rho\otimes \rho\otimes 1$ to both sides of this relation we obtain that $\wt \Phi(T(u))$ satisfies the defining $RTT$-relation \eqref{RTT-V}. Therefore,
 $\wt \Phi$ extends to a homomorphism  $\wt \Phi:X_\mcI(\mfg)\to Y(\mfg)$. By \eqref{R-exp}, 
 \begin{equation}\label{R(-u)}
  \mcR(-u)=1-\Omega u^{-1}+\sum_{\lambda\in \Lambda}(J(X_\lambda)\otimes X_\lambda -X_\lambda\otimes J(X_\lambda))u^{-2}+\tfrac{1}{2}\Omega^2 u^{-2} + O(u^{-3}).
 \end{equation}
After applying $\rho\otimes 1$ to both sides, we obtain that
\begin{equation}
 \wt \Phi(t_{ij}^{(1)})=\mcF_{ij} \quad \text{ and }\quad \wt \Phi(t_{ij}^{(2)})\equiv J(\mcF_{ij}) \mod \mbF_0^J\; \text{ for all }\; 1\leq i,j\leq N, \label{wtPhi-exp}
\end{equation}
where we recall that the elements $\mcF_{ij}\in \mfg$, which were defined in the proof of Proposition \ref{P:new-g}, are determined by $\sum_{i,j=1}^N E_{ij}\ot \mcF_{ij}=-(\rho\ot 1)\Omega$. 
 
Since $\mbF_0^J=U(\mfg)$ is generated by $\{\mcF_{ij}\}_{1\leq i,j\leq N}$, this shows that $\wt\Phi$ is surjective. The proof that $\wt \Phi$ is a coalgebra morphism commuting with the antipodes of $X_\mcI(\mfg)$ and $Y(\mfg)$ follows from the relations \eqref{co-R} and \eqref{S(R),eps(R)}: see the proof of \cite[Theorem 3.16]{GRW}. 
\end{proof}
We are now prepared to prove that $Y_R(\mfg)$ and $Y(\mfg)$ are isomorphic. 
\begin{theorem}\label{T:YR->YJ}
 The homomorphism $\wt \Phi$ factors through the quotient algebra $Y_R(\mfg)=X_\mcI(\mfg)/(\mcZ(u)-I)$ to yield an isomorphism of algebras 
 $\Phi:Y_R(\mfg)\iso Y(\mfg)$ which sends $\mcT(u)$ to $(\rho\otimes 1)(\mcR(-u))$. 
\end{theorem}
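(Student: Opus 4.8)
The plan is to establish the theorem in two stages: first show that $\wt\Phi$ kills the ideal $(\mcZ(u)-I)$, so that it descends to a surjection $\Phi\colon Y_R(\mfg)\to Y(\mfg)$, and then prove $\Phi$ is injective by comparing associated graded algebras. For the first stage I would compute $(\id\ot\wt\Phi)\mcZ(u)$ directly. Since $\wt\Phi$ is a morphism of Hopf algebras by Lemma \ref{L:X->YJ}, it intertwines $S_\mcI^2$ with $S^2$, whence $(\id\ot\wt\Phi)S_\mcI^2(T(u))=(\id\ot S^2)(\rho\ot 1)\mcR(-u)$. By Corollary \ref{C:S2} we have $S^2=\tau_{-\frac12 c_\mfg}$, and the shift relation $\tau_{c,d}\mcR(u)=\mcR(u+d-c)$ from \eqref{R-inv,shift} (with $c=0$ and $d=-\tfrac12 c_\mfg$) gives $(\id\ot S^2)(\rho\ot 1)\mcR(-u)=(\rho\ot 1)\mcR(-u-\tfrac12 c_\mfg)=(\id\ot\wt\Phi)T(u+\tfrac12 c_\mfg)$. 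Because $\wt\Phi$ is an algebra homomorphism it commutes with taking the matrix inverse, so $(\id\ot\wt\Phi)\mcZ(u)=(\id\ot\wt\Phi)\big(S_\mcI^2(T(u))\big)\cdot(\id\ot\wt\Phi)\big(T(u+\tfrac12 c_\mfg)^{-1}\big)=I$. Hence $\wt\Phi(z_{ij}^{(r)})=0$ for all $i,j,r$, the ideal $(\mcZ(u)-I)$ lies in $\Ker\wt\Phi$, and $\Phi$ is well defined and surjective.

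For the second stage I would first check that $\Phi$ is filtered. By \eqref{wtPhi-exp} one has $\Phi(\tau_{ij}^{(1)})=\mcF_{ij}\in\mbF_0^J$ and $\Phi(\tau_{ij}^{(2)})\equiv J(\mcF_{ij})\bmod\mbF_0^J$, so $\Phi(\tau_{ij}^{(2)})\in\mbF_1^J$; since Corollary \ref{C:mingen} shows that $Y_R(\mfg)$ is generated by the degree $\le 1$ elements $\tau_{ij}^{(1)},\tau_{ij}^{(2)}$ and the filtration is multiplicative, it follows that $\Phi(\mbF_k(Y_R(\mfg)))\subset\mbF_k^J$. Thus $\Phi$ induces a graded morphism $\gr\Phi\colon\gr Y_R(\mfg)\to\gr Y(\mfg)$. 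I would then compare $\gr\Phi\circ\varphi$ with $\varphi_J\circ\phi_\rho^z$ as algebra maps $U(\mfg_\rho[z])\to\gr Y(\mfg)$, where $\varphi$ is the surjection of Proposition \ref{P:Y-cur}, $\varphi_J$ the isomorphism of Proposition \ref{P:grJ}, and $\phi_\rho^z$ the isomorphism of Corollary \ref{C:g[z]}. Using the formulas above together with $\phi_\rho^z(F_{ij}^{(r)})=\mcF_{ij}z^r$, both maps send $F_{ij}^{(0)}\mapsto\overline{\mcF_{ij}}$ and $F_{ij}^{(1)}\mapsto\overline{J(\mcF_{ij})}$. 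As $\mfg[z]$, and hence $U(\mfg_\rho[z])$, is generated by $\{F_{ij}^{(0)},F_{ij}^{(1)}\}$, the two homomorphisms coincide, so $\gr\Phi\circ\varphi=\varphi_J\circ\phi_\rho^z$ is an isomorphism.

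Finally, the injectivity of $\gr\Phi\circ\varphi$ forces the surjection $\varphi$ to be injective, hence an isomorphism (which simultaneously yields the Poincar\'e--Birkhoff--Witt Theorem \ref{T:PBW}), and consequently $\gr\Phi=(\varphi_J\circ\phi_\rho^z)\circ\varphi^{-1}$ is an isomorphism. Because the filtration on $Y_R(\mfg)$ is exhaustive with $\mbF_{-1}(Y_R(\mfg))=\{0\}$, a filtered homomorphism that induces an isomorphism on associated graded algebras is itself an isomorphism; invoking this standard fact lets me conclude that $\Phi$ is an isomorphism. The only genuinely delicate point is the bookkeeping in the first stage, namely correctly tracking how $S^2$, realized as the shift $\tau_{-\frac12 c_\mfg}$ via Corollary \ref{C:S2}, acts on the spectral parameter of $\mcR(-u)$ through \eqref{R-inv,shift}; everything else reduces, thanks to Corollary \ref{C:mingen}, to a routine comparison on the low-degree generators.
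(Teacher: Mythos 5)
Your proposal is correct and follows essentially the same route as the paper: kill $(\mcZ(u)-I)$ by combining the Hopf property of $\wt\Phi$ with $S^2=\tau_{-\frac12 c_\mfg}$ and the shift identity for $\mcR(u)$, then use \eqref{wtPhi-exp} and Corollary \ref{C:mingen} to see $\Phi$ is filtered, and conclude by showing the associated graded map is an isomorphism via comparison with $\varphi_J\circ\phi_\rho^z$ on the degree $0$ and $1$ generators. Your formulation $\gr\Phi\circ\varphi=\varphi_J\circ\phi_\rho^z$ is just a rearrangement of the paper's identity $\varphi_\bullet\circ\varphi=\mathrm{id}$, and, exactly as in the paper, it simultaneously yields the injectivity of $\varphi$ and hence Theorem \ref{T:PBW}.
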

\begin{proof}
  By Corollary \ref{C:S2} the relation $S^2=\tau_{-\frac{1}{2}c_\mfg}$ is satisfied in $Y(\mfg)$ and by the second identity of \eqref{R-inv,shift} we have 
  $(1\otimes \tau_{-\frac{1}{2}c_\mfg})(\mcR(-u))=\mcR(-u-\frac{1}{2}c_\mfg)$.
  This shows that $(1\otimes S^2)(\mcR(-u))\mcR(-u-\tfrac{1}{2}c_\mfg)^{-1}=1$. Applying $\rho\otimes 1$ to both sides of this equality and using that $\wt\Phi$ is a morphism of Hopf algebras, we arrive at the relation 
\begin{equation*}
\wt \Phi(\mcZ(u))=\wt \Phi(S_\mcI^2(T(u)))\wt \Phi(T(u+\tfrac{1}{2}c_\mfg))^{-1}=I. 
\end{equation*}
This proves that $(\mcZ(u)-I)\subset \Ker\, \wt \Phi$ and hence that $\wt \Phi$ factors through $Y_R(\mfg)$ to yield an algebra epimorphism $\Phi:Y_R(\mfg)\onto Y(\mfg)$ determined by $\mcT(u)\mapsto(\rho\otimes 1)(\mcR(-u))$.

By \eqref{wtPhi-exp}, $\Phi(\tau_{ij}^{(1)})=\mcF_{ij}$ for all $1\leq i,j\leq N$ and $\Phi(\tau_{ij}^{(2)})\equiv J(\mcF_{ij}) \mod U(\mfg)$ for all $1\leq i,j\leq N$.  Since, by Corollary \ref{C:mingen}, $Y_R(\mfg)$ is generated by $\{\tau_{ij}^{(1)},\tau_{ij}^{(2)}\}_{1\leq i,j\leq N}$, this shows that $\Phi$ is a filtered homomorphism. To conclude that $\Phi$ is an isomorphism, it is enough to show that the associated graded morphism $\gr \Phi:\gr Y_R(\mfg)\to \gr Y(\mfg)$ is an isomorphism. 

Set $\varphi_\bullet=(\phi_\rho^z)^{-1}\circ \varphi_J^{-1}\circ \gr \Phi: \gr Y_R(\mfg)\to U(\mfg_\rho[z])$, where $\varphi_J$ is the isomorphism $U(\mfg[z])\iso \gr Y(\mfg)$ of Proposition \ref{P:grJ} and $\phi_\rho^z:U(\mfg_\rho[z])\iso U(\mfg[z])$ is the isomorphism of Corollary \ref{C:g[z]}.
This morphism sends $\bar{\tau}_{ij}^{(r)}$ to 
$F_{ij}^{(r-1)}=F_{ij}z^{r-1}$ for all $1\leq r\leq 2$ and $1\leq i,j\leq N$. Consider the composition $\varphi_\bullet\circ \varphi$ where $\varphi:U(\mfg_\rho[z])\onto \gr Y_R(\mfg)$ is the epimorphism of Proposition \ref{P:Y-cur}. This composition 
sends $F_{ij}^{(r)}$ to $F_{ij}^{(r)}$ for $r=0,1$ and hence is equal to the identity morphism. Therefore $\gr \Phi$, and thus $\Phi$, is an isomorphism. 
\end{proof}
In particular, we have shown that the ideal $(\mcZ(u)-I)$ is the kernel of the Hopf algebra morphism $\wt \Phi$, and hence is a Hopf ideal. The Yangian $Y_R(\mfg)$ thus inherits from $X_\mcI(\mfg)$ the unique Hopf algebra structure  such that $\Phi$ becomes an isomorphism of Hopf algebras. Explicitly, it has coproduct $\Delta_R$, antipode $S_R$, and counit $\eps_R$ given by 
\begin{equation}\label{Hopf-Y_R}
 \Delta_R(\mcT(u))=\mcT_{[1]}(u)\mcT_{[2]}(u),\quad S_R(\mcT(u))=\mcT(u)^{-1},\quad \eps_R(\mcT(u))=I.
\end{equation}

As was noted in Remark \ref{R:rational}, the coefficients $\mcR_k$ of the universal $R$-matrix $\mcR(u)$ have not been explicitly written down, and consequently the elements $\Phi(\tau_{ij}^{(r)})$ do not in general admit an explicit description. Nonetheless, such a description does exist for the images of the elements $\{\tau_{ij}^{(1)},\tau_{ij}^{(2)}\}_{1\leq i,j\leq N}$ which, by Corollary \ref{C:mingen}, do generate $Y_R(\mfg)$. Since the $J$-presentation $Y(\mfg)$ of the Yangian is defined only in terms of degree one and degree zero generators, it is perhaps more natural to rephrase this observation by stating that $\Phi^{-1}$ can be concretely described, which is the purpose of the next corollary. 
\begin{corollary}
 For each $1\leq i,j\leq N$ let $\{b_{kl}^{(ij)}\}_{1\leq k,l\leq N}\subset \C$ be defined by $(J\ot 1)(\mcF)=\sum_{i,j=1}^N E_{ij}\ot (\sum_{k,l=1}^N b_{kl}^{(ij)} \mcF_{kl})$. Then $\Phi^{-1}$ is determined on the generating set $\{\mcF_{ij},J(\mcF_{ij})\}_{1\leq i,j\leq N}$ of $Y(\mfg)$ by
\begin{equation}\label{Phi-inv}
  \mcF_{ij}\mapsto \tau_{ij}^{(1)},\quad J(\mcF_{ij})\mapsto\tau_{ij}^{(2)}-\tfrac{1}{2}\sum_{a=1}^N \tau_{ia}^{(1)}\tau_{aj}^{(1)}+\sum_{k,l=1}^N b_{kl}^{(ij)}\tau_{kl}^{(1)}
  \quad \forall \; 1\leq i,j\leq N. 
\end{equation}
\end{corollary}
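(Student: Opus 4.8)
The plan is to extract the \emph{exact} $u^{-2}$-coefficient of $(\rho\ot 1)(\mcR(-u))$, rather than merely its class modulo $\mbF_0^J$ as recorded in \eqref{wtPhi-exp}. First I would apply $\rho\ot 1$ to the expansion \eqref{R(-u)} of $\mcR(-u)$ and read off the coefficient of $u^{-2}$. Writing $T^{(2)}=\sum_{i,j=1}^N E_{ij}\ot t_{ij}^{(2)}$ and using $\rho(J(X_\lambda))=J(X_\lambda^\bullet)$, this gives
\begin{equation*}
 \wt\Phi(T^{(2)})=\sum_{\lambda\in\Lambda} J(X_\lambda^\bullet)\ot X_\lambda-\sum_{\lambda\in\Lambda} X_\lambda^\bullet\ot J(X_\lambda)+\tfrac12(\rho\ot 1)\Omega^2.
\end{equation*}

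Next I would identify each of the three terms entrywise using only the definitions from the proof of Proposition \ref{P:new-g}. For the middle term, writing $X_\lambda^\bullet=\sum_{k,l} c_{kl}^\lambda E_{kl}$, invoking the linearity of $J$, and using $\sum_\lambda c_{kl}^\lambda X_\lambda=-\mcF_{kl}$, one finds $-\sum_\lambda X_\lambda^\bullet\ot J(X_\lambda)=\sum_{k,l} E_{kl}\ot J(\mcF_{kl})$, so its $(i,j)$-entry is exactly $J(\mcF_{ij})$. For the last term, since $(\rho\ot 1)\Omega^2=((\rho\ot 1)\Omega)^2=\mcF^2$ in $\End V\ot Y(\mfg)$, its $(i,j)$-entry is $\sum_{a=1}^N\mcF_{ia}\mcF_{aj}$. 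Finally, because $\mcF=-\sum_\lambda X_\lambda^\bullet\ot X_\lambda$ and the module map $J$ sends $X_\lambda^\bullet$ to $J(X_\lambda^\bullet)$, the first term is precisely $-(J\ot 1)(\mcF)$, whose $(i,j)$-entry is $-\sum_{k,l=1}^N b_{kl}^{(ij)}\mcF_{kl}$ by the defining property of the scalars $b_{kl}^{(ij)}$. Combining the three contributions yields
\begin{equation*}
 \wt\Phi(t_{ij}^{(2)})=J(\mcF_{ij})+\tfrac12\sum_{a=1}^N\mcF_{ia}\mcF_{aj}-\sum_{k,l=1}^N b_{kl}^{(ij)}\mcF_{kl}.
\end{equation*}

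To conclude, I would pass from $\wt\Phi$ to $\Phi$ and invert. Since $\tau_{ij}^{(r)}$ is the image of $t_{ij}^{(r)}$, we have $\Phi(\tau_{ij}^{(r)})=\wt\Phi(t_{ij}^{(r)})$; the $u^{-1}$-coefficient of \eqref{R(-u)} gives $\Phi(\tau_{ij}^{(1)})=\mcF_{ij}$, hence $\Phi^{-1}(\mcF_{ij})=\tau_{ij}^{(1)}$, and the computation above reads
\begin{equation*}
 \Phi(\tau_{ij}^{(2)})=J(\mcF_{ij})+\tfrac12\sum_{a=1}^N\Phi(\tau_{ia}^{(1)})\Phi(\tau_{aj}^{(1)})-\sum_{k,l=1}^N b_{kl}^{(ij)}\Phi(\tau_{kl}^{(1)}).
\end{equation*}
Applying $\Phi^{-1}$ to this identity (using that $\Phi$ is an algebra homomorphism) and solving for $J(\mcF_{ij})$ gives exactly \eqref{Phi-inv}. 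That $\{\mcF_{ij},J(\mcF_{ij})\}$ generates $Y(\mfg)$, so that prescribing $\Phi^{-1}$ on this set determines it, follows from the surjectivity argument in the proof of Lemma \ref{L:X->YJ} together with Corollary \ref{C:mingen}.

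The only delicate point is bookkeeping: upgrading \eqref{wtPhi-exp} from a congruence modulo $\mbF_0^J$ to an exact identity requires retaining the $\tfrac12\Omega^2$ contribution and correctly matching the cross term $\sum_\lambda J(X_\lambda^\bullet)\ot X_\lambda$ with $-(J\ot 1)(\mcF)$. No structural input beyond \eqref{R(-u)}, the isomorphism $\Phi$ of Theorem \ref{T:YR->YJ}, and the definitions of $\mcF_{ij}$ and $b_{kl}^{(ij)}$ is needed.
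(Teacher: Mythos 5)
Your proposal is correct and follows essentially the same route as the paper's proof: both use the expansion \eqref{R(-u)} to compute $\Phi(\mcT^{(1)})=\mcF$ and $\Phi(\mcT^{(2)})=J(\mcF)-(J\ot 1)(\mcF)+\tfrac{1}{2}\mcF^2$ exactly, and then apply $\Phi^{-1}$ and solve for $J(\mcF)$. The only difference is presentational — you carry out the identifications entrywise, while the paper states them in matrix form — so the two arguments coincide.
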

\begin{proof}
 For each $r\geq 1$, set $\mcT^{(r)}=\sum_{i,j=1}^N E_{ij}\ot \tau_{ij}^{(r)}$ and define $J(\mcF)=\sum_{i,j=1}^N E_{ij}\ot J(\mcF_{ij})\in \End V\ot Y(\mfg)$. Then, using the expansion \eqref{R(-u)} we find that $\Phi(\mcT^{(1)})=\mcF$ and $\Phi(\mcT^{(2)})=J(\mcF)-(J\ot 1)(\mcF)+\tfrac{1}{2}\mcF^2$. 
Thus, $\Phi^{-1}(J(\mcF))=\mcT^{(2)}-\tfrac{1}{2}(\mcT^{(1)})^2+(J\ot 1)(\mcT^{(1)})$, which implies \eqref{Phi-inv}. \qedhere
\end{proof}
\begin{remark}
 When $\mfg$ is a symplectic or orthogonal Lie algebra and $V$ is its vector representation, it was proven in Proposition 3.19 of \cite{GRW} directly that the assignment 
 \eqref{Phi-inv} extends to an isomorphism $Y(\mfg)\iso Y_R(\mfg)$. In that case, and more generally in any case where $\rho(J(X))=0$ for all $X\in \mfg$, the term involving the coefficients $b_{kl}^{(ij)}$ in \eqref{Phi-inv} vanishes and we have $J(\mcF_{ij})\mapsto\tau_{ij}^{(2)}-\tfrac{1}{2}\sum_{a=1}^N \tau_{ia}^{(1)}\tau_{aj}^{(1)}$. 
\end{remark}
In the process of proving Theorem \ref{T:YR->YJ} we have also shown that the homomorphism $\varphi$ of Proposition \ref{P:Y-cur} is injective. We thus
obtain the following Poincar\'{e}-Birkhoff-Witt type theorem for  $Y_R(\mfg)$: 
\begin{theorem}\label{T:PBW}
 The surjective homomorphism $\varphi:U(\mfg_\rho[z])\onto\gr Y_R(\mfg)$ of Proposition \ref{P:Y-cur}, which is given by $F_{ij}^{(r-1)}\to \bar{\tau}_{ij}^{(r)}$,  is an isomorphism of algebras. Consequently, the assignment 
 \begin{equation*}
  F_{ij}\mapsto \tau_{ij}^{(1)} \quad \forall \;1\leq i,j\leq N
 \end{equation*}
defines an embedding $U(\mfg_\rho)\into Y_R(\mfg)$. 
\end{theorem}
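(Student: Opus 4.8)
The plan is to deduce both assertions from results already established, so that essentially no fresh computation is needed. For the first statement, note that by Proposition~\ref{P:Y-cur} the morphism $\varphi\colon U(\mfg_\rho[z])\onto \gr Y_R(\mfg)$ is surjective, so it suffices to prove injectivity. This was in fact already obtained inside the proof of Theorem~\ref{T:YR->YJ}: there one introduces $\varphi_\bullet=(\phi_\rho^z)^{-1}\circ \varphi_J^{-1}\circ \gr\Phi\colon \gr Y_R(\mfg)\to U(\mfg_\rho[z])$ and checks that $\varphi_\bullet\circ \varphi$ fixes $F_{ij}^{(0)}$ and $F_{ij}^{(1)}$, hence equals $\mathrm{id}_{U(\mfg_\rho[z])}$ because these elements generate $U(\mfg_\rho[z])$. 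Thus $\varphi$ has a left inverse, so it is injective; being also surjective, it is an isomorphism. I would simply record this observation and invoke the earlier argument.

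For the embedding $U(\mfg_\rho)\into Y_R(\mfg)$, rather than verifying directly that the elements $\tau_{ij}^{(1)}$ satisfy the defining relations \eqref{F-br}--\eqref{F-sym} of $U(\mfg_\rho)$ and then arguing injectivity through a separate graded/PBW analysis, I would realize the desired map as a composition of morphisms that are already known to be injective. Recall from Proposition~\ref{P:new-g} that $\phi_\rho\colon U(\mfg_\rho)\iso U(\mfg)$, $F_{ij}\mapsto \mcF_{ij}$, is an isomorphism, and from \eqref{Phi-inv} that the inverse of the isomorphism $\Phi$ of Theorem~\ref{T:YR->YJ} satisfies $\Phi^{-1}(\mcF_{ij})=\tau_{ij}^{(1)}$. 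Hence the composite
\[
U(\mfg_\rho)\xrightarrow{\ \phi_\rho\ } U(\mfg)\hookrightarrow Y(\mfg)\xrightarrow{\ \Phi^{-1}\ } Y_R(\mfg)
\]
is an algebra homomorphism sending $F_{ij}\mapsto \tau_{ij}^{(1)}$.

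The only point that then requires justification is that the canonical map $U(\mfg)\to Y(\mfg)$, $X\mapsto X$, is injective, since $\phi_\rho$ and $\Phi^{-1}$ are already isomorphisms. I would extract this from Proposition~\ref{P:grJ}: the filtration $\mbF^J$ has $\mbF_0^J$ equal to the image of $U(\mfg)$, and $\gr_0 Y(\mfg)=\mbF_0^J$ is identified by $\varphi_J$ with the degree-zero component of $U(\mfg[z])$, which is $U(\mfg)$; comparing the two exhibits the natural map $U(\mfg)\iso \mbF_0^J\subset Y(\mfg)$ as an isomorphism onto its image. With all three factors injective, the composite is an injective algebra homomorphism $U(\mfg_\rho)\into Y_R(\mfg)$ acting as $F_{ij}\mapsto \tau_{ij}^{(1)}$ on generators, as required.

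I expect no serious obstacle to remain, precisely because the crux—the identity $\varphi_\bullet\circ\varphi=\mathrm{id}$ that forces injectivity of $\varphi$—is already carried out in the proof of Theorem~\ref{T:YR->YJ}. The one mild subtlety is the injectivity of the canonical map $U(\mfg)\into Y(\mfg)$, and the main design choice in the writeup is to handle this cleanly through the grading of $\gr Y(\mfg)\cong U(\mfg[z])$ rather than attempting to check the relations of $U(\mfg_\rho)$ directly on the $\tau_{ij}^{(1)}$, which would be markedly less transparent.
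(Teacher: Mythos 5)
Your proposal is correct, and its first half is exactly the paper's argument: the paper deduces injectivity of $\varphi$ from the identity $\varphi_\bullet\circ\varphi=\mathrm{id}_{U(\mfg_\rho[z])}$ established in the proof of Theorem \ref{T:YR->YJ}, just as you do. Where you diverge is in the second assertion. The paper obtains the embedding $U(\mfg_\rho)\into Y_R(\mfg)$ as a direct consequence of the graded isomorphism just proved: since $\varphi$ is an isomorphism of graded algebras, its degree-zero restriction identifies $U(\mfg_\rho)$ with $\gr_0 Y_R(\mfg)=\mbF_0(Y_R(\mfg))$, the subalgebra of $Y_R(\mfg)$ generated by the $\tau_{ij}^{(1)}$, and this is precisely the assignment $F_{ij}\mapsto\tau_{ij}^{(1)}$. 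You instead factor the map as $\Phi^{-1}\circ\iota\circ\phi_\rho$, where $\iota:U(\mfg)\to Y(\mfg)$ is the canonical map, using Proposition \ref{P:new-g}, the formula $\Phi^{-1}(\mcF_{ij})=\tau_{ij}^{(1)}$ from \eqref{Phi-inv}, and injectivity of $\iota$ extracted from Proposition \ref{P:grJ} via the degree-zero component of $\gr Y(\mfg)\cong U(\mfg[z])$. This is a legitimate and complete argument (the injectivity of $\iota$ is exactly the kind of graded-degree-zero identification the paper uses on the $Y_R$ side, transported to the $Y(\mfg)$ side), and it has the structural merit of making the embedding independent of the PBW statement itself, relying only on Theorem \ref{T:YR->YJ} and its corollary; the cost is that it invokes more machinery ($\Phi^{-1}$ and Proposition \ref{P:grJ}) where the paper needs nothing beyond the isomorphism $\varphi$ restricted to degree zero.
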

The above theorem can be employed to obtain a complete description of the center of $Y_R(\mfg)$:
\begin{corollary}\label{C:ZY=1}
 The center of $Y_R(\mfg)$ is equal to $\C\cdot 1$. 
\end{corollary}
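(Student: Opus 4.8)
The plan is to deduce the statement from the Poincar\'{e}--Birkhoff--Witt theorem just established. By Theorem \ref{T:PBW} the symbol map identifies $\gr Y_R(\mfg)$ with $U(\mfg_\rho[z])$, and by Corollary \ref{C:g[z]} the latter is isomorphic to $U(\mfg[z])$; under these identifications the grading on $\gr Y_R(\mfg)$ corresponds to the standard $\Z_{\geq 0}$-grading of $U(\mfg[z])$ by powers of $z$. First I would record the elementary fact that passing to symbols sends the center into the center: if $Z\in Y_R(\mfg)$ is central and nonzero, with $d=\deg Z$ and principal symbol $\sigma(Z)\in \gr_d Y_R(\mfg)$, then for every homogeneous $\bar a\in \gr_e Y_R(\mfg)$ lifted to some $a\in\mbF_e(Y_R(\mfg))$ the identity $Za=aZ$ forces $\sigma(Z)\,\bar a=\bar a\,\sigma(Z)$ in $\gr_{d+e}Y_R(\mfg)$; as homogeneous elements span $\gr Y_R(\mfg)$, the symbol $\sigma(Z)$ is central. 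Thus the problem reduces to proving that the center of $U(\mfg[z])$ equals $\C$ and is concentrated in degree $0$: granting this, $\sigma(Z)\in\C\subset\gr_0$ forces $d=0$, whence $Z-\sigma(Z)\in\mbF_{-1}(Y_R(\mfg))=0$ and $Z\in\C$.

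It therefore remains to show that $Z(U(\mfg[z]))=\C$ for $\mfg$ simple. Here I would use that in characteristic zero the symmetrization map is an isomorphism of $\mfg[z]$-modules $S(\mfg[z])\iso U(\mfg[z])$ intertwining the adjoint actions, so that $Z(U(\mfg[z]))=U(\mfg[z])^{\mfg[z]}\cong S(\mfg[z])^{\mfg[z]}$, and prove that $S(\mfg[z])^{\mfg[z]}=\C$. Let $w$ be a nonzero invariant of symmetric degree $m\geq 1$ and let $d\geq 0$ be the largest power of $z$ occurring in a single tensor factor of a monomial of $w$. Fixing any $k>d$, the invariance $\ad(X z^k)w=0$ for all $X\in\mfg$ yields, upon extracting the part of $\ad(Xz^k)w$ carrying a factor of $z$-degree $d+k$ (a fresh slot, since $d+k$ exceeds every exponent present in $w$), the relation $(\mathrm{id}\ot\ad X)\,\Psi(w)=0$, where $\Psi$ is the polarization that deletes one top-degree factor and tensors with its $\mfg$-part. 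As $X$ ranges over $\mfg$, this places $\Psi(w)$ in a tensor factor with the space of $\ad$-invariants of $\mfg$, which vanishes because $\mfg$ is simple and hence has trivial center $\mfg^{\mfg}=0$. Composing $\Psi$ with the map reinserting the deleted factor multiplies the part of $w$ having a top-degree factor by a positive integer, so $\Psi(w)=0$ forces that part to vanish, contradicting the choice of $d$ unless $m=0$. Hence every invariant is a scalar.

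The main obstacle is precisely this last lemma, $Z(U(\mfg[z]))=\C$: the reduction to the associated graded is routine, but ruling out hidden invariants of positive $z$-degree is the crux, and the polarization/top-slot argument above is designed exactly to convert the $\mfg[z]$-invariance of $w$ into a statement about $\mfg$-invariants so that $\mfg^{\mfg}=0$ can be invoked. As an alternative one may instead cite the triviality of the center of $Y(\mfg)$, to which $Y_R(\mfg)$ is isomorphic by Theorem \ref{T:YR->YJ}, but the graded argument has the advantage of keeping the proof self-contained.
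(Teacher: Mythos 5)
Your proposal is correct, and its skeleton is the same as the paper's: both arguments use Theorem \ref{T:PBW} to identify $\gr Y_R(\mfg)$ with $U(\mfg_\rho[z])\cong U(\mfg[z])$ and then invoke the standard fact that the principal symbol of a central element of a filtered algebra is central in the associated graded algebra, so that triviality of $Z(U(\mfg[z]))$ forces $Z(Y_R(\mfg))=\C$. The genuine difference lies in how the key input $Z(U(\mfg[z]))=\C$ is handled: the paper simply cites it (\cite[Lemma 1.7.4]{Mobook}), whereas you prove it from scratch via the symmetrization isomorphism $S(\mfg[z])\iso U(\mfg[z])$ of $\mfg[z]$-modules followed by a polarization argument. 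That argument is sound: choosing $k$ strictly larger than the top $z$-degree $d$ occurring in the invariant $w$ guarantees that the terms of $\ad(Xz^k)w$ containing a factor of $z$-degree $d+k$ can cancel against nothing else; the insertion map $u\ot Y\mapsto u\cdot(Yz^{d+k})$ is injective on the relevant subspace (the factor of degree $d+k$ is identifiable since $u$ only involves degrees $\leq d$), so one really does get $(\mathrm{id}\ot \ad X)\Psi(w)=0$ for all $X\in\mfg$, hence $\Psi(w)\in S^{m-1}(\mfg[z])\ot\mfg^{\mfg}=0$ by simplicity of $\mfg$; and reinsertion scales each monomial having a top-degree factor by a positive integer, yielding the contradiction. (Your closing step is also fine: once $\sigma(Z)$ is central in $\gr Y_R(\mfg)$ it lies in $\C\subset\gr_0 Y_R(\mfg)$, forcing $\deg Z=0$ and $Z=\sigma(Z)\in\C$ under the identification $\gr_0 Y_R(\mfg)=\mbF_0(Y_R(\mfg))$.) What your route buys is self-containedness and uniformity — the only classical input is $\mfg^{\mfg}=0$, so no case-by-case or external reference is needed; what the paper's citation buys is brevity, its proof being essentially three sentences.
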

\begin{proof}
 The center of the universal enveloping algebra $U(\mfg_\rho[z])\cong U(\mfg[z])$ is known to be trivial: see for instance \cite[Lemma 1.7.4]{Mobook}. As a consequence of Theorem \ref{T:PBW}, the same must be true for the associated graded algebra $\gr Y_R(\mfg)$, and thus the Yangian $Y_R(\mfg)$. See also \cite[Theorem 1.12]{Ol}, \cite[Theorem 1.7.5]{Mobook}, and \cite[Corollary 3.9]{AMR} for the version of this result corresponding to the case where $\mfg$ is equal to $\mfsl_N, \mfso_N$, or $\mfsp_N$ and $V=\C^N$, which is proven in the exact same way.
\end{proof}
%
%
\section{Structure of the extended Yangian}\label{Sec:XR}
Using the results of the previous section one can extract a fair amount of information about the extended Yangian $X_\mcI(\mfg)$, and in fact prove several results which are known to hold when $\mfg$ is a classical Lie algebra and $V$ is its vector representation. Making this explicit is the main goal of the current section.

%
%
\subsection{The tensor product decomposition, the center, and the PBW theorem}\label{ssec:X-str}
In this subsection we will prove that $X_\mcI(\mfg)$ is isomorphic to the tensor product of a polynomial algebra in countably many variables with the Yangian $Y_R(\mfg)$. This will allow us to deduce a Poincar\'{e}-Birkhoff-Witt type theorem for $X_\mcI(\mfg)$ and also to obtain a complete description of its center.

For brevity we shall denote the polynomial algebra $\C[\my_\lambda^{(r)}\,:\,\lambda\in\mcI, \, r\geq 1]$ simply by $\C[\my_\lambda^{(r)}]_{\lambda,r}$.
\begin{definition}
We define the auxiliary algebra $\mathscr{X}_\mcI(\mfg)$ to be the tensor product of $\C[\my_\lambda^{(r)}]_{\lambda,r}$ with $Y_R(\mfg)$:
\begin{equation*}
 \mathscr{X}_\mcI(\mfg)=\C[\my_\lambda^{(r)}]_{\lambda,r}\otimes Y_R(\mfg). 
\end{equation*}
\end{definition}
Our present goal is to prove the deformed version of Proposition \ref{P:wtphi-gen}. Namely, we will prove that $X_\mcI(\mfg)$ and $\mathscr{X}_\mcI(\mfg)$ are isomorphic algebras. Define $\mathscr{Y}(u)\in \mcE\ot (\C[\my_\lambda^{(r)}]_{\lambda,r})[\![u^{-1}]\!]$ by 
\begin{equation*}
 \mathscr{Y}(u)=I+\sum_{\lambda\in \mcI} X_\lambda^\bullet \ot \mathscr{y}_\lambda(u),\quad\text{ where }\quad \my_\lambda(u)=\sum_{r\geq 1} \my_\lambda^{(r)}u^{-r}.
\end{equation*}
It will also be convenient to expand $\mathscr{Y}(u)=\sum_{i,j=1}^N E_{ij} \ot y_{ij}(u)$ with $\my_{ij}(u)=\delta_{ij}+\sum_{\lambda\in \mcI} c_{ij}^\lambda \my_\lambda(u)$ for all $1\leq i,j\leq N$: see \eqref{ChangeofBasis}.  

Set $\mathscr{T}(u)=\mathscr{Y}(u)\mcT(u)\in \End V\ot \mathscr{X}_\mcI(\mfg)[\![u^{-1}]\!]$, and denote by $\mathscr{t}_{ij}(u)=\delta_{ij}+\sum_{r\geq 1}\mathscr{t}_{ij}^{(r)}u^{-r}$  the $(i,j)$-th entry of $\mathscr{T}(u)$ (that is, $\mT(u)=\sum_{i,j=1}^N E_{ij}\ot \mt_{ij}(u)$). We then have 
\begin{equation}
 \mathscr{t}_{ij}^{(r)}=\tau_{ij}^{(r)}+\my_{ij}^{(r)}+\sum_{a=1}^N \sum_{c=1}^{r-1} y_{ia}^{(c)}\tau_{aj}^{(r-c)}\quad \forall \;1\leq i,j\leq N,\, r\geq 1. \label{tij-exp}
\end{equation}
The degree assignment $\deg \my_\lambda^{(r)}=r-1$ for all $\lambda \in \mcI$ and $r\geq 1$ defines a grading on the polynomial algebra $\C[\my_\lambda^{(r)}]_{\lambda,r}$. Let $\mbG_k$ denote the subspace spanned by monomials of degree equal to $k$ and denote the direct sum $\oplus_{i=0}^k \mbG_k$ by $\mbH_k$. In particular, 
we have $\my_{ij}^{(r)}\in \mbG_{r-1}$ for all $r\geq 1$ and $1\leq i,j\leq N$. After equipping $\mathscr{X}_\mcI(\mfg)$
with the tensor product filtration defined by 
\begin{equation*}
\mathbf{F}_r(\mathscr{X}_\mcI(\mfg))=\sum_{k+l=r} \mbH_k\otimes \mbF_l(Y_R(\mfg))=\bigoplus_{a=0}^r \mbG_a\ot \mbF_{r-a}(Y_R(\mfg)), 
\end{equation*}
it becomes a filtered algebra with $\gr \mathscr{X}_\mcI(\mfg)\cong\C[\my_\lambda^{(r)}]_{\lambda,r}\ot \gr Y_R(\mfg)$. It is immediate from \eqref{tij-exp} that the following relations are satisfied in $\gr \mathscr{X}_\mcI(\mfg)$:
\begin{equation*}
\bar{\mathscr{t}}_{ij}^{(r)}=\bar\tau_{ij}^{(r)}+\bar \my_{ij}^{(r)},\quad \forall \; 1\leq i,j\leq N,\, r\geq 1.
\end{equation*}
Here $\bar{\mathscr{t}}_{ij}^{(r)}$ and $ \bar \my_{ij}^{(r)}$ denote the images of $\mathscr{t}_{ij}^{(r)}$ and $\my_{ij}^{(r)}$, respectively, in $\mbF_{r-1}(\mX_\mcI(\mfg))/\mbF_{r-2}(\mX_\mcI(\mfg))\subset \gr \mX_\mcI(\mfg)$.

It follows from Theorem \ref{T:PBW} that the assignment $(\bar \my_{ij}^{(r)},\bar{\tau}_{ij}^{(r)})\mapsto (\my_{ij}^{(r)}, F_{ij}^{(r-1)})$ for all $r\geq 1$ and $1\leq i,j\leq N$  extends to an isomorphism $\gr \mathscr{X}_\mcI(\mfg) \iso\C[\my_\lambda^{(r)}]_{\lambda,r}\otimes U(\mfg_\rho[z])$. Composing with the inverse of the isomorphism $\phi_\mcI^z$ of Proposition \ref{P:wtphi-gen} (after identifying $\mathcal{K}_\lambda^{(r)}$ with $\my_\lambda^{(r)}$) yields an isomorphism 
\begin{equation}
 \varphi_{\mathscr{X}}:\gr \mathscr{X}_\mcI(\mfg)\iso U(\mfg_\mcI[z]), \quad \bar{\mathscr{t}}_{ij}^{(r)}\mapsto \mathds{F}_{ij}^{(r-1)}=F_{ij}^\mcI z^{r-1}\quad \forall \; 1\leq i,j\leq N,\, r\geq 1.\label{mcX->gex[z]}
\end{equation}
\begin{remark} In Step 2 of the proof  of Proposition \ref{P:X-cur}, it was useful to expand $T(u)$ with respect to the basis $\{X_\lambda^\bullet\}_{\lambda\in \Lambda^\bullet}$ of $\End V$. It is also sometimes more natural to expand $\mathscr{T}(u)$ and $\mcT(u)$ in this way.  Setting
 $\mathscr{t}_\lambda^{(r)}=\sum_{i,j=1}^N a_{ij}^\lambda \mathscr{t}_{ij}^{(r)}$ and $\tau_\lambda^{(r)}=\sum_{i,j=1}^N a_{ij}^\lambda \tau_{ij}^{(r)}$ for each $\lambda\in \Lambda^\bullet$ and $r\geq 1$, we obtain 
\begin{equation*}
 \mathscr{T}(u)=I+\sum_{\lambda\in \Lambda^\bullet}X_\lambda^\bullet \ot \mathscr{t}_\lambda(u)\quad \text{ and }\quad \mcT(u)=I+\sum_{\lambda\in \Lambda^\bullet}X_\lambda^\bullet \ot \tau_\lambda(u),
\end{equation*}
where $(\mathscr{t}_\lambda(u),\tau_\lambda(u))=(\sum_{r\geq 1}\mathscr{t}_\lambda^{(r)}u^{-r}, \sum_{r\geq 1}\tau_\lambda^{(r)}u^{-r})$ for all  $\lambda\in\Lambda^\bullet$.
We then have 
\begin{equation*}
\bar{\mathscr{t}}_\lambda^{(r)}=\begin{cases}
                                 \bar\tau_\lambda^{(r)} & \text{ if }\lambda\in \Lambda,\\
                                 \bar{\my}_\lambda^{(r)} & \text{ if }\lambda\in \mcI,\\
                                 0 & \text{ otherwise}, 
                                \end{cases} \label{mscT-filt}
\end{equation*}
in $\gr \mX_\mcI(\mfg)$, and the isomorphism $\varphi_\mX$ from \eqref{mcX->gex[z]}  is also determined by  $\bar \tau_{\lambda}^{(r)}\mapsto \mathds{X}_\lambda^{(r-1)}$ for all 
$\lambda \in \Lambda$ and $\bar \my_{\lambda}^{(r)}\mapsto \mathds{X}_\lambda^{(r-1)}$ for all $\lambda \in \mcI$: see Subsection \ref{ssec:PCA-ext}. 
\end{remark}
The next theorem is the first main result of this section, and, as previously suggested, it may be viewed as the Yangian analogue of Proposition \ref{P:wtphi-gen}. 
\begin{theorem}\label{T:XR->CxYR}
 The assignment $T(u)\mapsto \mathscr{T}(u)$ extends uniquely to yield an algebra isomorphism 
\begin{equation*}
\Phi_{\mcI}: X_\mcI(\mfg)\iso \mathscr{X}_\mcI(\mfg)=\C[\my_\lambda^{(r)}]_{\lambda,r}\otimes Y_R(\mfg).
\end{equation*}
\end{theorem}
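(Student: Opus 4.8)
The plan is to first show that the assignment $T(u)\mapsto\mathscr{T}(u)$ respects the defining $RTT$-relation \eqref{RTT-V}, thereby producing a filtered algebra homomorphism $\Phi_\mcI$, and then to prove it is an isomorphism by passing to associated graded algebras. For the first step, I would exploit the factorization $\mathscr{T}(u)=\mathscr{Y}(u)\mcT(u)$ together with three commutativity facts: (i) since the coefficients of $\mathscr{Y}(u)$ lie in $\mcE=\End_{Y(\mfg)}V$, which centralizes $\rho(Y(\mfg))$, the matrix $R(u-v)$ commutes with $\mathscr{Y}_a(u)$ for $a\in\{1,2\}$ (exactly as in the proof of Lemma \ref{L:auto}); (ii) the coefficients of $\mathscr{Y}(u)$ generate a commutative subalgebra, so $[\mathscr{Y}_1(u),\mathscr{Y}_2(v)]=0$; and (iii) the $\C[\my_\lambda^{(r)}]_{\lambda,r}$-factor and the $Y_R(\mfg)$-factor of $\mathscr{X}_\mcI(\mfg)$ commute, giving $[\mathscr{Y}_a(u),\mcT_b(v)]=0$. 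Using (i)--(iii) and the $RTT$-relation satisfied by $\mcT(u)$ in $Y_R(\mfg)$, a short rearrangement transforms $R(u-v)\mathscr{T}_1(u)\mathscr{T}_2(v)$ into $\mathscr{T}_2(v)\mathscr{T}_1(u)R(u-v)$, so $\mathscr{T}(u)$ satisfies \eqref{RTT-V} and $\Phi_\mcI$ is a well-defined homomorphism. Uniqueness is automatic since the coefficients of $T(u)$ generate $X_\mcI(\mfg)$.

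Next I would check that $\Phi_\mcI$ is filtered. By \eqref{tij-exp} we have $\mathscr{t}_{ij}^{(r)}=\tau_{ij}^{(r)}+\my_{ij}^{(r)}+(\text{lower degree})$, and both $\tau_{ij}^{(r)}$ and $\my_{ij}^{(r)}$ have degree $r-1$ in $\mathscr{X}_\mcI(\mfg)$; hence $\Phi_\mcI(t_{ij}^{(r)})=\mathscr{t}_{ij}^{(r)}\in\mbF_{r-1}(\mathscr{X}_\mcI(\mfg))$, matching $\deg t_{ij}^{(r)}=r-1$. Thus $\Phi_\mcI$ preserves filtrations, and its associated graded map satisfies $\gr\Phi_\mcI(\bar t_{ij}^{(r)})=\bar{\mathscr{t}}_{ij}^{(r)}$.

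The heart of the argument is then the computation of $\gr\Phi_\mcI$. I would form the composition
\begin{equation*}
\varphi_{\mathscr{X}}\circ\gr\Phi_\mcI\circ\varphi_\mcI: U(\mfg_\mcI[z])\to U(\mfg_\mcI[z]),
\end{equation*}
where $\varphi_\mcI$ is the surjection of Proposition \ref{P:X-cur} and $\varphi_{\mathscr{X}}$ is the isomorphism \eqref{mcX->gex[z]}. Chasing generators, this sends $\mdF_{ij}^{(r-1)}\mapsto\bar t_{ij}^{(r)}\mapsto\bar{\mathscr{t}}_{ij}^{(r)}\mapsto\mdF_{ij}^{(r-1)}$, so it is the identity map. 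Since $\varphi_{\mathscr{X}}$ is an isomorphism, this forces $\gr\Phi_\mcI\circ\varphi_\mcI=\varphi_{\mathscr{X}}^{-1}$ to be an isomorphism; in particular it is injective, so the surjection $\varphi_\mcI$ must in fact be injective and hence an isomorphism (this is precisely the PBW theorem for $X_\mcI(\mfg)$). Consequently $\gr\Phi_\mcI=\varphi_{\mathscr{X}}^{-1}\circ\varphi_\mcI^{-1}$ is an isomorphism.

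Finally, because $\Phi_\mcI$ is a filtered homomorphism whose induced map on associated graded algebras is an isomorphism, and the filtrations on $X_\mcI(\mfg)$ and $\mathscr{X}_\mcI(\mfg)$ are exhaustive and bounded below (indexed by $\Z_{\geq 0}$), the standard filtered-isomorphism criterion yields that $\Phi_\mcI$ itself is an isomorphism. The main obstacle I anticipate is the bookkeeping in the first step --- ensuring that the commutativity facts (i)--(iii) are correctly invoked across the two tensor factors of $\mathscr{X}_\mcI(\mfg)$ and that the $R$-matrix interacts with $\mathscr{Y}(u)$ only through the centralizer property of $\mcE$ --- while the remaining steps are essentially formal consequences of the already-established graded identifications $\varphi_\mcI$ and $\varphi_{\mathscr{X}}$.
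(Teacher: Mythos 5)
Your proposal is correct and follows essentially the same route as the paper's proof: the Lemma \ref{L:auto}-style centralizer argument to show $\mathscr{T}(u)$ satisfies \eqref{RTT-V}, filtration-preservation via \eqref{tij-exp}, and the identification $\varphi_{\mathscr{X}}\circ\gr\Phi_\mcI\circ\varphi_\mcI=\mathrm{id}_{U(\mfg_\mcI[z])}$ to conclude that $\gr\Phi_\mcI$, and hence $\Phi_\mcI$, is an isomorphism. Your additional observation that this forces $\varphi_\mcI$ to be injective is exactly how the paper later extracts the PBW theorem (Theorem \ref{T:X-PBW}) from this proof.
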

\begin{proof}
Since $\mathscr{Y}(u)\in \mcE\ot (\C[\my_\lambda^{(r)}]_{\lambda,r})[\![u^{-1}]\!]$  and $\mcT(u)$ satisfies the $RTT$-relation \eqref{RTT-V}, the same argument as used to prove Lemma \ref{L:auto} shows that $\mathscr{T}(u)=\mathscr{Y}(u)\mcT(u)$ also satisfies \eqref{RTT-V}. Therefore, $\Phi_{\mcI}: T(u)\mapsto \mathscr{T}(u)$ extends uniquely to an algebra homomorphism $X_\mcI(\mfg)\to \mathscr{X}_\mcI(\mfg)$. By \eqref{tij-exp}, $\Phi_{\mcI}$ is filtration preserving. To prove that $\Phi_{\mcI}$ is an isomorphism, we will follow the same argument as employed to prove Theorem \ref{T:YR->YJ} and show that the associated graded morphism $\gr \Phi_{\mcI}$ is an isomorphism. 

The composition $\gr \Phi_{\mcI}\circ \varphi_\mcI$, where $\varphi_\mcI:U(\mfg_\mcI[z])\onto \gr X_\mcI(\mfg)$ is the epimorphism of Proposition \ref{P:X-cur}, sends $ \mathds{F}_{ij}^{(r-1)}$ to $\bar{\mathscr{t}}_{ij}^{(r)}$  for all $r\geq 1$ and $1\leq i,j\leq N$. Composing with the isomorphism $\varphi_{\mathscr{X}}:\gr \mathscr{X}_\mcI(\mfg)\iso U(\mfg_\mcI[z])$ defined in \eqref{mcX->gex[z]} therefore gives the identity map $\mathrm{id}_{U(\mfg_\mcI[z])}$. 
This implies that $\gr \Phi_{\mcI}$ is indeed an isomorphism, and the same must be true of $\Phi_{\mcI}$.
\end{proof}
Our next goal is to use Theorem \ref{T:XR->CxYR} to obtain a complete description of the center of $X_\mcI(\mfg)$, and to prove a Poincar\'{e}-Birkhoff-Witt theorem for $X_\mcI(\mfg)$. 
We will need a few preliminary lemmas, the first being a consequence of Theorem \ref{T:YR->YJ}.  
\begin{lemma}\label{L:YT=TY}
The generating matrix $\mcT(u)$ belongs to $\rho(Y(\mfg))\ot Y_R(\mfg)[\![u^{-1}]\!]\subset \End V\ot  Y_R(\mfg)[\![u^{-1}]\!]$. Consequently, 
\begin{equation*}
 \mY(u)\mcT(u)=\mcT(u)\mY(u)\quad \text{ and } \quad \mY(u)\mT(u)=\mT(u)\mY(u) \quad \text{ in }\quad \End V \ot \mX_\mcI(\mfg). 
\end{equation*}
\end{lemma}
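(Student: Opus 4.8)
The plan is to reduce both assertions to the isomorphism $\Phi\colon Y_R(\mfg)\iso Y(\mfg)$ of Theorem \ref{T:YR->YJ}, together with the defining property $\mcE=\End_{Y(\mfg)}V$ of the subspace governing $\mY(u)$.

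For the first assertion I would apply $\id\ot\Phi$ to $\mcT(u)$. By Theorem \ref{T:YR->YJ} one has $(\id\ot\Phi)(\mcT(u))=(\rho\ot 1)(\mcR(-u))$, and since the universal $R$-matrix lies in $(Y(\mfg)\ot Y(\mfg))[\![u^{-1}]\!]$, its first tensor leg is carried into $\rho(Y(\mfg))$ by $\rho$; thus $(\rho\ot 1)(\mcR(-u))\in\rho(Y(\mfg))\ot Y(\mfg)[\![u^{-1}]\!]$. Expanding this matrix as $\sum_m Z_m\ot c_m(u)$ with $\{Z_m\}$ a basis of the subspace $\rho(Y(\mfg))\subseteq\End V$ and $c_m(u)\in Y(\mfg)[\![u^{-1}]\!]$, and then applying the inverse $\id\ot\Phi^{-1}$ (which fixes the $\End V$ slot), I would obtain $\mcT(u)=\sum_m Z_m\ot\Phi^{-1}(c_m(u))\in\rho(Y(\mfg))\ot Y_R(\mfg)[\![u^{-1}]\!]$, which is exactly the first claim.

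The consequence then follows from two separate sources of commutativity, one in each tensor slot. Using the expansion $\mcT(u)=\sum_m Z_m\ot b_m(u)$ with $Z_m\in\rho(Y(\mfg))$ and $b_m(u)\in Y_R(\mfg)[\![u^{-1}]\!]$ just obtained, I would compare $\mY(u)\mcT(u)$ and $\mcT(u)\mY(u)$ term by term. In the $\End V$ slot, each coefficient $X_\lambda^\bullet$ ($\lambda\in\mcI$) of $\mY(u)=I+\sum_{\lambda\in\mcI}X_\lambda^\bullet\ot\my_\lambda(u)$ lies in $\mcE=\End_{Y(\mfg)}V$, the centralizer of $\rho(Y(\mfg))$ in $\End V$, so $X_\lambda^\bullet Z_m=Z_m X_\lambda^\bullet$. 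In the algebra slot, the coefficients $\my_\lambda(u)$ lie in the factor $\C[\my_\lambda^{(r)}]_{\lambda,r}$ of $\mathscr{X}_\mcI(\mfg)$ while the $b_m(u)$ lie in the commuting factor $Y_R(\mfg)$, so $\my_\lambda(u)b_m(u)=b_m(u)\my_\lambda(u)$. These two facts give $\mY(u)\mcT(u)=\mcT(u)\mY(u)$ directly. The second identity is then immediate: since $\mT(u)=\mY(u)\mcT(u)$, one has $\mY(u)\mT(u)=\mY(u)^2\mcT(u)=\mY(u)\mcT(u)\mY(u)=\mT(u)\mY(u)$.

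The argument is essentially formal, so I do not anticipate a genuine obstacle; the only point demanding care is to keep the two tensor factors rigorously separate and to invoke the correct centralizer statement in each — $\mcE$ centralizing $\rho(Y(\mfg))$ inside $\End V$, and $\C[\my_\lambda^{(r)}]_{\lambda,r}$ and $Y_R(\mfg)$ being commuting subalgebras of $\mathscr{X}_\mcI(\mfg)$.
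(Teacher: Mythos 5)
Your proposal is correct and follows essentially the same route as the paper: the first claim is deduced from Theorem \ref{T:YR->YJ} together with $\mcR(-u)\in (Y(\mfg)\ot Y(\mfg))[\![u^{-1}]\!]$, and the commutation identities follow from $\mcE=\End_{Y(\mfg)}V$ centralizing $\rho(Y(\mfg))$ in the $\End V$ slot and the two tensor factors of $\mX_\mcI(\mfg)$ commuting in the algebra slot. Your write-up merely makes explicit (via the basis expansion and the transport under $\id\ot\Phi^{-1}$) what the paper leaves implicit.
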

\begin{proof}
Since $\mcR(u)\in (Y(\mfg)\ot Y(\mfg))[\![u^{-1}]\!]$, Theorem \ref{T:YR->YJ} implies the first part of the Lemma. As $\mY(u)\in \mcE\ot (\C[\my_\lambda^{(r)}]_{\lambda,r})[\![u^{-1}]\!]$ and $\mcE$ is the centralizer of $\rho(Y(\mfg))$ in $\End V$,  $[\mY(u),\mcT(u)]=0=[\mY(u),\mT(u)]$. \qedhere
\end{proof}
Next, define  $\mcY(u)$ to be the preimage of $\mY(u)$ under $\Phi_\mcI$:
\begin{equation*}
 \mcY(u)=I+\sum_{\lambda\in\mcI} X_\lambda^\bullet \ot y_\lambda(u)=\Phi_\mcI^{-1}(\mathscr{Y}(u))\in \mcE\ot X_\mcI(\mfg)[\![u^{-1}]\!],
\end{equation*}
and write $y_\lambda(u)=\sum_{r\geq 1} y_\lambda^{(r)}u^{-r}$. As was the case for $\mathscr{Y}(u)$, we shall also make use of the expansion of $\mcY(u)$ with respect to the basis of elementary matrices $\{E_{ij}\}_{1\leq i,j\leq N}$. That is, we may write 
\begin{equation*}
 \mcY(u)=\sum_{i,j=1}^N E_{ij}\ot y_{ij}(u)\quad \text{ with }\quad  y_{ij}(u)=\delta_{ij}+\sum_{\lambda\in \mcI} c_{ij}^\lambda y_\lambda(u)\quad \forall \; 1\leq i,j\leq N.
\end{equation*}
For each $\lambda\in \mcI$ (resp. $1\leq i,j\leq N$) and $r\geq 1$, the element $y_\lambda^{(r)}$ (resp. $y_{ij}^{(r)}$) belongs to $\mbF^\mcI_{r-1}$, and we will denote by $\bar{y}_\lambda^{(r)}$ (resp. $\bar{y}_{ij}^{(r)}$) its image in the quotient $\mbF^\mcI_{r-1}/\mbF^\mcI_{r-2}=\gr_{r-1}X_\mcI(\mfg)$. 
\begin{lemma}\label{L:Z-Y} The following statements hold: 
\begin{enumerate}
 \item \label{Z-Y:1} $\mathcal{Z}(u)=\mcY(u)\mcY(u+\tfrac{1}{2}c_\mfg)^{-1}\in \mcE \ot X_\mcI(\mfg)[\![u^{-1}]\!]$, 
 \item \label{Z-Y:2}  $z_{ij}^{(r+1)}\in \mbF^\mcI_{r-1}$ for all $1\leq i,j\leq N$ and $r\geq 0$ (where $\mbF^\mcI_{-1}=\{0\}$), 
 \item  \label{Z-Y:3} $\bar{z}_{ij}^{(r+1)}=\tfrac{r}{2}c_\mfg \bar{y}_{ij}^{(r)} \quad \forall \; 1\leq i,j\leq N$ and $r\geq 0$, where $\bar z_{ij}^{(r+1)}$ denotes the image of $z_{ij}^{(r+1)}$ in $\gr_{r-1}X_\mcI(\mfg)$. 
\end{enumerate}

\end{lemma}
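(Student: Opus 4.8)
The plan is to prove part (1) first, and then deduce parts (2) and (3) by a routine analysis of the filtration $\mbF^\mcI$. For part (1), I would transport the identity to $\mathscr{X}_\mcI(\mfg)=\C[\my_\lambda^{(r)}]_{\lambda,r}\ot Y_R(\mfg)$ along the isomorphism $\Phi_\mcI$ of Theorem \ref{T:XR->CxYR}. Writing $\wt S_\mcI=\Phi_\mcI\circ S_\mcI\circ\Phi_\mcI^{-1}$, this is an algebra anti-automorphism of $\mathscr{X}_\mcI(\mfg)$ satisfying $\wt S_\mcI(\mathscr{T}(u))=\mathscr{T}(u)^{-1}$ (apply $\Phi_\mcI$ entrywise to $S_\mcI(T(u))=T(u)^{-1}$). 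Since $\Phi_\mcI$ is an algebra isomorphism with $\Phi_\mcI(\mcY(u))=\mathscr{Y}(u)$, the asserted formula $\mcZ(u)=\mcY(u)\mcY(u+\tfrac12c_\mfg)^{-1}$ is equivalent to $\Phi_\mcI(\mcZ(u))=\mathscr{Y}(u)\mathscr{Y}(u+\tfrac12c_\mfg)^{-1}$, and from \eqref{Y-sym} one has $\Phi_\mcI(\mcZ(u))=\wt S_\mcI^2(\mathscr{T}(u))\,\mathscr{T}(u+\tfrac12c_\mfg)^{-1}$. Thus the whole problem reduces to computing $\wt S_\mcI^2(\mathscr{T}(u))$.

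The heart of the argument, and the step I expect to be the main obstacle, is the identification $\wt S_\mcI=\theta\ot S_R$, where $S_R$ is the antipode of $Y_R(\mfg)$ and $\theta$ is the algebra automorphism of $\C[\my_\lambda^{(r)}]_{\lambda,r}$ determined by $\theta(\mathscr{Y}(u))=\mathscr{Y}(u)^{-1}$. To produce $\theta$, I would define it on generators by reading off the entries of $\mathscr{Y}(u)^{-1}$, which lie in $\mcE\ot\C[\my_\lambda^{(r)}]_{\lambda,r}[\![u^{-1}]\!]$ because $\mcE=\End_{Y(\mfg)}V$ is a subalgebra of $\End V$ closed under inversion; applying the homomorphism $\theta$ entrywise to $\mathscr{Y}(u)\mathscr{Y}(u)^{-1}=I$ yields $\theta(\mathscr{Y}(u)^{-1})=\mathscr{Y}(u)$, so $\theta^2=\mathrm{id}$ and $\theta$ is invertible. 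One then checks that $\theta\ot S_R$ is an anti-automorphism (using commutativity of $\C[\my_\lambda^{(r)}]_{\lambda,r}$ and that $S_R$ is an anti-automorphism) and that, because $\mathscr{Y}(u)$ and $\mcT(u)$ lie in complementary tensor factors and commute (Lemma \ref{L:YT=TY}), it sends $\mathscr{T}(u)=\mathscr{Y}(u)\mcT(u)$ to $\mathscr{Y}(u)^{-1}\mcT(u)^{-1}=\mathscr{T}(u)^{-1}$, using $S_R(\mcT(u))=\mcT(u)^{-1}$ from \eqref{Hopf-Y_R}. As the entries of $\mathscr{T}(u)$ generate $\mathscr{X}_\mcI(\mfg)$ and an anti-automorphism is determined by its values on a generating set, this forces $\wt S_\mcI=\theta\ot S_R$, whence $\wt S_\mcI^2=\theta^2\ot S_R^2=\mathrm{id}\ot S_R^2$. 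Since $S_R^2(\mcT(u))=\mcT(u+\tfrac12c_\mfg)$ — which follows from Theorem \ref{T:YR->YJ}, Corollary \ref{C:S2} and the shift relation in \eqref{R-inv,shift}, exactly as in the proof that $\wt\Phi(\mcZ(u))=I$ — I obtain $\wt S_\mcI^2(\mathscr{T}(u))=\mathscr{Y}(u)\mcT(u+\tfrac12c_\mfg)$. Feeding this into the expression above gives $\Phi_\mcI(\mcZ(u))=\mathscr{Y}(u)\mcT(u+\tfrac12c_\mfg)\mathscr{T}(u+\tfrac12c_\mfg)^{-1}=\mathscr{Y}(u)\mathscr{Y}(u+\tfrac12c_\mfg)^{-1}$, and applying $\Phi_\mcI^{-1}$ yields part (1); membership in $\mcE\ot X_\mcI(\mfg)[\![u^{-1}]\!]$ is immediate since $\mcY(u)\in\mcE\ot X_\mcI(\mfg)[\![u^{-1}]\!]$ and $\mcE$ is closed under products and inverses.

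For parts (2) and (3) I would work directly from $\mcZ(u)=\mcY(u)\mcY(u+\tfrac12c_\mfg)^{-1}$. Since $y_\lambda^{(m)}\in\mbF_{m-1}^\mcI$, each of the series $\mcY(u)$, $\mcY(u+\tfrac12c_\mfg)$ and their inverses has $u^{-m}$-coefficient in $\mcE\ot\mbF^\mcI_{m-1}$, whose image in $\gr_{m-1}X_\mcI(\mfg)$ is $\pm\sum_{\lambda\in\mcI}X_\lambda^\bullet\ot\bar y_\lambda^{(m)}$ (the shift by $\tfrac12c_\mfg$ perturbs only lower-order terms). Writing $\mcZ(u)=I+\sum_{m\geq1}\mcZ^{(m)}u^{-m}$ and expanding the product, the top symbols of $\mcY(u)$ and of $\mcY(u+\tfrac12c_\mfg)^{-1}$ cancel, so $\mcZ^{(m)}\in\mbF_{m-2}^\mcI$; taking $(i,j)$-entries and setting $m=r+1$ gives part (2). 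For part (3) I would retain the next order modulo $\mbF_{m-3}^\mcI$: the terms quadratic in the coefficients of $\mcY$ cancel between the two appearances of the product, and the sole surviving contribution is the first-order shift correction $-\tfrac{m-1}{2}c_\mfg\sum_\lambda X_\lambda^\bullet\ot y_\lambda^{(m-1)}$ coming from $\mcY(u+\tfrac12c_\mfg)^{-1}$, so that $\mcZ^{(m)}\equiv\tfrac{m-1}{2}c_\mfg\sum_\lambda X_\lambda^\bullet\ot y_\lambda^{(m-1)}\pmod{\mbF_{m-3}^\mcI}$. Reading off the $(i,j)$-entry and putting $m=r+1$ yields $\bar z_{ij}^{(r+1)}=\tfrac{r}{2}c_\mfg\,\bar y_{ij}^{(r)}$, which is part (3). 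The only genuinely delicate point is the factorization $\wt S_\mcI=\theta\ot S_R$ in the second paragraph; once $\mcZ(u)=\mcY(u)\mcY(u+\tfrac12c_\mfg)^{-1}$ is established, everything else is filtration bookkeeping.
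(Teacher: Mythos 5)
Your proposal is correct and takes essentially the same route as the paper: for part \eqref{Z-Y:1} your anti-automorphism $\theta\ot S_R$ is precisely the paper's $S_\mX=S_\mY\ot S_R$, identified with $\Phi_\mcI\circ S_\mcI\circ \Phi_\mcI^{-1}$ by agreement on the generating matrix $\mT(u)$ via Lemma \ref{L:YT=TY} and $S_R^2(\mcT(u))=\mcT(u+\tfrac{1}{2}c_\mfg)$, and parts \eqref{Z-Y:2}--\eqref{Z-Y:3} are the same filtration bookkeeping. The only cosmetic difference is that for part \eqref{Z-Y:3} the paper avoids expanding the inverse to second order by multiplying through to get $\mcZ(u)\mcY(u+\tfrac{1}{2}c_\mfg)=\mcY(u)$ and comparing $u^{-r-1}$ coefficients modulo $\mbF^\mcI_{r-2}$; your direct expansion of $\mcY(u)\mcY(u+\tfrac{1}{2}c_\mfg)^{-1}$ reaches the same formula (despite a harmless sign ambiguity in how you describe the shift-correction term).
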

\begin{proof} Consider \eqref{Z-Y:1}.
 Since $\mY(u)$ is an invertible element of $(\mcE\ot \C[\my_\lambda^{(r)}]_{\lambda,r})[\![u^{-1}]\!]\cong \mcE\ot (\C[\my_\lambda^{(r)}]_{\lambda,r})[\![u^{-1}]\!]$, we obtain an automorphism $S_\mY$ of $\C[\my_\lambda^{(r)}]_{\lambda,r}$ which is determined by $\mY(u)\mapsto \mY(u)^{-1}$. Consider the tensor product $S_\mX=S_\mY\otimes S_R$, where we recall from \eqref{Hopf-Y_R} that $S_R$ is the antipode of $Y_R(\mfg)$, and it is given by $\mcT(u)\mapsto \mcT(u)^{-1}$. Then $S_\mX$ is the anti-automorphism 
 of the algebra $\mathscr{X}_\mcI(\mfg)=\C[\my_\lambda^{(r)}]_{\lambda,r}\otimes Y_R(\mfg)$ completely determined by 
 \begin{equation*}
 S_\mX(\mathscr{T}(u))=\mY(u)^{-1}\mcT(u)^{-1}=\mcT(u)^{-1}\mY(u)^{-1}=\mathscr{T}(u)^{-1},
 \end{equation*}
 where in the second equality we have appealed to Lemma \ref{L:YT=TY}. Consequently, $S_\mX\circ \Phi_{\mcI}=\Phi_{\mcI}\circ S_\mcI$. Since 
 $\Phi:Y_R(\mfg)\to Y(\mfg)$ is a Hopf algebra morphism and $(1\ot S^2)\mcR(-u)=\mcR(-u-\tfrac{1}{2}c_\mfg)$, we have  $S_R^2(\mcT(u))=\mcT(u+\tfrac{1}{2}c_\mfg)$. Therefore,
 \begin{equation*}
  \Phi_{\mcI}(\mcZ(u))=\Phi_{\mcI}(S_\mcI^2(T(u))T(u+\tfrac{1}{2}c_\mfg)^{-1})=S_\mX^2(\mathscr{T}(u))\mathscr{T}(u+\tfrac{1}{2}c_\mfg)^{-1}=S_\mX^2(\mY(u))\mY(u+\tfrac{1}{2}c_\mfg)^{-1}.
 \end{equation*}
 Since $S_\mX$ becomes an automorphism when restricted to $\C[\my_\lambda^{(r)}]_{\lambda,r}$ (namely $S_\mY$) and $S_\mX(\mY(u))=\mY(u)^{-1}$, we have $S_\mX^2(\mY(u))=\mY(u)$, and we may thus conclude that $\Phi_{\mcI}(\mcZ(u))=\mY(u)\mY(u+\tfrac{1}{2}c_\mfg)^{-1}$, and hence that $\mathcal{Z}(u)=\mcY(u)\mcY(u+\tfrac{1}{2}c_\mfg)^{-1}$. Since $\mcE=\End_{Y(\mfg)}V$ is an algebra, $\mathcal{Z}(u)$ also belongs to  $\mcE \ot X_\mcI(\mfg)[\![u^{-1}]\!]$. This observation concludes the proof of \eqref{Z-Y:1}. 
 
\noindent \textit{Proof of \eqref{Z-Y:2}.}
The $(i,j)$-th entry of the $u^{-r-1}$ coefficient of $\mcY(u+\tfrac{1}{2}c_\mfg)^{-1}$ is equal to
$-y_{ij}^{(r+1)} \mod \mbF^\mcI_{r-1}$. It is a straightforward consequence of this fact that the $u^{-r-1}$ coefficient of the $(i,j)$-th entry of $\mcY(u)\mcY(u+\tfrac{1}{2}c_\mfg)^{-1}$, which is equal to $z_{ij}^{(r+1)}$, is contained in $\mbF^\mcI_{r-1}$.

\noindent \textit{Proof of \eqref{Z-Y:3}.}
The argument we give is similar to the proof of Proposition \ref{P:Y-cur}. By \eqref{Z-Y:1}, we have 
\begin{equation*}
\mathcal{Z}(u)\mcY(u+\tfrac{1}{2}c_\mfg)=\mcY(u).   
\end{equation*}
 Taking the $(i,j)$-th coefficient of both sides yields $\sum_{a=1}^N z_{ia}(u)y_{aj}(u+\tfrac{1}{2}c_\mfg)=y_{ij}(u)$. Writing $y_{aj}(u+\tfrac{1}{2}c_\mfg)=\sum_{r\geq 0} y_{aj}^{\circ (r)} u^{-r}$, we have $y_{aj}^{\circ (r)}\in \mbF^\mcI_{r-1}$ for each $r\geq 0$ and 
 \begin{equation}
  y_{ij}(u)=\sum_{a=1}^N z_{ia}(u)y_{aj}(u+\tfrac{1}{2}c_\mfg)=y_{ij}(u+\tfrac{1}{2}c_\mfg)+z_{ij}(u)+\sum_{a=1}^N\sum_{k,s\geq 1} z_{ia}^{(k)}y_{aj}^{\circ (s)}u^{-k-s} \label{eq:y-z}
 \end{equation}
By \eqref{Z-Y:2}, $z_{ia}^{(k)}y_{aj}^{\circ (s)}\in \mbF^\mcI_{k+s-3}$. Thus, the coefficient of $u^{-r-1}$ in the summation on the right-hand side of the above equality is contained in $\mbF^\mcI_{r-2}$. On the other hand, the same argument as used to establish \eqref{eq:s-check} allows us to deduce that the  $u^{-r-1}$ coefficient of $y_{ij}(u+\tfrac{1}{2}c_\mfg)$ is equivalent to  $y_{ij}^{(r+1)}-\tfrac{r}{2}c_\mfg y_{ij}^{(r)}$ modulo $\mbF^\mcI_{r-2}$. Thus, \eqref{eq:y-z} implies that 
\begin{equation*}
 y_{ij}^{(r+1)}\equiv y_{ij}^{(r+1)}+z_{ij}^{(r+1)}-\tfrac{r}{2}c_\mfg y_{ij}^{(r)} \mod \mbF^\mcI_{r-2},
\end{equation*}
and hence that $\bar{z}_{ij}^{(r+1)}=\tfrac{r}{2}c_\mfg \bar{y}_{ij}^{(r)}$ for all $1\leq i,j\leq N,\, r\geq 0$. 
\end{proof}
For each $\lambda\in \Lambda^\bullet$, set $z_\lambda(u)=\sum_{r\geq 1} z_\lambda^{(r)} u^{-r}$ with $z_\lambda^{(r)}=\sum_{i,j=1}^N a_{ij}^\lambda z_{ij}^{(r)}$. Then, by Part \eqref{Z-Y:1} of Lemma \ref{L:Z-Y}, 
\begin{equation*}
 \mcZ(u)=I+\sum_{\lambda\in \Lambda^\bullet} X_\lambda^\bullet \ot z_\lambda(u)=I+\sum_{\lambda\in \mcI}X_\lambda^\bullet \ot z_\lambda(u).
\end{equation*}

The following Proposition gives a complete description of the center of $X_\mcI(\mfg)$ in terms of the coefficients $z_\lambda^{(r)}$ of $\mcZ(u)$. 
\begin{proposition}\label{P:Z(u)}
Let $ZX_\mcI(\mfg)$ denote the center of $X_\mcI(\mfg)$. 
The set of elements $\{y_\lambda^{(r)}\}_{\lambda\in \mcI, r\geq 1}$ is algebraically independent and generates $ZX_\mcI(\mfg)$, and the same is true of the set $\{z_\lambda^{(r)}\}_{\lambda\in \mcI, r\geq 2}$. Consequently, 
\begin{equation*}
  \C[y_\lambda^{(r)}\,:\,\lambda \in \mcI,\, r\geq 1]\cong ZX_\mcI(\mfg)\cong \C[z_\lambda^{(r)}\,:\,\lambda \in \mcI,\, r\geq 2].
\end{equation*}
\end{proposition}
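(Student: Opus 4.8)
The plan is to deduce the proposition from the isomorphism $\Phi_\mcI$ of Theorem \ref{T:XR->CxYR}, the triviality of the center of $Y_R(\mfg)$ (Corollary \ref{C:ZY=1}), and the relations recorded in Lemma \ref{L:Z-Y}, finishing with a standard filtered-to-graded transfer. First I would treat the generators $y_\lambda^{(r)}$. Since $\Phi_\mcI:X_\mcI(\mfg)\iso \C[\my_\lambda^{(r)}]_{\lambda,r}\ot Y_R(\mfg)$ is an algebra isomorphism, it carries $ZX_\mcI(\mfg)$ onto the center of the target, which is $\C[\my_\lambda^{(r)}]_{\lambda,r}\ot Z(Y_R(\mfg))$. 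By Corollary \ref{C:ZY=1} we have $Z(Y_R(\mfg))=\C\cdot 1$, so this center equals $\C[\my_\lambda^{(r)}]_{\lambda,r}\ot 1$. As $\mcY(u)=\Phi_\mcI^{-1}(\mY(u))$ by definition, we have $\Phi_\mcI(y_\lambda^{(r)})=\my_\lambda^{(r)}$ for all $\lambda\in\mcI$ and $r\geq 1$, and these are the free polynomial generators of $\C[\my_\lambda^{(r)}]_{\lambda,r}$. Consequently $\{y_\lambda^{(r)}\}_{\lambda\in\mcI,\,r\geq 1}$ is algebraically independent and generates $ZX_\mcI(\mfg)$, which yields the isomorphism $\C[y_\lambda^{(r)}:\lambda\in\mcI,\,r\geq 1]\cong ZX_\mcI(\mfg)$.

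Next I would record that the coefficients $z_\lambda^{(r)}$ are central. By Part \eqref{Z-Y:1} of Lemma \ref{L:Z-Y} we have $\mcZ(u)=\mcY(u)\mcY(u+\tfrac{1}{2}c_\mfg)^{-1}$, and the entries of $\mcY(u)$ are the central elements $y_\lambda^{(r)}$ from the first paragraph; hence the entries of $\mcY(u+\tfrac{1}{2}c_\mfg)^{-1}$ are central as well, and therefore so are all $z_\lambda^{(r)}$. Thus $\{z_\lambda^{(r)}\}_{\lambda\in\mcI,\,r\geq 2}$ is a collection of elements of $ZX_\mcI(\mfg)$, and it remains to show that it freely generates this algebra. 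I would do this by comparing leading symbols. Equip $ZX_\mcI(\mfg)$ with the filtration induced from $\mbF^\mcI_\bullet$; the natural map $\gr ZX_\mcI(\mfg)\to \gr X_\mcI(\mfg)$ is then injective. Applying $\gr\Phi_\mcI$ and using that $\gr\mathscr{X}_\mcI(\mfg)\cong \C[\my_\lambda^{(r)}]_{\lambda,r}\ot \gr Y_R(\mfg)$, the symbols $\bar y_\lambda^{(r)}$ are carried to the algebraically independent elements $\my_\lambda^{(r)}\ot 1$, so $\{\bar y_\lambda^{(r)}\}_{\lambda\in\mcI,\,r\geq 1}$ is algebraically independent in $\gr X_\mcI(\mfg)$; since every central element is a polynomial in the $y_\lambda^{(r)}$ and these symbols are independent, $\gr ZX_\mcI(\mfg)$ is precisely the polynomial algebra $\C[\bar y_\lambda^{(r)}]_{\lambda,r}$.

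Now I would invoke Part \eqref{Z-Y:3} of Lemma \ref{L:Z-Y}, which gives $\bar z_\lambda^{(r+1)}=\tfrac{r}{2}c_\mfg\,\bar y_\lambda^{(r)}$ for all $\lambda\in\mcI$ and $r\geq 0$ (note $z_\lambda^{(1)}=0$, consistent with starting at $r\geq 2$). Since $c_\mfg\neq 0$ and $r\geq 1$, the symbols $\{\bar z_\lambda^{(s)}\}_{\lambda\in\mcI,\,s\geq 2}$ are nonzero scalar multiples of $\{\bar y_\lambda^{(r)}\}_{\lambda\in\mcI,\,r\geq 1}$, hence they too freely generate the polynomial algebra $\gr ZX_\mcI(\mfg)$. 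I would then apply the standard lemma: if the leading symbols of a family of elements of a commutative filtered algebra (with exhaustive, separated, $\N$-indexed filtration) freely generate the associated graded algebra, then the elements themselves freely generate the algebra. Generation follows by descending induction on filtration degree, subtracting off polynomials in the $z_\lambda^{(s)}$; algebraic independence follows because any nontrivial algebraic relation would have a nonvanishing leading homogeneous part, whose symbol would be a nontrivial relation among the $\bar z_\lambda^{(s)}$. This gives $\C[z_\lambda^{(r)}:\lambda\in\mcI,\,r\geq 2]\cong ZX_\mcI(\mfg)$ and completes the proof.

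The main obstacle is the bookkeeping in the last two paragraphs: verifying that the induced filtration on $ZX_\mcI(\mfg)$ has associated graded equal to the polynomial algebra on the symbols $\bar y_\lambda^{(r)}$, and then transferring the purely graded identity of Part \eqref{Z-Y:3} back to a free-generation statement at the filtered level. The analytic and structural content is already supplied by Theorem \ref{T:XR->CxYR}, Corollary \ref{C:ZY=1}, and Lemma \ref{L:Z-Y}; what remains is to make the filtered-to-graded argument precise, in particular checking that no cancellation of top-degree terms can occur, which is exactly guaranteed by the algebraic independence of the symbols.
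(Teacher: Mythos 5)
Your proposal is correct and follows essentially the same route as the paper: the first half (identifying $ZX_\mcI(\mfg)$ with $\C[\my_\lambda^{(r)}]_{\lambda,r}$ via $\Phi_\mcI$ and Corollary \ref{C:ZY=1}) is identical, and the second half rests on the same inputs, namely Parts \eqref{Z-Y:2} and \eqref{Z-Y:3} of Lemma \ref{L:Z-Y} together with a filtered-to-graded transfer. The only difference is packaging: the paper encodes that transfer by showing the endomorphism $\varphi_{y,z}\colon y_\lambda^{(r)}\mapsto z_\lambda^{(r+1)}$ of $ZX_\mcI(\mfg)$ is filtered with associated graded a rescaling automorphism (hence is itself an automorphism), whereas you verify free generation by the symbols $\bar z_\lambda^{(s)}$ directly, which amounts to the same argument.
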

\begin{proof}
 By Corollary \ref{C:ZY=1}, the center of $\mX_\mcI(\mfg)$ is equal to the polynomial algebra $\C[\my_\lambda^{(r)}]_{\lambda,r}$. Since the isomorphism 
 $\Phi_\mcI$ of Theorem \ref{T:XR->CxYR} satisfies $\Phi_\mcI(y_\lambda^{(r)})=\my_\lambda^{(r)}$ for all $\lambda\in \mcI$ and $r\geq 1$, the set $\{y_\lambda^{(r)}\}_{\lambda\in\mcI, r\geq 1}$ must be an algebraically independent set which generates the center of $X_\mcI(\mfg)$. In particular,
 $ZX_\mcI(\mfg)\cong \C[y_\lambda^{(r)}\,:\,\lambda \in \mcI,\, r\geq 1]$. 
 
 Since the coefficients $\{z_\lambda^{(r)}\}_{\lambda\in \mcI,r\geq 2}$ are central, the assignment  $y_\lambda^{(r)}\mapsto z_\lambda^{(r+1)}$, for all $\lambda\in \mcI$ and $r\geq 1$, extends to an algebra endomorphism
 \begin{equation*}
 \varphi_{y,z}:ZX_\mcI(\mfg)\cong \C[y_\lambda^{(r)}\,:\,\lambda \in \mcI,\, r\geq 1]\to ZX_\mcI(\mfg).
 \end{equation*}
By Part \eqref{Z-Y:2} of Lemma \ref{L:Z-Y}, $\varphi_{y,z}$ is a filtered morphism, and by Part \eqref{Z-Y:3} of Lemma \ref{L:Z-Y} the associated graded morphism 
$\gr \varphi_{y,z}$ is just the rescaling automorphism of $\C[y_\lambda^{(r)}\,:\,\lambda \in \mcI,\, r\geq 1]$ which sends $y_\lambda^{(r)}$ to $2 (r c_\mfg)^{-1}y_\lambda^{(r)}$ for each $\lambda\in \mcI$ and $r\geq 1$. Thus $\varphi_{y,z}$ is an automorphism of $ZX_\mcI(\mfg)$ and hence $\{z_\lambda^{(r)}\}_{\lambda \in \mcI, r\geq 2}$ is an algebraically independent set which generates the center $ZX_\mcI(\mfg)$ of $X_\mcI(\mfg)$. \qedhere
\end{proof}
Using Theorem \ref{T:XR->CxYR} or, more accurately, its proof,  we obtain the following Poincar\'{e}-Birkhoff-Witt theorem for $X_\mcI(\mfg)$: 
\begin{theorem}\label{T:X-PBW}
 The surjective homomorphism $\varphi_\mcI:U(\mfg_\mcI[z])\to \gr X_\mcI(\mfg)$, $\mathds{F}_{ij}^{r-1}\mapsto \bar{t}_{ij}^{(r)}$, of Proposition \ref{P:X-cur} is an isomorphism of algebras. As a consequence, the assignment 
 \begin{equation}
 F_{ij}^\mcI\mapsto t_{ij}^{(1)} \quad \forall \; 1\leq i,j\leq N \label{g_I->X_I}
 \end{equation}
 defines an embedding $U(\mfg_\mcI)\into X_\mcI(\mfg)$, while the assignment
 \begin{equation}
 F_{ij}\mapsto t_{ij}^{(1)}-2c_\mfg^{-1}z_{ij}^{(2)} \quad \forall \; 1\leq i,j\leq N \label{g_rho->X_I}
 \end{equation}
 defines an embedding $U(\mfg_\rho)\into X_\mcI(\mfg)$.  
\end{theorem}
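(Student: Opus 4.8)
The plan is to deduce all three assertions from the structural results already established, so that only one short power-series computation is genuinely new. For the claim that $\varphi_\mcI$ is an isomorphism, I would extract this directly from the proof of Theorem \ref{T:XR->CxYR}, which establishes the identity $\varphi_{\mathscr{X}}\circ \gr\Phi_\mcI \circ \varphi_\mcI = \mathrm{id}_{U(\mfg_\mcI[z])}$, where $\varphi_{\mathscr{X}}$ is the isomorphism of \eqref{mcX->gex[z]} and $\gr\Phi_\mcI$ is an isomorphism because $\Phi_\mcI$ is. Writing $\Psi=\varphi_{\mathscr{X}}\circ \gr\Phi_\mcI$, the relation $\Psi\circ \varphi_\mcI = \mathrm{id}$ together with the invertibility of $\Psi$ forces $\varphi_\mcI = \Psi^{-1}$, which is therefore an isomorphism; this also confirms the surjectivity already recorded in Proposition \ref{P:X-cur}.

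For the embedding $U(\mfg_\mcI)\into X_\mcI(\mfg)$, I would restrict the now-established graded isomorphism $\varphi_\mcI$ to degree zero. The degree-zero component of $U(\mfg_\mcI[z])$ is the subalgebra $U(\mfg_\mcI)$ generated by the elements $\mdF_{ij}^{(0)}=F_{ij}^\mcI$, while the degree-zero component of $\gr X_\mcI(\mfg)$ is $\mbF_0^\mcI/\mbF_{-1}^\mcI$. Since $\mbF_{-1}^\mcI=\{0\}$, the canonical quotient map $\mbF_0^\mcI\to \gr_0 X_\mcI(\mfg)$ is the identity, so $\gr_0 X_\mcI(\mfg)$ is, as an algebra, precisely the subalgebra $\mbF_0^\mcI$ of $X_\mcI(\mfg)$. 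Thus $\varphi_\mcI$ restricts to an algebra isomorphism $U(\mfg_\mcI)\iso \mbF_0^\mcI\subset X_\mcI(\mfg)$ sending $F_{ij}^\mcI=\mdF_{ij}^{(0)}$ to $\bar t_{ij}^{(1)}=t_{ij}^{(1)}$, which is exactly \eqref{g_I->X_I}.

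For the embedding $U(\mfg_\rho)\into X_\mcI(\mfg)$, the strategy is to transport the embedding $U(\mfg_\rho)\into Y_R(\mfg)$, $F_{ij}\mapsto \tau_{ij}^{(1)}$, of Theorem \ref{T:PBW} back along $\Phi_\mcI^{-1}$, after identifying $Y_R(\mfg)$ with the subalgebra $1\ot Y_R(\mfg)$ of $\mathscr{X}_\mcI(\mfg)$. The only real computation is to determine $\Phi_\mcI^{-1}(\tau_{ij}^{(1)})$. From \eqref{tij-exp} one has $\Phi_\mcI(t_{ij}^{(1)})=\mathscr{t}_{ij}^{(1)}=\tau_{ij}^{(1)}+\my_{ij}^{(1)}$, and from Part \eqref{Z-Y:1} of Lemma \ref{L:Z-Y} one computes the $u^{-2}$-coefficient of $\mcY(u)\mcY(u+\tfrac12 c_\mfg)^{-1}$ to obtain $\Phi_\mcI(z_{ij}^{(2)})=\tfrac12 c_\mfg\, \my_{ij}^{(1)}$. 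Hence $\Phi_\mcI\big(t_{ij}^{(1)}-2c_\mfg^{-1}z_{ij}^{(2)}\big)=\tau_{ij}^{(1)}$, so that $\Phi_\mcI^{-1}(\tau_{ij}^{(1)})=t_{ij}^{(1)}-2c_\mfg^{-1}z_{ij}^{(2)}$. Composing the three injective maps $U(\mfg_\rho)\into Y_R(\mfg)\into \mathscr{X}_\mcI(\mfg)\xrightarrow{\Phi_\mcI^{-1}} X_\mcI(\mfg)$ then yields the embedding $F_{ij}\mapsto t_{ij}^{(1)}-2c_\mfg^{-1}z_{ij}^{(2)}$ of \eqref{g_rho->X_I}.

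I expect the main obstacle to be the single power-series identity $\Phi_\mcI(z_{ij}^{(2)})=\tfrac12 c_\mfg\,\my_{ij}^{(1)}$; everything else is bookkeeping with the graded maps already in hand. The two points requiring a little care are the degree-zero identification in the second step, where the vanishing $\mbF_{-1}^\mcI=\{0\}$ is what makes $\gr_0 X_\mcI(\mfg)$ literally a subalgebra of $X_\mcI(\mfg)$, and the fact, guaranteed by Part \eqref{Z-Y:1} of Lemma \ref{L:Z-Y}, that $\Phi_\mcI$ carries the coefficients $z_{ij}^{(r)}$ into the polynomial factor $\C[\my_\lambda^{(r)}]_{\lambda,r}$, which is precisely what legitimizes extracting the $u^{-2}$-coefficient there.
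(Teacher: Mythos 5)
Your proposal is correct and takes essentially the same route as the paper: injectivity of $\varphi_\mcI$ is read off from the identity $\varphi_{\mathscr{X}}\circ\gr\Phi_\mcI\circ\varphi_\mcI=\mathrm{id}$ established in the proof of Theorem \ref{T:XR->CxYR}, the embedding \eqref{g_I->X_I} is the degree-zero part of the resulting graded isomorphism, and your map $\Phi_\mcI^{-1}|_{1\ot Y_R(\mfg)}$ is exactly the paper's embedding $\iota_R\colon \mcT(u)\mapsto \mcY(u)^{-1}T(u)$, so \eqref{g_rho->X_I} arises in the same way. The only differences are cosmetic: the paper obtains $y_{ij}^{(1)}=2c_\mfg^{-1}z_{ij}^{(2)}$ by citing Part \eqref{Z-Y:3} of Lemma \ref{L:Z-Y} at $r=1$ (where the graded statement is an exact identity, since $\mbF^\mcI_{-1}=\{0\}$) rather than redoing your $u^{-2}$-coefficient computation, and your parenthetical claim that $\gr\Phi_\mcI$ is invertible \emph{because} $\Phi_\mcI$ is reverses the actual logical order in that proof --- though this is harmless, since your left-inverse identity together with the surjectivity from Proposition \ref{P:X-cur} already forces $\varphi_\mcI$ to be bijective without invoking invertibility of $\gr\Phi_\mcI$ at all.
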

\begin{proof}
 The injectivity of $\varphi_\mcI$ was proven in the course of the proof of Theorem \ref{T:XR->CxYR}, and that \eqref{g_I->X_I} defines an embedding follows immediately. 
 
 As for the last statement of the theorem, consider 
 the embedding $\iota_R:Y_R(\mfg)\into X_\mcI(\mfg)$, $\mcT(u)\mapsto \mcY(u)^{-1}T(u)$. It sends $\tau_{ij}^{(1)}$ to $t_{ij}^{(1)}-y_{ij}^{(1)}$ for all $1\leq i,j\leq N$. 
 Composing with the embedding $U(\mfg_\rho)\into Y_R(\mfg)$, $F_{ij}\mapsto \tau_{ij}^{(1)}$ of Theorem \ref{T:PBW}, we obtain an injection 
 $U(\mfg_\rho)\into X_\mcI(\mfg)$ which is given by $F_{ij}\mapsto t_{ij}^{(1)}-y_{ij}^{(1)}$ for all $1\leq i,j\leq N$. The proof that this coincides with \eqref{g_rho->X_I} is completed by noting that, by Part \eqref{Z-Y:3} of Lemma \ref{L:Z-Y}, we have $y_{ij}^{(1)}=2 c_\mfg^{-1} z_{ij}^{(2)}$ for all $1\leq i,j\leq N$. 
\end{proof}
%
%
%
\subsection{The Yangian as a  Hopf subalgebra of the extended Yangian}\label{ssec:Y->X}
By Theorem \ref{T:XR->CxYR}, $Y_R(\mfg)$ may also be identified as a subalgebra of $X_\mcI(\mfg)$ via the embedding 
\begin{equation*}
\iota_{R}:Y_R(\mfg)\into X_\mcI(\mfg), \quad \mcT(u)\mapsto \mcY(u)^{-1}T(u),
\end{equation*}
which played a role in the proof of Theorem \ref{T:X-PBW}. In this subsection we study $Y_R(\mfg)$ from this viewpoint, our  main goals being to show that 
$\iota_R$ is a Hopf algebra morphism, to study the behaviour of the center under the coproduct $\Delta_\mcI$, and to show that $Y_R(\mfg)$ can in fact be realized as a fixed point subalgebra of $X_\mcI(\mfg)$. 

In order to distinguish between the identifications of $Y_R(\mfg)$ as a quotient and as a subalgebra of $X_\mcI(\mfg)$, we shall denote by $\wt Y_R(\mfg)\subset X_\mcI(\mfg)$ the isomorphic copy of $Y_R(\mfg)$ obtained from the embedding $\iota_{R}$. We also set $\wt \mcT(u)=\mcY(u)^{-1}T(u)=\sum_{i,j}E_{ij}\otimes \wt \tau_{ij}(u)$. 

The first and main step in showing that $\iota_R$ is a morphism of Hopf algebras is to study the behaviour of $\mcY(u)$ under the coproduct, counit, and antipode of $X_\mcI(\mfg)$. This is the purpose of the next lemma. 
\begin{lemma}\label{L:Y-Hopf}
 The central matrix $\mcY(u)$ satisfies 
 \begin{equation*}
  \Delta_\mcI(\mcY(u))=\mcY_{[1]}(u)\mcY_{[2]}(u), \quad S_\mcI(\mcY(u))=\mcY(u)^{-1},\quad \eps_\mcI(\mcY(u))=I.  
 \end{equation*}
\end{lemma}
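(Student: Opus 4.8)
The plan is to obtain the three identities in the order counit, coproduct, antipode, with essentially all of the work concentrated in the coproduct relation. For the counit I would use that $\wt\Phi$ is a morphism of Hopf algebras, so that $\epsilon_\mcI=\epsilon_{Y(\mfg)}\circ\wt\Phi$, reducing the claim to the computation of $\wt\Phi(\mcY(u))$. In the proof of Theorem \ref{T:YR->YJ} it was shown that $\wt\Phi(\mcZ(u))=I$, hence $\wt\Phi(z_\lambda^{(r)})=0$ for all $\lambda\in\mcI$ and $r\geq 2$; since these coefficients generate the center $ZX_\mcI(\mfg)$ by Proposition \ref{P:Z(u)}, the algebra homomorphism $\wt\Phi$ kills the entire augmentation ideal of the center, and in particular $\wt\Phi(y_\lambda^{(r)})=0$ for all $\lambda,r$. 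Thus $\wt\Phi(\mcY(u))=I$ and therefore $\epsilon_\mcI(\mcY(u))=I$. Granting the coproduct identity, the antipode relation is then formal: applying the composite $\nabla\circ(S_\mcI\otimes\mathrm{id})$ to $\Delta_\mcI(y_{ij}(u))=\sum_a y_{ia}(u)\otimes y_{aj}(u)$ and invoking $\epsilon_\mcI(y_{ij}(u))=\delta_{ij}$ yields $\sum_a S_\mcI(y_{ia}(u))y_{aj}(u)=\delta_{ij}$, i.e. $S_\mcI(\mcY(u))\mcY(u)=I$, whence $S_\mcI(\mcY(u))=\mcY(u)^{-1}$.

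The core of the lemma is therefore the group-like relation $\Delta_\mcI(\mcY(u))=\mcY_{[1]}(u)\mcY_{[2]}(u)$, and the strategy I would follow is to first extract a coinvariance property of the center from an explicit computation of the coproduct of $\mcZ(u)$. Let $q\colon X_\mcI(\mfg)\onto Y_R(\mfg)$ be the canonical quotient map, which is a morphism of Hopf algebras since it corresponds to $\wt\Phi$ under the isomorphism $\Phi$ of Theorem \ref{T:YR->YJ}. Starting from $\mcZ(u)=S_\mcI^2(T(u))\,T(u+\tfrac{1}{2}c_\mfg)^{-1}$ and using that $S_\mcI^2$ is a coalgebra automorphism while $T(u)$ is group-like, one computes $\Delta_\mcI(S_\mcI^2(T(u)))=(S_\mcI^2 T)_{[1]}(u)(S_\mcI^2 T)_{[2]}(u)$ and $\Delta_\mcI(T(u+\tfrac12 c_\mfg)^{-1})=T_{[2]}(u+\tfrac12 c_\mfg)^{-1}T_{[1]}(u+\tfrac12 c_\mfg)^{-1}$; combining these via $S_\mcI^2(T(u))=\mcZ(u)T(u+\tfrac12 c_\mfg)$ gives
\begin{equation*}
\Delta_\mcI(\mcZ(u))=\mcZ_{[1]}(u)\,T_{[1]}(u+\tfrac12 c_\mfg)\,\mcZ_{[2]}(u)\,T_{[1]}(u+\tfrac12 c_\mfg)^{-1}.
\end{equation*}

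Applying $\mathrm{id}\otimes q$ to this identity and using $q(\mcZ(u))=I$ collapses the second factor to the identity matrix, so that $(\mathrm{id}\otimes q)\Delta_\mcI(\mcZ(u))=\mcZ(u)\otimes 1$. Applying $q\otimes\mathrm{id}$ instead produces $\mcT_{[1]}(u+\tfrac12 c_\mfg)\,\mcZ_{[2]}(u)\,\mcT_{[1]}(u+\tfrac12 c_\mfg)^{-1}$; here the entries of $\mcZ(u)$ lie in $\mcE=\End_{Y(\mfg)}V$ while those of $\mcT(u)$ lie in $\rho(Y(\mfg))$ by Lemma \ref{L:YT=TY}, so this conjugation is trivial and $(q\otimes\mathrm{id})\Delta_\mcI(\mcZ(u))=1\otimes\mcZ(u)$. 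Since the coefficients of $\mcZ(u)$ generate $ZX_\mcI(\mfg)$ and the bi-coinvariant elements form a subalgebra, every coefficient of $\mcY(u)$ is bi-coinvariant for $q$.

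The final — and hardest — step is to upgrade this bi-coinvariance to the exact group-like relation. Writing $\Delta_\mcI(\mcY(u))=\mcY_{[1]}(u)+\mcY_{[2]}(u)-I+\mcP(u)$, the two coinvariance identities together with the decomposition $X_\mcI(\mfg)=ZX_\mcI(\mfg)\cdot\wt Y_R(\mfg)$ of Theorem \ref{T:XR->CxYR} (so that $\Ker q$ is the ideal generated by the augmentation ideal of the center) force the entries of $\mcP(u)$ to lie in $\Ker q\otimes\Ker q$, and it remains to identify $\mcP(u)$ with $(\mcY_{[1]}(u)-I)(\mcY_{[2]}(u)-I)$. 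I would close this by induction on the filtration degree: under $\varphi_\mcI$ the coefficients of $\mcY(u)$ map into $\mfz_\mcI[z]\subset\gr X_\mcI(\mfg)\cong U(\mfg_\mcI[z])$ (Proposition \ref{P:X-cur} and Theorem \ref{T:X-PBW}), whose elements are primitive, so the top-degree part of $\Delta_\mcI(\mcY(u))-\mcY_{[1]}(u)\mcY_{[2]}(u)$ vanishes; feeding this into the coassociativity constraint on $\mcP(u)$ and descending in degree should pin down $\mcP(u)$ completely. The main obstacle is exactly here: coinvariance only determines $\Delta_\mcI(\mcY(u))$ modulo $\Ker q\otimes\Ker q$, and making the inductive comparison of the $\Ker q\otimes\Ker q$-component against the known primitive leading term precise is the delicate part of the argument.
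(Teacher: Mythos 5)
Your reduction of the lemma to the group-like identity, and the first two thirds of your argument for it, are correct: the formula $\Delta_\mcI(\mcZ(u))=\mcZ_{[1]}(u)\,T_{[1]}(u+\tfrac{1}{2}c_\mfg)\,\mcZ_{[2]}(u)\,T_{[1]}(u+\tfrac{1}{2}c_\mfg)^{-1}$ is right, both coinvariance identities follow as you say (the conjugation in the $(q\ot\mathrm{id})$ computation is trivial because $\mcE$ centralizes $\rho(Y(\mfg))$), and the decomposition $X_\mcI(\mfg)=\wt Y_R(\mfg)\oplus\Ker q$ does force $\Delta_\mcI(\mcY(u))=\mcY_{[1]}(u)+\mcY_{[2]}(u)-I+\mcP(u)$ with the entries of $\mcP(u)$ in $\Ker q\ot\Ker q$. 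But the step you yourself flag as delicate is a genuine hole, and the tools you propose cannot close it. Concretely: the matrix $\mcZ(u)$ has \emph{every} property your final step uses --- central coefficients which generate $ZX_\mcI(\mfg)$, bi-coinvariance for $q$, and primitive images in $\gr X_\mcI(\mfg)$ (by Part \eqref{Z-Y:3} of Lemma \ref{L:Z-Y}) --- and yet $\Delta_\mcI(\mcZ(u))\neq \mcZ_{[1]}(u)\mcZ_{[2]}(u)$ in general: by Corollary \ref{C:Hopf-Z}, $\Delta_\mcI(\mcZ(u))=\mcY_{[1]}(u)\mcZ_{[2]}(u)\mcY_{[1]}(u+\tfrac{1}{2}c_\mfg)^{-1}$, and rewriting this as $\mcZ_{[1]}(u)\mcZ_{[2]}(u)$ costs commutators of matrices lying in $\mcE$, which are nonzero whenever $\mcE=\End_{Y(\mfg)}V$ is noncommutative (e.g.\ $V=W\oplus W$ with $W$ irreducible, a case the lemma must cover). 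So no argument built only from bi-coinvariance, gr-primitivity and coassociativity can yield group-likeness; input specific to $\mcY(u)$ is required. Note also that primitivity of $\bar y_{ij}^{(r)}$ only controls the top filtration degree of $\Delta_\mcI(y_{ij}^{(r)})$, which both sides of your desired identity share trivially, so it gives no handle on the lower-degree part of $\mcP(u)$.

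The paper closes exactly this gap by going in the opposite direction: instead of trying to compute $\Delta_\mcI(\mcY(u))$, it \emph{defines} a coproduct $\Delta_\mX=\sigma_{23}\circ(\Delta_\mY\ot\Delta_R)$ on $\mX_\mcI(\mfg)=\C[\my_\lambda^{(r)}]_{\lambda,r}\ot Y_R(\mfg)$ under which $\mY(u)$ is group-like by construction, and then proves $(\Phi_\mcI\ot\Phi_\mcI)\circ\Delta_\mcI=\Delta_\mX\circ\Phi_\mcI$ by evaluating both algebra homomorphisms on the generating matrix: $\Delta_\mX(\mT(u))=\mY_{[1]}(u)\mY_{[2]}(u)\mcT_{[1]}(u)\mcT_{[2]}(u)=\mT_{[1]}(u)\mT_{[2]}(u)$, the only input being the commutation $[\mY_{[2]}(u),\mcT_{[1]}(u)]=0$ of Lemma \ref{L:YT=TY}. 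Pulling back through $\Phi_\mcI$ then gives the group-like identity at once; this transport-of-structure argument is the missing idea. Two smaller points: your antipode derivation is fine once the coproduct identity is available (the paper instead gets the antipode identity already inside Lemma \ref{L:Z-Y}, and deduces the counit from it); but your counit step has its own fixable gap --- from $\wt\Phi(z_\lambda^{(r)})=0$ you may conclude only that $\wt\Phi$ kills the ideal generated by the $z_\lambda^{(r)}$, and you still owe an argument that each $y_\lambda^{(r)}$ lies in that ideal, i.e.\ has no constant term as a polynomial in the $z$'s. One repair: applying $\wt\Phi$ to $\mcZ(u)=\mcY(u)\mcY(u+\tfrac{1}{2}c_\mfg)^{-1}$ gives $g(u)=g(u+\tfrac{1}{2}c_\mfg)$ for the scalar-entried matrix $g(u)=\wt\Phi(\mcY(u))\in I+\mcE\ot u^{-1}\C[\![u^{-1}]\!]$, and comparing coefficients forces $g(u)=I$.
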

\begin{proof}
We have already demonstrated in the course of the proof of Lemma \ref{L:Z-Y} that $S_\mcI(\mcY(u))=\mcY(u)^{-1}$. More precisely, we showed that $S_\mX(\mY(u))=\mY(u)^{-1}$ where 
$S_\mX$ is the anti-automorphism of $\mX_\mcI(\mfg)$ determined by $S_\mX(\mT(u))=\mT(u)^{-1}$. Since $S_\mX=\Phi_\mcI\circ S_\mcI\circ \Phi_\mcI^{-1}$, this implies that 
$S_\mcI(\mcY(u))=\mcY(u)^{-1}$. 

The Hopf algebra axioms dictate that $\eps_\mcI \circ S_\mcI=\eps_\mcI$, and hence $\eps_\mcI(\mcY(u))=\eps_\mcI(\mcY(u))^{-1}$. The equality $\eps_\mcI(\mcY(u) \mcY(u)^{-1})=I$ then implies that $\eps_\mcI(\mcY(u))^2=I$. Since the identity matrix $I$ is the unique square root of itself belong to $I+u^{-1}(\End V)[\![u^{-1}]\!]$, we can conclude that $\eps_\mcI(\mcY(u))=I$. 

It remains to see that $\Delta_\mcI(\mcY(u))=\mcY_{[1]}(u)\mcY_{[2]}(u)$. Let $\Delta_\mathscr{Y}$ be the algebra morphism $\C[\my_\lambda^{(r)}]_{\lambda,r}\to \C[\my_\lambda^{(r)}]_{\lambda,r}\ot \C[\my_\lambda^{(r)}]_{\lambda,r}$ determined by 
\begin{equation*}
\Delta_\mathscr{Y}(\mY(u))=\mY_{[1]}(u)\mY_{[2]}(u)\in \mcE\ot (\C[\my_\lambda^{(r)}]_{\lambda,r}\ot \C[\my_\lambda^{(r)}]_{\lambda,r})[\![u^{-1}]\!].
\end{equation*}
We then obtain 
an algebra morphism $\Delta_{\mX}=\sigma_{23}\circ (\Delta_\mY\ot \Delta_R):\mX_\mcI(\mfg)\to \mX_\mcI(\mfg)\ot \mX_\mcI(\mfg)$, where $\sigma_{23}=\mathrm{id}_{\C[\my_\lambda^{(r)}]_{\lambda,r}}\ot \sigma\ot \mathrm{id}_{Y_R(\mfg)}$ and $\sigma: Y_R(\mfg)\ot \C[\my_\lambda^{(r)}]_{\lambda,r}\to\C[\my_\lambda^{(r)}]_{\lambda,r}\ot Y_R(\mfg)$ is the flip map. By definition,  
\begin{equation*}
 \Delta_\mX(\mT(u))=\mY_{[1]}(u)\mY_{[2]}(u) \mcT_{[1]}(u)\mcT_{[2]}(u)\in \End V\ot (\mX_\mcI(\mfg)\ot \mX_\mcI(\mfg))[\![u^{-1}]\!].
\end{equation*}
Since $\mY_{[2]}(u)$ commutes with $\mcT_{[1]}(u)$, we can rewrite this as
\begin{equation*}
 \Delta_\mX(\mT(u))=\mY_{[1]}(u)\mcT_{[1]}(u)\mY_{[2]}(u) \mcT_{[2]}(u)=\mT_{[1]}(u)\mT_{[2]}(u),
\end{equation*}
and hence $(\Phi_\mcI\ot \Phi_\mcI)\circ \Delta_\mcI=\Delta_\mX\circ \Phi_\mcI$. This implies that $\Delta_\mcI=(\Phi_\mcI^{-1}\ot \Phi_\mcI^{-1})\circ \Delta_\mX \circ \Phi_\mcI$, and consequently 
\begin{equation*}
 \Delta_\mcI(\mcY(u))=(\Phi_\mcI^{-1}\ot \Phi_\mcI^{-1})(\mY_{[1]}(u)\mY_{[2]}(u))=\mcY_{[1]}(u)\mcY_{[2]}(u). \qedhere
\end{equation*}
\end{proof}

The above lemma leads us to the first main result of this subsection. Let $\eps_\mY$ be the homomorphism $\C[\my_\lambda^{(r)}]_{\lambda,r}\to \C$, $\mY(u)\mapsto I$, and recall that $\Phi_\mcI:X_\mcI(\mfg)\to \mX_\mcI(\mfg)$ is the algebra isomorphism of Theorem \ref{T:XR->CxYR}.
\begin{proposition}\label{P:Hopf}
 $\C[\my_\lambda^{(r)}]_{\lambda,r}$ is a Hopf algebra with coproduct $\Delta_\mY$, counit $\eps_\mY$ and antipode $S_\mY$, and if 
 $\mX_\mcI(\mfg)$ is equipped with the standard tensor product of Hopf algebras structure,  $\Phi_\mcI:X_\mcI(\mfg)\to \mX_\mcI(\mfg)$ becomes an isomorphism of Hopf algebras. In particular,  The embedding $\iota_R:Y_R(\mfg)\into X_\mcI(\mfg)$ is a morphism of Hopf algebras. 
\end{proposition}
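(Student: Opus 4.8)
The plan is to obtain the result by assembling identities already recorded in Lemmas \ref{L:Y-Hopf} and \ref{L:Z-Y}; the only genuinely new verification is that $(\C[\my_\lambda^{(r)}]_{\lambda,r},\Delta_\mY,\eps_\mY,S_\mY)$ is a Hopf algebra. Here $\Delta_\mY$ is the algebra morphism determined by $\Delta_\mY(\mY(u))=\mY_{[1]}(u)\mY_{[2]}(u)$ (introduced in the proof of Lemma \ref{L:Y-Hopf}), $S_\mY$ is the automorphism determined by $\mY(u)\mapsto \mY(u)^{-1}$ (from the proof of Lemma \ref{L:Z-Y}), and $\eps_\mY$ is the morphism $\mY(u)\mapsto I$; all are well defined as algebra homomorphisms because $\mcE=\End_{Y(\mfg)}V$ is a unital subalgebra of $\End V$. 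First I would verify the three Hopf axioms directly on the generating matrix $\mY(u)$, which suffices since every map involved is an algebra morphism and the coefficients of $\mY(u)$ generate $\C[\my_\lambda^{(r)}]_{\lambda,r}$: coassociativity follows from associativity of matrix multiplication, the counit axiom from $\eps_\mY(\mY(u))=I$ together with the fact that $I$ is a two-sided identity, and the antipode axiom from $S_\mY(\mY(u))\mY(u)=I=\mY(u)S_\mY(\mY(u))$. This shows $\C[\my_\lambda^{(r)}]_{\lambda,r}$ is a commutative Hopf algebra.

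With both tensor factors now Hopf algebras---$Y_R(\mfg)$ by \eqref{Hopf-Y_R} and $\C[\my_\lambda^{(r)}]_{\lambda,r}$ by the previous step---the algebra $\mX_\mcI(\mfg)=\C[\my_\lambda^{(r)}]_{\lambda,r}\ot Y_R(\mfg)$ inherits the standard tensor product Hopf structure, whose coproduct, antipode, and counit are precisely the maps $\Delta_\mX=\sigma_{23}\circ(\Delta_\mY\ot\Delta_R)$, $S_\mX=S_\mY\ot S_R$, and $\eps_\mX=\eps_\mY\ot\eps_R$ appearing in the proofs of Lemmas \ref{L:Y-Hopf} and \ref{L:Z-Y}. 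The compatibility $(\Phi_\mcI\ot\Phi_\mcI)\circ\Delta_\mcI=\Delta_\mX\circ\Phi_\mcI$ was established in the proof of Lemma \ref{L:Y-Hopf}, and the compatibility $S_\mX\circ\Phi_\mcI=\Phi_\mcI\circ S_\mcI$ in the proof of Lemma \ref{L:Z-Y}. The only remaining check is the counit compatibility $\eps_\mX\circ\Phi_\mcI=\eps_\mcI$, which is immediate: applying $\eps_\mX$ to $\Phi_\mcI(T(u))=\mY(u)\mcT(u)$ and using that $\mY(u)$ and $\mcT(u)$ have coefficients in the two distinct tensor factors yields $I=\eps_\mcI(T(u))$. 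Since $\Phi_\mcI$ is an algebra isomorphism by Theorem \ref{T:XR->CxYR}, it is therefore an isomorphism of Hopf algebras.

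To obtain the final assertion, I would factor $\iota_R$ through $\Phi_\mcI$. The inclusion $\iota_2:Y_R(\mfg)\into\mX_\mcI(\mfg)$, $y\mapsto 1\ot y$, of a tensor factor into a tensor product of Hopf algebras is automatically a morphism of Hopf algebras, so its composite with the Hopf isomorphism $\Phi_\mcI^{-1}$ is a Hopf algebra morphism $Y_R(\mfg)\to X_\mcI(\mfg)$. Using $\Phi_\mcI(\mcY(u))=\mY(u)$, $\Phi_\mcI(T(u))=\mY(u)\mcT(u)$, and the commutativity furnished by Lemma \ref{L:YT=TY}, one computes $\Phi_\mcI^{-1}(\mcT(u))=\mcY(u)^{-1}T(u)$, so this composite sends $\mcT(u)$ to $\mcY(u)^{-1}T(u)$ and hence coincides with $\iota_R$. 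Thus $\iota_R$ is a morphism of Hopf algebras.

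I expect the only real content to reside in the first step; once $\C[\my_\lambda^{(r)}]_{\lambda,r}$ is known to be a Hopf algebra, the remaining steps amount to bookkeeping that reuses identities established earlier. The main point demanding care is keeping the various identifications straight---in particular confirming that the standard tensor product coproduct on $\C[\my_\lambda^{(r)}]_{\lambda,r}\ot Y_R(\mfg)$ is literally the map $\Delta_\mX=\sigma_{23}\circ(\Delta_\mY\ot\Delta_R)$ already used, so that the intertwining relations from Lemmas \ref{L:Y-Hopf} and \ref{L:Z-Y} transfer verbatim.
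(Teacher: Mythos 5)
Your proof is correct, but it inverts the logical order of the paper's argument, and the two differ in where the Hopf axioms actually get verified. You prove from scratch that $(\C[\my_\lambda^{(r)}]_{\lambda,r},\Delta_\mY,\eps_\mY,S_\mY)$ is a Hopf algebra by checking coassociativity, the counit axiom and the antipode axiom on the generating matrix $\mY(u)$, then impose the standard tensor product Hopf structure on $\mX_\mcI(\mfg)$ and check that $\Phi_\mcI$ intertwines coproducts, counits and antipodes, quoting the identities $(\Phi_\mcI\ot\Phi_\mcI)\circ\Delta_\mcI=\Delta_\mX\circ\Phi_\mcI$ and $S_\mX\circ\Phi_\mcI=\Phi_\mcI\circ S_\mcI$ from the proofs of Lemmas \ref{L:Y-Hopf} and \ref{L:Z-Y}. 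The paper instead verifies no Hopf axiom at all: it transports the Hopf structure of $X_\mcI(\mfg)$ along the algebra isomorphism $\Phi_\mcI$ (so $\mX_\mcI(\mfg)$ is a Hopf algebra and $\Phi_\mcI$ a Hopf isomorphism by pure transport of structure), uses the same two lemmas to recognize the transported maps as $\Delta_\mX=\sigma_{23}\circ(\Delta_\mY\ot\Delta_R)$, $\eps_\mX$ and $S_\mX=S_\mY\ot S_R$, i.e.\ as the standard tensor-product maps, and then obtains the Hopf structure on $\C[\my_\lambda^{(r)}]_{\lambda,r}$ by restriction. The trade-off: your route requires the direct axiom verification, and there you should state explicitly that checking the antipode identity only on the coefficients of $\mY(u)$ is legitimate because $\C[\my_\lambda^{(r)}]_{\lambda,r}$ is commutative, so that the multiplication map is an algebra morphism and both sides of the antipode axiom are algebra morphisms agreeing on generators; the paper's transport argument gets all of this for free. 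In exchange, your write-up is more self-contained and, unlike the paper, actually proves the final assertion rather than leaving it implicit: factoring $\iota_R$ as $\Phi_\mcI^{-1}$ composed with the tensor-factor inclusion $y\mapsto 1\ot y$, which is automatically a Hopf morphism, is exactly the missing bookkeeping, and your computation $\Phi_\mcI^{-1}(\mcT(u))=\mcY(u)^{-1}T(u)$ correctly identifies this composite with $\iota_R$.
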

\begin{proof}
 $\mX_\mcI(\mfg)$ becomes a Hopf algebra, and $\Phi_\mcI$ a Hopf algebra isomorphism, after being equipped with coproduct $(\Phi_\mcI\ot \Phi_\mcI)\circ \Delta_\mcI\circ \Phi_\mcI^{-1}$ (which, by Lemma \ref{L:Y-Hopf}, is $\Delta_\mX$), 
 counit $\eps_\mcI \circ \Phi_\mcI^{-1}$ (which, by Lemma \ref{L:Y-Hopf}, is $\eps_\mX$), and antipode $\Phi_\mcI \circ S_\mcI \circ \Phi_\mcI^{-1}$ (which, by Lemma \ref{L:Y-Hopf}, is $S_\mX$). Since the tuple $(\Delta_\mY,\eps_\mY,S_\mY)$ coincides with $({\Delta_\mX}|_{\C[\my_\lambda^{(r)}]_{\lambda,r}}, {\eps_\mX}|_{\C[\my_\lambda^{(r)}]_{\lambda,r}}, {S_\mX}|_{\C[\my_\lambda^{(r)}]_{\lambda,r}})$, 
 it endows $\C[\my_\lambda^{(r)}]_{\lambda,r}$ with the structure of a Hopf algebra. 
 
 Since $\Delta_\mX=\sigma_{23}\circ (\Delta_\mY\ot \Delta_R)$, $\eps_\mX=\eta\circ (\eps_\mY\ot \eps_R)$ (where $\eta:\C\ot \C\to \C$ is the natural isomorphism), and 
 $S_\mX=S_\mY\ot S_R$, the Hopf algebra structure on $\mX_\mcI(\mfg)$ induced from $X_\mcI(\mfg)$ via $\Phi_\mcI$ coincides with the Hopf algebra structure obtained via the standard tensor product of Hopf algebras construction. 
\end{proof}
Before moving onto the last main result of this subsection, we note the following corollary of Lemma \ref{L:Y-Hopf}.
\begin{corollary}\label{C:Hopf-Z}
The central matrix $\mcZ(u)$ satisfies 
\begin{equation*}
 \Delta_\mcI(\mcZ(u))=\mcY_{[1]}(u)\mcZ_{[2]}(u)\mcY_{[1]}(u+\tfrac{1}{2}c_\mfg)^{-1},\quad S_\mcI(\mcZ(u))=\mcY(u)^{-1}\mcY(u+\tfrac{1}{2}c_\mfg),\quad \eps_\mcI(\mcZ(u))=I.
\end{equation*}
\end{corollary}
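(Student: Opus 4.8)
The plan is to reduce all three identities to the factorization $\mcZ(u)=\mcY(u)\mcY(u+\tfrac{1}{2}c_\mfg)^{-1}$ established in Part \eqref{Z-Y:1} of Lemma \ref{L:Z-Y}, together with the Hopf-theoretic behaviour of $\mcY(u)$ recorded in Lemma \ref{L:Y-Hopf}. The only structural inputs needed beyond these two lemmas are the elementary observations that, for matrices whose entries lie in $X_\mcI(\mfg)$, the maps $(\id\ot \Delta_\mcI)$ and $(\id\ot \eps_\mcI)$ are multiplicative (since $\Delta_\mcI$ and $\eps_\mcI$ are algebra homomorphisms applied entrywise) and hence also commute with matrix inversion. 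With this in hand, each assertion becomes a short manipulation.

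First I would dispatch the counit. Applying $(\id\ot \eps_\mcI)$ to the factorization and using multiplicativity gives $\eps_\mcI(\mcZ(u))=\eps_\mcI(\mcY(u))\,\eps_\mcI(\mcY(u+\tfrac{1}{2}c_\mfg))^{-1}$, which equals $I\cdot I^{-1}=I$ by Lemma \ref{L:Y-Hopf}. For the coproduct I would apply $(\id\ot \Delta_\mcI)$ to the same factorization, obtaining $\Delta_\mcI(\mcY(u))\,\Delta_\mcI(\mcY(u+\tfrac{1}{2}c_\mfg)^{-1})$; substituting $\Delta_\mcI(\mcY(w))=\mcY_{[1]}(w)\mcY_{[2]}(w)$ from Lemma \ref{L:Y-Hopf} and inverting the second factor yields $\mcY_{[1]}(u)\mcY_{[2]}(u)\,\mcY_{[2]}(u+\tfrac{1}{2}c_\mfg)^{-1}\mcY_{[1]}(u+\tfrac{1}{2}c_\mfg)^{-1}$. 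Recognising the middle pair $\mcY_{[2]}(u)\mcY_{[2]}(u+\tfrac{1}{2}c_\mfg)^{-1}$ as $\mcZ_{[2]}(u)$ — which follows from the factorization together with the definition of the $[2]$-embedding — collapses this precisely to $\mcY_{[1]}(u)\mcZ_{[2]}(u)\mcY_{[1]}(u+\tfrac{1}{2}c_\mfg)^{-1}$, with no reordering required.

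The antipode is the step where I expect the only genuine subtlety, because $S_\mcI$ is an \emph{anti}-homomorphism, so a priori $(\id\ot S_\mcI)$ reverses the order of matrix products rather than preserving it. The key point, which I would isolate as the crux of the argument, is that all entries of $\mcY(u)$, of $\mcY(u+\tfrac{1}{2}c_\mfg)^{-1}$, and of $\mcZ(u)$ are central in $X_\mcI(\mfg)$ (Lemma \ref{L:Y-Hopf} and Proposition \ref{P:Z(u)}). On the commutative subalgebra $ZX_\mcI(\mfg)$ an anti-homomorphism and a homomorphism agree, so $(\id\ot S_\mcI)$ is in fact multiplicative — and order-preserving — on matrices with central entries, and again commutes with inversion. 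Applying this to the factorization gives $S_\mcI(\mcZ(u))=S_\mcI(\mcY(u))\,S_\mcI(\mcY(u+\tfrac{1}{2}c_\mfg)^{-1})=\mcY(u)^{-1}\mcY(u+\tfrac{1}{2}c_\mfg)$, using $S_\mcI(\mcY(w))=\mcY(w)^{-1}$ from Lemma \ref{L:Y-Hopf} in the form $S_\mcI(\mcY(w)^{-1})=\mcY(w)$. This completes the three identities; the whole corollary is thus a formal consequence of the two preceding results once the centrality of the entries is invoked to tame the antipode.
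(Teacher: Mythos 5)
Your proof is correct and follows essentially the same route as the paper: both reduce everything to the factorization $\mcZ(u)=\mcY(u)\mcY(u+\tfrac{1}{2}c_\mfg)^{-1}$ from Lemma \ref{L:Z-Y} combined with Lemma \ref{L:Y-Hopf}, handling the antipode via the observation that $S_\mcI$ acts multiplicatively (indeed as an automorphism) on the commutative subalgebra $ZX_\mcI(\mfg)$. Your write-up merely makes explicit some routine points (entrywise multiplicativity, compatibility with matrix inversion, the identification of the middle pair with $\mcZ_{[2]}(u)$) that the paper leaves implicit.
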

\begin{proof}
 By Lemma \ref{L:Z-Y}, $\mcZ(u)=\mcY(u)\mcY(u+\tfrac{1}{2}c_\mfg)^{-1}$. Therefore, by Lemma \ref{L:Y-Hopf}, we have
 \begin{equation*}
  \Delta_\mcI(\mcZ(u))=\mcY_{[1]}(u)\mcY_{[2]}(u)\mcY_{[2]}(u+\tfrac{1}{2}c_\mfg)^{-1}Y_{[1]}(u+\tfrac{1}{2}c_\mfg)^{-1}=\mcY_{[1]}(u)\mcZ_{[2]}(u)\mcY_{[1]}(u+\tfrac{1}{2}c_\mfg)^{-1}.
 \end{equation*}
 Similarly, $\eps_\mcI(\mcZ(u))=\eps(\mcY(u))\eps(\mcY(u+\tfrac{1}{2}c_\mfg))^{-1}=I$. Lastly, since the restriction of $S_\mcI$ to the center $ZX_\mcI(\mfg)$ is an automorphism, $S_\mcI(\mcZ(u))=S_\mcI(\mcY(u))S_\mcI(\mcY(u+\tfrac{1}{2}c_\mfg))^{-1}=\mcY(u)^{-1}\mcY(u+\tfrac{1}{2}c_\mfg)$. \qedhere 
\end{proof}
Recall that, by Lemma \ref{L:auto}, for each $(f_\lambda(u))_{\lambda\in \mcI}\in \prod_{\lambda\in \mcI}(u^{-1}\C[\![u^{-1}]\!])_\lambda$ there is an automorphsim $ m_\mbf$ of $X_\mcI(\mfg)$ determined by the assignment \eqref{aut:m_F}. The next theorem proves that $\wt Y_R(\mfg)$ can be realized as a fixed point subalgebra of $X_\mcI(\mfg)$. 
\begin{theorem}\label{T:fixed-pt}
 The Yangian $\wt Y_R(\mfg)$ is equal to the subalgebra of $X_\mcI(\mfg)$ fixed by all automorphisms $m_\mbf$:
 \begin{equation}\label{Y-m_F}
  \wt Y_R(\mfg)=\left\{Y\in X_\mcI(\mfg)\,:\, m_\mbf(Y)=Y\quad \forall \; (f_\lambda(u))_{\lambda\in \mcI}\in \prod_{\lambda\in \mcI}(u^{-1}\C[\![u^{-1}]\!])_\lambda\right\}.
 \end{equation}
\end{theorem}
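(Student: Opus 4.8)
The plan is to transport the entire problem across the isomorphism $\Phi_\mcI: X_\mcI(\mfg) \iso \mX_\mcI(\mfg) = \C[\my_\lambda^{(r)}]_{\lambda,r}\ot Y_R(\mfg)$ of Theorem \ref{T:XR->CxYR}, under which the automorphisms $m_\mbf$ should become translations acting only on the central polynomial factor. First I would record the two identities $\Phi_\mcI(\mcY(u)) = \mathscr{Y}(u)$ and $\Phi_\mcI(\wt{\mcT}(u)) = \Phi_\mcI(\mcY(u)^{-1}T(u)) = \mathscr{Y}(u)^{-1}\mathscr{T}(u) = \mcT(u)$; the second shows that $\Phi_\mcI$ carries $\wt Y_R(\mfg)$ onto the subalgebra $1\ot Y_R(\mfg)$. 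It therefore suffices to identify the common fixed-point subalgebra of the transported automorphisms $\tilde m_\mbf := \Phi_\mcI\circ m_\mbf\circ \Phi_\mcI^{-1}$ with $1 \ot Y_R(\mfg)$.

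Next I would show $\tilde m_\mbf = S_\mbf \ot \mathrm{id}_{Y_R(\mfg)}$, where $S_\mbf$ is the automorphism of $\C[\my_\lambda^{(r)}]_{\lambda,r}$ determined by $\mathscr{Y}(u) \mapsto \mbf(u)\mathscr{Y}(u)$. That $S_\mbf$ is a well-defined automorphism follows exactly as in Lemma \ref{L:auto}: since $\mbf(u)\mathscr{Y}(u)\in \mcE\ot(\C[\my_\lambda^{(r)}]_{\lambda,r})[\![u^{-1}]\!]$, its coefficients in the basis $\{X_\lambda^\bullet\}_{\lambda\in\mcI}$ of $\mcE$ provide the images of the $\my_\lambda^{(r)}$, and invertibility of $\mbf(u)$ yields invertibility of $S_\mbf$. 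Setting $\tilde S_\mbf = S_\mbf\ot\mathrm{id}$, a direct computation gives $\tilde S_\mbf(\mathscr{T}(u)) = S_\mbf(\mathscr{Y}(u))\mcT(u) = \mbf(u)\mathscr{Y}(u)\mcT(u) = \mbf(u)\mathscr{T}(u) = \Phi_\mcI(m_\mbf(T(u)))$, so $\tilde S_\mbf$ and $\tilde m_\mbf$ agree on the generators $\mathscr{t}_{ij}^{(r)}$ and hence coincide. Because $\tilde m_\mbf$ fixes $Y_R(\mfg)$ pointwise and acts on $\C[\my_\lambda^{(r)}]_{\lambda,r}$ through $S_\mbf$, its fixed-point set is $\left(\bigcap_\mbf \mathrm{Fix}(S_\mbf)\right)\ot Y_R(\mfg)$, by linear independence over the second tensor factor.

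The crux is then to prove $\bigcap_\mbf \mathrm{Fix}(S_\mbf) = \C\cdot 1$. Here I would use the counit $\eps_\mY:\C[\my_\lambda^{(r)}]_{\lambda,r}\to\C$, $\mathscr{Y}(u)\mapsto I$. For each $\mbf$ the homomorphism $\eps_\mY\circ S_\mbf$ sends $\mathscr{Y}(u)$ to $\eps_\mY(\mbf(u)\mathscr{Y}(u)) = \mbf(u)$, so comparing coefficients of $X_\lambda^\bullet u^{-r}$ shows that $\eps_\mY\circ S_\mbf$ is precisely evaluation at the point $(f_\lambda^{(r)})_{\lambda,r}$. If $p$ is fixed by every $S_\mbf$, then $\eps_\mY(S_\mbf(p))=\eps_\mY(p)$ for all $\mbf$, so $p$ takes the constant value $\eps_\mY(p)$ at every tuple $(f_\lambda^{(r)})$; as $p$ involves only finitely many variables and these evaluation points sweep out all of affine space, $p$ is constant. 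Combining the three steps, the fixed-point subalgebra equals $\Phi_\mcI^{-1}(1\ot Y_R(\mfg)) = \wt Y_R(\mfg)$, as desired.

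The main obstacle is establishing cleanly that $\tilde m_\mbf$ splits as $S_\mbf\ot\mathrm{id}$, i.e. that $m_\mbf$ is invisible on the Yangian factor and is a pure translation of the central variables; once this is in place, both the evaluation argument and the two inclusions are formal. One must take care that $\mcE = \End_{Y(\mfg)}V$ may be noncommutative, but since $\mbf(u)$ has scalar coefficients this never interferes: $\mbf(u)$ commutes past $\eps_\mY$ and $\Phi_\mcI^{-1}$, and the products $X_\mu^\bullet X_\lambda^\bullet$ remain inside $\mcE$, so the construction of $S_\mbf$ goes through unchanged.
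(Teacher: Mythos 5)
Your proof is correct, and its backbone coincides with the paper's: the paper also transports the problem through the isomorphism $\Phi_\mcI$ of Theorem \ref{T:XR->CxYR}, and its key step is exactly your intertwining identity $\Phi_\mcI\circ m_\mbf=(S_\mbf\ot\mathrm{id})\circ\Phi_\mcI$ (there $S_\mbf\ot\mathrm{id}$ is denoted $m_\mbf^\mX=m_\mbf^\mY\ot\mathrm{id}$, verified by the same computation on the generating matrix), from which the inclusion $\wt Y_R(\mfg)\subset X_\mcI(\mfg)^{m_\mbf}$ follows in both treatments. Where you genuinely diverge is the reverse inclusion. The paper, following \cite{AMR}, argues by contradiction: a fixed element outside $\wt Y_R(\mfg)$ is written as a nonconstant polynomial in the $y_\lambda^{(r)}$ with coefficients in $\wt Y_R(\mfg)$; one picks a minimal top variable $y_\mu^{(m)}$, applies the specially crafted automorphisms $m_{\mbf_w}$ attached to $\mbf_w^\circ(u)=X_\mu^\bullet\ot wu^{-m}$ (which requires checking that these literally translate $y_\mu^{(m)}\mapsto y_\mu^{(m)}+w$ while fixing the other variables of degree at most $m$), deduces translation invariance of the offending polynomial, and kills it using the evaluation maps $\mathrm{ev}_w$ together with $\bigcap_w\Ker(\mathrm{ev}_w^\mX)=\{0\}$. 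You replace this with two structural observations: first, $\mathrm{Fix}(S_\mbf\ot\mathrm{id})=\mathrm{Fix}(S_\mbf)\ot Y_R(\mfg)$, since $\Ker(\theta\ot\mathrm{id})=\Ker(\theta)\ot Y_R(\mfg)$ for any linear map $\theta$ over a field, which reduces everything to computing $\bigcap_\mbf\mathrm{Fix}(S_\mbf)$ inside $\C[\my_\lambda^{(r)}]_{\lambda,r}$; second, the identity $\eps_\mY\circ S_\mbf=\mathrm{ev}_{(f_\lambda^{(r)})}$, which realizes every point of affine space as (counit)$\circ$(automorphism), so a common fixed polynomial takes the constant value $\eps_\mY(p)$ everywhere and is constant. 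Your route is cleaner and slightly more informative — it isolates the statement that the polynomial factor has trivial invariants, with no minimality bookkeeping and using all automorphisms at once rather than one-variable shifts — while the paper's route matches the argument already established in the literature for the classical cases and never needs the tensor-factorization of fixed subspaces explicitly. Your closing caveat is also the right one to address: the definition of $S_\mbf$ and the identity $\eps_\mY\circ S_\mbf=\mathrm{ev}_{(f_\lambda^{(r)})}$ survive the possible noncommutativity of $\mcE$ because the products $X_\lambda^\bullet X_\mu^\bullet$ remain in $\mcE$ and the coefficients $f_\lambda(u)$ are scalar.
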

\begin{proof}
 Recall from \eqref{mbF-tuple}  that $(f_\lambda(u))_{\lambda\in \mcI}$ is identified with the matrix $\mbf^\circ(u)=\sum_{\lambda\in \mcI} X_\lambda^\bullet \ot f_\lambda(u)$, and that, by \eqref{aut:m_F}, $m_\mbf(T(u))=\mbf(u) T(u)$, where $\mbf(u)=I+\mbf^\circ(u)$. Let us denote the right-hand side of \eqref{Y-m_F} by $X_\mcI(\mfg)^{m_{\mbf}}$. 
 
For each $(f_\lambda(u))_{\lambda\in \mcI}\in \prod_{\lambda\in \mcI}(u^{-1}\C[\![u^{-1}]\!])_\lambda$, the assignment $\mY(u)\mapsto \mbf(u)\mY(u)$ extends to an automorphism $m_{\mbf}^\mY$ of $\C[\my_\lambda^{(r)}]_{\lambda,r}$. Consider the automorphism $m_{\mbf}^\mX=m_{\mbf}^\mY\ot \mathrm{id}$ of $\mX_\mcI(\mfg)$. It satisfies 
\begin{equation*}
 m_\mbf^\mX(\mT(u))=m_\mbf^\mX(\mY(u))m_\mbf^\mX(\mcT(u))=\mbf(u)\mT(u),
\end{equation*}
and thus $m_\mbf^\mX\circ \Phi_\mcI=\Phi_\mcI\circ m_\mbf$. It follows that $m_\mbf(\mcY(u))=\mbf(u)\mcY(u)$ for every tuple  $(f_\lambda(u))_{\lambda\in \mcI}$.
Therefore, for each element $(f_\lambda(u))_{\lambda\in \mcI}\in \prod_{\lambda\in \mcI}(u^{-1}\C[\![u^{-1}]\!])_\lambda$,
\begin{equation*}
m_\mbf(\wt \mcT(u))=m_\mbf(\mcY(u))^{-1} m_\mbf(T(u))=\mcY(u)^{-1}\mbf(u)^{-1} \mbf(u) T(u)=\mcY(u)^{-1}T(u)=\wt \mcT(u).
\end{equation*}
This proves that $\wt Y_R(\mfg)\subset X_\mcI(\mfg)^{m_\mbf}$. 

To obtain the reverse inclusion, we employ similar techniques as used to prove \cite[Theorem 3.1]{AMR}. Suppose towards a contradiction that there is $X\in X_\mcI(\mfg)^{m_\mbf}\setminus \wt Y_R(\mfg)$. By Theorem \ref{T:XR->CxYR} we may write $X$ as a polynomial in the variables $\{y_\lambda^{(r)}\}_{\lambda\in \mcI,r\geq 1}$ with coefficients in $\wt Y_R(\mfg)$. This polynomial is non-constant by assumption. Only finitely many variables can appear in this polynomial, so there is $m\geq 1$  such that $X$ depends only on the variables $\{y_\lambda^{(r)}\}_{\lambda\in \mcI,r=1,\ldots,m}$. We take $m$ to be minimal with this property, and we fix $\mu \in \mcI$ such that 
$X$ depends on $y_\mu^{(m)}$. 

Let $X=\sum_{a\geq 0} X_a (y_\mu^{(m)})^a$ be the expansion of $X$ as a polynomial in the single variable $y_\mu^{(m)}$ and set $P(y_\mu^{(m)})=\sum_{a\geq 1} X_a (y_\mu^{(m)})^a$. The polynomial $P(y_\mu^{(m)})$ has degree at least $1$, as otherwise $X$ would not depend on $y_\mu^{(m)}$. For each $w\in \C$, define $\mbf_w^\circ(u)=(f_\lambda(u))_{\lambda\in \mcI}$ by
\begin{equation*}
 f_\lambda(u)=\begin{cases}
               0 & \text{ if } \; \lambda\neq \mu ,\\
               wu^{-m} & \text{ if } \; \lambda=\mu.
              \end{cases}
\end{equation*}
As a matrix in $\mcE\ot u^{-1}\C[\![u^{-1}]\!]$, $\mbf_w^\circ(u)=X_\mu^\bullet \ot wu^{-m}$. Note that 
\begin{equation*}
\mbf_w(u)\mcY(u)=(I+X_\mu^\bullet \ot w u^{-m})\mcY(u)=\mcY(u)+X_\mu^\bullet \ot wu^{-m}+(X_\mu^\bullet \ot wu^{-m})\mcY^\circ(u),
\end{equation*}
where $\mcY^\circ(u)=\mcY(u)-I$. This implies that, for $1\leq r\leq m$ and $\lambda\in \mcI$, the image of $y_\lambda^{(r)}$ under $m_{\mbf_w}$ is given by
\begin{equation*}
m_{\mbf_w}(y_\lambda^{(r)})=\begin{cases}
                             y_\lambda^{(r)} \; & \text{ if }\; (\lambda,r)\neq (\mu,m),\\
                             y_\mu^{(m)}+w \; & \text{ if }\; (\lambda,r)=(\mu,m).
                            \end{cases}
\end{equation*}
Consequently, $X=m_{\mbf_w}(X)=X_0+m_{\mbf_w}(P(y_\mu^{(m)}))=X_0+P(y_\mu^{(m)}+w)$ for each $w\in \C$. Here $P(y_\mu^{(m)}+w)$ is the polynomial obtained from $P(y_\mu^{(m)})$ by substituting $y_\mu^{(m)}\mapsto y_\mu^{(m)}+w$. This allows us to deduce that 
\begin{equation}\label{P-van}
 P(y_\mu^{(m)})=P(y_\mu^{(m)}+w) \quad \forall \; w\in \C. 
\end{equation}
For each $w\in \C$, let $\mathrm{ev}_w$ be the algebra endomorphism of $\C[y_\lambda^{(r)}]_{\lambda,r}$ given by $y_\lambda^{(r)}\mapsto y_\lambda^{(r)}$ for all $(\lambda,r)\neq (\mu,m)$ and $y_\mu^{(m)}\mapsto -w$. Note that
$\cap_{w\in \C} \Ker(\mathrm{ev}_w)=\{0\}$. We can extend $\mathrm{ev}_w$ to obtain an endomorphism $\mathrm{ev}_w^\mX$ of $\mX_\mcI(\mfg)$ by setting 
$\mathrm{ev}_w^\mX=\mathrm{ev}_w\ot \mathrm{id}$. We then have $\Ker(\mathrm{ev}_w^\mX)=\Ker(\mathrm{ev}_w)\ot Y_R(\mfg)$ and $\cap_{w\in \C}\Ker(\mathrm{ev}_w^\mX)=\{0\}$.  

The equality \eqref{P-van} implies that $\mathrm{ev}_w^\mX(\Phi_\mcI(P(y_\mu^{(m)})))=0$ for all $w\in \C$. This shows that $\Phi_\mcI(P(y_\mu^{(m)}))=0$, and thus that $P(y_\mu^{(m)})=0$. This contradicts the fact that $P(y_\mu^{(m)})$ is a non-constant polynomial of degree at least $1$. Thus no such $X$ can exist, and we may conclude that 
$\wt Y_R(\mfg)=X_\mcI(\mfg)^{m_\mbf}$. \qedhere
\end{proof}

%
%
\section{Drinfeld's theorem and classical Lie algebras}\label{Sec:Drin-Vec}

When $V$ is assumed to be irreducible, one can recover from the results of Sections \ref{Sec:RTT}, \ref{Sec:YR->YJ} and \ref{Sec:XR} a proof of \cite[Theorem 6]{Dr1}. Our first task is to formalize this statement: this will be accomplished in Subsection \ref{ssec:Drin}. We will conclude in Subsection \ref{ssec:class} by explaining how many of the results of this paper reduce to, and have been motivated by, results which are known to hold when $V$ is the vector representation of a classical Lie algebra $\mfg$. 
%
%
\subsection{Drinfeld's theorem and the irreducibility assumption}\label{ssec:Drin}
We now restrict our attention to the setting where the underlying $Y(\mfg)$-module $V$ is irreducible. As has been explained in Remark \ref{R:rational}, this situation has additional practical value, since, at least in principle, $R(u)$ can be computed by solving the equation \eqref{inter} and, after a suitable re-normalization, is equal to a rational $R$-matrix. 

Since $V$ is irreducible, Schur's lemma implies that $\mcE=\End_{Y(\mfg)}V=\C\cdot I$.
In particular, the indexing set $\mcI$ contains a single element, say $\varsigma$, and the basis element $X_\varsigma^\bullet$ of $\mcE$ can be chosen to equal the identity matrix $I$.
With this in mind, we shall henceforth denote $X_\mcI(\mfg)$ simply by $X(\mfg)$ whenever $V$ is assumed to be irreducible.

Set $z(u)=1+\sum_{r\geq 2}z_r u^{-r}=1+z_\varsigma(u)$ and $y(u)=1+\sum_{r\geq 1}y_r u^{-r}=1+y_\varsigma(u)$. The observation made in the previous paragraph implies the first part of the following result.
\begin{corollary}\label{C:z,y}
The matrices $\mcZ(u)$ and $\mcY(u)$ are equal to $z(u)\cdot I$ and $y(u)\cdot I$, respectively. In particular, $z(u)$ is uniquely determined by the relation 
\begin{equation*}
S^2_\mcI(T(u))T(u+\tfrac{1}{2}c_\mfg)^{-1}=z(u)\cdot I= T(u+\tfrac{1}{2}c_\mfg)^{-1}S^2_\mcI(T(u)). \label{z(u)}
\end{equation*}
\end{corollary}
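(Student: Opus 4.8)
The plan is to reduce everything to the single-element indexing set $\mcI=\{\varsigma\}$ and then to exploit the centrality of the coefficients of $z(u)$.

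First I would record that the first assertion is immediate from the observation in the preceding paragraph. Since $V$ is irreducible, $\mcE=\End_{Y(\mfg)}V=\C\cdot I$ forces $\mcI$ to be a singleton $\{\varsigma\}$, and the basis element $X_\varsigma^\bullet$ of $\mcE$ may be taken to equal $I$. Substituting this into the expansion $\mcZ(u)=I+\sum_{\lambda\in\mcI}X_\lambda^\bullet\ot z_\lambda(u)$ (valid by Part \eqref{Z-Y:1} of Lemma \ref{L:Z-Y}) and into the defining expansion $\mcY(u)=I+\sum_{\lambda\in\mcI}X_\lambda^\bullet\ot y_\lambda(u)$, each sum collapses to a single term and yields $\mcZ(u)=(1+z_\varsigma(u))\cdot I=z(u)\cdot I$ and $\mcY(u)=(1+y_\varsigma(u))\cdot I=y(u)\cdot I$.

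Next I would establish the displayed identity. The left-hand equality is simply the defining relation \eqref{Y-sym} of $\mcZ(u)$ combined with the identity $\mcZ(u)=z(u)\cdot I$ just obtained, giving $S_\mcI^2(T(u))T(u+\tfrac{1}{2}c_\mfg)^{-1}=z(u)\cdot I$. Multiplying on the right by $T(u+\tfrac{1}{2}c_\mfg)$ produces $S_\mcI^2(T(u))=z(u)T(u+\tfrac{1}{2}c_\mfg)$. By Proposition \ref{P:Z(u)}, the coefficients $z_r=z_\varsigma^{(r)}$ of $z(u)$ are central in $X(\mfg)$, so the scalar series $z(u)\cdot I$ commutes with $T(u+\tfrac{1}{2}c_\mfg)$; hence $T(u+\tfrac{1}{2}c_\mfg)^{-1}S_\mcI^2(T(u))=T(u+\tfrac{1}{2}c_\mfg)^{-1}z(u)T(u+\tfrac{1}{2}c_\mfg)=z(u)\cdot I$, which is the right-hand equality.

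Finally, the uniqueness of $z(u)$ is automatic: a scalar series $z(u)\cdot I$ determines $z(u)$ by reading off any diagonal entry, so the displayed matrix equations pin it down completely. I expect no genuine obstacle here; the only point requiring care is the appeal to the centrality of the $z_r$, which is precisely what licenses moving $z(u)\cdot I$ through $T(u+\tfrac{1}{2}c_\mfg)$ to obtain the second equality.
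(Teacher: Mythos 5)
Your proposal is correct and follows essentially the same route as the paper: the collapse of the expansions of $\mcZ(u)$ and $\mcY(u)$ using $\mcE=\C\cdot I$ (so $\mcI=\{\varsigma\}$, $X_\varsigma^\bullet=I$) for the first assertion, then the defining relation \eqref{Y-sym} for the left-hand equality and the centrality of the coefficients of $z(u)$ to commute $z(u)\cdot I$ past $T(u+\tfrac{1}{2}c_\mfg)$ for the right-hand one. The only cosmetic difference is that the paper leaves the centrality reference implicit where you cite Proposition \ref{P:Z(u)}, which is a perfectly valid source for it.
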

\begin{proof}
 The relation $z(u)\cdot I=S^2_\mcI(T(u))T(u+\tfrac{1}{2}c_\mfg)^{-1}$ is immediate from \eqref{Y-sym}. This relation, together with the centrality of $z(u)$, implies that 
 $z(u)T(u+\tfrac{1}{2}c_\mfg)=T(u+\tfrac{1}{2}c_\mfg)z(u)=S^2_\mcI(T(u))$, and hence that  $z(u)\cdot I= T(u+\tfrac{1}{2}c_\mfg)^{-1}S^2_\mcI(T(u))$. 
\end{proof}

These simplifications allow us to write down a proof of the following theorem, whose first two parts are precisely the statement of \cite[Theorem 6]{Dr1}. 
\begin{theorem}[Theorem 6 of \cite{Dr1}]\label{T:Drin}  The following three statements are satisfied:
 \begin{enumerate}
  \item \label{Dr:1}There is an epimorphism of Hopf algebras $\wt \Phi: X(\mfg)\onto Y(\mfg)$ such that 
  \begin{equation*}
  \wt \Phi(T(u))=(\rho\ot1)(\mcR(-u)).
  \end{equation*}
  \item \label{Dr:2} There is a series $c(u)=1+\sum_{r\geq 1} c_r u^{-r}$, whose coefficients $\{c_r\}_{r\geq 1}$ are central and  generate $\Ker\, \wt \Phi$ as an ideal, which satisfies
  \begin{equation*}
    \Delta_\mcI(c(u))=c(u)\otimes c(u). \label{grouplike}
  \end{equation*}
  \item \label{Dr:3} The coefficients of $c(u)$ generate the center of $X(\mfg)$, which is a polynomial algebra in countably many variables. 
 \end{enumerate}
\end{theorem}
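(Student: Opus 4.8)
The plan is to obtain all three parts as specializations of the general structural results of Sections \ref{Sec:RTT}--\ref{Sec:XR} to the irreducible case. The key simplification, already recorded in Corollary \ref{C:z,y}, is that Schur's lemma forces $\mcE=\End_{Y(\mfg)}V=\C\cdot I$, so that $\mcI$ is the singleton $\{\varsigma\}$ and the central matrices collapse to scalar series times the identity: $\mcZ(u)=z(u)\cdot I$ and $\mcY(u)=y(u)\cdot I$. Part \eqref{Dr:1} then requires no new work: since irreducibility of $V$ means $X(\mfg)=X_\mcI(\mfg)$, the desired epimorphism is exactly the Hopf algebra morphism $\wt\Phi$ constructed in Lemma \ref{L:X->YJ}, whose value on $T(u)$ is $(\rho\ot 1)(\mcR(-u))$.

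For Part \eqref{Dr:2}, I would set $c(u)=z(u)$, so that $c_r=z_r$ for $r\geq 2$ and $c_1=0$ (recall $z_{ij}^{(1)}=0$ by Part \eqref{Z-Y:2} of Lemma \ref{L:Z-Y}). The centrality of the coefficients $\{z_r\}_{r\geq 2}$ is furnished directly by Proposition \ref{P:Z(u)} applied with $\mcI=\{\varsigma\}$, under which $z_\varsigma^{(r)}=z_r$. That these coefficients generate $\Ker\,\wt\Phi$ as an ideal follows from Theorem \ref{T:YR->YJ}, which identifies $\Ker\,\wt\Phi$ with $(\mcZ(u)-I)$, the ideal generated by the entries $z_{ij}^{(r)}=\delta_{ij}z_r$. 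The grouplike identity is the single point needing a short computation: I would specialize Corollary \ref{C:Hopf-Z} to $\mcZ(u)=z(u)I$ and $\mcY(u)=y(u)I$, which yields $\Delta_\mcI(z(u))=(y(u)\ot 1)(1\ot z(u))(y(u+\tfrac{1}{2}c_\mfg)^{-1}\ot 1)$. Since the coefficients of $y(u)$ and $z(u)$ are central these three factors commute, and invoking the relation $z(u)=y(u)y(u+\tfrac{1}{2}c_\mfg)^{-1}$ of Part \eqref{Z-Y:1} of Lemma \ref{L:Z-Y} collapses the right-hand side to $z(u)\ot z(u)$, giving $\Delta_\mcI(c(u))=c(u)\ot c(u)$.

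Part \eqref{Dr:3} is then immediate from Proposition \ref{P:Z(u)}: with $\mcI=\{\varsigma\}$ that result states precisely that $\{z_r\}_{r\geq 2}$ is algebraically independent and generates the center $ZX(\mfg)$, so that $ZX(\mfg)\cong\C[z_r:r\geq 2]$ is a polynomial algebra in countably many variables, and these generators are exactly the nonzero coefficients of $c(u)$.

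Since the substantive content has already been established in the general setting, I do not anticipate a serious obstacle; the only step demanding care is the reordering of the scalar factors in the grouplike computation, which is legitimate precisely because $\mcY(u)$ and $\mcZ(u)$ have central coefficients. It is worth flagging explicitly that the grouplike property is genuinely special to the irreducible case: for general $V$ the object $\mcZ(u)$ is only a matrix obeying the twisted relation of Corollary \ref{C:Hopf-Z}, and it is the collapse $\mcE=\C\cdot I$ that converts this into the clean coproduct identity above. This is the sense in which Part \eqref{Dr:2} depends on irreducibility, as noted in the introduction.
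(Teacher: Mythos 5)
Your proposal is correct and follows essentially the same route as the paper: Part \eqref{Dr:1} is Lemma \ref{L:X->YJ}, the grouplike property of $c(u)=z(u)$ comes from specializing Corollary \ref{C:Hopf-Z} via Corollary \ref{C:z,y} together with $z(u)=y(u)y(u+\tfrac{1}{2}c_\mfg)^{-1}$, the kernel identification comes from Theorem \ref{T:YR->YJ}, and Part \eqref{Dr:3} from Proposition \ref{P:Z(u)}. The only (inessential) difference is that the paper also records $y(u)$ as a second valid choice of $c(u)$, whereas you work exclusively with $z(u)$.
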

\begin{proof}
 The first statement is precisely Lemma \ref{L:X->YJ}, which we have seen holds even when $V$ is not irreducible. Let us turn to \eqref{Dr:2}. There are two natural candidates for 
 the series $c(u)$, the first being $z(u)$ and the second being $y(u)$, and both satisfy the desired properties. If $c(u)=z(u)$, then by Corollaries \ref{C:Hopf-Z} and \ref{C:z,y} we have 
 \begin{equation*}
  \Delta_\mcI(z(u))=(y(u)\ot 1)(1\ot z(u))(y(u+\tfrac{1}{2}c_\mfg)^{-1}\ot 1)=z(u)\ot z(u), 
 \end{equation*}
 while Theorem \ref{T:YR->YJ} gives $\Ker\, \wt\Phi=(z(u)-1)$. If instead $c(u)=y(u)$, then it is immediate from Lemma \ref{L:Y-Hopf} and Corollary \ref{C:z,y} that $c(u)$ satisfies the grouplike property \eqref{grouplike}. As the ideal $(y(u)-1)$ generated by the coefficients $\{y_r\}_{r\geq 1}$ is equal to $(z(u)-1)$, we also have $\Ker\, \wt\Phi=(y(u)-1)$. 
 
 As for part \eqref{Dr:3}, Proposition \ref{P:Z(u)} and Corollary \ref{C:z,y} guarantee that both $\{z_r\}_{r\geq 2}$ and $\{y_r\}_{r\geq 1}$ are algebraically independent sets which generate $ZX(\mfg)$.
\end{proof}
\begin{remark}\label{R:Drin}
 More generally, when $V$ is not assumed to be irreducible, we have shown that $C(u)=\mcY(u)\in I+\mcE\ot u^{-1}X_\mcI(\mfg)[\![u^{-1}]\!]$ has central coefficients which generate 
 the ideal $(\mcZ(u)-I)=\Ker\, \wt \Phi$, and moreover that $C(u)$ satisfies  $\Delta(C(u))=C_{[1]}(u)C_{[2]}(u)$. This should be viewed as a generalization of \eqref{Dr:2}, and the statement that the coefficients $y_\lambda^{(r)}$ of $C(u)$ are algebraically independent generators of $ZX_\mcI(\mfg)$ (see Proposition \ref{P:Z(u)}) should be viewed as a generalization of \eqref{Dr:3}. 
\end{remark}
In the proof of Theorem \ref{T:Drin} we have observed that the series $z(u)$ is grouplike. It is thus also the case that $S_\mcI(z(u))=z(u)^{-1}$ (as can also be seen from Corollary \ref{C:Hopf-Z}). The next corollary summarizes these results. 
\begin{corollary}
When $V$ is irreducible the formulas of Corollary \ref{C:Hopf-Z} reduce to 
\begin{equation*}
\Delta_\mcI(z(u))=z(u)\ot z(u),\quad S_\mcI(z(u))=z(u)^{-1},\quad \eps_\mcI(z(u))=1.
\end{equation*}
\end{corollary}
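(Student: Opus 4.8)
The plan is to derive each of the three identities by specializing the corresponding formula of Corollary \ref{C:Hopf-Z}, using the description of $\mcZ(u)$ and $\mcY(u)$ supplied by Corollary \ref{C:z,y}. Because $V$ is irreducible, $\mcE=\C\cdot I$, and Corollary \ref{C:z,y} identifies $\mcZ(u)=z(u)\cdot I$ and $\mcY(u)=y(u)\cdot I$ as scalar matrices with central entries. As $\Delta_\mcI$, $S_\mcI$ and $\eps_\mcI$ act entrywise on these matrices, each matrix identity of Corollary \ref{C:Hopf-Z} should collapse---after cancelling the identity matrix $I$---to the asserted scalar identity. First I would record how the embeddings appearing in Corollary \ref{C:Hopf-Z} simplify: under the identifications above, $\mcY_{[1]}(u)$ becomes $I\cdot(y(u)\ot 1)$ and $\mcZ_{[2]}(u)$ becomes $I\cdot(1\ot z(u))$, and similarly for the shifted factor.

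For the coproduct I would substitute these into the first formula of Corollary \ref{C:Hopf-Z}, obtaining
$$\Delta_\mcI(z(u))\cdot I=I\cdot(y(u)\ot 1)(1\ot z(u))(y(u+\tfrac{1}{2}c_\mfg)^{-1}\ot 1).$$
Since the coefficients of $y(u)$ and $z(u)$ are central the three factors commute, so the right-hand side rearranges as $\big(y(u)y(u+\tfrac{1}{2}c_\mfg)^{-1}\big)\ot z(u)$; invoking $z(u)=y(u)y(u+\tfrac{1}{2}c_\mfg)^{-1}$ from Part \eqref{Z-Y:1} of Lemma \ref{L:Z-Y} then gives $z(u)\ot z(u)$, which is the first identity. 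This is exactly the computation already carried out in the proof of Theorem \ref{T:Drin}.

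The antipode and counit identities I would obtain by the same substitution, and they are more immediate. The middle formula of Corollary \ref{C:Hopf-Z} becomes $S_\mcI(z(u))\cdot I=\big(y(u)^{-1}y(u+\tfrac{1}{2}c_\mfg)\big)\cdot I$, and since $z(u)^{-1}=y(u)^{-1}y(u+\tfrac{1}{2}c_\mfg)$ this yields $S_\mcI(z(u))=z(u)^{-1}$; the third formula reads $\eps_\mcI(z(u))\cdot I=I$, whence $\eps_\mcI(z(u))=1$. I do not expect any genuine obstacle here: the entire content is that irreducibility reduces the matrix formulas of Corollary \ref{C:Hopf-Z} to scalar ones, so the only thing requiring attention is the bookkeeping of the tensor-leg embeddings $\mcY_{[1]}$, $\mcZ_{[2]}$ and the commutativity of the central series used to simplify the coproduct.
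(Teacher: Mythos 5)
Your proposal is correct and follows essentially the same route as the paper: the coproduct computation is exactly the one carried out in the proof of Theorem \ref{T:Drin}, and the antipode and counit identities are read off from Corollary \ref{C:Hopf-Z} after the irreducibility-induced identifications $\mcZ(u)=z(u)\cdot I$ and $\mcY(u)=y(u)\cdot I$ of Corollary \ref{C:z,y}, together with $z(u)=y(u)y(u+\tfrac{1}{2}c_\mfg)^{-1}$ from Lemma \ref{L:Z-Y}. The only cosmetic difference is that the paper primarily deduces $S_\mcI(z(u))=z(u)^{-1}$ from the grouplike property of $z(u)$, while noting (as you do) that it can equally be seen directly from Corollary \ref{C:Hopf-Z}.
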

We conclude this subsection by noting that, since $\mcE=\C\cdot I$, every automorphism $m_\mbf$ (see \eqref{aut:m_F}) takes the form $T(u)\mapsto f(u)T(u)$ for a series $f(u)\in 1+u^{-1}\C[\![u^{-1}]\!]$ uniquely determined by $\mbf(u)=I\ot f(u)$. With this in mind, we will denote $m_\mbf$ by $m_f$ for the remainder of this paper. 

%
%
\subsection{The vector representation of the Yangian of a classical Lie algebra}\label{ssec:class}
We now narrow our focus to the case where $\mfg$ is a Lie algebra of classical type and $V$ is specialized to its vector representation, our goal being to briefly highlight results in the literature which have motivated some of the results of this paper, with emphasis on the results of Section \ref{Sec:XR}.

We remark that these specializations fall into the slightly more general framework in which the representation $V$ of $Y(\mfg)$ is irreducible as a $\mfg$-module. Considering only such modules leads to fairly significant simplifications. For instance,  $\mfg_\mcI$ always coincides with $\mfg_\mcJ$ and hence Subsection \ref{ssec:g_I} and Step 2 of the proof of Proposition \ref{P:X-cur} are no longer needed. 
There are, however, examples where $R(u)$ has been computed when $V$ is not irreducible as a $\mfg$-module: see \cite{ChPr1}. 
%
%
%
\subsubsection{The special linear Lie algebra $\mfsl_N$}
Fix $N\geq 2$, let $\{e_1,\ldots,e_N\}$ denote the standard basis of $\C^N$, and view $\mfg=\mfsl_N$ as the space of traceless $N\times N$ matrices. Fixing the invariant form $(\cdot,\cdot)$ to be the trace form, 
we have $\Omega_\rho=P-\frac{1}{N} I$ and $c_\mfg=2N$, where $P=\sum_{i,j=1}^N E_{ij}\ot E_{ji}$ is the permutation operator $\sigma$ on $\C^N\ot \C^N$. Additionally, we have  $\mfg_\mcJ=\mfg_\mcI\cong \mfgl_N$. 

It is well known that the $\mfsl_N$-module $\C^N$ admits a $Y(\mfsl_N)$-module structure defined  by allowing $J(X)$, for each $X\in \mfsl_N$, to act as the 
zero operator: see for instance Example 1 of \cite{Dr1}. In this case $(\rho\ot \rho)(\mcR(-u))$ is, up to multiplication by a formal series in $u^{-1}$, equal to Yang's $R$-matrix 
\begin{equation} \label{R(u)-sl}
 R(u)=I-Pu^{-1}, 
\end{equation}
as can be deduced by directly solving the equation \eqref{inter} with $V=W=\C^N$. The associated extended Yangian $X(\mfsl_N)$ is usually denoted $Y(\mfgl_N)$ in the literature, and has been studied extensively.  In what follows we do not attempt to provide a full account of the history behind each result, but instead refer the reader to the appropriate results in the monograph \cite{Mobook} where a detailed bibliography is given. 

The central series $y(u)$ and $z(u)$ (adapting the notation from Corollary \ref{C:z,y}) both admit rather concrete descriptions. The series $y(u)$ is equal to the 
series $\wt{d}(u)$ which has appeared in the proof of \cite[Theorem 1.8.2]{Mobook}: it is the unique central series in $1+u^{-1}ZX(\mfsl_N)[\![u^{-1}]\!]$ such that 
\begin{equation*}
 \wt{d}(u)\wt{d}(u-1)\cdots \wt{d}(u-N+1)=\mathrm{qdet} T(u),
\end{equation*}
where $\mathrm{qdet} T(u)$ is the quantum determinant of the generating matrix $T(u)$: see Definition 1.6.5 of \cite{Mobook}. By \cite[Proposition 1.6.6]{Mobook}, it is given by 
\begin{equation*}
 \mathrm{qdet} T(u)=\sum_{\pi\in \mfS_N} \mathrm{sign}(\pi)\cdot t_{\pi(1),1}(u)\cdots t_{\pi(N),N}(u-N+1). 
\end{equation*}
The series $z(u)$ is related to the series
\begin{equation*}
 \mathsf{z}(u)=\frac{\mathrm{qdet} T(u-1)}{\mathrm{qdet}T(u)},
\end{equation*}
which was defined in \cite[(1.68)]{Mobook}, by $z(u)=\mathsf{z}(u+N)$, as can be seen using \cite[Theorem 1.9.9]{Mobook}.  
The relation $z(u)=1$ is equivalent to $\mathrm{qdet} T(u)=1$, as was pointed out in the original statement of \cite[Theorem 6]{Dr1}.

Theorem \ref{T:XR->CxYR} reduces to the statements of Theorems 1.7.5 and 1.8.2 of \cite{Mobook}, and Proposition \ref{P:Z(u)} follows from these same results together with \cite[Corollary 1.9.7]{Mobook}. The Poincar\'{e}-Birkhoff-Witt theorem for $X(\mfsl_N)$ (Theorem \ref{T:X-PBW} with $(\mfg,V)=(\mfsl_N,\C^N)$) is given in \cite[Theorem 1.4.1]{Mobook}. 

The description of $Y_R(\mfsl_N)$ as the subalgebra of $X(\mfsl_N)$ consisting of all elements stable under all automorphisms of the form $m_f$, which is provided by Theorem \ref{T:fixed-pt}, was actually taken as the definition of $Y_R(\mfsl_N)$ in \cite{Mobook}. It was then proven in Corollary 1.8.3 of \cite{Mobook} that $Y_R(\mfsl_N)$ could be equivalently characterized as in Definition \ref{D:Y}. According to \cite[Bibliographical notes 1.8]{Mobook}, the description of $Y_R(\mfsl_N)$ using the automorphisms $m_f$ is originally due to Drinfeld, as is the more general fact that $Y_R(\mfsl_N)$ can be realized as a subalgebra of $X(\mfsl_N)$: see 
Theorem 1.13 of \cite{Ol}.

%
%
\subsubsection{The orthogonal and symplectic Lie algebras $\mfso_N$ and $\mfsp_{2n}$} 
Still assuming $N\geq 2$, let $n\in \mathbb{N}$ be defined by $N=2n$ (if $N$ is even) 
and $N=2n+1$ (if $N$ is odd). We now assume that $\mfg_N=\mfg$ is either equal to $\mfso_N$ or $\mfsp_{N}$, where $N$ is necessarily even in the latter case. It is convenient to relabel 
the standard basis of $\C^N$ using the indexing set $\mcI_N=\{-n,\ldots,-1,(0),1,\ldots,n\}$, where 
$(0)=0$ if $N=2n+1$ and should be omitted otherwise. That is, we denote the standard basis of $\C^N$ by 
$\{e_{-n},\ldots,e_{-1}, (e_0), e_1,\ldots,e_N\}$. Let $t: \End \C^N \to \End \C^N$ denote the transposition determined  by
\begin{equation*}
 (E_{ij})^t=\theta_{ij}E_{-j,-i} \quad \text{ where } \quad \theta_{ij}=\begin{cases}
                                                                         1 & \; \text{ if }\; \mfg_N=\mfso_N,\\
                                                                         \mathrm{sign}(i)\mathrm{sign}(j)& \; \text{ if }\; \mfg_N=\mfsp_N.
                                                                    \end{cases}
\end{equation*}
The Lie algebra $\mfg_N$ can then be realized as the Lie subalgebra of $\mfgl_N$ spanned by the elements $F_{ij}=E_{ij}-(E_{ij})^t$, and as this notation suggests the corresponding presentation is consistent with that provided by Proposition \ref{P:new-g}. We refer the reader to (2.4) and (2.5) of \cite{AMR} for an explicit description of the defining relations. 

Letting $(\cdot,\cdot)$ be equal to one half of the trace form, we have $\Omega_\rho=P-Q$ and $c_\mfg=4\ka$, where 
\begin{equation*}
 P=\sum_{i,j\in \mcI_N} E_{ij}\ot E_{ji}, \quad Q=P^{t_2}=\sum_{i,j\in \mcI_N} \theta_{ij} E_{ij} \ot E_{-i,-j},\quad \text{ and } \quad \ka=\begin{cases}
                                                                                                                                          N/2-1 & \; \text{ if }\; \mfg_N=\mfso_N,\\
                                                                                                                                          n+1  & \; \text{ if }\; \mfg_N=\mfsp_N.
                                                                                                                                         \end{cases}
\end{equation*}
As in the $\mfg=\mfsl_N$ case, it is well known that the vector representation $\C^N$ of $\mfg_N$ extends to a representation of $Y(\mfg_N)$ by setting $\rho(J(X))=0$ for all $X\in \mfg_N$. For an explicit proof see \cite[Proposition 3.1]{GRW}. The $R$-matrix $(\rho\ot \rho)(\mcR(-u))$ can be computed from \eqref{inter} and is equal to
\begin{equation}\label{R(u)-sosp}
 R(u)=I-Pu^{-1}+Q(u-\ka)^{-1},
\end{equation}
up to multiplication by an invertible element of $\C[\![u^{-1}]\!]$. This has certainly been known for a long time (see \cite{KS2} and \cite[Example 2]{Dr1}), but for a complete proof 
we refer the reader to Proposition 3.13 of the recent paper \cite{GRW}.
The $RTT$-Yangian $Y_R(\mfg_N)$ and the extended Yangian $X(\mfg_N)$ have not been studied to the same extent as their $\mfsl_N$ analogues, although there has been an increase in efforts over the last fifteen years \cite{AACFR,AMR,MoMuWalg,MoMuWalg2,GRW,JLM}.  

It was proven in \cite{AACFR} (see also \cite[(2.26)]{AMR}) that there is a central series $\mathsf{z}(u)\in 1+u^{-1}ZX(\mfg_N)[\![u^{-1}]\!]$ determined by 
\begin{equation*}
 \mathsf{z}(u)\cdot I=T^t(u+\ka)T(u)=T(u)T^t(u+\ka), \quad \text{ where }\quad T^t(u)=\sum_{i,j\in \mcI_N} (E_{ij})^t\ot t_{ij}(u).
\end{equation*}
By comparing (2.31) of \cite{AMR} with the relation $S^2_\mcI(T(u))=z(u)T(u+2\ka)$ of Corollary \ref{C:z,y}, we can conclude that 
\begin{equation*}
 z(u)=\frac{\mathsf{z}(u)}{\mathsf{z}(u+\ka)}. 
\end{equation*}
Conversely $y(u)$ is equal to the central series $\mathsf{y}(u)$ defined in \cite[Theorem 3.1]{AMR}: it is uniquely determined by $y(u)y(u+\ka)=\mathsf{z}(u)$. It was also noted in the statement of \cite[Theorem 6]{Dr1} that, when $(\mfg,V)=(\mfso_N,\C^N$), the coefficients of $\mathsf{z}(u)-1$ generate the kernel of the epimorphism $\wt \Phi$ from Lemma \ref{L:X->YJ} as an ideal.

Theorem \ref{T:XR->CxYR} with $(\mfg,V)=(\mfg_N,\C^N)$ is precisely Theorem 3.1 of \cite{AMR}, while Corollary \ref{P:Z(u)} is deduced from that same theorem of \cite{AMR} together with \cite[Corollary 3.9]{AMR}. The Poincar\'{e}-Birkhoff-Witt theorem for $X(\mfg_N)$ when $V=\C^N$ was stated and proven in Corollary 3.10 of \cite{AMR}: see also \cite[Theorem 3.6]{AMR}, which is exactly Theorem \ref{T:PBW} in the particular case being discussed.  

Just as was the case for $\mfg=\mfsl_N$ with $V=\C^N$, the authors of \cite{AMR} first defined $Y_R(\mfg_N)$ as the fixed point subalgebra of $X(\mfg_N)$ under all automorphisms $m_f$, and then  in  \cite[Corollary 3.2]{AMR} proved that it could be equivalently defined as a quotient of $X(\mfg_N)$.
%
%
 
\end{document}